\documentclass[a4paper,11pt]{amsart}
\usepackage{amsthm}
\usepackage{tikz}
\usepackage{epsfig}
\usepackage{amsfonts}
\usepackage{amsmath}
\usepackage{amssymb}
\usepackage{hyperref,latexsym}
\PassOptionsToPackage{unicode}{hyperref}

\usepackage[english]{babel}
\usepackage{amsmath}
\usepackage{amsfonts}
\usepackage{enumerate}
\usepackage{amssymb,mathtools,caption}
\usepackage{color}
\usepackage{mathrsfs}
\usepackage{xkeyval,xifthen,bm,twoopt}

\newtheorem{theo}{Theorem}[section]
\newtheorem{lemm}[theo]{Lemma}
\newtheorem{coro}[theo]{Corollary}
\newtheorem{prop}[theo]{Proposition}
\newtheorem{rema}[theo]{Remark}

\theoremstyle{definition}
\newtheorem{hyp}{Hypothesis}
\newtheorem*{exampl*}{Examples}
\newtheorem*{conj*}{Conjecture}

\newtheorem*{hist*}{History for $p=2$}
\newtheorem{clm}{Claim}
\newtheorem{asm}[theo]{Assumption}

\DeclareMathOperator{\dv}{div}

\newcommand{\lbr}[1][{(}]{\left#1}
\newcommand{\rbr}[1][{)}]{\right#1}

\newcommand{\bo}{\mathcal{O}}
\newcommand{\bigo}[1]{\bo \lbr #1 \rbr}

 \everymath{\displaystyle}

\numberwithin{equation}{section}
\newcommand{\R}{\mathbb{R}}

\newcommand{\eqdef}{\stackrel{{\rm{def}}}{=}}

\newcommand{\ga} {\gamma}

\usepackage[a4paper]{geometry}
\setlength{\topmargin}{-0.1in}
\setlength{\oddsidemargin}{0in}
\setlength{\evensidemargin}{0in}
\setlength{\textwidth}{7.0in}
\setlength{\rightmargin}{1cm}
\setlength{\leftmargin}{1cm}
\setlength{\textheight}{8.8in}
\headsep=0.25 in

\newcommand{\de}{\delta}
\newcommand{\al}{\alpha}

\newcommand{\gpgt}[1][]{\ifthenelse{\isempty{#1}}{g \left( \frac{\ga}{2} \right)}{\frac{\ga}{#1}}}

\newcommand{\taylor}[4][]{
\ifthenelse{\equal{#1}{2}}{#2 \lbr #3 \rbr + #2'\lbr#3 \rbr \lbr #4\rbr + \frac{#2'' \lbr\zeta\rbr \lbr #4 \rbr^2}{2} }{}
\ifthenelse{\equal{#1}{3}}{#2 \lbr #3 \rbr + #2'\lbr#3 \rbr \lbr #4 \rbr + \frac{#2'' \lbr#3\rbr \lbr #4 \rbr^2}{2} + \frac{#2''' \lbr\zeta\rbr \lbr #4 \rbr^3}{3!} }{}
}

\newcommand{\taylorinv}[4][]{
\ifthenelse{\equal{#1}{2}}{#2\ \lbr #3 \rbr + \lbr #2\rbr '\lbr#3 \rbr\lbr #4\rbr + \frac{\lbr #2\rbr'' \lbr\zeta\rbr \lbr #4 \rbr^2}{2} }{}
\ifthenelse{\equal{#1}{3}}{#2 \lbr #3 \rbr + \lbr #2\rbr'\lbr#3 \rbr \lbr #4\rbr + \frac{\lbr #2\rbr'' \lbr#3\rbr \lbr #4 \rbr^2}{2} + \frac{\lbr #2\rbr''' \lbr\zeta\rbr \lbr #4 \rbr^3}{3!} }{}
}

\newcommand{\gpg}[2][]{
    \ifthenelse{\isempty{#1}}
    {\ifthenelse{\isempty{#2}}{g(\ga)}{g\lbr#2\rbr} }
    {\ifthenelse{\isempty{#2}}{g^{(#1)}(\ga)}{g^{(#1)}\lbr#2\rbr} }
    }


\newcommand{\gpu}[1][]{
		\ifthenelse{\isempty{#1}}%
		      {g(u)}
		      {g^{(#1)}(u)}
		      }

\newcommand{\gpgd}[2][]{
\ifthenelse{\isempty{#2}}{
		\ifthenelse{\isempty{#1}}{g(\ga)}{}
		\ifthenelse{\equal{#1}{1}}{g'(\ga)}{}
		\ifthenelse{\equal{#1}{2}}{g''(\ga)}{}
		\ifthenelse{\equal{#1}{3}}{g'''(\ga)}{}
		\ifthenelse{\equal{#1}{4}}{g''''(\ga)}{}}
		{
		\ifthenelse{\isempty{#1}}{g\lbr#2\rbr}{}
		\ifthenelse{\equal{#1}{1}}{g'\lbr#2\rbr}{}
		\ifthenelse{\equal{#1}{2}}{g''\lbr#2\rbr}{}
		\ifthenelse{\equal{#1}{3}}{g'''\lbr#2\rbr}{}
		\ifthenelse{\equal{#1}{4}}{g''''\lbr#2\rbr}{}}
		      }

\newcommand{\gpud}[1][]{
		\ifthenelse{\isempty{#1}}{g(u)}{}
		\ifthenelse{\equal{#1}{1}}{g'(u)}{}
		\ifthenelse{\equal{#1}{2}}{g''(u)}{}
		\ifthenelse{\equal{#1}{3}}{g'''(u)}{}
		}		      
		      
\newcommand{\vsig}{\varsigma}

\newcommand{\vu}[1][]{
		\ifthenelse{\isempty{#1}}%
		      {\vsig(u)}
		      {\vsig^{(#1)}(u)}
		      }

\newcommand{\vg}[1][]{
		\ifthenelse{\isempty{#1}}%
		      {\vsig(\ga)}
		      {\vsig^{(#1)}(\ga)}
		      }
		      
\newcommand{\gk}[1][]{
		\ifthenelse{\isempty{#1}}
		{\ga^q}
		{\ga^{q-#1}}
		}

\newcommand{\ginv}[2][]{
\ifthenelse{\isempty{#2}}{
		\ifthenelse{\isempty{#1}}{g^{-1} \lbr \ga \rbr}{}
		\ifthenelse{\equal{#1}{1}}{\lbr g^{-1}\rbr' \lbr \ga \rbr}{}
		\ifthenelse{\equal{#1}{2}}{\lbr g^{-1}\rbr'' \lbr \ga \rbr}{}
		\ifthenelse{\equal{#1}{3}}{\lbr g^{-1}\rbr''' \lbr \ga \rbr}{}
		}{
		\ifthenelse{\isempty{#1}}{g^{-1} \lbr #2 \rbr}{}
		\ifthenelse{\equal{#1}{1}}{\lbr g^{-1}\rbr' \lbr #2 \rbr}{}
		\ifthenelse{\equal{#1}{2}}{\lbr g^{-1}\rbr'' \lbr #2 \rbr}{}
		\ifthenelse{\equal{#1}{3}}{\lbr g^{-1}\rbr''' \lbr #2 \rbr}{}
		    }
		    }

\newcommand{\td}{T_{\de}}

\newcommand{\gtd}[1][]{
  \ifthenelse{\isempty{#1}}{y(\td)}{}
  \ifthenelse{\equal{#1}{1}}{g'(y(\td))}{}
  \ifthenelse{\equal{#1}{2}}{g''(y(\td))}{}
  \ifthenelse{\equal{#1}{3}}{g'''(y(\td))}{}
}

\newcommand{\ytd}[1][]{
  \ifthenelse{\isempty{#1}}{y(\td)}{}
  \ifthenelse{\equal{#1}{1}}{y'(\td)}{}
  \ifthenelse{\equal{#1}{2}}{y''(\td)}{}
  \ifthenelse{\equal{#1}{3}}{y'''(\td)}{}
}

\newcommand{\plap}[2][]{\ifthenelse{\isempty{#1}}{\lvert #2 \rvert^{n-2} #2}{\lbr \lvert #2 \rvert^{n-2} #2 \rbr}}

\newcommand{\gt}[2][]{\ifthenelse{\isempty{#2}}{
		\ifthenelse{\isempty{#1}}{g(y(t))}{}
		\ifthenelse{\equal{#1}{1}}{g'(y(t))}{}
		\ifthenelse{\equal{#1}{2}}{g''(y(t))}{}
		\ifthenelse{\equal{#1}{3}}{g'''(y(t))}{}
		}{
		\ifthenelse{\isempty{#1}}{g(y(#2))}{}
		\ifthenelse{\equal{#1}{1}}{g'(y(#2))}{}
		\ifthenelse{\equal{#1}{2}}{g''(y(#2))}{}
		\ifthenelse{\equal{#1}{3}}{g'''(y(#2))}{}
		    }
		    }

\newcommand{\psit}[2][]{\ifthenelse{\isempty{#2}}{
		\ifthenelse{\isempty{#1}}{\psi(y(t))}{}
		\ifthenelse{\equal{#1}{1}}{\psi'(y(t))}{}
		\ifthenelse{\equal{#1}{2}}{\psi''(y(t))}{}
		\ifthenelse{\equal{#1}{3}}{\psi'''(y(t))}{}
		}{
		\ifthenelse{\isempty{#1}}{\psi(y(#2))}{}
		\ifthenelse{\equal{#1}{1}}{\psi'(y(#2))}{}
		\ifthenelse{\equal{#1}{2}}{\psi''(y(#2))}{}
		\ifthenelse{\equal{#1}{3}}{\psi'''(y(#2))}{}
		    }
		    }

\newcommand{\nminn}[1][]{ \ifthenelse{\isempty{#1}}{ \lbr \frac{n-1}{n} \rbr}{\lbr \frac{n-1}{n} \frac{1}{#1}\rbr}} 

\newcommand{\funcpow}[4][y]{
\ifthenelse{\isempty{#2}}{\lbr #1 (#3) \rbr^{#4}}{}
\ifthenelse{\equal{#2}{1}}{\lbr #1' (#3) \rbr^{#4}}{}
\ifthenelse{\equal{#2}{2}}{\lbr #1'' (#3) \rbr^{#4}}{}
\ifthenelse{\equal{#2}{3}}{\lbr #1''' (#3) \rbr^{#4}}{}
\ifthenelse{\equal{#2}{4}}{\lbr #1'''' (#3) \rbr^{#4}}{}
\ifthenelse{\equal{#2}{5}}{\lbr #1''''' (#3) \rbr^{#4}}{}
\ifthenelse{\equal{#2}{6}}{\lbr #1'''''' (#3) \rbr^{#4}}{}
}

\usepackage{graphicx}
\usepackage{graphics}
\usepackage{epsfig}
\usepackage{amsmath}
\usepackage{amsfonts}
\usepackage{psfrag}


\begin{document}


\title[Uniqueness of positive radial solutions with exponential nonlinearity]{Uniqueness of positive solutions of a $n$-Laplace equation in a ball in $\R^n$ with exponential nonlinearity}
\author{Adimurthi}
\address{T.I.F.R. CAM, P.B. No. 6503,\\
   Sharadanagar, Chikkabommasandra \\
   Bangalore 560065, India}
\email{adiadimurthi@gmail.com \and aditi@math.tifrbng.res.in}

\author{Karthik A}
\address{Department of Mathematics,\\
Louisiana State University,\\
USA}
\email{kadimu1@lsu.edu \and karthikaditi@gmail.com}

\author{Jacques  Giacomoni}
\address{LMAP (UMR CNRS 5142) Bat. IPRA,
   Avenue de l'Universit\'e \\
   F-64013 Pau, France}
\email{jacques.giacomoni@univ-pau.fr}

%
%
%
%
%
%

%
%


\begin{abstract}
Let $n\geq 2$ and $\Omega\subset\R^n$ be a bounded domain. Then by Trudinger-Moser embedding, $W^{1,n}_0(\Omega)$ is embedded in an Orlicz space consisting of exponential functions. Consider the corresponding semi linear $n$-Laplace equation with critical or sub-critical exponential nonlinearity in a ball $B(R)$ with dirichlet boundary condition. In this paper, we prove that under suitable growth conditions on the nonlinearity, there exists an $\gamma_0>0$, and a corresponding $R_0(\gamma_0)>0$ such that for all $0<R<R_0$, the problem admits a unique non degenerate positive radial solution $u$ with $\Vert u\Vert_\infty\geq \gamma_0$.
\end{abstract}
\maketitle 
\section{Introduction}
Let $B(R)\subset\R^n$ be the ball of radius $R$ with center at the origin and $1<p\leq n$ with $f\in C^0[0,+\infty)$. Consider the following problem
\begin{eqnarray}\label{eq1.1}
\left\{\begin{array}{ll}
&-\dv (\vert\nabla u\vert^{p-2}\nabla u)=f(u)\quad\mbox{in }B(R)\\
&u=0\quad\mbox{on }\partial B(1), \; u>0\quad\mbox{in } B(R).
\end{array}\right.
\end{eqnarray}
Existence of a solution to Problem \eqref{eq1.1} has been studied extensively under the banner of Emden-Fowler type equations or Yamabe type equations. In this paper we make some progress regarding the question of uniqueness.

\begin{hist*} Let  $\lambda>0$ and define
\begin{eqnarray}\label{eq1.2}
\displaystyle f(u)=\left\{\begin{array}{ll}
u^r+\lambda u\quad&\mbox{if } n\geq 2 \text{ and } 1< r < \infty \\
 \lambda \ h(u) e^{u^q}\quad&\mbox{if } n=2 \text{ and } 1\leq q\leq 2,
\end{array}\right.
\end{eqnarray}
where $h\in C^1(0,\infty)$ with $h(0) = 0$ and is of lower order growth than $e^{u^q}$. In this context, H. Brezis raised the following question:

\begin{center} \emph{``Does \eqref{eq1.1} admit utmost one solution?''}\end{center}

From {\sc Gidas-Ni-Nirenberg} \cite{GiNiNi}, we know that any solution of \eqref{eq1.1} with $p=2$  is radial. Using this and by scaling argument when $\lambda=0$, in \cite{GiNiNi}, it was proven that \eqref{eq1.1} with $p=2$ admits utmost one solution. When $\lambda\neq 0$, their approach fails to yield uniqueness. Using the Pohozaev's identity for $n \geq 2$ and $r<\infty$, the uniqueness question was answered in  \cite{AdYa,ErTa,Li,Sr,NiNu}. In  \cite{ErTa,Ta}, the authors used the Pohozaev's identity in an ingenious way to obtain uniqueness for a wide class of nonlinearities $f(u)$. 

Next, consider the case when $n=2$ and the nonlinearity is of exponential type. If $f(u)=u e^u$, then uniqueness was proved in  \cite{Ad1,Ta}. In \cite{Ta}, they again made use of Pohozaev's identity, though unfortunately, their technique  cannot be extended if $q>1$. Hence the question remained:

\begin{center}\emph{``What happens if $n=2$ and $1<q\leq 2$ ?''} \end{center}
\end{hist*} 
In this paper we try to answer the uniqueness by adopting the linearization technique from \cite{Ad1} and the asymptotic analysis of {\sc Atkinson-Peletier} \cite{AtPe,LePe} (see also {\sc Volkmer} \cite{Vo}). Under suitable conditions on $f$ we show that there exists a $\lambda_0>0$ such that  for $p=n$, $0<\lambda<\lambda_0$, $q\in (1,\frac{n}{n-1}]$ and $\|u\|_{\infty} > \text{'some large quantity'}$, the radial solutions of  \eqref{eq1.1} are unique and nondegenerate.
For the existence and nonexistence of positive solutions  see \cite{Ad,AdPr2}.

\begin{hyp}
\label{hypothesis-f}
Let $f\in C [0,\infty)\cap C^3(0,\infty) $ with $f(u)>0$ for $u>0$. Let $a>0$, $\lambda>0$, $\rho\in C^3(0,\infty)$ and $1<q\leq \frac{n}{n-1}$ be  such that 
\begin{eqnarray}\label{eq1.3}
f(u)=\lambda e^{g(u)}, \quad g(u)=a u^q +\rho(u).
\end{eqnarray}
Let $\rho^{(k)}$ denote the $k$-th derivative of $\rho$. 
Assume that there exist $0<\alpha\leq 1$, $\beta>0$ such that
\begin{eqnarray}\label{eq1.4}
\begin{array}{ll}
f(s)-f(0)=\bigo{s^\beta}\text{  as  } s \rightarrow 0^+\\
f'(s)=\bigo{s^{-1+\alpha}}\text{  as  } s \rightarrow 0^+.
\end{array}
\end{eqnarray}
Now the following assumptions will be stated as and when they are used in the theorems: 
\begin{itemize}
\item[(H1)] $\displaystyle\lim_{\gamma\to\infty}\frac{\rho^{(k)}(\gamma)}{\gamma^{q-k}}=0$ for $k\in \{0,1,2,3\}$.
\item[(H2)] There exists a $b \in \R$ such that 

$$
\displaystyle\varliminf_{\gamma\to\infty}g(\gamma)-\left(\frac{n-1}{n}\right) \gamma g'(\gamma) \geq b
$$
\item[(H3)] 
$
\displaystyle\lim_{\gamma\to\infty}\frac{g'(\gamma)}{g''(\gamma) \lbr[[] \log g'(\gamma)\rbr[]]^4} \lbr g'(\gamma)-(n-1)\gamma g''(\gamma)\rbr=\infty.
$
\end{itemize}
Here $g'$, $g''$ denote the first and second  derivatives of $g$. 
\end{hyp}


We now have  the following main result:
\begin{theo}\label{theo1.5}
Assume $f$ satisfies (H1) and (H3) from Hypothesis \ref{hypothesis-f} and that there exists a  $p \geq 0$ and a constant  $C_p\neq 0$ such that
\begin{eqnarray}\label{eq1.6}
\displaystyle\lim_{u\to 0^+}\frac{f(u)}{u^p}=C_p.
\end{eqnarray}
Then there exists a  $\lambda_0>0$, $\gamma_0>0$ such that for all $\lambda\in (0,\lambda_0)$
\begin{eqnarray}\label{eq1.7}
\left\{\begin{array}{ll}
&-\dv(\vert\nabla u\vert^{n-2}\nabla u)=\lambda f(u)\quad\mbox{in }B(1),\\
&u>0,\quad u\mbox{ radial in } B(1),\\
& u=0\quad\mbox{on }\partial B(1),
\end{array}\right.
\end{eqnarray}
admits a unique solution satisfying $\Vert u\Vert_\infty>\gamma_0$. Furthermore, the solution is non degenerate.
\end{theo}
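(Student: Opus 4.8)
The plan is to reduce to a shooting problem and then read off nondegeneracy and monotonicity of the bifurcation curve from sharp asymptotics of the shooting solution and its $\ga$-derivative. Since Theorem~\ref{theo1.5} concerns radial solutions, write $u=u(r)$, $r=|x|$, so that \eqref{eq1.7} becomes the singular boundary value problem
\[
-\bigl(r^{n-1}|u'|^{n-2}u'\bigr)'=\lambda\,r^{n-1}f(u),\qquad u'(0)=0,\quad u(1)=0,\quad u>0 \text{ on } [0,1).
\]
The scaling $u(x)=v(\lambda^{1/n}x)$ carries this to the same equation with $\lambda=1$ on the ball of radius $R=\lambda^{1/n}$, preserving $\|u\|_\infty$. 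Hence it suffices to study, for $\lambda=1$, the initial value problem
\[
-\bigl(r^{n-1}|v'|^{n-2}v'\bigr)'=r^{n-1}f(v),\qquad v(0)=\ga,\quad v'(0)=0,
\]
which has a unique solution $v(\cdot\,;\ga)$ (a standard fixed-point argument). From $r^{n-1}|v'|^{n-2}v'=-\int_0^r s^{n-1}f(v)\,\ds<0$ for $0<r\le R(\ga)$ --- $R(\ga)$ being the first zero of $v(\cdot\,;\ga)$, which is unique and finite --- $v$ is strictly decreasing on $(0,R(\ga)]$ and $v'(R(\ga))<0$. Undoing the scaling, Theorem~\ref{theo1.5} is equivalent to: there are $\ga_0,R_0>0$ such that $\ga\mapsto R(\ga)$ is a strictly decreasing bijection of $(\ga_0,\infty)$ onto $(0,R_0)$ and each $v(\cdot\,;\ga)$ with $\ga>\ga_0$ is nondegenerate.

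\textbf{Step 2: Atkinson--Peletier asymptotics for $v(\cdot\,;\ga)$.}
Following \cite{AtPe,LePe,Vo}, I would rescale the unknown near its maximum by $w=g'(\ga)(\ga-v)$ and pass to an Emden--Fowler type independent variable $t=t(r;\ga)$. Using $g(v)=g(\ga)+g'(\ga)(v-\ga)+O\bigl(g''(\ga)(v-\ga)^2\bigr)$ together with (H1), the equation for $w$ reduces, to leading order, to the radial $n$-dimensional Liouville-type equation $\Delta_n w=e^{-w}$, whose explicit solutions produce the inner profile
\[
v(r;\ga)\ \approx\ \ga-\frac{n}{g'(\ga)}\,\log\!\Bigl(1+e^{\frac{T_1-t}{n-1}}\Bigr),
\]
with one free parameter $T_1=T_1(\ga)$. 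Matching this against an outer expansion near $v=0$ --- where $f(v)\to f(0)$ and \eqref{eq1.4}, \eqref{eq1.6} give the required regularity and the (lower-order) contribution of that region --- and imposing $v(0;\ga)=\ga$ and $v(R(\ga);\ga)=0$ determines $T_1(\ga)$ and $R(\ga)$ together and yields an asymptotic relation among $\ga$, $T_1(\ga)$ and $R(\ga)$; in particular $R(\ga)\to 0$. The work in this step is to make the approximation quantitative and uniform for $\ga>\ga_0$, with a $C^1$ bound on $v$ minus the profile up to $r=R(\ga)$, so that $v'(R(\ga))$ is controlled.

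\textbf{Step 3: linearized equation, nondegeneracy, and conclusion.}
Set $\phi=\partial v/\partial\ga$; it solves
\[
\bigl((n-1)r^{n-1}|v'|^{n-2}\phi'\bigr)'+r^{n-1}f'(v)\phi=0,\qquad \phi(0)=1,\quad \phi'(0)=0,
\]
which makes classical sense up to $r=R(\ga)$ because $f'(v)$ is integrable there by \eqref{eq1.4}. The solutions regular at $r=0$ form a line spanned by $\phi$, so the radial linearization at $v(\cdot\,;\ga)$ has trivial kernel iff $\phi(R(\ga))\neq 0$; and differentiating $v(R(\ga);\ga)=0$ gives $R'(\ga)=-\phi(R(\ga))/v'(R(\ga))$, so $R'(\ga)$ has the sign of $\phi(R(\ga))$. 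Inserting the Step~2 asymptotics into the $\phi$-equation, $\phi$ is governed to leading order by the derivatives of the explicit profile with respect to $\ga$ and to $T_1$ (which are essentially closed form); evaluating at $r=R(\ga)$ and using (H1) and (H3) one obtains $\phi(R(\ga))\neq 0$ with a fixed sign for all large $\ga$ --- the combination $g'(\ga)-(n-1)\ga g''(\ga)$ appearing in (H3) is, up to a positive factor, the leading term of $\phi(R(\ga))$, while the prefactor $g'(\ga)/\bigl(g''(\ga)[\log g'(\ga)]^4\bigr)\to\infty$ is precisely what forces the errors accumulated in Steps~2--3 to be of smaller order. Therefore, for a suitable $\ga_0$, the map $\ga\mapsto R(\ga)$ is $C^1$ and strictly monotone on $(\ga_0,\infty)$, hence --- being continuous with $R(\ga)\to 0$ --- a strictly decreasing bijection onto $(0,R_0)$ with $R_0:=R(\ga_0)$. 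Setting $\lambda_0:=R_0^{\,n}$ and reversing the scaling of Step~1, for each $\lambda\in(0,\lambda_0)$ there is exactly one solution of \eqref{eq1.7} with $\|u\|_\infty>\ga_0$, and it is nondegenerate in the radial class (and in $W^{1,n}_0(B(1))$, by the standard sign analysis of the angular modes, using that $\partial_{x_i}u$ is the regular solution of the first angular-mode equation and does not vanish on $\partial B(1)$).

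\textbf{Main obstacle.}
The heart of the matter is Steps~2--3: obtaining the asymptotic expansion of $v(\cdot\,;\ga)$ and, more delicately, of $\phi=\partial_\ga v$ and of the boundary slope $v'(R(\ga))$, with error terms \emph{provably} of smaller order than the (small) quantity $g'(\ga)-(n-1)\ga g''(\ga)$ that controls the sign of $\phi(R(\ga))$. Because the exponential nonlinearity makes $v$ extremely sensitive to $\ga$ in the inner region, this error analysis must be executed with care; and it is exactly the range $q\in(1,\tfrac{n}{n-1}]$ --- where the Pohozaev-identity arguments of \cite{Ta,Ad1} break down --- that forces this asymptotic route, with (H3), which degenerates at the endpoint $q=\tfrac{n}{n-1}$, as the natural nondegeneracy condition.
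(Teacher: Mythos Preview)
Your outline is correct and follows essentially the same route as the paper: reduce to the shooting problem, use the Atkinson--Peletier profile $z(t)=\ga-\frac{n}{g'}\log(1+e^{(T_1-t)/(n-1)})$ to get sharp asymptotics for the first zero, linearize in $\ga$, and read off from (H3) that the leading term $g'-(n-1)\ga g''$ of the derivative of the first zero dominates the $O\bigl(g''(\log g')^4/g'\bigr)$ error, yielding strict monotonicity and nondegeneracy. The paper carries this out after the Sturm change of variables $r=ne^{-t/n}$ (so one studies $T(\ga)$ with $R(\ga)=ne^{-T(\ga)/n}$), and the bulk of the work---exactly what you flag as the main obstacle---is the quantitative error analysis for $y$ (Section~\ref{section3}) and, more delicately, for $V_1=\partial_\ga y$ (Section~\ref{section4}), the latter relying on a new energy-type identity (Lemma~\ref{lemma4.6}) to pin down the first turning point of $V_1$.
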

\begin{rema}
 By integration, (H1) and (H3) of Hypothesis \ref{hypothesis-f} implies (H2). In fact, one can find $\ga_0 > 0 $, $M>0$ and a constant $c \in \R$ such that for all $\ga \geq \ga_0$, we have the estimate
 \begin{equation*}
  g(\ga) - \lbr \frac{n-1}{n} \rbr \ga g'(\ga) \geq M (\log \ga)^5 + c.
 \end{equation*}
It is easy to see in the subcritical case, that is when $1<q<\frac{n}{n-1}$, (H3) of Hypothesis \ref{hypothesis-f} always holds. The question now remains if one can remove the hypothesis (H3)!
\end{rema} 
First consequence of Theorem \ref{theo1.5} is that large solution coincides with the solution obtained by the moutainpass lemma and we have the following Corollary:
\begin{coro}
Assume that the hypothesis in Theorem \ref{theo1.5} are satisfied. Then there exists $\lambda_0>0$ and $\gamma_0>0$ such that for all $\lambda\in (0,\lambda_0)$,  solutions to \eqref{eq1.7}  satisfying $\Vert u\Vert_\infty>\gamma_0$ has a Morse index equal to one.
\end{coro}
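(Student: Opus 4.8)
The plan is to show that the unique large solution $u_\lambda$ furnished by Theorem~\ref{theo1.5} is exactly the critical point produced by the mountain pass lemma, and that its non-degeneracy then pins the Morse index to $1$. Work in the radial Sobolev space $X:=W^{1,n}_{0,\mathrm{rad}}(B(1))$ with the energy
\begin{equation*}
J_\lambda(v)=\frac1n\int_{B(1)}|\nabla v|^n\,\dx-\lambda\int_{B(1)}F(v)\,\dx,\qquad F(s)=\int_0^s f(\tau)\,d\tau,
\end{equation*}
which is $C^1$ on $X$ by the Trudinger--Moser inequality together with the restriction $q\le\frac{n}{n-1}$; by the principle of symmetric criticality its critical points are precisely the radial solutions of \eqref{eq1.7}. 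Note the linearised quadratic form at a solution $u$,
\begin{equation*}
Q_u(v)=\int_{B(1)}\!\Big(|\nabla u|^{n-2}|\nabla v|^2+(n-2)|\nabla u|^{n-4}(\nabla u\cdot\nabla v)^2\Big)\dx-\lambda\int_{B(1)}\!f'(u)v^2\,\dx,
\end{equation*}
and recall that, since a positive radial solution is radially decreasing, smooth where $\nabla u\ne0$ and $C^{1,\alpha}$ up to the origin, $\nabla u$ vanishes only at $x=0$, so $Q_u$ has discrete spectrum and the Morse index (number of negative eigenvalues) is well defined.

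Next I would verify the mountain pass geometry for $\lambda$ small. On the sphere $\{\|\nabla v\|_n=r_0\}$ one has $J_\lambda(v)\ge\frac1n r_0^n-\lambda\sup_{\|\nabla v\|_n=r_0}\int F(v)\ge\frac1{2n}r_0^n=:\delta_0>0$ once $\lambda<\lambda_0$ (shrinking $\lambda_0$), the supremum being finite by Trudinger--Moser; meanwhile $J_\lambda(te_0)\to-\infty$ as $t\to\infty$ for fixed $e_0\ge0$, $e_0\not\equiv0$, because $F$ grows faster than $|t|^n$, and $J_\lambda(0)\le0<\delta_0$ after the harmless normalisation $F(0)\ge0$. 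In the subcritical case the associated exponential embedding is compact, so Palais--Smale holds and the mountain pass value $c_\lambda\ge\delta_0$ is attained at a critical point $v_\lambda$ of mountain pass type; in the critical case $q=\frac{n}{n-1}$ the same conclusion follows from the radial compactness/asymptotic analysis already used for Theorem~\ref{theo1.5}. I then claim $\|v_\lambda\|_\infty>\gamma_0$ for $\lambda$ small: otherwise, along some $\lambda_j\to0$ one has $-\dv(|\nabla v_{\lambda_j}|^{n-2}\nabla v_{\lambda_j})=\lambda_j f(v_{\lambda_j})$ with right-hand side bounded by $\lambda_j\max_{[0,\gamma_0]}f\to0$, so $v_{\lambda_j}\to0$ in $C^1$ by elliptic estimates and $J_{\lambda_j}(v_{\lambda_j})\to0$, contradicting $J_{\lambda_j}(v_{\lambda_j})=c_{\lambda_j}\ge\delta_0$. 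By the uniqueness part of Theorem~\ref{theo1.5}, $v_\lambda=u_\lambda$.

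It remains to compute the Morse index of $u_\lambda$. As $u_\lambda$ is a critical point of mountain pass type, its Morse index is at most $1$ (the quasilinear analogue of Hofer's theorem; the mild degeneracy of $Q_{u_\lambda}$ at the single point $x=0$ does not affect the min--max argument). For the lower bound, consider $t\mapsto J_\lambda(tu_\lambda)$, which is critical at $t=1$ with
\begin{equation*}
\frac{d^2}{dt^2}\Big|_{t=1}J_\lambda(tu_\lambda)=\lambda\int_{B(1)}u_\lambda f(u_\lambda)\big((n-1)-u_\lambda g'(u_\lambda)\big)\,\dx,
\end{equation*}
using $f'(s)=g'(s)f(s)$. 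Since $sg'(s)=aqs^q+s\rho'(s)\to+\infty$ by (H1), the integrand is negative on $\{u_\lambda>U_0\}$ for a fixed $U_0$, the complementary region contributes at most a fixed constant, while $\int_{B(1)}f(u_\lambda)=\lambda^{-1}\int_{\partial B(1)}|\partial_\nu u_\lambda|^{n-1}\,\dsigma\to\infty$ as $\lambda\to0$ (the boundary flux staying bounded, by the asymptotics of blow-up solutions). Hence the second derivative is strictly negative for $\lambda$ small, so $u_\lambda$ is not a local minimiser of $J_\lambda$; combined with non-degeneracy ($Q_{u_\lambda}$ has trivial kernel, by Theorem~\ref{theo1.5}), this forces at least one negative direction. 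Therefore the Morse index of $u_\lambda$ equals $1$, and choosing $\gamma_0,\lambda_0$ so that all of the above holds simultaneously proves the Corollary.

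The main obstacle lies in the two steps that genuinely invoke the paper's machinery rather than soft arguments: the Palais--Smale property at level $c_\lambda$ in the critical case $q=\frac{n}{n-1}$, and the quantitative asymptotics $\int f(u_\lambda)\to\infty$ (equivalently, bounded boundary flux) needed to certify that $u_\lambda$ is not a local minimiser. Both are consequences of the Atkinson--Peletier-type analysis already underlying Theorem~\ref{theo1.5}, so the real work is bookkeeping: checking they apply uniformly over the relevant range of $\lambda$, and consolidating the finitely many smallness conditions on $\lambda_0$ and largeness conditions on $\gamma_0$. A secondary technical point is giving a rigorous meaning to ``Morse index at most one at a mountain pass point'' for the merely $C^1$ functional $J_\lambda$, which is handled by the standard Morse-theoretic results for quasilinear operators once one knows $\nabla u_\lambda$ vanishes only at the origin.
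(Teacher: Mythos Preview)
The paper supplies no proof of this Corollary: it is stated with the single-sentence justification that ``the large solution coincides with the solution obtained by the mountain pass lemma''. Your proposal is exactly this argument spelled out in detail---mountain pass geometry for small $\lambda$, the mountain pass critical point being large (via the energy lower bound $c_\lambda\ge\delta_0$), identification with $u_\lambda$ via the uniqueness in Theorem~\ref{theo1.5}, Morse index $\le 1$ from the min--max characterisation, and Morse index $\ge 1$ from the explicit negative direction $v=u_\lambda$ together with non-degeneracy---so the approaches coincide.

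One imprecision worth fixing: in the lower-bound step you justify $\int_{B(1)} f(u_\lambda)\to\infty$ by saying ``the boundary flux stays bounded''. In fact the flux $|\partial_\nu u_\lambda(1)|^{n-1}\sim C\big(g'(\gamma)\big)^{-(n-1)}\to 0$ (this is \eqref{2.11} in Theorem~\ref{theo2.5}, after undoing the Sturm change of variables); what you actually need is that it decays more slowly than $\lambda=n^n e^{-T(\gamma)}$, i.e.\ that $e^{T(\gamma)}/(g')^{n-1}\to\infty$, and this follows from \eqref{eq2.12} together with the remark after Theorem~\ref{theo1.5} that $(H1)$ and $(H3)$ force $g(\gamma)-\tfrac{n-1}{n}\gamma g'(\gamma)\to\infty$. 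Your own caveats---Palais--Smale at level $c_\lambda$ in the critical case $q=\tfrac{n}{n-1}$, and a rigorous Hofer-type statement for the merely $C^1$ functional $J_\lambda$---are the genuine technical debts, and the paper leaves them equally implicit.
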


\begin{exampl*} We give some examples of $f$ that satisfy conditions in Hypothesis \ref{hypothesis-f}. 
\begin{itemize}
\item[(i)] Let $f(u)= u^p e^{u^q}$, $0\leq p<\infty$, then we have
\begin{gather*}
g(\gamma)=\gamma^q+p\log(\gamma), \quad \rho(\gamma)=p\log(\gamma),\\
g(\gamma)-\left(\frac{n-1}{n}\right)\gamma g'(\gamma)=\left(\frac{n-1}{n}\right)\gamma^q\left(\frac{n}{n-1}-q\right)+p\log(\gamma)-\frac{p(n-1)}{n}\\
g'(\gamma)-(n-1)\gamma g''(\gamma)=(n-1)q\gamma^{q-1}\left(\frac{n}{n-1}-q\right)+\frac{pn}{\gamma}.
\end{gather*}
 Therefore, if $1<q\leq\frac{n}{n-1}$, we see that  (H1) and (H2) of Hypothesis \ref{hypothesis-f} is satisfied. If $1<q<\frac{n}{n-1}$, then (H1), (H2) and  (H3) are satisfied, but when $q=\frac{n}{n-1}$, (H3) is not satisfied.
\item[(ii)] Let $f(u)=\lambda u^p e^{u^q+\beta u}$ with $\beta>0$. In this case, (H1), (H2) and (H3) are satisfied for $1\leq q\leq \frac{n}{n-1}$.
\end{itemize}
\end{exampl*}

From the Picone's Identity \cite{AlHu}, assuming \eqref{eq1.6}, there exist a $\lambda_1>0$, $\gamma_1>0$ such that (see Lemma \ref{lemma2.1}):
\begin{itemize}
\item If $0\leq p< n-1$, then for $\lambda\in (0,\lambda_1)$,  there exists a solution $u$ of \eqref{eq1.7} with $\Vert u\Vert_\infty<\gamma_1$.
\item If $p\geq n-1$, then for $\lambda\in (0,\lambda_1)$, there does not exist a solution with $\Vert u\Vert_\infty<\gamma_1$.
\end{itemize}
Hence by plotting the solution curve $\|u_\lambda\|_{\infty}$ with respect to $\lambda$, we have the following diagrams.
\begin{figure*}[ht!]
\flushleft
\begin{minipage}{0.40\textwidth}
\begin{tikzpicture}[scale=0.7]
\draw[->] [thick] (0,0) node[below]{$0$} -- (9.5,0) node[below]{$\|u_\lambda\|_{\infty}$};
\draw[->] [thick] (0,0)--(0,5) node[left]{$\lambda$};
\draw [thick, color=black] plot [smooth] coordinates {(0,0)  (0.7,0.6) (1.3,1.3) (2.3,3) (2.9,3) (3.5,1.5) (4.3,1.5) (5.5,4) (6.3,4) (7.2,2) (8,1.1)  (9.6, 0.7)    };
\draw[dashed][thick]  (2.6,0)node[below]{$\gamma_1$}--(2.6,3.1);
\draw[dashed][thick]  (7,0)node[below]{$\gamma_0$}--(7,2.3);
\end{tikzpicture}
\caption*{Case $0\leq p<n-1$.}
\end{minipage}
\hspace*{1.6cm}
\begin{minipage}{0.40\textwidth}
 \begin{tikzpicture}[scale=0.7]
\draw[->] [thick] (0,0) node[below]{$0$} -- (9.5,0) node[below]{$\|u_\lambda\|_{\infty}$};
\draw[->] [thick] (0,0)--(0,5) node[left]{$\lambda$};
\draw [thick, color=black] plot [smooth] coordinates {(0,4.5) (1,3.9)(3,1.5) (4.3,1.5) (5.5,4) (6.3,4) (7.2,2) (8,1.1)  (9.6, 0.7)    };
\draw[dashed][thick]  (7,0)node[below]{$\gamma_0$}--(7,2.3);
\end{tikzpicture}
{\caption*{Case $p=n-1$.}}
\end{minipage}
\end{figure*}

\begin{figure}[ht!]
 \centering
\begin{tikzpicture}[scale=0.7]
\draw[->] [thick] (0,0) node[below left]{$0$} -- (10,0) node[below]{$\|u_\lambda\|_\infty$};
\draw[->] [thick] (0,0)--(0,5) node[left]{$\lambda$};
\draw [thick, color=black] plot [smooth] coordinates {(0.3,4.5)  (0.8,1.5) (1.5, 1.5) (2.3,3) (2.9,3) (3.5,1.5) (4.3,1.5) (5.5,4) (6.3,4) (7.2,2) (8,1.1)  (9.6, 0.7)    };
\draw[dashed][thick]  (7,0)node[below]{$\gamma_0$}--(7,2.3);
\end{tikzpicture}
\caption*{Case $p>n-1$.}
 \end{figure}

\begin{rema}
We now recall the Trudinger-Moser imbedding \cite{Mo}: let $\Omega\in \R^n$ be a bounded  open set, then  the Trudinger-Moser imbedding says
\begin{eqnarray*}
A=\displaystyle\sup_{\substack{u\in W^{1,n}_0(\Omega)\\ \| u\|\leq 1}}\int_\Omega e^{(n \omega_n^{\frac{1}{n-1}})\vert u\vert^{\frac{n}{n-1}}}<\infty, 
\end{eqnarray*}
where $\omega_n=$ volume of $S^{n-1}$ and $n \omega_n^{\frac{1}{n-1}}$ is the best constant. It was shown by {\sc Carleson-Chang} \cite{CaCh} that if $\Omega$ is a ball, then $A$ is achieved by a radial function $u_0\in W^{1,n}_0(\Omega)$ with $u_0>0$ satisfying the  equation
\begin{eqnarray}\label{eq1.8}
\left\{\begin{array}{ll}
&-\dv(\vert\nabla u_0\vert^{n-2}\nabla u_0)=\lambda_0u_0^{\frac{1}{n-1}} e^{(n\omega^{\frac{1}{n-1}})u_0^{\frac{n}{n-1}}}\quad\mbox{in }\Omega,\\
& u_0>0, \quad u_0 \textrm{ radial  in }\Omega,\\
& u_0=0\quad\mbox{on }\partial\Omega,
\end{array}\right.
\end{eqnarray}
for some $\lambda_0>0$. 
\begin{conj*} Maximizers  are unique!\end{conj*}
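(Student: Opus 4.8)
\smallskip
\noindent\textbf{A strategy toward the conjecture.}
The conjecture is equivalent to a one-variable statement about the radial solution branch of the Euler--Lagrange equation \eqref{eq1.8}. Since the Trudinger--Moser supremum transforms covariantly under scaling (one checks $A(B(R))=R^{n}A(B(1))$ directly), it suffices to work on $B(1)$. By the Carleson--Chang symmetry result any maximizer $u_0$ is radial and positive, and, normalized by $\int_{B(1)}|\nabla u_0|^{n}=1$, it solves \eqref{eq1.8} with Lagrange multiplier $\lambda_0=\big(\int_{B(1)}u_0^{n/(n-1)}e^{\alpha u_0^{n/(n-1)}}\big)^{-1}$, where $\alpha=n\omega_n^{1/(n-1)}$; this is \eqref{eq1.7} with $f(u)=u^{1/(n-1)}e^{\alpha u^{n/(n-1)}}$, i.e. the critical case of example (i), with $q=\frac{n}{n-1}$, $p=\frac1{n-1}$, $\rho(u)=\frac1{n-1}\log u$. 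Conversely, comparing the gradients of $T(w)=\int_{B(1)}e^{\alpha|w|^{n/(n-1)}}$ and of $v\mapsto\int_{B(1)}|\nabla v|^{n}$ shows that every positive radial solution $v$ of this \eqref{eq1.7} with $\int_{B(1)}|\nabla v|^{n}=1$ is a constrained critical point of $T$ on the unit sphere. Parametrizing all positive radial solutions of \eqref{eq1.7} over all $\lambda>0$ by $\gamma=\|v\|_\infty=v(0)$ through the radial ODE (shoot from the origin, let $R(\gamma)$ be the first zero, rescale to $B(1)$ with $\lambda(\gamma)=R(\gamma)^{n}$) and writing $D(\gamma)=\int_{B(1)}|\nabla v_\gamma|^{n}$, the conjecture becomes: \emph{the map $\Psi(\gamma):=\int_{B(1)}e^{\alpha(v_\gamma/D(\gamma)^{1/n})^{n/(n-1)}}$ attains its maximum $A$ at exactly one $\gamma$}. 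The first step of the program is to make this reduction rigorous, in particular to know that $\gamma\mapsto v_\gamma$ is a $C^{1}$ curve along which $D$, $\lambda$, $\Psi$ vary continuously and that every maximizer lies on it; this calls on non-degeneracy (triviality of the linearized kernel) run through the implicit function theorem, as in the body of the paper.

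Second, I would analyze $\Psi$. As $\gamma\to0^{+}$ one is in the sublinear regime $f(u)\sim Cu^{1/(n-1)}$ and $\Psi(\gamma)$ tends to a value strictly below $A$; as $\gamma\to\infty$, $v_\gamma$ concentrates at the origin and $v_\gamma/D(\gamma)^{1/n}$ approaches a Moser concentrating profile, so $\Psi(\gamma)\to A_c$, the Carleson--Chang concentration threshold (equal to $\pi(1+e)$ when $n=2$), which by \cite{CaCh} satisfies $A_c<A$. Hence $\Psi$ is continuous and its limsup at both ends of $(0,\infty)$ is below its maximum value, so $\max\Psi=A$ is attained in a compact subinterval $[\gamma_-,\gamma_+]$ of $(0,\infty)$. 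To pin this down I would push the Atkinson--Peletier / Volkmer expansions used in the paper, in the Emden--Fowler variable $t=-n\log r$, through enough orders to obtain $\lambda(\gamma)$, $D(\gamma)$ and $\int_{B(1)}e^{\alpha v_\gamma^{n/(n-1)}}$, and hence the sign of $\Psi'(\gamma)$, for large $\gamma$; the expectation is that $\Psi$ is eventually strictly monotone, so $[\gamma_-,\gamma_+]$ can be taken genuinely bounded.

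Third, and this is the crux, comes uniqueness of the maximizer of $\Psi$ inside $[\gamma_-,\gamma_+]$. I see two routes. (a) A second-variation / Morse-index argument: at a maximizer $u_0$ the second variation of $T$ on the unit sphere is $\le0$, and showing it is $<0$ transverse to $\R u_0$ makes $u_0$ a non-degenerate (hence isolated) constrained maximum; combined with the existence of a maximizer this should, via a Leray--Schauder degree or a mountain-pass count along the branch, force uniqueness. This is the constrained-variational face of the ``Morse index $=1$'' statement of the paper's Corollary, \emph{except} that that Corollary, like Theorem \ref{theo1.5}, presumes (H3), which, as the Examples record, fails precisely at $q=\frac{n}{n-1}$. (b) A monotonicity identity for $\frac{d}{d\gamma}\log\Psi$ coupled to the radial ODE, in the spirit of the Erbe--Tang use of Pohozaev's identity; but the introduction notes that such identities already break down for $q>1$, so route (b) would need a genuinely new algebraic input tied to the critical value. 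Either way the single obstruction is the borderline exponent: at $q=\frac{n}{n-1}$ one has $g'(\gamma)-(n-1)\gamma g''(\gamma)=\bigo{\gamma^{q-1}}$ with the logarithmic terms from $\rho(\gamma)=\frac1{n-1}\log\gamma$ now deciding the sign of $\Psi'$, so the whole program rests on upgrading the asymptotic analysis of this paper to cover that degenerate case, which is exactly the open question flagged after Theorem \ref{theo1.5}. The remaining ingredients are substantial but are essentially structured adaptations of the machinery already developed here.
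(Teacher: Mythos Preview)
The statement is a \emph{conjecture}, and the paper does not prove it. The paper explicitly leaves it open, remarking only that (for $n=2$) it would follow from uniqueness in \eqref{eq1.7} for \emph{all} $\lambda>0$; Theorem~\ref{theo1.5} delivers uniqueness only for small $\lambda$ and requires hypothesis (H3), which---as Example~(i) records---fails precisely at the critical exponent $q=\tfrac{n}{n-1}$ relevant to the Moser problem \eqref{eq1.8}. So there is no proof in the paper to compare your attempt against.

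Your submission is a programme, not a proof, and you are candid about this. The reduction to a one-parameter family $\gamma\mapsto\Psi(\gamma)$ is sound, and the endpoint picture (both limits strictly below $A$, hence the maximum attained in a compact subinterval) is the correct Carleson--Chang heuristic. Two genuine gaps remain, only the second of which you fully acknowledge. First, route~(a) does not close as written: non-degeneracy of the second variation makes each maximizer an isolated constrained critical point, but neither a Leray--Schauder degree nor a mountain-pass count, without further input, rules out several isolated maxima of $\Psi$; you still need a global argument (monotonicity of $\Psi'$, log-concavity, or an explicit topological count) to pass from ``isolated'' to ``unique''. Second, the sharp obstruction you identify is exactly the one the paper flags as open: for $f(u)=u^{1/(n-1)}e^{\alpha u^{n/(n-1)}}$ one has $g'-(n-1)\gamma g''=\tfrac{n}{(n-1)\gamma}$, while the error term in \eqref{eq2.14} is of order $\tfrac{g''(\log g')^4}{g'}\sim\tfrac{(\log\gamma)^4}{\gamma}$, so the sign of $T'(\gamma)$---and hence of $\lambda'(\gamma)$ and, through your chain of reductions, of $\Psi'(\gamma)$ for large $\gamma$---is not resolved by the present expansion. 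Your outline is a coherent research plan aligned with the paper's own aspirations, but it does not settle the conjecture; it re-locates the difficulty.
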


In case of $n=2$, if one can show that \eqref{eq1.7} admits utmost one solution for all $\lambda>0$, then the above conjecture is positively answered. 
\end{rema}
\begin{rema}
In \cite{AdSa}, the following singular Trudinger Moser Imbedding has been shown: 
\begin{eqnarray*}
\displaystyle\sup_{\Vert u\Vert_{W^{1,p}_0(\Omega)}\leq 1}\int_{\Omega}\frac{e^{\alpha\vert u\vert^{\frac{n}{n-1}}}}{\vert x\vert^\beta}<\infty, 
\end{eqnarray*}
holds if and only if $\alpha$ and $\beta $ satisfy $\frac{\alpha}{\delta_n}+\frac{\beta}{n}\leq 1$  where $\delta_n:=n \omega_n^{\frac{1}{n-1}}$.
\end{rema}
 In view of this, consider the following singular semilinear equation
\begin{eqnarray}\label{eq1.8singular}
\left\{\begin{array}{ll}
&-\dv(\vert\nabla u\vert^{n-2}\nabla u)=\lambda \frac{f(u)}{\vert x\vert^\beta}\quad\mbox{in }B(1),\\
& u>0, \quad u\mbox{ radial  in }B(1),\\
& u=0\quad\mbox{on }\partial B(1).
\end{array}\right.
\end{eqnarray}
We now have the following theorem concerning the singular Trudinger-Moser embedding:
\begin{theo}\label{theosingular}
Assume that $\beta<n$ and $f$ satisfies all the conditions in Hypothesis \ref{hypothesis-f}, then there exists a $\lambda_0>0$ and  $\gamma_0>0$ such that for all $\lambda\in (0,\lambda_0)$,  \eqref{eq1.8singular} admits a unique solution $u$ provided $\Vert u\Vert_\infty\geq \gamma_0$.
\end{theo}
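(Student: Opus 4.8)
The plan is to remove the singular weight by an Emden--Fowler-type power change of the radial variable, reducing \eqref{eq1.8singular} to the non-singular problem \eqref{eq1.7}, and then to invoke Theorem~\ref{theo1.5}. Since only radial solutions are considered, I would first write $u=u(r)$, $r=|x|\in(0,1)$, so that \eqref{eq1.8singular} becomes the ODE
\begin{equation*}
\bigl(r^{n-1}|u'(r)|^{n-2}u'(r)\bigr)' = -\lambda\, r^{\,n-1-\beta}\, f\bigl(u(r)\bigr),\qquad r\in(0,1),
\end{equation*}
together with $u'(0)=0$, $u(1)=0$, $u>0$. Because $\beta<n$, the exponent $\sigma:=\tfrac{n-\beta}{n}$ is positive (and $\sigma=1$ when $\beta=0$, i.e.\ the regular case). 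Setting $s:=r^{\sigma}\in(0,1)$ and $w(s):=u(r)$, and using $u'(r)=\sigma r^{\sigma-1}w'(s)$ together with $r^{n-1}|u'|^{n-2}u'=\sigma^{n-1}s^{n-1}|w'|^{n-2}w'$, the ODE above becomes
\begin{equation*}
\bigl(s^{n-1}|w'(s)|^{n-2}w'(s)\bigr)' = -\frac{\lambda}{\sigma^{n}}\, s^{n-1}\, f\bigl(w(s)\bigr),\qquad s\in(0,1),
\end{equation*}
which is exactly the radial form of $-\dv(|\nabla w|^{n-2}\nabla w)=\widetilde\lambda\, f(w)$ in $B(1)$, $w=0$ on $\partial B(1)$, with $\widetilde\lambda:=\lambda\sigma^{-n}=\lambda\,(n/(n-\beta))^{n}$. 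Since $s=1\Leftrightarrow r=1$ and $s\to0^{+}\Leftrightarrow r\to0^{+}$, one reads off $w(1)=0$, $w>0$, and $\|w\|_{\infty}=w(0)=u(0)=\|u\|_{\infty}$; and the identity $\int_{0}^{1}|u'(r)|^{n}r^{n-1}\,\mathrm{d}r=\sigma^{n-1}\int_{0}^{1}|w'(s)|^{n}s^{n-1}\,\mathrm{d}s$ shows that $u\in W^{1,n}_{0}(B(1))$ if and only if $w\in W^{1,n}_{0}(B(1))$. Hence $u\mapsto w$ is a sup-norm-preserving bijection between the radial (weak) solutions of \eqref{eq1.8singular} and the radial solutions of \eqref{eq1.7} with $\lambda$ replaced by $\widetilde\lambda$.

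Granting this equivalence, the conclusion follows by bookkeeping. As $f$ satisfies the hypotheses of Theorem~\ref{theo1.5}, that theorem provides $\lambda_{0}^{\ast}>0$ and $\gamma_{0}>0$ such that for each $\widetilde\lambda\in(0,\lambda_{0}^{\ast})$ the problem \eqref{eq1.7} has a unique solution with $\|w\|_{\infty}>\gamma_{0}$, which is moreover non degenerate. I would then set $\lambda_{0}:=\sigma^{n}\lambda_{0}^{\ast}$: for $\lambda\in(0,\lambda_{0})$ we have $\widetilde\lambda\in(0,\lambda_{0}^{\ast})$, and pulling the unique large solution $w$ back through $s=r^{\sigma}$ yields a unique radial solution $u$ of \eqref{eq1.8singular} with $\|u\|_{\infty}=\|w\|_{\infty}>\gamma_{0}$ (shrinking $\lambda_{0}$ if needed so that the large-solution branch lies strictly above $\gamma_{0}$, the closed inequality $\|u\|_{\infty}\ge\gamma_{0}$ in the statement is then also covered). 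Non degeneracy is inherited: the linear reparametrization $w\mapsto u$ conjugates the linearization of \eqref{eq1.8singular} at $u$ to the linearization of \eqref{eq1.7} at $w$ — obtained by linearizing the equivalence just proved — so the kernel is trivial on one side precisely when it is on the other.

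The one place where I expect to have to be careful is the behavior at the origin. For $\beta\neq0$ the underlying map $x\mapsto|x|^{\sigma-1}x$ is not smooth at $0$, and when $\beta\ge1$ the pulled-back function $u$ need not be $C^{1}$ at $0$, so one cannot phrase the equivalence at the level of classical radial ODE solutions with $u'(0)=0$. I would therefore run the whole argument at the level of weak solutions in $W^{1,n}_{0}(B(1))$ — the natural class here, since $|x|^{-\beta}\in L^{1}_{\mathrm{loc}}(\R^{n})$ for $\beta<n$ and $u$ is bounded, so the right-hand side of \eqref{eq1.8singular} is a well-defined distribution — using the energy identity above to check that $W^{1,n}_{0}$-membership is preserved in both directions, and obtaining the transfer of the equation by testing against radial functions and applying the same substitution. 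Apart from this, there is no genuine obstacle: the analytic heart of the matter, namely the linearization technique together with the Atkinson--Peletier-type asymptotic analysis, is already contained in Theorem~\ref{theo1.5}.
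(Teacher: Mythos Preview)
Your proposal is correct and is essentially the same reduction as the paper's: the paper first applies the Sturm change $r=ne^{-t/n}$ and then rescales $t\mapsto t/a$ with $a=1-\beta/n=\sigma$, which is exactly your power substitution $s=r^{\sigma}$ viewed in the $t$-variable, arriving at the non-singular problem with $\tilde f=f/(n^{\beta}a^{n})$ (equivalently your $\tilde\lambda=\lambda\sigma^{-n}$, up to the harmless additive constant in $\tilde g=g+\mathrm{const}$) and then invoking Theorem~\ref{theo1.5}. Your extra care about working at the weak $W^{1,n}_{0}$ level near the origin is a point the paper passes over silently, but the core idea is identical.
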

We now give the plan of the paper: The paper is divided into four parts; In Section \ref{section2}, using the Sturm's transformation, we reduce the problem to an initial value problem starting at $\infty$. By shooting argument, we rephrase Theorem \ref{theo1.5} into the behaviour of first zero of the solution with respect to the initial condition as in \cite{AtPe}. We state two theorems without proof which deals with the asymptotic behaviour of the first zero and its derivative with respect to the initial data. Then we deduce the proof of Theorems \ref{theo1.5} and \ref{theosingular}.

In Section \ref{section3}, we prove the asymptotic behaviour  of first zero of the solution. Basically, this is in the {\sc Atkinson-Peletier} \cite{AtPe} analysis for general $\gamma$. Here we make use of the ideas from \cite{Vo}  to obtain finer estimates. Since some of the estimates are used in the proof of the theorem and we were unable to find exact references, we will give the complete proof.

In Section \ref{section4}, we prove the asymptotic behaviour of the derivative of the first zero with respect to the initial data using a new identity. This is much more delicate.
The proof of the Theorem \ref{theo1.5} follows from this finer asymptotic behaviour which will be given at the end. 
\section{Proof of Theorem \ref{theo1.5}.}\label{section2}
First we  reduce the problem into an initial value problem as follows: Let $R>0$ and define $v(x)=u\lbr \frac{x}{R}\rbr$, then, for $\vert x\vert<R$, $v$ satisfies:
\begin{eqnarray*}
\displaystyle -\dv(\vert\nabla v\vert^{n-2}\nabla v) =-\frac{1}{R^n}\dv(\vert\nabla v\vert^{n-2}\nabla v)(\frac{x}{R})=\frac{\lambda}{R^n} f(v).
\end{eqnarray*}
Choosing $R=\lambda^{\frac{1}{n}}$, we see that  $v$ satisfies 
\begin{eqnarray}\label{eq2.1}
\left\{\begin{array}{ll}
& -(r^{n-1}\vert v'\vert^{n-2}v')'=f(v)r^{n-1}\quad\mbox{in }(0,R),\\
& v>0\quad \mbox{in }(0,R),\\
& v'(0)=v(R)=0,
\end{array}\right.
\end{eqnarray}
where $v'$ denotes the derivatives of $v$ with respect to $r:=\vert x\vert$. Hence, for $r>0$, consider the following initial value problem:
\begin{eqnarray}\label{eq2.2}
\left\{\begin{array}{ll}
&-(r^{n-1}\vert w'\vert^{n-2}w')'=f(w)r^{n-1},\\
& w(0)=\gamma, w'(0)=0.
\end{array}\right.
\end{eqnarray}
We denote by $R(\gamma)$ to be the first zero of $w$ defined by
\begin{eqnarray*}
\displaystyle R(\gamma)=\sup\{r \ : \, w(s)>0 \quad\mbox{for all } s\in [0,r]\}.
\end{eqnarray*}
Let $v(0)=\gamma$, then \eqref{eq2.1} and \eqref{eq2.2} are related by $w(0)=\gamma$, $R(\gamma)=R$, $v(r)=w(r)$ and 
\begin{equation*}\label{eq2.3}
R(\gamma)=\lambda^{\frac{1}{n}}.
\end{equation*}
Hence the existence and uniqueness of solutions \eqref{eq1.8} are related to studying the behaviour of $\gamma\mapsto R(\gamma)$. Denote the solution of \eqref{eq2.2} by $w=w(r,\gamma)$ and now consider the Sturm's change of variables:
\begin{eqnarray*}
r=ne^{-\frac{t}{n}},\quad y(t,\gamma)=w(r,\gamma),\quad R(\gamma)=ne^{-\frac{T(\gamma)}{n}},
\end{eqnarray*}
then $y$ satisfies
\begin{eqnarray}\label{eq2.4}
\left\{\begin{array}{ll}
&-(\vert y'\vert^{n-2}y')'=f(y) e^{-t}\quad\mbox{in } (T(\gamma),\infty),\\
& y>0\quad\mbox{in } (T(\gamma),\infty),\\
& y(\infty,\gamma)=\gamma,\quad y'(\infty,\gamma)=0, \quad y(T(\gamma),\gamma)=0.
\end{array}\right.
\end{eqnarray}
Before we proceed further, we prove the following important Lemma:
\begin{lemm}\label{lemma2.1}
Let $f\in C^1(0,\infty)\cap C^0[0,\infty)$ such that $f\geq 0$ and $\gamma>0$. Then, there exists a unique solution to \eqref{eq2.4} such that $\gamma\mapsto T(\gamma)$ and  $\gamma\mapsto y(t,\gamma)$ are $C^1$ mappings. Furthermore if there exists a  $p \geq 0$ and a corresponding constant $C_p>0$ with  $f$ satisfying
\begin{eqnarray}\label{eq2.5}
\displaystyle\varlimsup_{\theta\to 0}\frac{f(\theta)}{\theta^p}=C_p,
\end{eqnarray}
then there exists $\theta_0\in \R$ such that
\begin{eqnarray}\label{eq2.6}
\left\{\begin{array}{lll}
&\displaystyle\varlimsup_{\gamma\to 0} T(\gamma)=-\infty\quad \mbox{if } p>n-1,\\
\\
&\displaystyle\varlimsup_{\gamma\to 0} T(\gamma)<\theta_0\quad \mbox{if } p=n-1,\\
\\
&\displaystyle\varlimsup_{\gamma\to 0} T(\gamma)=\infty\quad \mbox{if } 0\leq p <n-1.
\end{array}\right.
\end{eqnarray}
\end{lemm}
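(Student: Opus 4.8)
The plan is to separate the two assertions: existence, uniqueness and $C^1$-dependence are a matter of standard ODE theory for the radial $n$-Laplacian, while the asymptotics of $T(\gamma)$ are read off from two-sided bounds on the first zero $R(\gamma)$ of the un-transformed problem \eqref{eq2.2}, using the Sturm relation $T(\gamma)=n\log n-n\log R(\gamma)$ (so that $R(\gamma)\to\infty$, $R(\gamma)$ bounded away from $0$, and $R(\gamma)\to0$ correspond exactly to the three claimed regimes).

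\textbf{Existence, uniqueness, regularity.} Undoing the Sturm change of variables $r=ne^{-t/n}$, a solution of \eqref{eq2.4} on $(T(\gamma),\infty)$ is precisely a solution $w(\cdot,\gamma)$ of the singular IVP \eqref{eq2.2} on $(0,R(\gamma))$. Integrating \eqref{eq2.2} once (and using $f\ge0$, so $w$ is nonincreasing) recasts it as
\[
w(r)=\gamma-\int_0^r\Big(\tau^{1-n}\int_0^\tau f(w(s))\,s^{n-1}\,ds\Big)^{\frac1{n-1}}d\tau,
\]
whose right-hand side is a contraction on a small ball of $C^0([0,\delta])$ because the outer integration tames the $\tfrac1{n-1}$-power; this yields the unique solution, continued while $w>0$, with $w'(r)\sim-(f(\gamma)r/n)^{1/(n-1)}$ as $r\to0$. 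Standard smooth dependence for this integral equation (the degeneracy of $|w'|^{n-2}$ at $r=0$ being mild, since $w'$ vanishes there at a definite rate, so one may differentiate under the integral sign) gives that $\gamma\mapsto w(\cdot,\gamma)$, hence $\gamma\mapsto y(t,\gamma)$, is $C^1$. At $r=R(\gamma)$ one has $w=0$, and since under \eqref{eq2.5} $f$ is positive somewhere on $(0,\gamma)$, the first-integral identity forces $w'(R(\gamma))<0$ (else that integral would vanish); thus $R(\gamma)$ is a simple zero, the implicit function theorem applied to $w(r,\gamma)=0$ gives $R\in C^1$, and $T(\gamma)=n\log n-n\log R(\gamma)$ is $C^1$.

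\textbf{Asymptotics.} By \eqref{eq2.5} — invoking the stronger \eqref{eq1.6} wherever a lower bound on $f$ is needed — fix $\delta_0>0$ and $0<c_-\le c_+$ with $c_-\theta^p\le f(\theta)\le c_+\theta^p$ on $(0,\delta_0)$. Evaluating the displayed identity at $r=R(\gamma)$ for $\gamma<\delta_0$ and bounding $f(w)\le c_+\gamma^p$ (valid since $0\le w\le\gamma$) yields
\[
R(\gamma)\ \ge\ c_1\,\gamma^{\frac{n-1-p}{n}}.
\]
This already settles the first two cases: if $p>n-1$ then $R(\gamma)\to\infty$, so $\varlimsup_{\gamma\to0}T(\gamma)=-\infty$; if $p=n-1$ then $R(\gamma)\ge c_1>0$, so $\varlimsup_{\gamma\to0}T(\gamma)\le n\log n-n\log c_1<\theta_0$ for a suitable $\theta_0$.

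\textbf{The remaining case $0\le p<n-1$ is the crux}: here one needs a matching \emph{upper} bound on $R(\gamma)$ even though $w$ is controlled from below only on subintervals. I would decompose $[0,R(\gamma)]$ dyadically by the levels of the (strictly decreasing) $w$: take $0=\sigma_0<\sigma_1<\cdots\uparrow R(\gamma)$ with $w(\sigma_k,\gamma)=\gamma2^{-k}$. For $\tau\in[\sigma_k,\sigma_{k+1}]$ one has $w(s)\ge\gamma2^{-k-1}$ on $[0,\tau]$, hence $|w'(\tau)|\ge(c_-/n)^{1/(n-1)}(\gamma2^{-k-1})^{p/(n-1)}\tau^{1/(n-1)}$; integrating over this interval, across which $w$ drops by $\gamma2^{-k-1}$, gives
\[
\sigma_{k+1}^{\frac n{n-1}}-\sigma_k^{\frac n{n-1}}\ \le\ C\,(\gamma2^{-k-1})^{\frac{n-1-p}{n-1}}.
\]
Summing over $k\ge0$ — the geometric series converges exactly because $n-1-p>0$ — gives $R(\gamma)\le c_2\,\gamma^{(n-1-p)/n}\to0$, hence $\varlimsup_{\gamma\to0}T(\gamma)=\infty$. (Alternatively one could rescale $w(r,\gamma)=\gamma\widehat w(\mu r)$ with $\mu^n=C_p\gamma^{p-n+1}$ and pass to the limit $\gamma\to0$, reducing to the pure-power problem whose first zero is finite; but the dyadic estimate is self-contained.) Collecting the three cases completes the proof. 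I expect the only genuine work to be making the contraction and smooth-dependence argument of the first part rigorous — the degeneracy of $|w'|^{n-2}$ at the origin being the delicate point — and tracking the constants in the dyadic estimate.
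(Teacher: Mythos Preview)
Your argument is correct, and for the first two regimes ($p\ge n-1$) it is essentially the paper's own computation, transported from the $t$-variable to the $r$-variable: the paper integrates \eqref{eq2.8} twice, bounds $f(y)\le c\gamma^p$, and reads off $T(\gamma)\le\log(c(n-1)^{n-1})+(p-(n-1))\log\gamma$, which is exactly your lower bound $R(\gamma)\ge c_1\gamma^{(n-1-p)/n}$ in disguise.

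Where you genuinely diverge from the paper is the case $0\le p<n-1$. The paper proceeds by contradiction and comparison: assuming $\varlimsup T(\gamma)<\infty$, it rescales $w(t)=\beta\,y(t-m+T(\gamma))$ with $\beta=e^{(T(\gamma)-m)/(n-1-p)}$ so that $w$ satisfies an equation with weight $a(t)\ge1$, then invokes the Picone identity against the first eigenfunction $\varphi$ of $-((\varphi')^{n-1})'=\varphi^{n-1}e^{-t}$ to force a sign contradiction. Your dyadic level-set decomposition is a direct, self-contained alternative: it avoids Picone entirely, yields the quantitative rate $R(\gamma)\le c_2\gamma^{(n-1-p)/n}$ (matching the lower bound, so in fact $R(\gamma)\sim\gamma^{(n-1-p)/n}$), and the geometric convergence makes transparent exactly why $p<n-1$ is the threshold. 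The paper's route is more conceptual---it links the question to an eigenvalue comparison---while yours is more elementary and gives sharper information. Both, as you note, silently need the two-sided bound $c_-\theta^p\le f(\theta)\le c_+\theta^p$ near $0$, which comes from the stronger limit hypothesis \eqref{eq1.6} rather than the $\varlimsup$ in \eqref{eq2.5}; the paper's Picone step ``$a(t)\ge1$'' has the same hidden requirement.

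On the existence and $C^1$-dependence part: the paper simply cites \cite{NaTh}, whereas you sketch the contraction argument and flag the degeneracy of $|w'|^{n-2}$ at $r=0$ as the point needing care. That is an honest assessment---for $n>2$ the $\tfrac1{n-1}$-power is only H\"older, so a naive Picard iteration is not immediately a contraction, and one must either regularise, work in a weighted space, or argue as in \cite{NaTh}. Your outline is accurate about where the work lies.
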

\proof
Consider the following initial value problem
\begin{eqnarray}\label{eq2.7}
\left\{\begin{array}{ll}
&-(\vert y'\vert^{n-2}y')'=f(y)e^{-t}=e^{g(y)-t}, \\
&y(\infty)=\gamma,\ \  y'(\infty)=0.
\end{array}\right.
\end{eqnarray}
Local existence and uniqueness of solution to \eqref{eq2.7} were proved in \cite{NaTh}. Let $T(\gamma)$ denote the first zero of $y$ as defined by
\begin{eqnarray*}
T(\gamma):=\inf\{t\ : \  y(s)>0 \mbox{ for all  } s\in (t,\infty)\}.
\end{eqnarray*}
Integrating \eqref{eq2.7}, we see that 
\begin{eqnarray}\label{eq2.8}
\vert y'(t)\vert^{n-2}y'(t)=\int_t^\infty f(y(s))e^{-s}{\rm d}s, 
\end{eqnarray}
which gives $y'>0$ and hence we see that  $y$ must be an increasing function. We must either have  $T(\gamma)>-\infty$ or $T(\gamma)=-\infty$.

Suppose $T(\gamma)=-\infty$, then $y(t)>0$ for all $t\in\R$. Fixing any  $t\leq 0$, we get 
\begin{eqnarray*}
y'(t)^{n-1}=\int_t^\infty f(y(s))e^{-s}{\rm d}s\geq \int_0^\infty f(y(s))e^{-s}{\rm d}s\eqdef\beta^{n-1}>0.
\end{eqnarray*}
Integrating the above expression, we obtain $0\leq y(t)\leq y(0)+\beta t\to-\infty$ as $t\to -\infty$ which is a contradiction. Hence $T(\gamma)\in\R$ and $y$ is an increasing function in $(T(\gamma),\infty)$.

Denote the solution $y(\cdot)\eqdef y(\cdot,\gamma)$, then  \cite{NaTh} gives  $\gamma\mapsto y(t,\gamma)$ and  $\gamma\mapsto T(\gamma)$ are  $C^1$ maps.

Let $f$ satisfy \eqref{eq2.5}, then integrating \eqref{eq2.8}, we obtain
\begin{eqnarray}\label{eq2.9}
\gamma=y(T(\gamma))+\int_{T(\gamma)}^\infty\left(\int_\theta^\infty f(y)e^{-s} {\rm d}s\right)^{\frac{1}{n-1}}{\rm d}\theta.
\end{eqnarray}
Let $p\geq n-1$, then from \eqref{eq2.5}, it is easy to see that there exists constants  $c>0$ and  $\varepsilon<1$ such that for all $\gamma\in (0,\varepsilon)$, the following holds:
\begin{eqnarray}\label{eq2.10}
f(\gamma)\leq c\gamma^{p}.
\end{eqnarray}
From \eqref{eq2.9} and \eqref{eq2.10}, we now have
\begin{eqnarray*}
\gamma\leq (c\gamma^p)^{\frac{1}{n-1}}\int_{T(\gamma)}^\infty\left(\int_\theta^\infty e^{-s}{\rm d}s\right)^{\frac{1}{n-1}}{\rm d}\theta=c^{\frac{1}{n-1}}(n-1)\gamma^{\frac{p}{n-1}} e^{-\frac{T(\gamma)}{n-1}},
\end{eqnarray*}
that is 
\begin{equation*}
 \begin{array}{cl}
T(\gamma)\leq \log(c (n-1)^{n-1}) +(p-(n-1))\log(\gamma)\leq \log(c (n-1)^{n-1}) &\text{ if } p=n-1 \\
\\
\lim_{\gamma \rightarrow \infty}T(\gamma)\to -\infty &\textrm{ if } p>n-1 .
 \end{array}
\end{equation*}

Let $0\leq p<n-1$ and suppose for some sequence $\gamma\to 0$, $T(\gamma)$ satisfies
\begin{eqnarray*}
\alpha:=\displaystyle\limsup_{\gamma\to 0^+}T(\gamma)<\infty.
\end{eqnarray*}
Let $m\in \R$ and $\beta=e^{\frac{T(\gamma)-m}{(n-1)-p}}$,  then from the above assumption, $\beta$ is bounded as $\gamma\to 0$. Now define $w$ by
\begin{eqnarray*}
w(t)\eqdef \beta y(t-m+T(\gamma)),
\end{eqnarray*} which satisfies
\begin{eqnarray*}
\left\{\begin{array}{lll}
&-((w')^{n-1})'(t)=\left(\frac{f(y(t-m+T(\gamma)))}{y^p(t-m+T(\gamma))}\right)w^{p-(n-1)}(t)w^{n-1}(t)e^{-t}\quad\mbox{in }(m,\infty)\\
& w>0 \quad \mbox{in }(m,\infty),\\
& w(\infty)=\beta \gamma,\quad w'(\infty)=w(m)=0.
\end{array}\right.
\end{eqnarray*}
Let $\varphi$ and $m_0$ satisfy
\begin{eqnarray*}
\left\{\begin{array}{lll}
&-((\varphi')^{n-1})'(t)=\varphi^{n-1}e^{-t}\quad\mbox{in }(m_0,\infty)\\
& \varphi>0 \quad \mbox{in }(m_0,\infty),\\
& \varphi(\infty)=1,\quad \varphi'(\infty)=\varphi(m_0)=0.
\end{array}\right.
\end{eqnarray*}
Let $m=m_0+1$ and define  $a(t):=\frac{f(y(t-m+T(\gamma)))}{y^p(t-m+T(\gamma))}w(t)^{p-(n-1)}$. Then from \eqref{eq2.5} for $\gamma$ sufficiently small, we have that $a(t)\geq 1$ for $t\in (m_0,\infty)$. Since $m>m_0$ and $w>0$ in $(m,\infty)$, we have by the Picone's identity
\begin{eqnarray*}
0\leq \int_{m_0}^\infty\left((\varphi')^n-\left(\frac{\varphi^n}{w^{n-1}}\right)'(w')^{n-1}\right){\rm d}t=\int_{m_0}^\infty(1-a(t))\varphi^n e^{-t}{\rm d}t<0
\end{eqnarray*}
which is a contradiction and this proves the lemma. \hfill \qed

Next we consider the behaviour of $\gamma\to T(\gamma)$ as $\gamma\to\infty$ and we have the following Theorem whose proof will be given in section \ref{section3}.
\begin{theo}\label{theo2.5}
Assume that $f$ satisfies (H1), (H2) of Hypothesis \ref{hypothesis-f} and let $\beta>0$ as in \eqref{eq1.4}. Then there exists a $\gamma_0>0$ such that for all $\gamma>\gamma_0$,we have
\begin{eqnarray}\label{2.11}
\displaystyle y'(T(\gamma),\gamma)=\frac{n}{n-1}\frac{1}{g'}+\frac{n^2\alpha_n g''}{(n-1)(g')^3}+\bo\left(\frac{\delta^2g''}{(g')^4}+\frac{e^{-(g-(\frac{n-1}{n})\gamma g')}}{g'}\right),
\end{eqnarray}
\begin{eqnarray}\label{eq2.12}
\displaystyle T(\gamma)&=&(g-\left(\frac{n-1}{n}\right)\gamma g')+(n-1)\log(\left(\frac{n-1}{n}\right)g')+\frac{\alpha_n(n-1)\gamma g''}{g'}\nonumber\\
&+&\bo\left(\frac{(\log g')^2}{g'}
+\frac{(\log g')^{\beta +1}}{(g')^\beta}+ e^{-(g-\left(\frac{n-1}{n}\right)\gamma g')}\right)
\end{eqnarray}
where $g=g(\gamma)$, $g'=g'(\gamma)$, $g''=g''(\gamma)$, $g'''=g'''(\gamma)$, $\delta=\log(g')$ and $\alpha_n=1+\frac{1}{2}+\cdot\cdot\cdot+\frac{1}{n}$. 
\end{theo}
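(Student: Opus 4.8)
The plan is to run a refined Atkinson--Peletier shooting analysis; existence, uniqueness and $C^1$-dependence of $y(\cdot,\gamma)$ and $T(\gamma)$ are already supplied by Lemma~\ref{lemma2.1}. First I would start from the once-integrated form of \eqref{eq2.7},
\begin{equation*}
\bigl(y'(t)\bigr)^{n-1}=\int_t^\infty e^{g(y(s))-s}\,\ds ,\qquad t\in[T(\gamma),\infty),
\end{equation*}
which shows $y'>0$, so $y$ increases from $0$ at $t=T(\gamma)$ to $\gamma$ at $t=\infty$. Passing to $y$ as the independent variable, writing $t=t(y)$ and $P(y)=\bigl(y'(t(y))\bigr)^{n-1}$, one has $t'(y)=P(y)^{-1/(n-1)}$ and $\frac{n-1}{n}\bigl(P^{n/(n-1)}\bigr)'(y)=-e^{g(y)-t(y)}$. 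Integrating the latter over $(0,\gamma)$ (with $P(\gamma)=0$) and reading off the first display at $t=T(\gamma)$ reduces the theorem to controlling, to accuracy $\delta^2/g'$ (with $\delta=\log g'$), the profile $t(y)$ -- equivalently $y'$ as a function of $y$ -- since
\begin{equation*}
y'(T(\gamma))^{\,n}=\frac{n}{n-1}\int_0^\gamma e^{g(y)-t(y)}\,\mathrm{d}y ,\qquad T(\gamma)=\lim_{y\to0^+}t(y),
\end{equation*}
and the integrand in the first identity concentrates near $y=\gamma$.

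Next I would separate three regimes as $t$ decreases from $+\infty$. In the \emph{outer region} $y\approx\gamma$, and a bootstrap in the integral equation starting from $\gamma-y(t)\sim(n-1)e^{(g(\gamma)-t)/(n-1)}$ yields a two-term expansion valid down to a transition layer located near $t\approx g(\gamma)+(n-1)\log\bigl((n-1)g'(\gamma)\bigr)$, where $\gamma-y$ reaches the scale $1/g'(\gamma)$ on which $g$ varies. In the \emph{layer} I rescale: with $\gamma-y=\Psi(\sigma)/g'(\gamma)$, $t=g(\gamma)+(n-1)\log\bigl((n-1)g'(\gamma)\bigr)+\sigma$, and the Taylor expansion $g(y)=g(\gamma)-\Psi+\frac{g''(\gamma)}{2g'(\gamma)^2}\Psi^2+\cdots$ (legitimate by (H1), as $\gamma-y\ll\gamma$), \eqref{eq2.7} becomes
\begin{equation*}
\frac{\mathrm{d}}{\mathrm{d}\sigma}\bigl[(-\Psi')^{\,n-1}\bigr]=-\frac{e^{-\Psi-\sigma}}{(n-1)^{n-1}}\Bigl(1+\bigo{\tfrac{g''}{(g')^2}}+\cdots\Bigr),\qquad \Psi(+\infty)=0,\quad -\Psi'(-\infty)=\tfrac{n}{n-1},
\end{equation*}
whose $\gamma\to\infty$ limit problem is explicitly integrable (for $n=2$, $\phi:=\Psi+\sigma$ solves $\phi''=e^{-\phi}$, hence $(\phi')^2=1-2e^{-\phi}$; for general $n$ one integration produces a separable first-order equation). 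Feeding the limit profile $\Psi_0$ back into the integral equation recovers $y'(T(\gamma))=\tfrac{n}{n-1}\tfrac{1}{g'}+\cdots$ using $\int_{\R}e^{-\Psi_0-\sigma}\,\mathrm{d}\sigma=n^{n-1}$, and pins down the layer. Finally, in the \emph{inner region} below the layer the solution descends from $y\approx\gamma$ to $0$; there $g(y(s))-s$ stays strictly below its value at the layer -- this is precisely what (H2) buys, since it keeps $g-\tfrac{n-1}{n}\gamma g'$, hence $-T(\gamma)$, below $-(n-1)\log g'$ -- so $e^{g(y(s))-s}$ is negligible against the layer contribution, $y'(t)\approx\tfrac{n}{n-1}\tfrac{1}{g'}$ is essentially constant, and $\int_0^\gamma\mathrm{d}y/y'\approx\tfrac{n-1}{n}\gamma g'(\gamma)$. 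Combining the layer location with this descent time gives the leading part $g(\gamma)-\tfrac{n-1}{n}\gamma g'(\gamma)+(n-1)\log\bigl(\tfrac{n-1}{n}g'(\gamma)\bigr)$ of $T(\gamma)$, and the contribution near the endpoint $s=T(\gamma)$ accounts for the $e^{-(g-\frac{n-1}{n}\gamma g')}$ remainder.

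The step I expect to be the main obstacle is extracting the \emph{second-order} corrections $\tfrac{n^2\alpha_n g''}{(n-1)(g')^3}$ and $\tfrac{\alpha_n(n-1)\gamma g''}{g'}$ with errors of the stated order, rather than just leading behaviour; this is where the classical Atkinson--Peletier scheme has to be reworked with Volkmer's finer estimates. It requires carrying the quadratic Taylor term $\tfrac{g''}{2(g')^2}\Psi^2$ through the whole layer and outer computation (the harmonic sum $\alpha_n=1+\tfrac12+\cdots+\tfrac1n$ arises from integrating it against $\Psi_0$, together with the $n$-Laplacian algebra), solving for the first correction $\Psi_1$ to the profile and bounding its effect on $\int e^{-\Psi-\sigma}\,\mathrm{d}\sigma$ and on $t(y)$, and keeping track of the logarithms that accumulate because $\Psi_0(\sigma)\sim\tfrac{n}{n-1}|\sigma|$ grows like $\log g'(\gamma)$ on the far side of the layer -- the source of the $\delta^2 g''/(g')^4$ and $(\log g')^2/g'$ errors. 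The remaining term $(\log g')^{\beta+1}/(g')^\beta$ would come from the bottom of the inner region, $y\to0$, where $f$ is controlled only by \eqref{eq1.4} ($f(s)-f(0)=\bigo{s^\beta}$), which limits the accuracy of $t(y)$ there. In short, the architecture is classical but every estimate must be pushed one order past the leading term and made uniform in $\gamma$, notably across the matching of the layer with the inner region; that uniform bookkeeping is the crux, and it is what Section~\ref{section3} carries out.
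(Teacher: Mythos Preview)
Your architecture is correct and is essentially the paper's: an Atkinson--Peletier analysis with a layer near $t\approx T_1:=g+(n-1)\log\bigl(\tfrac{n-1}{n}g'\bigr)$ and an almost-linear descent below it, the harmonic sum $\alpha_n$ coming from the $g''$ correction in the layer, and the $\beta$- and exponential remainders from the bottom of the inner region.

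The execution differs in one substantive point. Instead of a bootstrap in the rescaled profile $\Psi$ and a first-order correction $\Psi_1$, the paper writes your limit profile $\Psi_0$ down globally and explicitly as the comparison function
\[
z(t)=\gamma-\frac{n}{g'}\log\Bigl(1+e^{(T_1-t)/(n-1)}\Bigr),
\]
and obtains the two-sided control $y\le z$ and $g(y)\ge g+g'(z-\gamma)$ in a single stroke from the monotone energy
\[
E(t)=(y')^{n-1}-\tfrac{n-1}{n}\,g'(y)\,(y')^{n}-e^{g(y)-t},\qquad E'(t)=-\tfrac{n-1}{n}\,g''(y)\,(y')^{\,n+1}<0,\quad E(\infty)=0.
\]
This is the device your sketch is missing, and it is precisely what makes the second-order bookkeeping tractable: once these bounds are in hand, a Taylor expansion of $g$ at $\gamma$ gives $y=z+\mathcal O(\delta^2 g''/(g')^3)$ and $g(y)=g+g'(z-\gamma)+\mathcal O(\delta^2 g''/(g')^2)$ for $t\ge T_\delta$, and the $\alpha_n$ term in $y'(T_\delta)$ is read off directly from the identity $E(T_\delta)=\tfrac{n-1}{n}\int_{T_\delta}^\infty g''(y)(y')^{n+1}\,\mathrm ds$ together with the explicit integral of $(z')^{n+1}$, rather than by perturbing the profile. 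The inner region and the $(\log g')^{\beta+1}/(g')^{\beta}$ remainder are handled exactly as you outline. Your matched-asymptotics route would also go through, but it trades the single $E$-identity for tracking $\Psi_1$ through the matching; the paper's way is shorter and gives the uniform control for free.
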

\noindent {\it Linearization}: Let $V_1(t)= V_1(t,\gamma)=\frac{\partial y(t,\gamma)}{\partial\gamma}$. Then differentiating \eqref{eq2.4} with respect to $\gamma$, we obtain:
\begin{eqnarray}\label{eq2.13}
\left\{\begin{array}{ll}
&-((y')^{n-2}V_1')'(t)=\frac{f'(y(t))V_1e^{-t}}{n-1}\quad\mbox{in }(T(\gamma),\infty)\\
& V_1(\infty)=1, V_1'(\infty)=0.
\end{array}\right.
\end{eqnarray}
Differentiating $y(T(\gamma),\gamma)=0$, we obtain
\begin{eqnarray}\label{eq2.17}
T'(\gamma)=-\frac{\frac{\partial y(T(\gamma),\gamma)}{\partial\gamma}}{y'(T(\gamma),\gamma)}=-\frac{V_1(T(\gamma),\gamma)}{y'(T(\gamma),\gamma)}
\end{eqnarray}
where $T'(\gamma)$ denotes the derivative of $T(\gamma)$ with respect to $\gamma$.
Then using Theorem \ref{theo2.5} together with the asymptotics of $V_1(T(\gamma),\gamma)$ proved in Section \ref{section4}, we are able to prove the following asymptotic behaviour of $T'(\gamma)$ as $\gamma\to\infty$.
\begin{theo}\label{theo2.6}
There exists a $\gamma_0>0$ such that for all $\gamma>\gamma_0$
\begin{eqnarray}\label{eq2.14}
 T'(\gamma) = \frac{1}{n}(g'-(n-1)\gamma g'')+\bo\left(g''\frac{(\log(g'))^4 }{g'}\right).
\end{eqnarray}
\end{theo}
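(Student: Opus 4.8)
The plan is to work from the differentiated boundary relation \eqref{eq2.17}, $T'(\gamma)=-V_1(T(\gamma),\gamma)/y'(T(\gamma),\gamma)$, whose denominator is already pinned down by Theorem \ref{theo2.5}; so everything reduces to the asymptotics of the numerator $V_1(T(\gamma),\gamma)$ as $\gamma\to\infty$. For orientation, formally differentiating \eqref{eq2.12} gives
\[
\frac{\mathrm d}{\mathrm d\gamma}\lbr g-\tfrac{n-1}{n}\gamma g'\rbr=\tfrac{1}{n}\lbr g'-(n-1)\gamma g''\rbr ,
\]
which is exactly the advertised leading term, whereas the $\gamma$-derivatives of the remaining entries of \eqref{eq2.12} involve only $g''/g'$, $\gamma g'''/g'$ and $\gamma(g'')^2/(g')^2$, each of which is $\bigo{g''(\log g')^4/g'}$ or smaller once (H1) is used in the form $\rho^{(k)}(\gamma)/\gamma^{q-k}\to0$. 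The point is to make this rigorous through $V_1$, which cannot be done by differentiating error terms.

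The engine is the linear structure of \eqref{eq2.13}. Set $L[\phi]:=-\bigl((n-1)(y')^{n-2}\phi'\bigr)'-g'(y)e^{g(y)-t}\phi$, so that $L[V_1]=0$; one reads off from \eqref{eq2.7} the explicit identities $L[1]=-g'(y)e^{g(y)-t}$, $L[y']=-e^{g(y)-t}$, $L[ty']=(n-t)e^{g(y)-t}$ and $L[y]=\bigl(n-1-yg'(y)\bigr)e^{g(y)-t}$. For each such test function $\psi$ the Wronskian $W_\psi:=(n-1)(y')^{n-2}\bigl(V_1'\psi-V_1\psi'\bigr)$ satisfies $W_\psi'=L[\psi]\,V_1$ and $W_\psi(\infty)=0$; integrating over $(T(\gamma),\infty)$ and using $y(T(\gamma))=0$, the choice $\psi=y$ collapses to
\[
(n-1)\,\bigl(y'(T(\gamma),\gamma)\bigr)^{n-1}\,V_1(T(\gamma),\gamma)=\int_{T(\gamma)}^{\infty}\bigl(n-1-y(t)g'(y(t))\bigr)e^{g(y(t))-t}\,V_1(t)\,\dt ,
\]
which is the ``new identity'' underlying Section \ref{section4}; the choices $\psi\in\{1,y',ty'\}$, together with the $\gamma$-derivative $(n-1)(y')^{n-2}V_1'=\int_t^\infty g'(y)e^{g(y)-s}V_1\,\ds$ of \eqref{eq2.8} and the normalisation $V_1(\infty)=1$, supply the auxiliary relations needed to close the estimate.

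It remains to evaluate the right-hand side, and this is where the real work lies, since one must asymptotically integrate \eqref{eq2.13} itself. Using the fine pointwise control of $y(\cdot,\gamma)$ from Section \ref{section3} (the \textsc{Atkinson--Peletier}--\textsc{Volkmer} analysis), one treats \eqref{eq2.13} as a perturbation of an explicitly solvable model near $t=T(\gamma)$, where $y$ is small, and for $t$ large, where $y\approx\gamma$ and $g(y)\approx g(\gamma)$, and matches the two across the transition; the identity $\int_{T(\gamma)}^{t}e^{g(y)-s}\,\ds=(y'(T(\gamma),\gamma))^{n-1}-(y'(t,\gamma))^{n-1}$ of \eqref{eq2.8} controls where the kernel in the ``new identity'' is supported. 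The subtlety is that $V_1$ is not of order one throughout that support — it changes sign, running from a value of order one at $t=\infty$ down to $\approx-\tfrac{1}{n-1}$ at $t=T(\gamma)$ — so one cannot merely bound $|V_1|$: the precise size and the cancellations have to be extracted, which is exactly what the auxiliary Wronskian relations are for. Substituting the resulting expansion of $V_1(T(\gamma),\gamma)$, together with \eqref{2.11}, into \eqref{eq2.17} and simplifying the rational function of $g,g',g'',g'''$ with (H1) then yields $\tfrac{1}{n}(g'-(n-1)\gamma g'')+\bigo{g''(\log g')^4/g'}$.

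Two things will be the main obstacles. First, the linearised operator carries the degenerate weight $(y')^{n-2}$, which vanishes as $t\to\infty$ and, when $f(0)=0$ (the case of the Trudinger--Moser model nonlinearity), is singular as $t\to T(\gamma)$; so the Wronskian identities and the inner-layer analysis must be handled with much more care than their counterparts for $y$ in Theorem \ref{theo2.5}. Second, obtaining the sharp exponent $(\log g')^4$ in the remainder demands tracking how the logarithmic factors accumulate — the $\log g'$'s already present in \eqref{eq2.12}, one more created by the $\gamma$-differentiation, and further ones from the transition-zone estimate of $V_1$ — and verifying that their total degree does not exceed four. This threshold is precisely what makes (H3) sufficient: (H3) forces $|g'-(n-1)\gamma g''|$ to dominate $g''(\log g')^4/g'$, hence $T'(\gamma)\neq0$ for $\gamma>\gamma_0$, which is the strict monotonicity of $T(\cdot)$ behind the uniqueness in Theorem \ref{theo1.5}.
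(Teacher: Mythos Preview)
Your Wronskian identity with $\psi=y$ is valid, but it is \emph{not} the ``new identity'' that drives Section \ref{section4}. The paper's key identity is Lemma \ref{lemma4.6}: an equation for $J(t)=(1-g'(y)y'-g''(y)/g'(y))(y')^{n-2}V_1'+((y')^{n-2}V_1')'$ whose right-hand side involves only $V_1'$ against weights of size $g''$. Its purpose is not to express $V_1(T(\gamma))$ directly, but to locate the \emph{first turning point} $S$ of $V_1$ with the sharp precision $S=T_1+(n-1)\log((n-1)g''/(g')^2)+\bo(\delta^4 g''/(g')^2)$ (Lemma \ref{lemma4.8}). The paper then compares $V_1$ to the explicit $V_2=1-g'z'$ (the linearisation of $z$ in \eqref{eq4.2}--\eqref{eq4.4}) on $[S,\infty)$, and from $S$ propagates $V_1$ down through $[\tilde T,S_6]$ and $[T(\gamma),\tilde T]$ via Lemmas \ref{lemma4.10}--\ref{lemma4.12}, obtaining $V_1(T(\gamma))=V_1(S)\bigl(1-\tfrac{(n-1)\gamma g''}{g'}+\ldots\bigr)$ with $V_1(S)\approx-\tfrac{1}{n-1}$; this is what produces the $(n-1)\gamma g''$ correction in $T'(\gamma)$. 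A final contradiction argument (Claim \ref{claim7}) shows the assumption $S\geq S_6$ always holds.

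The obstacle with your route is a severe cancellation. In your integral $\int_{T(\gamma)}^\infty(n-1-yg'(y))e^{g(y)-t}V_1\,\mathrm dt$, the factor $n-1-yg'(y)$ is of order $\gamma g'$ on the bulk of the support of $e^{g(y)-t}$, while the left-hand side is of order $(g')^{-(n-1)}$: the integral must therefore cancel to relative order $1/(\gamma g')$ before the leading behaviour of $V_1(T(\gamma))$ even emerges. Extracting that cancellation requires precisely the fine pointwise control of $V_1$ across the sign change near $S_1$ and the turning point $S$ --- i.e., the comparison $V_1\approx V_2$ with the error bounds of Lemmas \ref{lemma4.2}--\ref{lemma4.3} --- which is the whole content of Section \ref{section4}. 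Your outline acknowledges this (``the real work lies\ldots one must asymptotically integrate \eqref{eq2.13} itself''), but does not supply the mechanism; in particular, you have no analogue of the turning-point analysis (Lemmas \ref{lemma4.6}--\ref{lemma4.8}), and the auxiliary Wronskians you list for $\psi\in\{1,y',ty'\}$ do not by themselves pin down $V_1$ to the precision needed to survive the $\gamma g'$ amplification. As written, the proposal is a plausible plan but not a proof: the step from the identity to the asymptotic of $V_1(T(\gamma))$ is where the entire difficulty sits, and it is not addressed.
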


\begin{proof}[Proof of Theorem \ref{theo1.5}] 
From \eqref{eq2.2}, $\frac{\gamma g''}{g'}=\bo(1)$ and hence from (H2) of Hypothesis \ref{hypothesis-f} and \eqref{eq2.12}, we see that $T(\gamma)\to\infty$ as $\gamma\to\infty$. That is $\lambda^{\frac{1}{n}}=R(\gamma)\to 0$ as $\gamma\to\infty$. Hence in order to prove the theorem, it is enough to show that there exists a $\gamma_0$ large such that for all $\gamma>\gamma_0$, $T(\gamma)$ is strictly increasing function.  From (H3), we see that $g'-(n-1)\gamma g''>0$ and $\frac{g''}{g'}(\log(g'))^4 =o(1)(g'-(n-1)\gamma g'')$ .
Hence from \eqref{eq2.14},  we have for $\gamma_0$ large and $\gamma>\gamma_0$,
\begin{eqnarray*}
T'(\gamma)>0.
\end{eqnarray*}
 This gives that $\gamma\mapsto T(\gamma)$ is a strictly increasing  function and $y$ is nondegenerate. This proves Theorem \ref{theo1.5}.
 \end{proof} 
 
\begin{proof}[Proof of Theorem \ref{theosingular}] If $u$ is a solution of \eqref{eq1.8singular}, then, as earlier, converting the equation \eqref{eq1.8singular} to an initial value problem like in \eqref{eq2.2} we have for $R(\gamma)=\lambda^{\frac{1}{n}}$, $y(t,\gamma)=u(r,\gamma)$, $u(0)=\gamma$ and $r=n e^{-\frac{t}{n}}$;  $y$ satisfies:
\begin{eqnarray*}
-((y')^{n-1})'=\frac{f(y)e^{-t}}{n^\beta e^{-\frac{\beta t}{n}}}=\frac{1}{n^\beta}f(y) e^{-(1-\frac{\beta}{n})t}.
\end{eqnarray*}
Let $a=(1-\frac{\beta}{n})$ and $y(t)=y(\frac{t}{a})$, then $y$ satisfies
\begin{eqnarray*}
\left\{\begin{array}{ll}
&-((y')^{n-1})'=\frac{f(y)}{n^\beta a^n}e^{-t}\quad \mbox{in } (T(\gamma),\infty)\\
&y(\infty)=\gamma,\; y'(\infty)=0,\; y(T(\gamma)=0.
\end{array}\right.
\end{eqnarray*}
Let $\tilde{f}(y)\eqdef\frac{f(y)}{n^\beta a^n}$, then $\tilde{g}(s)\eqdef \log(\tilde{f}(s))=\log(f(s))+\log(\frac{1}{n^\beta a^n})=g(s)+\log(\frac{1}{n^\beta a^n})$. Therefore the theorem now follows from Theorem \ref{theo1.5}.
\end{proof}
\section{Proof of Theorem \ref{theo2.5}.}\label{section3}
We shall use the following notation throughout the rest of the paper.

\noindent {\bf Notation:} Let $A(\gamma)$, $B(\gamma)$ be two functions on any interval $J\subset \R$. We then say $A(\gamma)\sim B(\gamma)$ if there exists a constant $C>0$ such that 
\begin{eqnarray}\label{eq2.15}
\frac{1}{C}A(\gamma)\leq B(\gamma)\leq C A(\gamma) \quad \text{  holds for all  } \quad \gamma\in J.
\end{eqnarray}
\begin{prop}\label{proposition3.1}
Let $a>0$, $q>0$, $A$ and $B$ are functions on $[s_0,\infty)$ such that 
\begin{gather*}
\label{eq3.2} A(s)=as^q+B(s), \\
\label{eq3.3} \displaystyle\lim_{s\to\infty}\frac{B(s)}{s^q}=0.
\end{gather*}
Let $0<\delta\leq C\log(s)$ for some $C>0$. Then there exists an $s_1=s_1(\delta)>0$ and a $C_1=C_1(\delta)>0$ such that for all $s\geq s_1$, $\eta\in [s-\delta,s]$:
\begin{gather}
\label{eq3.4} A(s)\sim s^q \\
\label{eq3.5} \frac{1}{C_1} A(s)\leq A(\eta)\leq C_1 A(s) \nonumber.
\end{gather}
\end{prop}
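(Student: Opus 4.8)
The plan is to prove the two claimed estimates in turn. The comparison $A(s)\sim s^q$ for large $s$ is immediate from the decay hypothesis on $B$, and the two-sided bound relating $A(\eta)$ and $A(s)$ then follows by comparing each of $A(\eta)$ and $A(s)$ to the pure power $s^q$ and exploiting that, since $\delta\leq C\log s$ with $C\log s / s\to 0$, the interval $[s-\delta,s]$ is short relative to $s$.

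\emph{Step 1: $A(s)\sim s^q$.} Since $B(s)/s^q\to 0$ as $s\to\infty$, there is $s_1'\geq s_0$ such that $|B(s)|\leq \frac{a}{2} s^q$ for all $s\geq s_1'$, and hence
\[
\frac{a}{2}\, s^q\;\leq\; A(s)\;\leq\;\frac{3a}{2}\, s^q \qquad (s\geq s_1').
\]
Therefore $A(s)\sim s^q$ on $[s_1',\infty)$ with comparison constant $C_0:=\max\{2/a,\,3a/2\}$, which depends only on $a$; this is \eqref{eq3.4}.

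\emph{Step 2: propagation over $[s-\delta,s]$.} Here the point is the choice of $s_1$. Since $C\log s / s\to 0$, pick $s_1''$ with $C\log s\leq s/2$ for all $s\geq s_1''$, and set $s_1:=2\max\{s_1',s_1''\}$ (this is where $s_1$ depends on $\delta$, or really only on $C$). Then for $s\geq s_1$ and $\eta\in[s-\delta,s]$ one has $s/2\leq s-\delta\leq\eta\leq s$, so $\eta\geq s_1'$ and Step 1 applies at both $\eta$ and $s$; moreover $\frac12\leq\eta/s\leq 1$ together with $q>0$ gives $2^{-q}s^q\leq\eta^q\leq s^q$. Chaining these inequalities,
\[
A(\eta)\leq C_0\,\eta^q\leq C_0\, s^q\leq C_0^2\, A(s),\qquad A(\eta)\geq C_0^{-1}\eta^q\geq 2^{-q}C_0^{-1}s^q\geq 2^{-q}C_0^{-2}\, A(s),
\]
so $\frac{1}{C_1}A(s)\leq A(\eta)\leq C_1 A(s)$ with $C_1:=2^q C_0^2$, which is the remaining estimate.

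I do not expect a genuine obstacle: the argument is elementary, and the only delicate point is the order of quantifiers in the choice of $s_1$, which must be taken large enough both to enter the regime $|B(s)|\leq\frac{a}{2} s^q$ and to guarantee $\delta\leq s/2$, so that the left endpoint $s-\delta$ of the interval stays inside that regime. As a byproduct one sees that $C_1$ can in fact be chosen independent of $\delta$, which is consistent with (and slightly stronger than) the stated conclusion.
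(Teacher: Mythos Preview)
Your proof is correct and follows essentially the same elementary route as the paper: both arguments reduce to the two observations that $B(s)/s^q\to 0$ and $\delta/s\to 0$, so that $A(\eta)$ and $A(s)$ are both comparable to $s^q$ uniformly for $\eta\in[s-\delta,s]$. The only cosmetic difference is that the paper factors the ratio $A(\eta)/A(s)$ explicitly and shows it tends to $1$, whereas you sandwich each term against $s^q$ separately; both yield the same conclusion.
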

\proof \eqref{eq3.4} follows from \eqref{eq3.3}. Let $\eta=s-\alpha$ with $0\leq\alpha\leq \delta\leq C\log(s)$, and 
\begin{eqnarray*}
\displaystyle A(\eta)=a(s-\alpha)^q+B(s-\alpha)= \frac{A(s)}{(1+\frac{B(s)}{a s^q})}\left(1-\frac{\alpha}{s}\right)^q\left(1+\frac{B(s-\alpha)}{a(s-\alpha)^q}\right).
\end{eqnarray*}
Hence,
$\displaystyle\lim_{s\to\infty}\frac{A(\eta)}{A(s)}=1$
and this proves the proposition.\qed
\begin{prop}\label{proposition3.2}
Let $g$ satisfy (H1) and $g^{(k)}$ denote the $k^{th}$ derivative of $g$. Then there exists a $s_0$ such that for all $\gamma>s_0$:
\begin{itemize}
\item[(i)] $k\in \{0,1,2\}$, $g^{(k)}(\gamma)\sim \gamma^{q-k}$ and $g^{(3)}(\gamma)=\bo(\gamma^{q-3})$,
\item[(ii)] Let $0<\delta\leq C_1\log(g'(\gamma))$, $\eta\in [\gamma-\delta,\gamma]$, $0\leq \theta_1$,$\theta_2\leq \delta$ and $k\in\{0, 1,2\}$. Then there exist $C_2=C_2(\delta)$ and $\gamma_0=\gamma_0(\delta)$ such that for all $\gamma\geq \gamma_0$,
\begin{eqnarray*}
\frac{1}{C_2}g^{(k)}(\gamma)\leq g^{(k)}(\eta)\leq C_2 g^{(k)}(\gamma),
\end{eqnarray*}
and
\begin{eqnarray*}
\frac{\gamma g^{(k+1)}(\gamma-\theta_1)}{g^{(k)}(\gamma-\theta_2)}=\bo(1).
\end{eqnarray*}
\item[(iii)] $g^{-1}(g(\gamma)-\eta)=\gamma-\frac{\eta}{g'(\gamma)}+\bo\left(\frac{\eta^2 g^{(2)}(\gamma)}{(g'(\gamma))^3}\right)$.
\end{itemize}
\end{prop}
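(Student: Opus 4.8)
The plan is to dispatch (i) and (ii) as quick consequences of (H1) and Proposition~\ref{proposition3.1}, leaving (iii), which needs one new idea, a bootstrap that locates $g^{-1}(g(\gamma)-\eta)$. For (i), I would simply differentiate $g=a(\cdot)^q+\rho$: for $k\in\{0,1,2,3\}$,
\[ g^{(k)}(\gamma)=a\,q(q-1)\cdots(q-k+1)\,\gamma^{q-k}+\rho^{(k)}(\gamma), \]
and by (H1) the ratio $g^{(k)}(\gamma)/\gamma^{q-k}$ tends to $a\,q(q-1)\cdots(q-k+1)$ as $\gamma\to\infty$. For $k\in\{0,1,2\}$ this limit is strictly positive (this is where $q>1$, hence $q-1>0$, is used), so past some $s_0$ the ratio lies between half and twice its limit, which is exactly the relation $g^{(k)}(\gamma)\sim\gamma^{q-k}$. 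For $k=3$ the limiting coefficient $a\,q(q-1)(q-2)$ need not be positive --- indeed $q-2\le 0$ since $q\le\frac{n}{n-1}\le 2$ --- so one only records $g^{(3)}(\gamma)=\bigo{\gamma^{q-3}}$.

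For (ii), the chain of inequalities follows by applying Proposition~\ref{proposition3.1} to $A=g^{(k)}$, $B=\rho^{(k)}$ with exponent $q-k$ and with $\delta\le C_1\log g'(\gamma)=\bigo{\log\gamma}$ (by (i)). For $k\in\{0,1\}$ the exponent $q-k$ is positive and the proposition applies directly; for $k=2$, when $q<2$ the exponent is negative so Proposition~\ref{proposition3.1} does not literally apply, but its proof uses only that $\alpha/s\to 0$ uniformly for $\alpha\in[0,\delta]$ and that $B(s-\alpha)/(s-\alpha)^{q-k}\to 0$ --- both still hold --- so the same computation gives $g^{(2)}(\eta)\sim g^{(2)}(\gamma)$, and enlarging $\gamma_0(\delta)$ turns the implied constant into an explicit $C_2(\delta)$. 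The quotient estimate then follows by combining with (i): for $\theta_1,\theta_2\in[0,\delta]$ one has $\gamma-\theta_i\sim\gamma$, hence $g^{(k+1)}(\gamma-\theta_1)=\bigo{\gamma^{q-k-1}}$ and $g^{(k)}(\gamma-\theta_2)\sim\gamma^{q-k}$, so $\gamma\,g^{(k+1)}(\gamma-\theta_1)/g^{(k)}(\gamma-\theta_2)=\bigo{1}$.

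For (iii), I would put $\zeta:=g^{-1}(g(\gamma)-\eta)$, which is well-defined with $\zeta<\gamma$ because (i) makes $g$ strictly increasing on $[s_0,\infty)$, and Taylor-expand $g$ at $\gamma$: there is $\xi\in(\zeta,\gamma)$ with
\[ \zeta-\gamma=-\frac{\eta}{g'(\gamma)}-\frac{g''(\xi)}{2g'(\gamma)}(\zeta-\gamma)^2, \]
so everything reduces to bounding the last term. The delicate step is a bootstrap for $\zeta$: since $g(\gamma)-g(\gamma/2)=\int_{\gamma/2}^{\gamma}g'\sim\gamma^q$ by (i), as long as $\eta$ lies in the range needed in the applications (say $\eta\le\delta$ with $\delta$ as above, or more generally $\eta=o(\gamma^q)$) one gets $\eta<g(\gamma)-g(\gamma/2)$ for $\gamma$ large, hence $\zeta\in[\gamma/2,\gamma]$; on that interval (i) gives $g'\sim\gamma^{q-1}$ and $g''\sim\gamma^{q-2}$, so $\eta=\int_\zeta^{\gamma}g'\ge c\,\gamma^{q-1}(\gamma-\zeta)$ yields the crude bound $\gamma-\zeta\le C\eta/g'(\gamma)$; squaring and inserting $|g''(\xi)|\le Cg''(\gamma)$ gives $\bigl|\frac{g''(\xi)}{2g'(\gamma)}(\zeta-\gamma)^2\bigr|\le C\,\eta^2 g''(\gamma)/(g'(\gamma))^3$, which is the asserted error term.

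I expect the bootstrap in (iii) to be the only genuine obstacle: the Taylor remainder is evaluated at the unknown point $\xi$, and to control it through (i)--(ii) one must first confine $\zeta$ to a fixed proportion of $[0,\gamma]$, which is precisely where the (implicit) smallness of $\eta$ relative to $g(\gamma)-g(\gamma/2)\sim\gamma^q$ enters; I would state that hypothesis explicitly. Everything else is routine bookkeeping with (H1) and Proposition~\ref{proposition3.1}.
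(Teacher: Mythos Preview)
Your argument is correct and for (i)--(ii) coincides with the paper's. For (iii) you take a slightly different but equivalent route: the paper Taylor-expands $g^{-1}$ at $g(\gamma)$, using $(g^{-1})''(\xi)=-g''(g^{-1}(\xi))/(g'(g^{-1}(\xi)))^3$ and the same localization $g^{-1}(\xi)\in[\gamma/2,\gamma]$, so the remainder is directly $\eta^2\cdot\bo(g''/(g')^3)$ without a bootstrap; you expand $g$ at $\gamma$ instead, which yields an implicit equation for $\zeta-\gamma$ and forces the extra step $\gamma-\zeta\le C\eta/g'(\gamma)$ before you can close. Both rest on the same key observation $g(\gamma)-\eta\ge g(\gamma/2)$, and your remark that this requires $\eta=o(\gamma^q)$ (which is how (iii) is actually used later, with $\eta=\bo(\log g')$) is a useful clarification the paper leaves implicit. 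Your observation that Proposition~\ref{proposition3.1} is stated for positive exponent but its proof goes through unchanged for $q-2\le 0$ is also accurate; the paper silently relies on this.
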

\proof
(i) follows from \eqref{eq3.4}. (ii) follows from (H1) of Hypothesis \ref{hypothesis-f}, (i) of Proposition \ref{proposition3.2} and Proposition \ref{proposition3.1}.
Differentiating $\gamma=g^{-1}(g(\gamma))$, we obtain
\begin{eqnarray*}
\displaystyle (g^{-1})'(g(\gamma))=\frac{1}{g'(\gamma)}, \quad (g^{-1})^{(2)}(g(\gamma))=-\frac{g^{(2)}(\gamma)}{(g'(\gamma))^3},\quad \displaystyle (g^{-1})^{(2)}(\xi)=-\frac{g^{(2)}(g^{-1}(\xi))}{(g'(g^{-1}(\xi)))^3}.
\end{eqnarray*}
For $\gamma$ large, $g(\frac{\gamma}{2})\leq g(\gamma)-\eta$ and hence for $\xi\in [g(\gamma)-\eta, g(\gamma)]$, $\frac{\gamma}{2}\leq g^{-1}(\xi)\leq \gamma$. Hence, from (i) and (ii), we see that 
\begin{eqnarray*}
(g'(g^{-1}(\xi)))\sim g'(\gamma) \quad\mbox{and } g^{(2)}(g^{-1}(\gamma))\sim g^{(2)}(\gamma).
\end{eqnarray*}
Therefore, by Taylor's theorem, there exists an $\xi\in [g(\gamma)-\eta, g(\gamma)]$ such that 
\begin{eqnarray*}
&\displaystyle g^{-1}(g(\gamma)-\eta)=\gamma-\frac{\eta}{g'(\gamma)}+\frac{1}{2}(g^{-1})^{(2)}(\xi)\eta^2\\
&\displaystyle=\gamma-\frac{\eta}{g'(\gamma)}-\frac{1}{2}\frac{g^{(2)}(g^{-1}(\xi))\eta^2}{(g'(g^{-1}(\xi)))^3}\\
&\displaystyle=\gamma-\frac{\eta}{g'(\gamma)}+\bo(\frac{\eta^2 g^{(2)}(\gamma)}{(g'(\gamma))^3}).
\end{eqnarray*}
This proves (iii) and hence the proposition. \qed

\begin{prop}\label{proposition3.3}
Let $a>0$ and $F(x,b)=x^n-a x^{n-1} -b$. Then there exists an $\varepsilon_0>0$ and a smooth function $X\, :\, (-\varepsilon_0,\varepsilon_0)\to\R$ such that for $\vert b\vert <\varepsilon_0$:
\begin{eqnarray*}
X(0)=a,\, F(X(b),b)=0,\, X(b) =a+\frac{b}{a^{n-1}}+ \bigo{\frac{b^2}{a^{2n-1}}}.
\end{eqnarray*}
\end{prop}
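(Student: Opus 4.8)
The plan is to apply the implicit function theorem to $F$ at the point $(x,b)=(a,0)$ and then extract the asymptotic expansion of the branch $X(b)$ by a perturbative computation. First I would check the hypotheses: $F$ is a polynomial, hence $C^\infty$ on $\R^2$; we have $F(a,0)=a^n-a\cdot a^{n-1}=0$; and $\partial_x F(x,b)=n x^{n-1}-(n-1)a x^{n-2}$, so $\partial_x F(a,0)=n a^{n-1}-(n-1)a^{n-1}=a^{n-1}\neq 0$ since $a>0$. By the implicit function theorem there is an $\varepsilon_0>0$ and a $C^\infty$ function $X:(-\varepsilon_0,\varepsilon_0)\to\R$ with $X(0)=a$ and $F(X(b),b)=0$ for $|b|<\varepsilon_0$, shrinking $\varepsilon_0$ if necessary so that $X(b)$ stays in a small neighbourhood of $a$ (in particular bounded away from $0$).

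Next I would compute the first two derivatives of $X$ at $0$ by differentiating the identity $X(b)^n-aX(b)^{n-1}-b\equiv 0$. Differentiating once and evaluating at $b=0$ gives $\big(nX(0)^{n-1}-(n-1)aX(0)^{n-2}\big)X'(0)=1$, i.e. $a^{n-1}X'(0)=1$, so $X'(0)=a^{-(n-1)}$. Differentiating a second time and evaluating at $b=0$ yields an expression of the form $a^{n-1}X''(0)+\big(n(n-1)a^{n-2}-(n-1)(n-2)a^{n-3}\cdot a\big)(X'(0))^2=0$; the bracket simplifies to $(n-1)a^{n-2}$, giving $X''(0)=-(n-1)a^{n-2}(X'(0))^2/a^{n-1}=-(n-1)a^{-(2n-1)}$, which in particular is $O(a^{2-2n})$ — only the order matters for the statement. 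By Taylor's theorem with remainder, $X(b)=X(0)+X'(0)b+\tfrac12 X''(\zeta)b^2$ for some $\zeta$ between $0$ and $b$; since $X''$ is continuous and $X$ stays near $a$, $X''(\zeta)$ is comparable to $a^{-(2n-1)}$ uniformly for $|b|<\varepsilon_0$, so the remainder is $\bigo{b^2/a^{2n-1}}$. Combining, $X(b)=a+b/a^{n-1}+\bigo{b^2/a^{2n-1}}$, as claimed.

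This argument is essentially routine; the only point requiring a little care is that the $\bigo{\cdot}$ in the statement is presumably meant to be uniform as $a$ ranges over the relevant interval (the proposition will be applied with $a=a(\gamma)$ growing), not just for fixed $a$. If a uniform-in-$a$ estimate is wanted, I would instead rescale: set $x=a(1+s)$, so that $F=0$ becomes $a^n\big((1+s)^n-(1+s)^{n-1}\big)=b$, i.e. $(1+s)^{n-1}s=b/a^n$, and apply the implicit function theorem to the fixed (parameter-free) function $\Phi(s)=(1+s)^{n-1}s$ at $s=0$, where $\Phi(0)=0$, $\Phi'(0)=1$. This produces $s=S(b/a^n)$ with $S$ smooth near $0$, $S(0)=0$, $S'(0)=1$, hence $s=b/a^n+O((b/a^n)^2)$ with a constant independent of $a$; multiplying by $a$ gives $X(b)=a+b/a^{n-1}+\bigo{b^2/a^{2n-1}}$ uniformly. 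The main (and quite mild) obstacle is thus just deciding on and arranging this uniformity; the algebra of the expansion itself is immediate.
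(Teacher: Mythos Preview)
Your proof is correct and follows essentially the same route as the paper's: apply the implicit function theorem at $(a,0)$, differentiate $F(X(b),b)=0$ to compute $X'(0)$ and $X''(0)$, and read off the Taylor expansion. One harmless arithmetic slip: the bracket in your second-derivative step equals $2(n-1)a^{n-2}$, not $(n-1)a^{n-2}$, so $X''(0)=-2(n-1)a^{-(2n-1)}$ (matching the paper), but as you note only the order matters. Your rescaling $x=a(1+s)$ to secure uniformity in $a$ is a genuine addition; the paper does not address this point even though the proposition is later applied with $a=\frac{n}{(n-1)g'(\gamma)}\to 0$, so your version is actually more careful than the original.
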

\proof Since $F(a,0)=0$ and $\frac{\partial F}{\partial x}(a,0)= a^{n-1} \neq 0$, hence by implicit function theorem, there exists $\varepsilon_0>0$ and a unique smooth function $X\,:\, (-\varepsilon_0,\varepsilon_0)\to\R$ such that $X(0)=a$ and $F(X(b),b)=0$ for $\vert b\vert<\varepsilon_0$. Furthermore,
\begin{equation*}
\begin{array}{rl}
0& =\frac{{\rm d}}{{\rm d}b}F(X(b),b)=\frac{\partial F}{\partial x}(X(b), b)X'(b)+\frac{\partial F}{\partial b}(X(b),b),\\
\\
X'(b)& =\frac{1}{n X(b)^{n-1}-(n-1)a X(b)^{n-2}} \\ \\
X''(b) & =-\left(\frac{n(n-1)X(b)^{n-2}-(n-1)(n-2)a X(b)^{n-3}}{(n X(b)^{n-1}-(n-1)a X(b)^{n-2})^2}\right)X'(b).
\end{array}
\end{equation*}
%
Therefore, $X'(0)=\frac{1}{a^{n-1}}$, $X''(0)=-\frac{2(n-1)}{a^{2n-1}}$ and
\begin{eqnarray*}
X(b)=X(0)+X'(0)b+\bo(X''(0)b^2)=a+\frac{b}{a^{n-1}}+\bo\left(\frac{b^2}{a^{2n-1}}\right).
\end{eqnarray*}
This completes the proof of the proposition. \qed

With these preliminary propositions, we will move on to the proof of the theorem.
Let $s_0>0$ such that $g$ is strictly increasing convex function in $[s_0,\infty)$ and for  $t\in\R$, $\theta\geq s_0$, $\gamma\geq s_0$  with $g^{(k)}=g^{(k)}(\gamma)$, define the following:
\begin{gather}
 \label{eq3.7} T_1 : =g+(n-1)\log\left(\frac{(n-1)g'}{n}\right),\nonumber  \\
 \label{eq3.8} T_0 :=g-\left(\frac{n-1}{n}\right)\gamma g'+(n-1)\log\left(\frac{(n-1)g'}{n}\right)-(n-1)\log\left(1-e^{\frac{-\gamma g'}{n}}\right),\\
 \label{eq3.9} z(t) :=\gamma-\frac{n}{g'}\log\left(1+e^{\frac{T_1-t}{n-1}}\right), \nonumber \\
 \label{eq3.10} \psi(\theta) :=g(\theta)-g+\left(\frac{n-1}{n}\right)(\gamma-\theta)g'-(n-1)\log\left(\frac{(n-1)g'}{n}\right).
\end{gather}
Then from direct computations, $z$ satisfies the following
\begin{eqnarray}\label{eq3.11}
z(T_0)=0, \quad z(t)>0\,\mbox{ for }t>T_0,
\end{eqnarray}
\begin{eqnarray}\label{eq3.12}
\displaystyle z'(t)=\left(\frac{n}{n-1}\right)\frac{1}{g'}\frac{e^{\frac{T_1-t}{n-1}}}{1+e^{\frac{T_1-t}{n-1}}}= \left(\frac{n}{n-1}\right)\frac{1}{g'}\left(1-\frac{1}{1+e^{\frac{T_1-t}{n-1}}}\right),
\end{eqnarray}
\begin{eqnarray}\label{eq3.13}
-((z'(t))^{n-1})'=\left(\frac{n}{(n-1)g'}\right)^{n-1}\frac{e^{T_1-t}}{(1+e^{\frac{T_1-t}{n-1}})^n}=e^{g-t+g'(z-\gamma)}
\end{eqnarray}
\begin{eqnarray}\label{eq3.14}
z(\infty)=\gamma,\quad z'(\infty)=0.
\end{eqnarray}
Let $\gamma>s_0$ and $y(t)=y(t,\gamma)$ be the corresponding solution of \eqref{eq2.7}.
Let $\tilde{T}>T(\gamma)$ be defined by 
\begin{eqnarray}\label{eq3.16}
y(\tilde{T})=s_0.
\end{eqnarray}
Then we have the following Lemma: 
\begin{lemm}\label{lemma3.3}
Let $\delta= \bo(\log{g'})$ and
\begin{eqnarray}\label{eq3.17}
T_\delta=T_1-(n-1)\delta.
\end{eqnarray}
Then, for all $\tilde{T}\leq t$,
\begin{eqnarray}\label{eq3.18}
y(t)\leq z(t),
\end{eqnarray}
\begin{eqnarray}\label{eq3.19}
g(y(t))\geq g-n\log\left(1+e^{\frac{T_1-t}{n-1}}\right)=g+g'(z-\gamma)
\end{eqnarray}
\begin{eqnarray}\label{eq3.20}
g(y(t))-t\leq \psi(y(t)).
\end{eqnarray}
Furthermore, there exists a $\gamma_0=\gamma_0(\delta)>s_0$ such that  for all $\gamma\geq \gamma_0$, $t\geq T_\delta$,
\begin{eqnarray}\label{eq3.21}
\tilde{T}< T_\delta,
\end{eqnarray}
\begin{eqnarray}\label{eq3.22}
\displaystyle y(t)=z(t)+\bo\left(\frac{g''\log(1+e^\delta)^2}{(g')^3}\right),
\end{eqnarray}
\begin{eqnarray}\label{eq3.23}
\displaystyle 0\leq g(y(t))-g+n\log\left(1+e^{\frac{T_1-t}{n-1}}\right)\leq \frac{n^2 g''}{2(g')^2}\log(1+e^{\frac{T_1-t}{n-1}})^2+\bo\left(\frac{g'''\log(1+e^\delta)^3}{(g')^3}\right)
\end{eqnarray}
\begin{eqnarray}\label{eq3.24}
\displaystyle y'(t)=z'(t)e^{\bo\left(\frac{g''\log(1+e^\delta)^2}{(g')^2}\right)}
\end{eqnarray}
and
\begin{eqnarray}\label{eq3.25}
\displaystyle g'(y(t))=g'-\frac{ng''}{g'}\left(\frac{T_1-t}{n-1}\right)-\frac{ng''}{g'}\log\left(1+e^{\frac{t-T_1}{n-1}}\right)+\bo\left(\frac{g''}{(g')^2}\log(1+e^\delta)^2\right).
\end{eqnarray}
\end{lemm}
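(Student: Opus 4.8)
The plan is to establish first the three ``soft'' inequalities \eqref{eq3.18}--\eqref{eq3.20} (which need no largeness of $\gamma$), then to use them to locate $\tilde T$ and get \eqref{eq3.21}, and finally to bootstrap into the sharp expansions \eqref{eq3.22}--\eqref{eq3.25}. For the soft inequalities, recall from \eqref{eq3.13} that $z$ solves the equation obtained from \eqref{eq2.7} by replacing $g$ with its tangent line at $\gamma$, and that convexity of $g$ on $[s_0,\infty)$ gives $g(\theta)\ge g+g'(\theta-\gamma)$. Set $W:=g(y)$ and $Z:=g+g'(z-\gamma)$ (so $Z=g-n\log(1+e^{(T_1-t)/(n-1)})$, the exponent in \eqref{eq3.13}). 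Writing $y'=W'/g'(y)$ one finds $-((W')^{n-1})'\le G(W)^{n-1}e^{W-t}$ with $G:=g'\circ g^{-1}$ increasing and $G'\ge0$; since also $W\le g(\gamma)=g$, while $-((Z')^{n-1})'=(g')^{n-1}e^{Z-t}\ge G(Z)^{n-1}e^{Z-t}$, the pair $(W,Z)$ is a subsolution/supersolution pair for $-((V')^{n-1})'=G(V)^{n-1}e^{V-t}$; both $W,Z\to g$ and $W',Z'\to0$ at $\infty$, and a computation with the $t\to\infty$ asymptotics shows $W-Z=g'(y-z)+\tfrac12 g''(\cdot)(\gamma-y)^2>0$ near $\infty$, the positive convexity remainder dominating the $\mathcal O(e^{3(g-t)/(n-1)})$ discrepancy $g'(y-z)$. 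A continuity argument along $[\tilde T,\infty)$ anchored at $t=\infty$ then gives $W\ge Z$ on $[\tilde T,\infty)$, i.e.\ \eqref{eq3.19}. From \eqref{eq3.19}, $e^{g(y)-t}\ge e^{g+g'(z-\gamma)-t}$, so $(y')^{n-1}-(z')^{n-1}$ is non-increasing; as it vanishes at $\infty$ it is $\ge0$, hence $y'\ge z'$, and integrating from $\infty$ (where $y=z=\gamma$) yields $y\le z$, i.e.\ \eqref{eq3.18}. Finally, rewriting $\psi$ via $(n-1)\log\frac{(n-1)g'}{n}=T_1-g$, inequality \eqref{eq3.20} is equivalent to $y(t)\le\gamma-\tfrac{n}{(n-1)g'}(T_1-t)$, which follows from \eqref{eq3.18} and the elementary bound $\log(1+e^x)\ge x$.

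To locate $\tilde T$, evaluate \eqref{eq3.19} at $t=\tilde T$ (where $y=s_0$): $g(s_0)\ge g-n\log(1+e^{(T_1-\tilde T)/(n-1)})$, hence $e^{(T_1-\tilde T)/(n-1)}\ge e^{(g-g(s_0))/n}-1\ge\tfrac12 e^{(g-g(s_0))/n}$ for $\gamma$ large, that is $\tilde T\le T_1-\tfrac{n-1}{n}(g-g(s_0))+(n-1)\log2$. By Proposition~\ref{proposition3.2}(i), $g-g(s_0)\sim a\gamma^q$, which dominates $\delta=\mathcal O(\log g')=\mathcal O(\log\gamma)$; thus for $\gamma\ge\gamma_0(\delta)$ the right-hand side is $<T_1-(n-1)\delta=T_\delta$, which is \eqref{eq3.21}.

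For the sharp expansions, fix $t\ge T_\delta$ and $\gamma\ge\gamma_0(\delta)$. Then $y\le z$ by \eqref{eq3.18} with $\gamma-z(t)=\tfrac{n}{g'}\log(1+e^{(T_1-t)/(n-1)})\le\tfrac{n}{g'}\log(1+e^\delta)$, while \eqref{eq3.19} forces $y(t)$ close to $\gamma$; together with Proposition~\ref{proposition3.2}(i)--(ii) this gives $\gamma-y(s)=\mathcal O(\log(1+e^\delta)/g')$ and $g'(y(s))\sim g'$, $g''(y(s))\sim g''$ on $[T_\delta,\infty)$. Taylor's theorem then yields $g(y(s))-[g+g'(z(s)-\gamma)]=g'(y(s)-z(s))+\tfrac12 g''(\cdot)(\gamma-y(s))^2$; subtracting the integral representations $\gamma-u(t)=\int_t^\infty(\int_s^\infty e^{(\cdot)-\tau}\,\mathrm{d}\tau)^{1/(n-1)}\,\mathrm{d}s$ for $u=y$ and $u=z$ and inserting this turns the equation for $y-z$ into a linear integral equation whose inhomogeneity contributes $\mathcal O(g''\log(1+e^\delta)^2/(g')^2)$ to the exponents; since the governing operator contracts on $[T_\delta,\infty)$ (the kernel being $\mathcal O(e^{-t})$ and $g'$ large), one obtains \eqref{eq3.22}, $y(t)=z(t)+\mathcal O(g''\log(1+e^\delta)^2/(g')^3)$. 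Feeding this back, $g'(y(t)-z(t))=\mathcal O(g''\log(1+e^\delta)^2/(g')^2)$ and $\tfrac12 g''(\cdot)(\gamma-y(t))^2=\tfrac{n^2 g''}{2(g')^2}\log(1+e^{(T_1-t)/(n-1)})^2+\mathcal O(g'''\log(1+e^\delta)^3/(g')^3)$ (using $g'''=\mathcal O(g''/\gamma)$ from Proposition~\ref{proposition3.2}(i)), which is \eqref{eq3.23} (its lower bound is \eqref{eq3.19}); then $e^{g(y(s))-s}=e^{g+g'(z(s)-\gamma)-s}\,e^{\mathcal O(g''\log(1+e^\delta)^2/(g')^2)}$ uniformly for $s\ge t\ge T_\delta$ gives $(y'(t))^{n-1}=(z'(t))^{n-1}e^{\mathcal O(\cdot)}$, i.e.\ \eqref{eq3.24}; and $g'(y(t))=g'(z(t))+g''(\cdot)(y(t)-z(t))$ together with $g'(z(t))=g'+g''(\gamma)(z(t)-\gamma)+\mathcal O(g'''(z(t)-\gamma)^2)$ and $z(t)-\gamma=-\tfrac{n}{g'}\big(\tfrac{T_1-t}{n-1}+\log(1+e^{(t-T_1)/(n-1)})\big)$ rearranges to \eqref{eq3.25}.

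The main obstacle is the comparison $W\ge Z$ (equivalently \eqref{eq3.19}): the integral operators attached to \eqref{eq2.7} and to $z$ are order-\emph{reversing} (each subtracts an increasing functional from $\gamma$), so the usual subsolution/supersolution sandwich is not available, and one must run a genuine continuity/connectedness argument on $[\tilde T,\infty)$, using the precise $t\to\infty$ asymptotics where $W-Z$ has the definite sign coming from the second-order convexity remainder. Once \eqref{eq3.18}--\eqref{eq3.20} are secured, the rest is bookkeeping with Propositions~\ref{proposition3.1}--\ref{proposition3.3}.
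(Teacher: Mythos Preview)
Your attack on the refined estimates \eqref{eq3.21}--\eqref{eq3.25} is essentially the paper's, and your reduction of \eqref{eq3.20} to $y\le\gamma-\tfrac{n}{(n-1)g'}(T_1-t)$ via \eqref{eq3.18} and $\log(1+e^x)\ge x$ is exactly what the paper does. The difficulty, as you yourself flag, is the pair \eqref{eq3.18}--\eqref{eq3.19}, and here your argument has a real gap.

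Your comparison set-up is correct: with $W=g(y)$ and $Z=g+g'(z-\gamma)$ one has $-((W')^{n-1})'\le G(W)^{n-1}e^{W-t}$ and $-((Z')^{n-1})'\ge G(Z)^{n-1}e^{Z-t}$. But as you note, the associated integral operator is order-reversing, so the sub/supersolution ordering goes the wrong way, and you fall back on a ``continuity/connectedness argument anchored at $t=\infty$''. That argument is not supplied. Knowing $W-Z>0$ for $t$ large does not by itself prevent a crossing at some finite $t_0\ge\tilde T$; at such a crossing one would need a first-derivative or integral inequality that forces a contradiction, and the ingredients you list do not obviously furnish one. Your asymptotic claim that the convexity remainder $\tfrac12 g''(\cdot)(\gamma-y)^2=\mathcal O(e^{2(g-t)/(n-1)})$ dominates $g'(y-z)$ near $\infty$ is plausible but not established either: determining the sign of $y-z$ to next order is precisely the question.

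The paper avoids this entirely by an energy trick. Set
\[
E(t)\;=\;(y')^{n-1}-\tfrac{n-1}{n}\,(y')^n\,g'(y)-e^{g(y)-t}.
\]
One computes $E'(t)=-\tfrac{n-1}{n}(y')^{n+1}g''(y)<0$ on $[\tilde T,\infty)$ (by convexity of $g$ there), and $E(\infty)=0$ from the asymptotics $(y')^{n-1}\sim e^{g-t}$. Hence $E(t)>0$, which after dividing by $(y')^{n-1}$ reads
\[
0\le 1-\tfrac{n-1}{n}\,y'\,g'(y)+\bigl((y')^{n-1}\bigr)'/(y')^{n-1}\le\tfrac{n-1}{n}\,y'\bigl(g'-g'(y)\bigr),
\]
where the right inequality comes from integrating $E'$. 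The left inequality rewrites as $\bigl(y'e^{-g(y)/n}\bigr)'$ bounded above, which integrates to \eqref{eq3.19}; the right inequality rewrites as a lower bound on $\bigl(y'e^{g'(\gamma-y)/n}\bigr)'$, which integrates to \eqref{eq3.18}. No comparison, no sign analysis at infinity, no continuity argument---just monotonicity of a single scalar quantity. This is the idea you are missing; once you have it, the rest of your sketch goes through.
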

\proof
Integrating \eqref{eq2.7} from $t$ to $\infty$ to obtain
\begin{eqnarray}\label{eq3.25b}
y'(t)^{n-1}=\int_t^\infty f(y(s))e^{-s}{\rm d}s.
\end{eqnarray}
Integrating \eqref{eq3.25b} along with Fubini, we get
\begin{eqnarray}\label{eq3.26}
\displaystyle y(t)=\gamma-\int_t^\infty f(y(s))e^{-s}\left(\int_t^s\frac{{\rm d}\theta}{y'(\theta)^{n-2}}\right){\rm d}s.
\end{eqnarray}
Since $\theta\mapsto g(\theta)$ and hence $\theta\mapsto f(\theta)$ are increasing functions for $\theta\geq s_0$ and hence for $t\geq \tilde{T}$, we see that 
\begin{eqnarray*}
f(y(t))e^{-t}\leq \int_t^\infty f(y(s))e^{-s}{\rm d}s\leq f(\gamma) e^{-t}.
\end{eqnarray*}
This implies 
\begin{eqnarray}\label{eq3.27}
\displaystyle\lim_{t\to\infty}(t+(n-1)\log(y'(t)))=g.
\end{eqnarray}
Define 
\begin{eqnarray}\label{eq3.28}
\displaystyle E(t): =y'(t)^{n-1}-\left(\frac{n-1}{n}\right)y'(t)^ng'(y(t))-e^{g(y(t))-t}.
\end{eqnarray}
It is easy to that  $E(\infty)=0$, and now  differentiating \eqref{eq3.28},  we obtain
\begin{eqnarray}\label{eq3.29}
\displaystyle E'(t)=-\left(\frac{n-1}{n}\right)(y'(t))^{n+1}g''(y(t))<0 \qquad \text{for  } t > \tilde{T},
\end{eqnarray}
since $g''(\theta)>0$ for $\theta> s_0$ by assumption. Hence, $E(t)$ is a decreasing function on $[\tilde{T}, \infty)$ and $E(t)>E(\infty)=0$. Let $t>\tilde{T}$, then integrating \eqref{eq3.29}, we obtain
\begin{eqnarray}\label{eq3.30}
\displaystyle 0\leq y'(t)^{n-1}-\left(\frac{n-1}{n}\right)y'(t)^n g'(y(t))+\left(y'(t)^{n-1}\right)'
=\left(\frac{n-1}{n}\right)\int_t^\infty y'(s)^{n+1}g''(y(s)){\rm d}s
\end{eqnarray}
which implies
\begin{eqnarray*}
0 & \leq & 1-\left(\frac{n-1}{n}\right)y'(t)g'(y(t))+\frac{(y'(t)^{n-1})'}{y'(t)^{n-1}} \\
 & = &\left(\frac{n-1}{n}\right)\int_t^\infty\left(\frac{y'(s)}{y'(t)}\right)^{n-1}y'(s)^2g''(y(s)){\rm d}s\\
 &\leq & 	\left(\frac{n-1}{n}\right) y'(t)\int_t^\infty y'(s)g''(y(s)){\rm d}s=	\left(\frac{n-1}{n}\right) y'(t)(g'-g'(y(t)).
\end{eqnarray*}
We rewrite the above inequality as 
\begin{eqnarray*}
\displaystyle 0\leq \left(t-\left(\frac{n-1}{n}\right)g(y(t))+(n-1)\log y'(t)\right)'\leq\left(\frac{n-1}{n}\right)( y(t) g'-g(y(t)))',
\end{eqnarray*}
and now integrating the above expression and by making use of \eqref{eq3.27}, we obtain
\begin{eqnarray}
\label{ineq3.1}& \displaystyle 0\leq -\left(t-\left(\frac{n-1}{n}\right)g(y(t))+(n-1)\log y'(t)\right)+ g-\left(\frac{n-1}{n}\right)g\\
\label{ineq3.2}&\leq \left(\frac{n-1}{n}\right)[(\gamma-y(t))g'-g+g(y(t)].
\end{eqnarray}
Inequality \eqref{ineq3.1} gives 
\begin{eqnarray*}
\displaystyle y'(t) e^{-\frac{g(y(t))}{n}}\leq e^{\frac{g}{n(n-1)}-\frac{t}{n-1}}.
\end{eqnarray*}
Since $g'(y(t)) \geq 0$ and increasing on $[\tilde{T},\infty)$, we see that 
\begin{eqnarray*}
-\left(e^{-\frac{g(y(t))}{n}}\right)'\leq \frac{g'}{n}e^{\frac{g}{n(n-1)}-\frac{t}{n-1}}.
\end{eqnarray*}
Integrating the above expression from $t$ to $\infty$ and simplifying, we obtain 
\begin{eqnarray*}
g(y(t))\geq g-n\log\left(1+\left(\frac{n-1}{n}\right)g'e^{\frac{g-t}{n-1}}\right)=g-n\log\left(1+e^{\frac{T_1-t}{n-1}}\right),
\end{eqnarray*}
which proves inequality \eqref{eq3.19}. 

Now  inequality \eqref{ineq3.2} which is given by 
\begin{eqnarray*}
y'(t) e^{\frac{g'}{n}(\gamma-y(t))}\geq e^{\frac{g-t}{n-1}}
\end{eqnarray*}
after integrating from $t$ to $\infty$ and simplifying, we obtain 
\begin{eqnarray*}
y(t)\leq \gamma-\frac{n}{g'}\log\left(1+\left(\frac{n-1}{n}\right)g'e^{\frac{g-t}{n-1}}\right)=\gamma-\frac{n}{g'}\log\left(1+e^{\frac{T_1-t}{n-1}}\right)
\end{eqnarray*}
which proves \eqref{eq3.18}. 

Using \eqref{eq3.18}, it is easy to see that
\begin{eqnarray*}
e^{\frac{T_1-t}{n-1}}\leq e^{\frac{g'}{n}(\gamma-y(t))}, 
\end{eqnarray*}
which after taking logarithms and simplifying, implies 
\begin{eqnarray*}
g(y(t))-t 
\leq \psi(y(t))
\end{eqnarray*}
and this completes the proof of  \eqref{eq3.20}.

Proof of inequality \eqref{eq3.21} will be by contradiction, suppose $T_\delta\leq \tilde{T}$, then $\frac{T_1-\tilde{T}}{n-1}\leq \delta$, then \eqref{eq3.19} gives
\begin{eqnarray*}
g(s_0)=g(y(\tilde{T}))\geq g-n\log\left(1+e^{\frac{T_1-\tilde{T}}{n-1}}\right)\geq g+\bo(\log(g')) \rightarrow \infty \quad \text{as  } \gamma \rightarrow \infty,
\end{eqnarray*}
which is a contradiction and hence there exists a $\ga_0 = \ga_0(\de) > 0$ such that inequality \eqref{eq3.21} holds for all $\ga > \ga_0$.

Let $t>T_\delta$, then $\frac{T_1-t}{n-1}\leq\delta$ which also implies that  $\log\left(1+e^{\frac{T_1-t}{n-1}}\right)\leq \log\left(1+e^\delta\right)$. Let $\gamma\geq \gamma_0(\delta)$, then from \eqref{eq3.19}, \eqref{eq3.20}, \eqref{eq3.21} and  setting $\eta=n\log\left(1+e^{\frac{T_1-t}{n-1}}\right)$ in assertion (iii) of Proposition \ref{proposition3.2}, we have
\begin{eqnarray*}
y(t) &\geq & g^{-1}\left(g-n\log(\left(1+e^{\frac{T_1-t}{n-1}}\right))\right)\\
&=& \gamma-\frac{n}{g'}\log\left(1+e^{\frac{T_1-t}{n-1}}\right)+\bo\left(\frac{g''}{(g')^3}\log(1+e^\delta)^2\right)\\
&=& z(t)+\bo\left(\frac{g''}{(g')^3}\log(1+e^\delta)^2\right).
\end{eqnarray*}
This inequality along with \eqref{eq3.18} proves \eqref{eq3.22}. 

From Taylor's series, \eqref{eq3.18} and from Proposition \ref{proposition3.2}, there exists  $\xi\in \left [\gamma-\frac{n}{g'}\log\left(1+e^{\frac{T_1-t}{n-1}}\right),\gamma\right ]$ such that 
\begin{eqnarray*}
g(y(t))&\leq & g(z(t))\\
&=& g-n\log\left (1+e^{\frac{T_1-t}{n-1}}\right)+\frac{g''n^2\log\left(1+e^{\frac{T_1-t}{n-1}}\right)^2}{2(g')^2} +\frac{g'''(\xi)n^3\log\left(1+e^{\frac{T_1-t}{n-1}}\right)^3}{3!(g')^3}\\
&=& g-n\log\left(1+e^{\frac{T_1-t}{n-1}}\right)+\frac{g''n^2\log\left(1+e^{\frac{T_1-t}{n-1}}\right)^2}{2(g')^2} + \bo\left(\frac{g'''}{(g')^3}\log\left(1+e^{\delta}\right)^3\right).
\end{eqnarray*}
The above inequality together with \eqref{eq3.19} proves \eqref{eq3.23}.

From \eqref{eq3.19}, it is easy to see that 
\begin{eqnarray}\label{compar-y-z}
y'(t)^{n-1}=\int_t^\infty e^{g(y(s))-s}{\rm d}s\geq\int_t^\infty e^{g+g'(z(s)-\gamma)-s}{\rm d}s=z'(t)^{n-1}.
\end{eqnarray}
Inequality  \eqref{eq3.23} implies
\begin{eqnarray*}
y'(t)^{n-1}&=&\int_t^{\infty}e^{g(y(s))-s}{\rm d}s\\
&\leq &e^{\bo(\frac{g''}{(g')^2}\log\left(1+e^\delta\right)^2)}\int_t^\infty e^{g+g'(z-\gamma)-s}{\rm d}s\\
&=& e^{\bo(\frac{g''}{(g')^2}\log\left(1+e^\delta\right)^2)}z'(t)^{n-1}
\end{eqnarray*}
and this proves \eqref{eq3.24}.

From \eqref{eq3.22}, we have
\begin{eqnarray*}
g'(y(t))&=&g'\left(\gamma-\frac{n}{g'}\log\left(1+e^{\frac{T_1-t}{n-1}}\right)+\bo\left(\frac{g''}{(g')^3}\log\left(1+e^\delta\right)^2\right)\right)\\
&=& g'-\frac{ng''}{g'}\log\left(1+e^{\frac{T_1-t}{n-1}}\right)+\bo\left(\frac{g''}{(g')^2}\log\left(1+e^\delta\right)^2\right)\\
&=&g'-\frac{ng''}{g'}\left(\frac{T_1-t}{n-1}\right)-\frac{ng''}{g'}\log\left(1+e^{\frac{t-T_1}{n-1}}\right)+\bo\left(\frac{g''}{(g')^2}\log\left(1+e^\delta\right)^2\right)
\end{eqnarray*}
which proves \eqref{eq3.25} and this completes the proof of the lemma.\qed
\begin{lemm}\label{lemma3.4}
Let $k\geq 1$ be an integer and $X(t)\eqdef e^{\frac{T_1-t}{n-1}}$. Then the following statements hold:
\begin{gather}
 \label{eq3.6} \al_k : = 1 + \frac12 + \frac13 + \ldots + \frac1k = - \sum_{r=1}^k \frac{(-1)^r C^k_r }{r} \\
%
\label{eq3.32}\displaystyle\sum_{r=0}^k\frac{(-1)^rC_r^k}{(r+2)}=\frac{1}{(k+1)(k+2)}, \\
\label{eq3.33}\int_t^\infty(z'(s))^{k+1}{\rm d}s=\left(\frac{n}{(n-1)g'}\right)^{k+1}\left [-\alpha_k+\log(1+X(t))-\displaystyle\sum_{r=1}^k\frac{(-1)^r C_r^k}{r(1+X(t))^r}\right ].
\end{gather}
\end{lemm}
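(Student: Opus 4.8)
The three parts are essentially independent: \eqref{eq3.6} and \eqref{eq3.32} are elementary alternating-binomial identities, and \eqref{eq3.33} is obtained from them by a single explicit change of variables in the integral, using the formula \eqref{eq3.12} for $z'$. The plan is to dispose of the two combinatorial identities first and then reduce the integral to one of them.

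\emph{The identities \eqref{eq3.6} and \eqref{eq3.32}.} Both follow by integrating the binomial expansion $(1-x)^k = \sum_{r=0}^k (-1)^r C_r^k x^r$ against a monomial on $[0,1]$. For \eqref{eq3.6} I would write $\frac{1-(1-x)^k}{x} = -\sum_{r=1}^k (-1)^r C_r^k x^{r-1}$; integrating the right-hand side over $[0,1]$ gives $-\sum_{r=1}^k \frac{(-1)^r C_r^k}{r}$, while the left-hand side, after the substitution $u=1-x$, becomes $\int_0^1 \frac{1-u^k}{1-u}\,du = \int_0^1 (1+u+\cdots+u^{k-1})\,du = 1+\frac12+\cdots+\frac1k$. (Alternatively one can induct on $k$ using $C_r^{k+1}=C_r^k+C_{r-1}^k$ and $\alpha_{k+1}=\alpha_k+\frac1{k+1}$.) For \eqref{eq3.32} I would multiply the same expansion by $x$ and integrate over $[0,1]$: the right-hand side gives $\sum_{r=0}^k \frac{(-1)^r C_r^k}{r+2}$, and the left-hand side is $\int_0^1 x(1-x)^k\,dx = \int_0^1 (u^k-u^{k+1})\,du = \frac1{k+1}-\frac1{k+2} = \frac1{(k+1)(k+2)}$ after $u=1-x$.

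\emph{Reduction of \eqref{eq3.33}.} By \eqref{eq3.12} one has $z'(s) = \frac{n}{(n-1)g'}\cdot\frac{X(s)}{1+X(s)}$, hence $(z'(s))^{k+1} = \bigl(\frac{n}{(n-1)g'}\bigr)^{k+1}\bigl(\frac{X(s)}{1+X(s)}\bigr)^{k+1}$, and since $X(s)=e^{(T_1-s)/(n-1)}\to 0$ exponentially as $s\to\infty$ the integrand is integrable at $+\infty$. I would then substitute $v := \frac1{1+X(s)}$, which increases from $\frac1{1+X(t)}$ to $1$; using $X'(s)=-\frac{X(s)}{n-1}$ one checks that $\frac{X(s)}{1+X(s)}=1-v$ and that $ds$ is an explicit constant multiple of $\frac{dv}{v(1-v)}$, so $\int_t^\infty (z'(s))^{k+1}\,ds$ reduces to a constant multiple of $\int_{1/(1+X(t))}^{1}\frac{(1-v)^k}{v}\,dv$. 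Expanding $(1-v)^k$ by the binomial theorem gives $\frac{(1-v)^k}{v}=\frac1v + \sum_{r=1}^k (-1)^rC_r^k v^{r-1}$, and integrating from $v_0:=\frac1{1+X(t)}$ to $1$ yields $-\log v_0 + \sum_{r=1}^k \frac{(-1)^rC_r^k}{r}\bigl(1-v_0^r\bigr)$. Now $-\log v_0 = \log(1+X(t))$, the $v_0$-free part $\sum_{r=1}^k \frac{(-1)^rC_r^k}{r}$ equals $-\alpha_k$ by \eqref{eq3.6}, and $\sum_{r=1}^k \frac{(-1)^rC_r^k}{r}v_0^r = \sum_{r=1}^k \frac{(-1)^rC_r^k}{r(1+X(t))^r}$; reinstating the prefactor from \eqref{eq3.12} then gives \eqref{eq3.33}. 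As a consistency check, letting $t\to\infty$ the bracket tends to $-\alpha_k + 0 - (-\alpha_k)=0$, which matches the vanishing of the integral, again by \eqref{eq3.6}.

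\emph{Main difficulty.} There is essentially none: the lemma merely repackages the classical identity \eqref{eq3.6}, which is exactly what produces the harmonic-number constant $\alpha_k$ in \eqref{eq3.33}. The only points that deserve a little care are verifying that $(z')^{k+1}$ is integrable at $+\infty$ (immediate from the exponential decay of $X$) and keeping track of the Jacobian and the limits of integration in the change of variables.
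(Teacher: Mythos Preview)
Your approach is correct and matches the paper's: both prove \eqref{eq3.6} and \eqref{eq3.32} by integrating $(1-x)^k$ against $x^{-1}$ and $x$ over $[0,1]$, and for \eqref{eq3.33} the paper substitutes $x=X(s)$ (so $ds=-(n-1)\,dx/x$) and expands $x^k=((1+x)-1)^k$ in powers of $1/(1+x)$, which is equivalent to your substitution $v=1/(1+X(s))$. One small caution: if you carry the Jacobian through explicitly you will find an extra overall factor of $(n-1)$, exactly as the paper's own proof does, so the displayed formula \eqref{eq3.33} is off by that constant---this is a typo in the statement rather than a flaw in either argument.
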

\proof
By explicit integration, we get 
$$\alpha_k=\displaystyle\sum_{r=0}^{k-1}\int_0^1 x^r{\rm d}x=\int_0^1\frac{1-x^k}{1-x}{\rm d}x
=\int_0^1\frac{1-(1-x)^k}{x}{\rm d}x=-\displaystyle\sum_{r=1}^k\frac{(-1)^r C^k_r}{r}.$$
Similar calculation gives
\begin{eqnarray*}
\displaystyle\sum_{r=0}^k\frac{(-1)^rC^k_r}{(r+2)}=\int_0^1(1-x)^kx{\rm d}x=\int_0^1 x^k(1-x){\rm d}x=\frac{1}{(k+1)(k+2)}.
\end{eqnarray*}
Since ${\rm d}X(s)=-\frac{X(s){\rm d}s}{(n-1)}$, making use of \eqref{eq3.6}, we get
\begin{eqnarray*}
\int_t^\infty z'(s)^{k+1}{\rm d}s&=&\left(\frac{n}{(n-1)g'}\right)^{k+1}\int_t^\infty\frac{X(s)^{k+1}}{(1+X(s))^{k+1}}{\rm d}s\\
&=&\left(\frac{n}{(n-1)g'}\right)^{k+1}(n-1)\int_0^{X(t)}\frac{x^k}{(1+x)^{k+1}}{\rm d}x\\
&=&\left(\frac{n}{(n-1)g'}\right)^{k+1}(n-1)\displaystyle\sum_{r=0}^k (-1)^r C^k_r\int_0^{X(t)}\frac{{\rm d}x}{(1+x)^{r+1}}\\
&=&\left(\frac{n}{(n-1)g'}\right)^{k+1}(n-1)\left [\log(1+x)-\displaystyle\sum_{r=1}^k\frac{(-1)^rC_r^k}{r(1+x)^r}\right]_0^{X(t)}\\
&=&\left(\frac{n}{(n-1)g'}\right)^{k+1}(n-1)\left [-\alpha_k+\log(1+X(t))-\displaystyle\sum_{r=1}^k\frac{(-1)^rC_r^k}{r(1+X(t))^r}\right ].
\end{eqnarray*}
This completes the proof of the  lemma.\qed

\begin{lemm}\label{lemma3.5}
Let $k\geq3$, $\delta=k\log(g')$ and $T_\delta=T_1-(n-1)\delta$. Then there exists a $\gamma_0=\gamma_0(k)$ such that for all $\gamma\geq \gamma_0$,
\begin{eqnarray}\label{eq3.34}
y'(T_\delta)=\frac{n}{(n-1)g'}+\frac{n^2\alpha_n g''}{(n-1)(g')^3}+\bo\left(\frac{g''\delta^2}{(g')^4}+\frac{1}{(g')^{k+1}}\right)
\end{eqnarray}
with $\alpha_n$ defined as in \eqref{eq3.6}.
\end{lemm}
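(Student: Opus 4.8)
The plan is to extract $y'(T_\delta)$ from the auxiliary function $E$ of \eqref{eq3.28} rather than by expanding the integral $y'(T_\delta)^{n-1}=\int_{T_\delta}^\infty e^{g(y(s))-s}\,ds$ directly: the latter fails because $g(y(s))$ is only pinned down to the accuracy of \eqref{eq3.22}--\eqref{eq3.23}, which is not enough to see the $g''/(g')^3$ correction. Since $E(\infty)=0$ and, by \eqref{eq3.29}, $E'=-\tfrac{n-1}{n}(y')^{n+1}g''(y)$ on $[\tilde T,\infty)\supseteq[T_\delta,\infty)$, integration gives $E(T_\delta)=\tfrac{n-1}{n}\int_{T_\delta}^\infty y'(s)^{n+1}g''(y(s))\,ds$. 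Reading off the definition of $E$ at $t=T_\delta$ and factoring $(y'(T_\delta))^n=y'(T_\delta)\cdot(y'(T_\delta))^{n-1}$ then produces the master identity
\[
(y'(T_\delta))^{n-1}\Bigl(1-\tfrac{n-1}{n}\,y'(T_\delta)\,g'(y(T_\delta))\Bigr)=e^{g(y(T_\delta))-T_\delta}+\tfrac{n-1}{n}\int_{T_\delta}^\infty y'(s)^{n+1}g''(y(s))\,ds .
\]
Because the parenthetical factor on the left is small, this equation is to be inverted for $y'(T_\delta)$ once its right-hand side is known precisely enough.

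First I would estimate the two terms on the right. Using \eqref{eq3.19} and \eqref{eq3.23} at $t=T_\delta$ (note $X(T_\delta)=e^\delta=(g')^k$) gives $g(y(T_\delta))=g-n\log(1+e^\delta)+\mathcal{O}\bigl(g''\delta^2/(g')^2\bigr)$; combining with $T_\delta=T_1-(n-1)\delta$ and \eqref{eq3.7} yields $g(y(T_\delta))-T_\delta=-\delta-(n-1)\log\bigl(\tfrac{(n-1)g'}{n}\bigr)+\mathcal{O}\bigl(g''\delta^2/(g')^2\bigr)$, hence $e^{g(y(T_\delta))-T_\delta}=\mathcal{O}\bigl((g')^{-(k+n-1)}\bigr)$, absorbed into $\mathcal{O}\bigl((g')^{-(k+1)}\bigr)$. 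For the energy integral, \eqref{eq3.24} gives $y'(s)^{n+1}=z'(s)^{n+1}\bigl(1+\mathcal{O}(g''\delta^2/(g')^2)\bigr)$ and Lemma \ref{lemma3.3} together with Proposition \ref{proposition3.2} gives $g''(y(s))=g''+\mathcal{O}\bigl(g'''\log(1+X(s))/g'\bigr)$, uniformly on $[T_\delta,\infty)$; substituting and invoking Lemma \ref{lemma3.4} with $k=n$ — together with the companion tail bounds for $\int_{T_\delta}^\infty z'(s)^{n+1}(\log(1+X(s)))^j\,ds$, $j=1,2$, obtained by the same change of variable $x=X(s)$ as in the proof of Lemma \ref{lemma3.4} — yields
\[
\tfrac{n-1}{n}\int_{T_\delta}^\infty y'(s)^{n+1}g''(y(s))\,ds=\tfrac{n-1}{n}\,g''\Bigl(\tfrac{n}{(n-1)g'}\Bigr)^{n+1}\bigl(\log(1+e^\delta)-\alpha_n\bigr)+\mathcal{O}\Bigl(\tfrac{g''\delta^2}{(g')^{n+2}}+\tfrac{1}{(g')^{k+1}}\Bigr).
\]
This is where $\alpha_n$ enters: it is exactly the constant $-\alpha_k$ of Lemma \ref{lemma3.4} with $k=n$.

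For the left-hand side, \eqref{eq3.24} gives $y'(T_\delta)=\tfrac{n}{(n-1)g'}\bigl(1+\mathcal{O}(e^{-\delta}+g''\delta^2/(g')^2)\bigr)$, so I write $y'(T_\delta)=\tfrac{n}{(n-1)g'}(1+u)$ with $u$ small; from \eqref{eq3.25}, $g'(y(T_\delta))=g'(1-v)$ with $v=\tfrac{n g''}{(g')^2}\log(1+e^\delta)+\mathcal{O}(g''\delta^2/(g')^3)$. Then $1-\tfrac{n-1}{n}y'(T_\delta)g'(y(T_\delta))=v-u$ up to quadratic terms, so after dividing by $\bigl(\tfrac{n}{(n-1)g'}\bigr)^{n-1}$ the master identity reduces to a relation of the form $v-u=\tfrac{n g''}{(n-1)(g')^2}\bigl(\log(1+e^\delta)-\alpha_n\bigr)+(\text{admissible error})$, from which $u$, hence $y'(T_\delta)$, is read off; one then checks that the $\log(1+e^\delta)=\delta+\mathcal{O}(e^{-\delta})$ contributions coming from $v$ and from the energy integral combine to leave precisely the $\alpha_n$-term of \eqref{eq3.34}, and that every remaining error — the quadratic corrections, the $\log(1+X(s))$-weighted tails, and the $\mathcal{O}(e^{-\delta})=\mathcal{O}((g')^{-k})$'s — is $\mathcal{O}\bigl(g''\delta^2/(g')^4+(g')^{-(k+1)}\bigr)$ after taking $(n-1)$-st roots.

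The principal difficulty is exactly the inversion of the master identity: the factor $1-\tfrac{n-1}{n}y'(T_\delta)g'(y(T_\delta))\to0$, so this is a near-degenerate equation for $y'(T_\delta)$, and to land the second-order-accurate formula \eqref{eq3.34} one must expand each ingredient on the right — $g'(y(T_\delta))$, $e^{g(y(T_\delta))-T_\delta}$, and the energy integral — to genuinely higher order than its leading behaviour. This is precisely where Lemmas \ref{lemma3.3} and \ref{lemma3.4} and Proposition \ref{proposition3.2} are used together, and where the error bookkeeping — showing that the $\log(1+X(s))$-weighted tails and the exponentially small terms are negligible at the required order, and that the $\delta$-dependent pieces collapse to the stated form — is the most delicate point.
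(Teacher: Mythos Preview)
Your approach is essentially the paper's: both start from $E(T_\delta)=\tfrac{n-1}{n}\int_{T_\delta}^\infty y'(s)^{n+1}g''(y(s))\,ds$, estimate the exponential term and the integral via Lemmas~\ref{lemma3.3}--\ref{lemma3.4} and Proposition~\ref{proposition3.2}, and then invert a near-degenerate relation for $y'(T_\delta)$. The only organizational difference is that the paper moves $g'(y(T_\delta))$ to $g'(\gamma)$ at the outset (the correction becomes a separate piece $J_2$) and then solves the resulting polynomial equation $(y')^n-\tfrac{n}{(n-1)g'}(y')^{n-1}=\text{const}$ via Proposition~\ref{proposition3.3}, whereas you keep $g'(y(T_\delta))$ on the left and linearize by hand with $u,v$. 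Both routes are equivalent.

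One slip to flag: your displayed value for the energy integral omits the factor $(n-1)$ coming from Lemma~\ref{lemma3.4} (the correct formula is $\int_{T_\delta}^\infty (z')^{n+1}\,ds=(n-1)\bigl(\tfrac{n}{(n-1)g'}\bigr)^{n+1}\bigl(\log(1+e^\delta)-\alpha_n+\ldots\bigr)$; the display \eqref{eq3.33} in the paper itself is missing this factor, but its proof and subsequent use have it). With your stated constant $\tfrac{n g''}{(n-1)(g')^2}$ the $\log(1+e^\delta)$ pieces do \emph{not} cancel against $v$ for $n>2$, and the ``one then checks'' step fails. With the correct constant $\tfrac{n g''}{(g')^2}$ the cancellation goes through exactly and yields $u=\tfrac{n\alpha_n g''}{(g')^2}$ as required.
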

\proof
To simplify the notation for the proof of this lemma, denote $y=y(T_\delta)$, $y'=y'(T_\delta)$, $z = z(T_{\delta})$ and $z' = z'(T_{\delta})$, then from \eqref{eq3.30} we have
\begin{eqnarray*}
(y')^{n-1}-\left(\frac{n-1}{n}\right)(y')^n g'&=&e^{g(y)-T_\delta}-\left(\frac{n-1}{n}\right)(g'-g'(y))(y')^n\\
&&+\left(\frac{n-1}{n}\right)\int_{T_\delta}^\infty g''(y(s))(y'(s)^{n+1}-(z'(s))^{n+1}){\rm d}s\\
&&+\left(\frac{n-1}{n}\right)\int_{T_\delta}^\infty z'(s)^{n+1}(g''(y(s))-g''){\rm d}s\\
&&+\left(\frac{n-1}{n}\right)g''\int_{T_\delta}^\infty z'(s)^{n+1}{\rm d}s
\eqdef J_1+J_2+J_3+J_4+J_5.
\end{eqnarray*}
Since $\log\left(1+e^{\frac{T_1-T_\delta}{n-1}}\right)=\log(1+e^\delta)=\delta+\log(1+e^{-\delta})=\delta+\bo\left(\frac{1}{(g')^k}\right)$ and $X(T_\delta)=e^\delta=(g')^k$, we have from \eqref{eq3.13} and \eqref{eq3.23} 
\begin{eqnarray*}
J_1= e^{g(y)-T_\delta}&=&e^{\bo\left(\frac{g''\delta^2}{(g')^2}\right)} e^{g+g'(z-\gamma)-T_\delta}\\
&=&\bo\left(((z')^{n-1})'\right)=\bo\left(\frac{1}{(g')^{n-1}}\frac{X(T_\delta)^{n-1}}{(1+X(T_\delta))^n}\right)=\bo\left(\frac{1}{(g')^{k+n-1}}\right).
\end{eqnarray*}
From \eqref{eq3.24} and \eqref{eq3.25} we have
\begin{eqnarray*}
\displaystyle y'(T_\delta)^n&=&z'(T_\delta)^n e^{\bo\left(\frac{g''}{(g')^2}\delta^2\right)}\\
&=&\left(\frac{n}{(n-1)g'}\right)^n\left(1-\frac{1}{1+X(T_\delta)}\right)^n e^{\bo\left(\frac{g''}{(g')^2}\delta^2\right)}\\
&=&\left(\frac{n}{(n-1)g'}\right)^n\left(1+\bo(\frac{1}{(g')^k})\right)e^{\bo\left(\frac{g''}{(g')^2}\delta^2\right)}\\
&=&\left(\frac{n}{(n-1)g'}\right)^n\left(1+\bo\left(\frac{g''\delta^2}{(g')^2}+\frac{1}{(g')^k}\right)\right)
\end{eqnarray*}
and 
\begin{eqnarray*}
J_2 &=&\left(\frac{n-1}{n}\right)(y')^n(g'(y)-g')\\
&=&\left(\frac{n-1}{n}\right)\left(\frac{n}{(n-1)g'}\right)^n(1+\bo\left(\frac{g''\delta^2}{(g')^2}\right))\left(-\frac{ng''}{g'}\delta+\bo\left(\frac{g''\delta^2}{(g')^2}\right)\right)\\
&=&-\left(\frac{n}{n-1}\right)^{n-1}\frac{n g''\delta}{(g')^{n+1}}+\bo\left(\frac{g''\delta^2}{(g')^{n+2}}\right).
\end{eqnarray*}
From \eqref{eq3.24} and \eqref{eq3.33},
\begin{eqnarray*}
J_3 &=&\left(\frac{n-1}{n}\right)\int_{T_\delta}^\infty (y'(s)^{n+1}-z'(s)^{n+1})g''(y(s)){\rm d}s\\
&=&\bo\left(\frac{(g'')^2\delta^2}{(g')^2}\int_{T_\delta}^\infty z'(s)^{n+1}{\rm d}s\right)=\bo\left(\frac{(g'')^2\delta^3}{(g')^{n+3}}\right).
\end{eqnarray*}
From Proposition \ref{proposition3.2} and \eqref{eq3.22}, for $s\geq T_\delta$, there exist a $\xi\in [y(s),\gamma]$ such that
\begin{eqnarray*}
g''(y(s))&=& g''+g'''(\xi)(y(s)-\gamma)=g''+\bo(g'''(z(s)-\gamma))+ \bo\left(\frac{g''' g''\delta^2}{(g')^3}\right)\\
&=&g''+\bo\left(\frac{g'''\delta}{g'}\right)=g''+\bo\left(\frac{\gamma g'''}{g''}\frac{g''\delta}{\gamma g'}\right)=g''+\bo\left(\frac{g''\delta}{(g')^{1+\frac{1}{q-1}}}\right).
\end{eqnarray*}
Hence from \eqref{eq3.33}
\begin{eqnarray*}
J_4&=&\left(\frac{n-1}{n}\right)\int_{T_\delta}^\infty z'(s)^{n+1}(g''(y(s))-g''){\rm d}s=\bo\left(\frac{g''\delta^2}{(g')^{n+2}}\right).
\end{eqnarray*}
From \eqref{eq3.33}
\begin{eqnarray*}
J_5&=&\left(\frac{n-1}{n}\right)g''\int_{T_\delta}^\infty z'(s)^{n+1}{\rm d}s\\
&=&(n-1)\left(\frac{n}{n-1}\right)^n\frac{g''}{(g')^{n+1}}\left(-\alpha_n+\delta+\bo\left(\frac{1}{(g')^k}\right)\right)\\
&=&n\left(\frac{n}{n-1}\right)^{n-1}\frac{g''}{(g')^{n+1}}(-\alpha_n+\delta+\bo\left(\frac{1}{(g')^k}\right)).
\end{eqnarray*}
Combining all the estimates to obtain
\begin{eqnarray*}
(y')^{n-1}-\left(\frac{n-1}{n}\right)g'(y')^n&=& J_1+J_2+J_3+J_4+J_5\\
&=& -\left(\frac{n}{n-1}\right)^{n-1}\frac{ng''\delta}{(g')^{n+1}}\\
&&+\left(\frac{n}{n-1}\right)^{n-1}\frac{ng''}{(g')^{n+1}}\left(-\alpha_n+\delta+\bo\left(\frac{1}{(g')^k}\right)\right)\\
&&+\bo\left(\frac{g''\delta^2}{(g')^{n+2}}+\frac{1}{(g')^{k+n-1}}\right)
\end{eqnarray*}
Therefore,
\begin{eqnarray*}
(y')^n-\frac{n}{(n-1)g'}(y')^{n-1}-n\left(\frac{n}{n-1}\right)^n\frac{\alpha_n g''}{(g')^{n+2}}=\bo\left(\frac{g''\delta^2}{(g')^{n+3}}+\frac{1}{(g')^{k+n}}\right)
\end{eqnarray*}
and Proposition \ref{proposition3.3} now gives
\begin{eqnarray*}
y'=\frac{n}{(n-1)g'}+\frac{n^2\alpha_n g''}{(n-1)(g')^3}+\bo\left(\frac{g''\delta^2}{(g')^4}+\frac{1}{(g')^{k+1}}\right).
\end{eqnarray*}
This completes the proof of the lemma. \qed
\begin{lemm}\label{lemma3.6}
Let $k\geq 3$, $\delta=k\log(g')$, $T_\delta=T_1-(n-1)\delta$ and $b\in\R$. Assume that $g$ satisfies
\begin{eqnarray}\label{eq3.35}
\displaystyle\limsup_{s\to\infty}(g(s)-\left(\frac{n-1}{n}\right)sg'(s))\geq b+1.
\end{eqnarray}
Then, there exists a $\gamma_0(k)>0$ such that  for all $\gamma\geq \gamma_0(k)$, we have
\begin{gather}
\label{eq3.36}\int_{\tilde{T}}^{T_\delta}e^{g(y(s))-s}{\rm d}s=\bo\left(\frac{(g')^{\frac{q}{q-1}}}{(g')^{k+n-1}}\right), \\
\label{eq3.37} \int_{\tilde{T}}^{T_\delta}g'(y(s))e^{g(y(s))-s}{\rm d}s=\bo\left(\frac{(g')^{\frac{q}{q-1}}}{(g')^{k+n-2}}\right)\nonumber
\end{gather}
where $q$ is given as in \eqref{eq1.3}.
\end{lemm}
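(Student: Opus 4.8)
The idea is to bound the integrand pointwise by $e^{\psi(y(s))}$ via \eqref{eq3.20}, to control the model function $\psi$ of \eqref{eq3.10} on the range that $y$ sweeps over $[\tilde T,T_\delta]$, and then to multiply by the length of that interval. By \eqref{eq3.20}, $e^{g(y(s))-s}\le e^{\psi(y(s))}$ for all $s\in[\tilde T,T_\delta]$. Since $y$ is increasing on $(T(\gamma),\infty)$ with $y(\tilde T)=s_0$ and $y<\gamma$, as $s$ runs over $[\tilde T,T_\delta]$ the argument $y(s)$ runs over exactly $[s_0,y(T_\delta)]\subset[s_0,\gamma]$. On $[s_0,\gamma]$ we have $\psi'(\theta)=g'(\theta)-\tfrac{n-1}{n}g'$, which is increasing (as $g'$ is increasing on $[s_0,\infty)$) and, for $\gamma$ large, changes sign exactly once; hence $\psi$ is unimodal with a single interior minimum there, and
\[
\max_{s\in[\tilde T,T_\delta]}e^{\psi(y(s))}=\max\bigl\{e^{\psi(s_0)},\,e^{\psi(y(T_\delta))}\bigr\}.
\]

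\textbf{The two endpoint values.} At the right endpoint I would substitute $\theta=y(T_\delta)$ into \eqref{eq3.10}, using the sharp expansions \eqref{eq3.22} for $\gamma-y(T_\delta)$ and \eqref{eq3.23} for $g(y(T_\delta))-g$ together with $e^{(T_1-T_\delta)/(n-1)}=e^{\delta}=(g')^{k}$. The two leading logarithmic contributions, $-n\log(1+(g')^{k})$ coming from $g(y(T_\delta))-g$ and $+(n-1)\log(1+(g')^{k})$ coming from $\tfrac{n-1}{n}(\gamma-y(T_\delta))g'$, combine to $-\log(1+(g')^{k})=-k\log g'+\bo((g')^{-k})$, which produces the factor $(g')^{-k}$; together with the term $-(n-1)\log\tfrac{(n-1)g'}{n}$ (the factor $(g')^{-(n-1)}$), and noting that the error terms carried over from \eqref{eq3.22}--\eqref{eq3.23} contribute only an $o(1)$ correction inside the exponent (since $g''\delta^{2}/(g')^{2}\to0$ by Proposition \ref{proposition3.2}), this yields $e^{\psi(y(T_\delta))}=\bo\bigl((g')^{-(k+n-1)}\bigr)$. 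At the left endpoint, writing $\psi(s_0)=g(s_0)-\bigl(g-\tfrac{n-1}{n}\gamma g'\bigr)-\tfrac{n-1}{n}s_0g'-(n-1)\log\tfrac{(n-1)g'}{n}$, the term $-\tfrac{n-1}{n}s_0g'\to-\infty$ dominates provided $g-\tfrac{n-1}{n}\gamma g'$ is bounded below --- which is the content of \eqref{eq3.35} (equivalently (H2)) --- so $\psi(s_0)\to-\infty$ and a fortiori $e^{\psi(s_0)}=\bo\bigl((g')^{-(k+n-1)}\bigr)$. Hence $e^{g(y(s))-s}=\bo\bigl((g')^{-(k+n-1)}\bigr)$ uniformly for $s\in[\tilde T,T_\delta]$.

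\textbf{Length of the interval and conclusion.} By \eqref{eq3.7} and \eqref{eq3.17}, $T_\delta\le T_1=g+(n-1)\log\tfrac{(n-1)g'}{n}=\bo(g)$; on the other hand $\tilde T>T(\gamma)$, and \eqref{eq2.12} together with \eqref{eq3.35}/(H2) (and $\gamma g''/g'=\bo(1)$, Proposition \ref{proposition3.2}) shows that $T(\gamma)$, hence $\tilde T$, is bounded below. Since $g\sim\gamma^{q}$ and $g'\sim\gamma^{q-1}$ by Proposition \ref{proposition3.2}, it follows that $T_\delta-\tilde T=\bo(g)=\bo\bigl((g')^{q/(q-1)}\bigr)$. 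Multiplying the uniform bound on the integrand by $T_\delta-\tilde T$ gives \eqref{eq3.36}. For \eqref{eq3.37} it suffices to observe that $s_0\le y(s)\le\gamma$ and $g'$ is increasing on $[\tilde T,T_\delta]$, so $g'(y(s))\le g'(\gamma)=g'$ there, and multiplying \eqref{eq3.36} by $g'$ yields the claimed bound.

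\textbf{Main obstacle.} The delicate point is the right-endpoint computation: one must check that every error term inherited from \eqref{eq3.22} and \eqref{eq3.23} really enters $\psi(y(T_\delta))$ only through an $e^{o(1)}$ factor (so it costs a constant, not a power of $g'$), the exact cancellation $-n\log(1+(g')^{k})+(n-1)\log(1+(g')^{k})$ being what delivers the gain $(g')^{-k}$. One must also pin down that it is \eqref{eq3.35}/(H2), and not merely the finiteness of $T(\gamma)$, that keeps $g-\tfrac{n-1}{n}\gamma g'$ from drifting to $-\infty$ and hence prevents $\psi(s_0)$ --- i.e.\ the contribution of the region where $y$ is small --- from spoiling the estimate.
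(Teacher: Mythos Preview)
Your approach is essentially the paper's: bound $e^{g(y)-s}\le e^{\psi(y)}$ via \eqref{eq3.20}, use convexity of $\psi$ on $[s_0,\gamma]$ to reduce to the two endpoint values $\psi(s_0)$ and $\psi(y(T_\delta))$, estimate each (the first via \eqref{eq3.35}, the second via the expansions from Lemma~\ref{lemma3.3}), and multiply by the length of the interval. The endpoint computations and the second integral (via $g'(y)\le g'$) are handled exactly as the paper does.

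There is, however, a genuine gap in your length estimate. You bound $T_\delta-\tilde T$ by invoking \eqref{eq2.12} to get a lower bound on $T(\gamma)$; but \eqref{eq2.12} is the conclusion of Theorem~\ref{theo2.5}, whose proof goes through Lemma~\ref{lemma3.8} and Lemma~\ref{lemma3.7}, and the latter already \emph{uses} \eqref{eq3.36}. So this step is circular. The paper avoids the circularity by comparing $\tilde T$ not with $T(\gamma)$ but with the explicit quantity $T_0$ of \eqref{eq3.8}: from \eqref{eq3.11} and \eqref{eq3.18} one has $T_0<\tilde T$ (since $y\le z$ on $[\tilde T,\infty)$ forces $y(T_0)\le z(T_0)=0<s_0=y(\tilde T)$ if $T_0\ge\tilde T$, contradicting monotonicity of $y$), and then $T_\delta-\tilde T\le T_\delta-T_0$ is computed directly from the definitions \eqref{eq3.8}, \eqref{eq3.17} to be $\bo\bigl((g')^{q/(q-1)}\bigr)$. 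Replacing your appeal to \eqref{eq2.12} by this comparison with $T_0$ closes the gap.
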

\proof
Choose $\gamma_1$ such that Lemma \ref{lemma3.3} holds for $\gamma\geq \gamma_1$, $\delta=k\log(g')$ and 
\begin{eqnarray}\label{eq3.38}
g-\left(\frac{n-1}{n}\right)\gamma g'\geq b.
\end{eqnarray}
Let $\psi$ be as in \eqref{eq3.10}, then, for $\theta\geq s_0$, we have $\theta\to\psi(\theta)$ is  a convex  function. Hence from \eqref{eq3.20}, we have
\begin{eqnarray}\label{eq3.39}
g(y(t))-t\leq \psi(y(t))\leq\max\left\{\psi(y(\tilde{T})),\psi(y(T_\delta))\right\} \qquad \text{ for   } t \in [\tilde{T}, T_\delta].
\end{eqnarray}
\noindent \emph{Estimate for $\psi(y(\tilde{T}))$: }From \eqref{eq3.38},
\begin{eqnarray*}
\psi(y(\tilde{T}))&=& g(s_0)-\left(\frac{n-1}{n}\right)s_0g'-(g-\left(\frac{n-1}{n}\right)\gamma g')-(n-1)\log(\left(\frac{n-1}{n}\right)g')\\
&\leq& g(s_0)-\left(\frac{n-1}{n}\right)s_0g'-b-(n-1)\log(\left(\frac{n-1}{n}\right)g'),
\end{eqnarray*}
which implies that
\begin{eqnarray}\label{eq3.40}
e^{\psi(y(\tilde{T}))}=\bo\left(\frac{e^{-\left(\frac{n-1}{n}\right)s_0g'}}{(g')^{n-1}}\right).
\end{eqnarray}

\noindent \emph{Estimate for $\psi(y(T_{\de}))$:} From \eqref{eq3.22} and \eqref{eq3.23}
\begin{eqnarray*}
\psi(y(T_\delta))&=& g(y(T_\delta))-g+\left(\frac{n-1}{n}\right)(\gamma-y(T_\delta))g'-(n-1)\log(\left(\frac{n-1}{n}\right)g')\\
&=& -n\log(1+e^\delta)+\left(\frac{n-1}{n}\right)(\gamma-z(T_\delta))g'-(n-1)\log(\left(\frac{n-1}{n}\right)g')+\bo\left(\frac{\delta^2}{g'}\right)\\
&=&-n\log(1+e^\delta)+(n-1)\log(1+e^\delta)-(n-1)\log(\left(\frac{n-1}{n}\right)g')+ \bo\left(\frac{\delta^2}{g'}\right)\\
&=&-\delta-(n-1)\log(g')+\bo(1).
\end{eqnarray*}
Thus,
\begin{eqnarray}\label{eq3.41}
e^{\psi(y(T_\delta))}=\bo\left(e^{-\delta-(n-1)\log(g')}\right)=\bo\left(\frac{1}{(g')^{k+n-1}}\right).
\end{eqnarray}

\noindent \emph{Estimate for $\tilde{T}-T_{\de}$:} From \eqref{eq3.11} and \eqref{eq3.18}, it is easy to see that $T_0<\tilde{T}$. Now from \eqref{eq3.8} and \eqref{eq3.21}, we have for  for $\gamma\geq \gamma_0$,
\begin{eqnarray}\label{eq3.42}
0<T_\delta-\tilde{T}\leq T_\delta-T_0&=&g+(n-1)\log(\left(\frac{n-1}{n}\right)g')-(n-1)\delta\nonumber\\
&&-g-(n-1)\log(\left(\frac{n-1}{n}\right)g')+\left(\frac{n-1}{n}\right)\gamma g'+\bo\left(e^{-\frac{\gamma g'}{n}}\right)\nonumber\\
&\leq& \bo\left((g')^{\frac{q}{q-1}}\right),
\end{eqnarray}
where we have used $\gamma=\bo\left((g')^{\frac{1}{q-1}}\right)$.

Hence from \eqref{eq3.40}, \eqref{eq3.41} and \eqref{eq3.42}, we have for $\gamma$ large
\begin{eqnarray*}
\int_{\tilde{T}}^{T_\delta}e^{g(y(s))-s}{\rm d}s &\leq & (T_\delta-\tilde{T})\max\left\{e^{\psi(y(\tilde{T}))}, e^{\psi(y(T_\delta))}\right\}\\
&=&\bo\left(\frac{(g')^{\frac{q}{q-1}}}{(g')^{k+n-1}}\right)
\end{eqnarray*}
and
\begin{eqnarray*}
\int_{\tilde{T}}^{T_\delta} g'(y(s))e^{g(y(s))-s}{\rm d}s=\bo\left(g'\int_{\tilde{T}}^{T_\delta} e^{g(y(s))-s}{\rm d}s\right)=\bo\left(\frac{(g')^{\frac{q}{q-1}}}{(g')^{k+n-2}}\right).
\end{eqnarray*} 
This completes the proof of the lemma.\qed

Immediate consequence of the above lemma gives the following asymptotics: 
\begin{lemm}\label{lemma3.7}
Let $g$ satisfies \eqref{eq3.35} with $k\geq	5+\frac{q}{q-1}$, $\delta=k\log(g')$ and $T_\delta=T_1-(n-1)\delta$. Then there exists a $\gamma_0>0$ such that for $\gamma>\gamma_0$ and for all $t\in [\tilde{T},T_\delta]$, we have
\begin{eqnarray}\label{eq3.43}
y'(t)=\frac{n}{(n-1)g'}+\frac{n^2\alpha_n g''}{(n-1)(g')^3}+\bo\left(\frac{\delta^2 g''}{(g')^4}\right),
\end{eqnarray}
\begin{eqnarray}\label{eq3.44}
\tilde{T}&=&\left(\frac{n-1}{n}\right)s_0g'+(g-\left(\frac{n-1}{n}\right)\gamma g')+(n-1)\log(\left(\frac{n-1}{n}\right)g') +\nonumber\\
&&+\frac{(n-1)g''\alpha_n(\gamma-s_0)}{g'}+\bo\left(\frac{(\log(g'))^2}{g'}\right),
\end{eqnarray}
\begin{eqnarray}\label{eq3.45}
T(\gamma)&\geq& \lbr g-\left(\frac{n-1}{n}\right)\gamma g'\rbr + (n-1)\log\lbr\left(\frac{n-1}{n}\right)g'\rbr+\frac{(n-1)\alpha_n\gamma g''}{g'} + \nonumber \\
  && + \bo\left(\frac{(\log(g'))^2}{g'}\right).
\end{eqnarray}
Furthermore, setting $t_0\eqdef(n+3)\log(g')$, if $T(\gamma)\geq t_0+\bo(1)$ then
\begin{eqnarray}\label{eq3.46}
T(\gamma)=g-\left(\frac{n-1}{n}\right)\gamma g'+(n-1)\log(\left(\frac{n-1}{n}\right) g')&+&\frac{(n-1)\alpha_n\gamma g''}{g'}+\nonumber\\
&+& \bo\left(\frac{(\log(g'))^2}{g'}\right).
\end{eqnarray}
Also for all $t\in [\max\{t_0,T(\gamma)\}, \tilde{T}]$, we have
\begin{eqnarray}\label{eq3.47}
y'(t)=y'(\tilde{T})+\bo\left(\frac{g''\delta^2}{(g')^4}\right).
\end{eqnarray}
If $T(\gamma)< t_0+\bo(1)$, then
\begin{eqnarray}\label{eq3.48}
y(t_0)=\bo\left(\frac{\log(g')}{g'}\right).
\end{eqnarray}
\end{lemm}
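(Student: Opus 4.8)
The seven estimates split into two groups. Estimates \eqref{eq3.43}--\eqref{eq3.45} live in the ``convex region'' $[\tilde T,T_\delta]$, where Lemmas \ref{lemma3.3}--\ref{lemma3.6} apply; estimates \eqref{eq3.46}--\eqref{eq3.48} live in the ``lower region'' $[\,\max\{t_0,T(\gamma)\},\tilde T\,]$, where $y\le s_0$ and the only lever is that the forcing term $e^{g(y(s))-s}$ is polynomially small in $g'$ because $s\ge t_0=(n+3)\log g'$. Throughout one uses the power relations of Proposition \ref{proposition3.2}: $g'\sim\gamma^{q-1}$, $g''\sim\gamma^{q-2}$, $\gamma g''\sim g'$, $\gamma\sim(g')^{1/(q-1)}$. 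For \eqref{eq3.43}, integrate \eqref{eq2.7} to get, for $t\in[\tilde T,T_\delta]$,
\[
  y'(t)^{n-1}=y'(T_\delta)^{n-1}+\int_t^{T_\delta}e^{g(y(s))-s}\,\ds ;
\]
by Lemma \ref{lemma3.6} the integral is a relative perturbation of $y'(T_\delta)^{n-1}\sim(g')^{-(n-1)}$ of size $\bo\left((g')^{q/(q-1)-k}\right)$, so substituting $y'(T_\delta)$ from Lemma \ref{lemma3.5} and taking $(n-1)$-th roots gives \eqref{eq3.43}; the hypothesis $k\ge 5+\frac{q}{q-1}$ is exactly what makes this perturbation and the $(g')^{-(k+1)}$ residual of Lemma \ref{lemma3.5} absorb into $\bo\left(\frac{\delta^2 g''}{(g')^4}\right)$.

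For \eqref{eq3.44}--\eqref{eq3.45}: write $s_0=y(\tilde T)=y(T_\delta)-\int_{\tilde T}^{T_\delta}y'(s)\,\ds$, and use \eqref{eq3.22}, the explicit value $z(T_\delta)=\gamma-\frac{n}{g'}\log(1+e^\delta)=\gamma-\frac{n\delta}{g'}+\bo\left((g')^{-k-1}\right)$, and \eqref{eq3.43} for $\int_{\tilde T}^{T_\delta}y'$ to solve for
\[
  T_\delta-\tilde T=\frac{(n-1)(\gamma-s_0)g'}{n}-(n-1)\delta-\frac{(n-1)\alpha_n(\gamma-s_0)g''}{g'}+\bo\left(\frac{(\log g')^2}{g'}\right),
\]
using $\gamma g''\sim g'$ for the error; then $\tilde T=T_\delta-(T_\delta-\tilde T)$ with $T_\delta=T_1-(n-1)\delta$, $T_1=g+(n-1)\log\frac{(n-1)g'}{n}$, yields \eqref{eq3.44}. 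For \eqref{eq3.45}: $y$ is strictly increasing, so $0=y(T(\gamma))<s_0=y(\tilde T)$ forces $T(\gamma)<\tilde T$; changing variables, $\tilde T-T(\gamma)=\int_0^{s_0}\frac{\mathrm{d}y}{y'}$, and since $y'(t)^{n-1}=\int_t^\infty e^{g(y)-s}\,\ds$ is decreasing in $t$ one has $y'\ge y'(\tilde T)$ on $[T(\gamma),\tilde T]$, whence $\tilde T-T(\gamma)\le s_0/y'(\tilde T)$; expanding $y'(\tilde T)$ by \eqref{eq3.43} and inserting \eqref{eq3.44}, the $\frac{n-1}{n}s_0g'$ terms cancel, the $s_0$-dependence disappears, and \eqref{eq3.45} follows.

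For the lower region: on $[\,\max\{t_0,T(\gamma)\},\tilde T\,]$ one has $y(s)\le s_0$, hence $\int_t^{\tilde T}e^{g(y(s))-s}\,\ds\le e^{g(s_0)}e^{-t}\le e^{g(s_0)}(g')^{-(n+3)}$, so $y'(t)^{n-1}=y'(\tilde T)^{n-1}+\bo\left((g')^{-(n+3)}\right)$ and $y'(t)=y'(\tilde T)\bigl(1+\bo((g')^{-4})\bigr)$; with \eqref{eq3.43} at $t=\tilde T$ this is \eqref{eq3.47}. Under the extra assumption $T(\gamma)\ge t_0+\bo(1)$ the same estimate upgrades the bound of the previous paragraph to $\tilde T-T(\gamma)=\frac{s_0}{y'(\tilde T)}\bigl(1+\bo((g')^{-4})\bigr)$, which together with the upper bound $\tilde T-T(\gamma)\le s_0/y'(\tilde T)$ pins $T(\gamma)$ down to the precision in \eqref{eq3.46}. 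For \eqref{eq3.48}: combining the lower bound $g-\frac{n-1}{n}\gamma g'\ge b$ (valid for large $\gamma$ by \eqref{eq3.35}) with \eqref{eq3.45} gives $T(\gamma)\ge (n-1)\log g'-\bo(1)$, so in the regime $T(\gamma)<t_0+\bo(1)$ one gets $0\le t_0-T(\gamma)\le 4\log g'+\bo(1)=\bo(\log g')$; on $[T(\gamma),t_0]$, where $y\le s_0$, splitting $y'(s)^{n-1}=\int_s^{\tilde T}e^{g(y)-u}\,\mathrm{d}u+y'(\tilde T)^{n-1}$ shows $y'(s)^{n-1}=\bo\left((g')^{-(n-1)}\right)$, i.e.\ $y'(s)=\bo\left((g')^{-1}\right)$, and integrating over $[T(\gamma),t_0]$ gives $y(t_0)=\bo\left(\frac{\log g'}{g'}\right)$.

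The structure above is routine; the work lies in the error bookkeeping of \eqref{eq3.43}--\eqref{eq3.45}, where one must simultaneously track powers of $g'$, $g''$, $\delta=\log g'$ and $\gamma$ through the relations of Proposition \ref{proposition3.2}, and in particular secure the coefficient-exact cancellation of the $\frac{n-1}{n}s_0g'$ term between $\tilde T$ and $\tilde T-T(\gamma)$ so that the arbitrary threshold $s_0$ drops out of \eqref{eq3.45}--\eqref{eq3.46}. The hypothesis $k\ge 5+\frac{q}{q-1}$ and the exponent $n+3$ in $t_0$ are calibrated so that every residual power of $g'$ falls below the displayed errors; verifying these dominations --- uniformly up to the critical exponent $q=\frac{n}{n-1}$, where $T(\gamma)$ may remain bounded and \eqref{eq3.48} is the operative case --- is the delicate point.
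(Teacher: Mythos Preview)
Your proposal is correct and follows essentially the same route as the paper: the same split into the convex region $[\tilde T,T_\delta]$ (handled via Lemmas \ref{lemma3.5}--\ref{lemma3.6}) and the lower region $[\max\{t_0,T(\gamma)\},\tilde T]$ (handled via the crude bound $e^{g(y)-s}\le e^{g(s_0)-t_0}$), with the mean-value/integration arguments and the $s_0$-cancellation carried out just as in the paper. The only cosmetic differences are that the paper writes \eqref{eq3.44} via the mean value theorem $\tilde T=T_\delta+(s_0-y(T_\delta))/y'(\xi)$ rather than your integral $s_0=y(T_\delta)-\int_{\tilde T}^{T_\delta}y'$, and for \eqref{eq3.48} the paper applies the mean value theorem on $[t_0,\tilde T]$ to write $y(t_0)=s_0+y'(\xi)(t_0-\tilde T)$ and then expands $\tilde T$, whereas you integrate $y'=\bo((g')^{-1})$ over $[T(\gamma),t_0]$ after bounding its length by $4\log g'+\bo(1)$ via \eqref{eq3.45} and \eqref{eq3.35}; both variants give the same estimate with the same effort.
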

\proof Since $k\geq5+\frac{q}{q-1}$, we have for $t\in [\tilde{T},T_\delta]$ and from using \eqref{eq3.36}, \eqref{eq3.34}
\begin{eqnarray*}
y'(t)^{n-1}&=& y'(T_\delta)^{n-1}+\int_t^{T_\delta}e^{g(y(s))-s}{\rm d}s\\
&=&y'(T_\delta)^{n-1}+\bo\left(\frac{1}{(g')^{4+n}}\right)\\
&=& y'(T_\delta)^{n-1}\left(1+\bo\left(\frac{1}{(g')^{5}}\right)\right).
\end{eqnarray*}
Now by 	making use of  estimate \eqref{eq3.34}, we get 
\begin{eqnarray*}
y'(t)&=& y'(T_\delta)\left(1+\bo\left(\frac{1}{(g')^5}\right)\right)\\
&=&\frac{n}{(n-1)g'}+\frac{n^2\alpha_n g''}{(n-1)(g')^3}+\bo\left(\frac{\delta^2 g''}{(g')^4}\right).
\end{eqnarray*}
This proves \eqref{eq3.43}. 

From the mean value theorem and \eqref{eq3.43}, there exists a $\xi\in [\tilde{T},T_\delta]$ such that
\begin{eqnarray*}
\tilde{T}&=& T_\delta+\frac{y(\tilde{T})-y(T_\delta)}{y'(\xi)}=T_\delta+\frac{s_0-z(T_\delta)+\bo\left(\frac{g''\delta^2}{(g')^3}\right)}{y'(\xi)}\\
&=& T_{\delta} + \left(\frac{n-1}{n}\right)g'\left[1-\frac{n\alpha_n g''}{(g')^2}+\bo\left(\frac{\delta^2 g''}{(g')^3}\right)\right]\times\left[s_0-\gamma+\frac{n}{g'}\log(1+e^\delta)+\bo\left(\frac{g''\delta^2}{(g')^3}\right)\right]\\
&=& T_1-(n-1)\delta+\left(\frac{n-1}{n}\right)g'(s_0-\gamma)+(n-1)\delta+(n-1)\log(1+e^{-\delta})\\
&&-\frac{\alpha_n(n-1)g''(s_0-\gamma)}{g'}
-\frac{n(n-1)\alpha_n g''}{(g')^2}\log(1+e^\delta)+ \bo\left(\frac{\delta^2\gamma g''}{(g')^2}\right)\\
&=&\left(\frac{n-1}{n}\right)s_0g'+(g-\left(\frac{n-1}{n}\right)\gamma g')
+(n-1)\log(\left(\frac{n-1}{n}\right)g') +\frac{\alpha_n(n-1)g''(\gamma-s_0)}{g'}\\
&&+\bo\left(\frac{(\log(g'))^2}{g'}\right). 
\end{eqnarray*}
This proves \eqref{eq3.44}.

For $t\in [T(\gamma),\tilde{T}]$, we have $y'(t)\geq y'(\tilde{T})$ and hence from the mean value theorem, there exist a $\xi\in [T(\gamma),\tilde{T}]$ such that 
\begin{eqnarray*}
\tilde{T}-T(\gamma)=\frac{y(\tilde{T})-y(T(\gamma))}{y'(\xi)}\leq \frac{s_0}{y'(\tilde{T})}.
\end{eqnarray*}
Hence from \eqref{eq3.43} and \eqref{eq3.44}
\begin{eqnarray}\label{eq3.49}
T(\gamma)&\geq &\tilde{T}-\frac{s_0}{y'(\tilde{T})}\nonumber\\
&=&\left(\frac{n-1}{n}\right)s_0g'+(g-\left(\frac{n-1}{n}\right)\gamma g')+(n-1)\log(\left(\frac{n-1}{n}\right) g')+\bo\left(\frac{(\log(g'))^2}{g'}\right)\nonumber\\
&&+\frac{\alpha_n(n-1)g''(\gamma-s_0)}{g'}-s_0\left(\frac{n-1}{n}\right)g'\left[1-\frac{n\alpha_n g''}{(g')^2}+\bo\left(\frac{\delta^2 g''}{(g')^3}\right)\right]\nonumber\\
&=&(g-\left(\frac{n-1}{n}\right)\gamma g')+(n-1)\log(\left(\frac{n-1}{n}\right)g')+\frac{\alpha_n(n-1)\gamma g''}{g'}\nonumber\\
&&+\bo\left(\frac{(\log(g'))^2}{g'}\right).
\end{eqnarray}
This proves \eqref{eq3.45}. 

Let $t\in [\max\{t_0, T(\gamma)\},\tilde{T}]$, then
\begin{eqnarray*}
y'(t)^{n-1}&=&y'(\tilde{T})^{n-1}+\int_t^{\tilde{T}}f(y(s))e^{-s}{\rm d}s\\
&=&y'(\tilde{T})^{n-1}+\bo(e^{-T(\gamma)})= y'(\tilde{T})^{n-1}+\bigo{\frac{1}{(g')^{n+3}}}.
\end{eqnarray*}
Hence
\begin{eqnarray}\label{eq3.50}
y'(t)=y'(\tilde{T})+\bo\left(\frac{g'' \de^2}{(g')^4}\right).
\end{eqnarray}
Suppose that $T(\gamma)\geq t_0+\bo(1)$, then there exists $\xi\in [T(\gamma), \tilde{T}]$ and from \eqref{eq3.49} and \eqref{eq3.50} we have
\begin{eqnarray*}
T(\gamma)&=&\tilde{T}+\frac{y(T(\gamma))-y(\tilde{T})}{y'(\xi)}\\
&=&\tilde{T}-\frac{s_0}{y'(\tilde{T})}(1+\bo\left(\frac{g'' \delta^2}{(g')^3}\right))\\
&=&(g-\left(\frac{n-1}{n}\right)\gamma g')+(n-1)\log(\left(\frac{n-1}{n}\right)g')+\frac{\alpha_n(n-1)\gamma g''}{g'}+ \bo\left(\frac{(\log(g'))^2}{g'}\right).
\end{eqnarray*}
This proves \eqref{eq3.46}.

Let $T(\gamma)<t_0=(n+3)\log(g')+ \bo(1)$. Then \eqref{eq3.50} holds for $t\in [t_0,\tilde{T}]$ and this proves \eqref{eq3.47}. 
From \eqref{eq3.47} and mean value theorem, there exists a $\xi\in [t_0,\tilde{T}] \subset [T(\gamma),\tilde{T}]$ such that
\begin{eqnarray*}
y(t_0)&=&y(\tilde{T})+y'(\xi)(t_0-\tilde{T})\\
&=& s_0+y'(\tilde{T})(t_0-\tilde{T})+O\left(\frac{(t_0-\tilde{T})g'' \de^2}{(g')^4}\right)\\
&=& s_0 + (t_0 - \tilde{T}) \lbr \frac{n}{(n-1)g'} + \frac{\al_n n^2 g''}{(n-1)(g')^3} + \bigo{\frac{g''\de^2}{(g')^4}} \rbr \\
&=& s_0+\frac{n}{(n-1)g'} \lbr 1+\bo\left(\frac{g''}{(g')^2}\right) \rbr \lbr t_0-\left(\frac{n-1}{n}\right)s_0 g'  \rbr[.]\\
&&\lbr[.] -(g-\left(\frac{n-1}{n}\right)\gamma g')-(n-1)\log(\left(\frac{n-1}{n}\right)g')+\bo(1)\rbr\\
&=&\frac{n}{(n-1)g'}\left[t_0-(g-\left(\frac{n-1}{n}\right)\gamma g')-(n-1)\log(\left(\frac{n-1}{n}\right)g')\right]+\bo\left(\frac{1}{g'}\right)\\
&\leq &\frac{n}{(n-1)g'}\left[ t_0-b-(n-1)\log(\left(\frac{n-1}{n}\right)g')\right] +\bo\left(\frac{1}{g'}\right)=\bo\left(\frac{\log(g')}{g'}\right).
\end{eqnarray*}
This proves \eqref{eq3.48} and hence the lemma.\qed
\begin{lemm}\label{lemma3.8}
Assume that there exists a $\beta>0$ such that  as $s\to 0$,
\begin{eqnarray*}
f(s)-f(0)=\bo(s^\beta).
\end{eqnarray*}
Then there exists a bounded function $A\,:\,\gamma\to A(\gamma)$ such that $A(\gamma)=0$ if $f(0)=0$ and for large $\gamma$
\begin{eqnarray}\label{eq3.51}
T(\gamma)&=&(g-\left(\frac{n-1}{n}\right)\gamma g')+(n-1)\log(\left(\frac{n-1}{n}\right)g')+\frac{(n-1)\alpha_n\gamma g''}{g'}+A(\gamma)\nonumber\\
&&+ \bo\left(\frac{(\log(g'))^2}{g'} +\frac{(\log(g'))^{\beta+1}}{(g')^\beta}e^{-(g-\left(\frac{n-1}{n}\right)\gamma g')}\right).
\end{eqnarray}
Furthermore,
\begin{itemize}
\item[(i)] Assume that $\displaystyle\lim_{\gamma\to\infty}(g-\left(\frac{n-1}{n}\right)\gamma g')=\infty$, then
\begin{eqnarray}\label{eq3.52}
y'(T(\gamma))=\frac{n}{(n-1)g'}+\frac{n^2\alpha_n g''}{(n-1)(g')^3}+ \bo\left(\frac{\delta^2 g''}{(g')^4}+\frac{e^{-(g-\left(\frac{n-1}{n}\right)\gamma g')}}{(g')^{n-1}}\right).
\end{eqnarray}
\item[(ii)] Assume that $\displaystyle\limsup_{\gamma\to\infty}\left(g-\left(\frac{n-1}{n}\right)\gamma g'\right)<\infty$.
Then,
\begin{eqnarray}\label{eq3.53}
y'(T(\gamma))&=&\frac{n}{(n-1)g'}+\frac{n^2\alpha_n g''}{(n-1)(g')^3}+\left[\left(1+\frac{f(0)e^{-T(\gamma)}}{y'(t_0)^{n-1}}\right)^{\frac{1}{n-1}}-1\right]y'(\tilde{T})\nonumber\\
&&+ \bo\left(\frac{(\log(g'))^\beta}{(g')^\beta}+\frac{g''(\log(g'))^2}{(g')^4}\right).
\end{eqnarray}
\end{itemize}
\end{lemm}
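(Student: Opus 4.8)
The plan is to push the Atkinson--Peletier analysis of this section below the level $y=s_0$, into the boundary layer $[T(\gamma),\tilde T]$, where the structural hypotheses on $g$ no longer help and the only usable information is $f(s)-f(0)=\bo(s^\beta)$ as $s\to0$; fix $s_0$ small enough that this bound holds on $[0,s_0]$. Integrating \eqref{eq2.7} from $t$ to $\tilde T$ gives the master identity
\begin{equation*}
y'(t)^{n-1}=y'(\tilde T)^{n-1}+\int_t^{\tilde T}f(y(s))e^{-s}\,\ds=y'(\tilde T)^{n-1}+f(0)\bigl(e^{-t}-e^{-\tilde T}\bigr)+\int_t^{\tilde T}\bo\bigl(y(s)^{\beta}\bigr)e^{-s}\,\ds .
\end{equation*}
Since $((y')^{n-1})'=-f(y)e^{-t}<0$, $y'$ is nonincreasing on the layer, so, using $y'(\tilde T)\sim1/g'$ from \eqref{eq3.43} and the crude bound $y\le s_0$, one first gets $y'(T(\gamma))=\bo(1/g')$, and then, bootstrapping, $y(s)\le y'(T(\gamma))(s-T(\gamma))=\bo\bigl((s-T(\gamma))/g'\bigr)$ on $[T(\gamma),\tilde T]$; hence the $\bo(y^\beta)$ integral is controlled by $\bo\bigl((g')^{-\beta}e^{-T(\gamma)}\bigr)$ up to logarithmic factors, $e^{-\tilde T}$ is negligible because $\tilde T=\tfrac{n-1}{n}s_0g'+\cdots$ by \eqref{eq3.44}, and $e^{-T(\gamma)}=\bo\bigl(e^{-(g-\frac{n-1}{n}\gamma g')}/(g')^{n-1}\bigr)$ by \eqref{eq3.45} together with (H2).

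To get \eqref{eq3.51} I would feed this into the mean value theorem exactly as in the derivation of \eqref{eq3.46}: write $T(\gamma)=\tilde T-s_0/y'(\xi)$ for some $\xi\in(T(\gamma),\tilde T)$, expand $1/y'(\xi)$ from the master identity --- whose only $t$-dependent term, $f(0)e^{-t}$, is concentrated near $T(\gamma)$ --- and substitute the expansion \eqref{eq3.44} of $\tilde T$. The $f(0)$-independent part reproduces the right-hand side of \eqref{eq3.46}; the $f(0)$-dependent part assembles into the bounded term $A(\gamma)=\dfrac{f(0)e^{-T(\gamma)}}{(n-1)\,y'(\tilde T)^{n-1}}+\cdots$, which is $\bo(1)$ by the estimate for $e^{-T(\gamma)}$ and vanishes identically when $f(0)=0$ (so that \eqref{eq3.51} collapses to \eqref{eq3.46}); the $\bo(y^\beta)$ remainder, converted back through this same device, contributes the error term $\tfrac{(\log g')^{\beta+1}}{(g')^{\beta}}e^{-(g-\frac{n-1}{n}\gamma g')}$, the rest of the error being inherited from \eqref{eq3.44}.

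For the two refined statements on $y'(T(\gamma))$ I would distinguish whether $T(\gamma)$ exceeds $t_0=(n+3)\log g'$. In case (i), $g-\tfrac{n-1}{n}\gamma g'\to\infty$ and \eqref{eq3.45} give $T(\gamma)\to\infty$ and, for $\gamma$ large, $T(\gamma)\ge t_0+\bo(1)$; then \eqref{eq3.47} holds on $[\max\{t_0,T(\gamma)\},\tilde T]=[T(\gamma),\tilde T]$ and, with \eqref{eq3.43}, gives $y'(T(\gamma))=\tfrac{n}{(n-1)g'}+\tfrac{n^2\alpha_n g''}{(n-1)(g')^3}+\bo\bigl(\delta^2g''/(g')^4\bigr)$, the $f(0)e^{-T(\gamma)}$ contribution to $y'(T(\gamma))^{n-1}$ being of order $e^{-(g-\frac{n-1}{n}\gamma g')}/(g')^{n-1}$; this is \eqref{eq3.52}. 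In case (ii), if $T(\gamma)\ge t_0+\bo(1)$ held then \eqref{eq3.46} would force $T(\gamma)=(n-1)\log g'+\bo(1)$, contradicting $T(\gamma)\ge(n+3)\log g'$; hence $T(\gamma)<t_0+\bo(1)$, \eqref{eq3.48} applies and $y(t_0)=\bo\bigl((\log g')/g'\bigr)$. Integrating \eqref{eq2.7} on $[T(\gamma),t_0]$ then gives
\begin{equation*}
y'(T(\gamma))^{n-1}=y'(t_0)^{n-1}+f(0)\bigl(e^{-T(\gamma)}-e^{-t_0}\bigr)+\bo\Bigl(\tfrac{(\log g')^{\beta}}{(g')^{\beta}}e^{-T(\gamma)}\Bigr),
\end{equation*}
where $e^{-t_0}=(g')^{-(n+3)}$ and $e^{-T(\gamma)}=\bo\bigl((g')^{-(n-1)}\bigr)$, the latter of the same order as $y'(t_0)^{n-1}$, so $f(0)e^{-T(\gamma)}/y'(t_0)^{n-1}=\bo(1)$ and may not be dropped; extracting the $(n-1)$-th root, replacing $y'(t_0)$ by $y'(\tilde T)$ via \eqref{eq3.47}, and expanding $y'(\tilde T)$ via \eqref{eq3.43} yields \eqref{eq3.53}.

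\textbf{Main obstacle.} The delicate part is the boundary-layer analysis of the first two paragraphs. Away from $t_0$ nothing is known about $g$, so the only lever is $f(s)-f(0)=\bo(s^\beta)$, and one must simultaneously: bound $y$ near $T(\gamma)$ sharply enough (linearly in $t-T(\gamma)$ with slope $\bo(1/g')$, via the bootstrap) that the $\bo(y^\beta)$ remainder does not dominate the genuine $f(0)e^{-t}$ term; correctly decide on which side of $t_0$ the value $T(\gamma)$ lies, this being what separates the clean expansion \eqref{eq3.52} from the one carrying the explicit $\bigl[(1+f(0)e^{-T(\gamma)}/y'(t_0)^{n-1})^{1/(n-1)}-1\bigr]y'(\tilde T)$ correction (of the same order $1/g'$ as the leading term, hence unavoidable when $g-\tfrac{n-1}{n}\gamma g'$ stays bounded); and carry everything through the mean value theorem conversion between the $y'$-asymptotics and the $T(\gamma)$-asymptotics without degrading the $\bo((\log g')^2/g')$ precision of \eqref{eq3.44}.
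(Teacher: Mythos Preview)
Your master identity and bootstrap on $[T(\gamma),\tilde T]$ are correct and match the paper's strategy, and your treatment of parts (i) and (ii) for $y'(T(\gamma))$ is essentially right. The gap is in your derivation of \eqref{eq3.51} via the mean value theorem.

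When $f(0)\neq0$ the function $y'$ is \emph{not} approximately constant on $[T(\gamma),\tilde T]$: from your own master identity, $y'(T(\gamma))^{n-1}/y'(\tilde T)^{n-1}=1+f(0)e^{-T(\gamma)}/y'(\tilde T)^{n-1}+\cdots$, and in case (ii) this ratio is $1+\bo(1)$, not $1+o(1)$. Hence $s_0/y'(\xi)$, which is of size $\sim g'$, is known only up to an $\bo(1)$ multiplicative factor depending on the uncontrolled $\xi$, so the MVT determines $T(\gamma)$ only up to $\bo(g')$. Your proposed $A(\gamma)=\dfrac{f(0)e^{-T(\gamma)}}{(n-1)y'(\tilde T)^{n-1}}+\cdots$ is a pointwise quantity; what actually appears via MVT is $\dfrac{s_0}{y'(\tilde T)}\cdot\bo\!\left(\dfrac{f(0)e^{-\xi}}{y'(\tilde T)^{n-1}}\right)$, which can be anything from $0$ to $\bo(g')$ as $\xi$ ranges over the interval.

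The paper replaces the MVT by an integration: it writes
\[
\frac{y(t_0)}{y'(t_0)}=\int_{T(\gamma)}^{t_0}\frac{y'(t)}{y'(t_0)}\,\dt
=(t_0-T(\gamma))+\int_{T(\gamma)}^{t_0}\left[\Bigl(1+\tfrac{f(0)e^{-t}}{y'(t_0)^{n-1}}\Bigr)^{\frac{1}{n-1}}-1\right]\dt+\text{error},
\]
and \emph{defines} $A(\gamma)$ to be the last integral. Boundedness is then seen by the substitution $t\mapsto t+\theta_0$ with $\theta_0=-\log f(0)+(n-1)\log y'(t_0)$, which turns the integrand into $(1+e^{-t})^{1/(n-1)}-1$ over an interval whose lower endpoint $T(\gamma)+\theta_0$ is bounded below by \eqref{eq3.45}. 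This averaging is exactly what the MVT loses.

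A smaller issue: in case (i) you claim $T(\gamma)\ge t_0+\bo(1)$ from $T(\gamma)\to\infty$, but $t_0=(n+3)\log g'$ also diverges; \eqref{eq3.45} only gives $T(\gamma)\ge(g-\tfrac{n-1}{n}\gamma g')+(n-1)\log g'+\bo(1)$, and nothing forbids $g-\tfrac{n-1}{n}\gamma g'\to\infty$ slower than $4\log g'$. The paper handles this by splitting on $T(\gamma)\gtrless t_0$ uniformly, not on (i)/(ii), and proving \eqref{eq3.52} in both subcases.
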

\proof Let $t_0=(n+3)\log(g')$. If $T(\gamma)\geq t_0$, then \eqref{eq3.51} and \eqref{eq3.52} follows from Lemma \ref{lemma3.7}. 

Hence assume that $T(\gamma)<t_0$. From \eqref{eq3.45} it is easy to see that $(g-\frac{(n-1)}{n}\gamma g')=\bo(\log(g'))$ which gives $\tilde{T}=\bo(g')$ and from \eqref{eq3.50}, we have
\begin{equation}\label{eq3.55}\begin{array}{lll}
t_0-\frac{y(t_0)}{y'(t_0)}&=& t_0-\frac{y(\tilde{T})+y'(\xi)(t_0-\tilde{T})}{y'(\tilde{T})+\bo\left(\frac{g'' \de^2}{(g')^4}\right)}\\
&=& t_0-\frac{s_0+y'(\tilde{T})(t_0-\tilde{T})+\bo\left(\frac{(t_0-\tilde{T})g''\de^2}{(g')^4}\right)}{y'(\tilde{T})(1+\bo(\frac{g'' \de^2}{(g')^3}))}\\
&=& t_0-\frac{s_0}{y'(\tilde{T})}-(t_0-\tilde{T})+\bo\left(\frac{g'' \de^2}{(g')^3}\right)\\
&=&\tilde{T}-\frac{s_0}{y'(\tilde{T})}+\bo\left(\frac{g''\de^2}{(g')^3}\right)\\
&=& (g-\left(\frac{n-1}{n}\right)\gamma g')+(n-1)\log(\left(\frac{n-1}{n}\right)g')+\frac{(n-1)\alpha\gamma g''}{g'}+\bo\left(\frac{(\log(g'))^2}{g'}\right).
\end{array}\end{equation}
From \eqref{eq3.45}, \eqref{eq3.43} and \eqref{eq3.50} we have for $t\in [T(\gamma),t_0]$, 
\begin{eqnarray}\label{eq3.56}
\frac{e^{-t}}{y'(t_0)^{n-1}}=\bo\left(e^{-(g-\frac{n-1}{n}\gamma g')}\right).
\end{eqnarray}
Hence from \eqref{eq3.48} and \eqref{eq3.56}, we have
\begin{eqnarray*}
y'(t)^{n-1}&=&y'(t_0)^{n-1}+\int_t^{t_0}f(y(s))e^{-s}{\rm d}s\\
&=& y'(t_0)^{n-1}+ f(0)e^{-t}+\bo\left(\int_t^{t_0}y(s)^\beta e^{-s}{\rm d}s+\frac{1}{(g')^{n+3}}\right)\\
&=& y'(t_0)^{n-1}+f(0) e^{-t}+\bo\left(\frac{(\log(g'))^\beta}{(g')^\beta}e^{-t}+\frac{1}{(g')^{n+3}}\right).
\end{eqnarray*}
Therefore, from \eqref{eq3.56}
\begin{eqnarray}\label{eq3.58}
\frac{y'(t)}{y'(t_0)}&=&\left(1+\frac{f(0)e^{-t}}{y'(t_0)^{n-1}}\right)^{\frac{1}{n-1}}+\bo\left(\frac{(\log(g'))^\beta}{(g')^\beta}\frac{e^{-t}}{y'(t_0)^{n-1}}+\frac{1}{(g')^3}\right)\nonumber\\
&=&\left(1+\frac{f(0)e^{-t}}{y'(t_0)^{n-1}}\right)^{\frac{1}{n-1}}+\bo\left(\frac{(\log(g'))^\beta}{(g')^\beta}e^{-(g-\frac{n-1}{n}\gamma g')}+\frac{1}{(g')^3}\right).
\end{eqnarray}
Hence from \eqref{eq3.55}
\begin{eqnarray}\label{eq3.57}
\frac{y(t_0)}{y'(t_0)}&=&\int_{T(\gamma)}^{t_0}\frac{y'(t)}{y'(t_0)}{\rm d}t\nonumber\\
&=& t_0-T(\gamma)+\int_{T(\gamma)}^{t_0}\left[(1+\frac{f(0)e^{-t}}{y'(t_0)})^{\frac{1}{n-1}}-1\right]{\rm d}t\nonumber\\
&&+\bo\left(\frac{(\log(g'))^{\beta+1}}{(g')^\beta}e^{-(g-\frac{n-1}{n}\gamma g')}+\frac{(\log(g'))}{(g')^3}\right).
\end{eqnarray}
Let 
\begin{eqnarray*}
A(\gamma)\eqdef\int_{T(\gamma)}^{t_0}\left[(1+\frac{f(0)e^{-t}}{y'(t_0)^{n-1}})^{\frac{1}{n-1}}-1\right]{\rm d}t.
\end{eqnarray*}
If $f(0)=0$, then $A(\gamma)=0$. Let $f(0)\neq 0$ and $\theta_0\eqdef -\log(f(0))+(n-1)\log(y'(t_0))$. Then,
\begin{eqnarray*}
A(\gamma)&=&\int_{T(\gamma)+\theta_0}^{t_0+\theta_0}\left[(1+e^{-t})^{\frac{1}{n-1}}-1\right]{\rm d}t\quad\mbox{and }\\
 t_0+\theta_0&=&(n+3)\log(g')-(n-1)\log(g')+\bo(1)\to\infty\mbox{ as }\gamma\to\infty.
\end{eqnarray*}
Also from \eqref{eq3.45}
\begin{eqnarray*}
T(\gamma)+\theta_0\geq (n-1)\log(g')-(n-1)\log(g')+\bo(1)=\bo(1).
\end{eqnarray*}
Hence $A(\gamma)$ is bounded as $\gamma\to\infty$. Therefore we have from \eqref{eq3.57} and \eqref{eq3.55}
\begin{eqnarray*}
T(\gamma)&=& t_0-\frac{y(t_0)}{y'(t_0)}+A(\gamma)+\bo\left(\frac{(\log(g'))^{\beta+1}}{(g')^\beta}e^{-(g-\left(\frac{n-1}{n}\right)\gamma g')}+\frac{(\log(g'))}{(g')^3}\right)\\
&=&(g-\left(\frac{n-1}{n}\right)\gamma g') +(n-1)\log(\left(\frac{n-1}{n}\right)g')+\frac{(n-1)\alpha\gamma g''}{g'}\\
&&+ A(\gamma)+ \bo\left(\frac{(\log(g'))^2}{g'}+\frac{(\log(g'))^{\beta+1}}{(g')^\beta} e^{-(g-\left(\frac{n-1}{n}\right)\gamma g')}\right).
\end{eqnarray*}
This proves \eqref{eq3.51}.  Let $g-\left(\frac{n-1}{n}\right)\gamma g')\to\infty$ as $\gamma\to\infty$. Then for $t\in [T(\gamma), t_0]$ and from \eqref{eq3.50}
\begin{eqnarray*}
y'(t)^{n-1}=y'(t_0)^{n-1}+\int_t^{t_0}f(y(s))e^{-s}{\rm d}s=y'(t_0)^{n-1}+\bo\left(e^{-T(\gamma)}\right).
\end{eqnarray*}
Thus,
\begin{eqnarray*}
y'(t)&=&y'(t_0)+\bo\left(\frac{e^{-(g-\left(\frac{n-1}{n}\right)\gamma g')}}{(g')^{n-1}}\right)\\
&=& y'(\tilde{T})+\bo\left(\frac{1}{(g')^3}+\frac{e^{-(g-\left(\frac{n-1}{n}\right)\gamma g')}}{(g')^{n-1}}\right).
\end{eqnarray*}
This proves \eqref{eq3.52}. From \eqref{eq3.58} we have 
\begin{eqnarray*}
y'(T(\gamma))&=&y'(t_0)+y'(t_0)\left[(1+\frac{f(0)e^{-T(\gamma)}}{y'(t_0)^{n-1}})^{\frac{1}{n-1}}-1\right]\\
&&+\bo\left(\frac{(\log(g'))^\beta}{(g')^\beta}e^{-(g-\left(\frac{n-1}{n}\right)\gamma g')}+\frac{1}{(g')^3}\right)
\end{eqnarray*}
and \eqref{eq3.53} follows from \eqref{eq3.50}. This proves the lemma.\qed\\
\begin{proof}[Proof of Theorem \ref{theo2.5}]: This follows from Lemma \ref{lemma3.8}.\end{proof}

\section{Proof of Theorem \ref{theo2.6}.}\label{section4}
First we look at the linearization of $z$ which plays an important role in proving the theorem. Let $\theta\in\R$ and define $z_\theta(t)\eqdef z(t-\theta g')+\theta$, then $z_\theta$ satisfies
\begin{eqnarray}\label{eq4.1}
\left\{\begin{array}{ll}
&-((z'_\theta)^{n-1})'=e^{g+g'(z_\theta-\gamma)-t}\\
&z_\theta(\infty)=\gamma+\theta,\quad z'_\theta(\infty)=0.
\end{array}\right.
\end{eqnarray}
Let $V_2(t)=\frac{\partial z_\theta}{\partial \theta}\vert_{\theta=0}$. Then $V_2$ satisfies
\begin{eqnarray}\label{eq4.2}
V_2(t)=1-g'z'(t).
\end{eqnarray}
Differentiating \eqref{eq4.1} with respect to $\theta$ and evaluate at $\theta=0$  to obtain
\begin{eqnarray}\label{eq4.3}
\left\{\begin{array}{ll}
&-((z')^{n-2}V_2')'=\frac{g'}{n-1}V_2e^{g+g'(z-\gamma)-t}\\
&V_2(\infty)=1,\quad V_2'(\infty)=0.
\end{array}\right.
\end{eqnarray}
Moreover from \eqref{eq4.2}, $V_2$ satisfies
\begin{equation}\label{eq4.4}\begin{array}{lll}
\displaystyle V_2(t)&=&\frac{1-\frac{g'}{n}e^{\frac{g-t}{n-1}}}{1+\left(\frac{n-1}{n}\right)g'e^{\frac{g-t}{n-1}}}
=\frac{1-\frac{1}{n-1}e^{\frac{T_1-t}{n-1}}}{1+e^{\frac{T_1-t}{n-1}}}\\
&=&-\frac{1}{n-1}+\left(\frac{n}{n-1}\right)\frac{1}{1+e^{\frac{T_1-t}{n-1}}},
\end{array}\end{equation}
\begin{eqnarray}\label{eq4.5}
V_2'(t)=\frac{n}{(n-1)^2}\frac{e^{\frac{T_1-t}{n-1}}}{(1+e^{\frac{T_1-t}{n-1}})^2},
\end{eqnarray}
\begin{eqnarray}\label{eq4.6}
V''_2(t)=\frac{n}{(n-1)^3}\frac{e^{\frac{T_1-t}{n-1}}}{(1+e^{\frac{T_1-t}{n-1}})^2}\left[1-\frac{2}{1+e^{\frac{T_1-t}{n-1}}}\right].
\end{eqnarray}
Clearly $V'_2>0$ and $V_2$ has exactly one zero at $t=S_0$ given by 
\begin{eqnarray}\label{eq4.7}
S_0=g+(n-1)\log\left(\frac{g'}{n}\right)=T_1-(n-1)\log(n-1).
\end{eqnarray}
It is easy to see that 
\begin{eqnarray}\label{eq4.8}
V_2(-\infty)=-\frac{1}{n-1}.
\end{eqnarray}
Let $V_1$ be the linearization of $y(t)=y(t,\gamma)$ as in section \ref{section2} (see \eqref{eq2.13}) and set 
\begin{gather}
\label{eq4.9} \delta=\bo(\log(g')), \quad T_\delta=T_1-(n-1)\delta,\nonumber\\
\label{eq4.10} \rho_1(t)\eqdef\frac{1}{(n-1)} f'(y(t)) = \frac{1}{n-1} e^{g(y(t))} g'(y(t)),\\
\label{eq4.11}\rho_2(t)\eqdef \frac{g'}{(n-1)}e^{g+g'(z(t)-\gamma)}.\nonumber
\end{gather}
Then, $V_1$ and $V_2$ satisfy respectively:
\begin{eqnarray}\label{eq4.12}
-((y')^{n-2}V_1')'=\rho_1V_1 e^{-t},
\end{eqnarray}
\begin{eqnarray}\label{eq4.13}
-((z')^{n-2}V_2')'=\rho_2 V_2e^{-t}.
\end{eqnarray}
For $T(\gamma)\leq t<\eta$, integrating \eqref{eq4.12} and \eqref{eq4.13} gives
\begin{eqnarray}\label{eq4.14}
V_1'(t)=\left(\frac{y'(\eta)}{y'(t)}\right)^{n-2}V'_1(\eta)+\frac{1}{y'(t)^{n-2}}\int_t^\eta \rho_1 V_1e^{-s}{\rm d}s.
\end{eqnarray}
Thus,
\begin{eqnarray}\label{eq4.15}
V_1(t)=V_1(\eta)-V_1'(\eta)\int_t^\eta \left(\frac{y'(\eta)}{y'(s)}\right)^{n-2}{\rm d}s-\int_t^\eta\rho_1 V_1 e^{-s}\lbr \int_t^s\frac{{\rm d}\theta}{y'(\theta)^{n-2}}\rbr{\rm d}s.
\end{eqnarray}
Similarly,
\begin{eqnarray}\label{eq4.16}
V_2'(t)=\left(\frac{z'(\eta)}{z'(t)}\right)^{n-2}V'_2(\eta)+\frac{1}{z'(t)^{n-2}}\int_t^\eta \rho_2 V_2e^{-s}{\rm d}s
\end{eqnarray}
and
\begin{eqnarray}\label{eq4.17}
V_2(t)=V_2(\eta)-V_2'(\eta)\int_t^\eta \left(\frac{z'(\eta)}{z'(s)}\right)^{n-2}{\rm d}s-\int_t^\eta\rho_2 V_2 e^{-s}\lbr\int_t^s\frac{{\rm d}\theta}{z'(\theta)^{n-2}}\rbr{\rm d}s.
\end{eqnarray}
Let 
\begin{eqnarray}\label{eq4.18}
\rho_3(t)&\eqdef &\left((y'(t)^{n-2}-z'(t)^{n-2})V'_2(t)\right)'\nonumber\\
&=&(n-2)(y'(t)^{n-3}y''(t)-z'(t)^{n-3}z''(t))V_2'+(y'(t)^{n-2}-z'(t)^{n-2})V''_2.
\end{eqnarray}
Then we have
\begin{eqnarray*}
\left((y')^{n-2}(V_1' V_2-V_1V_2')\right)'&=& ((y')^{n-2}V_1')'V_2-((y')^{n-2}V_2')'V_1\\
&=&-\rho_1 V_1V_2 e^{-t}+\rho_2 V_1V_2 e^{-t}-\rho_3(t)V_1.
\end{eqnarray*}
Therefore,
\begin{eqnarray}\label{eq4.19}
V_1'(t)V_2(t)-V_1(t)V'_2(t)&=& \left(\frac{y'(\eta)}{y'(t)}\right)^{n-2}(V_1'(\eta)V_2(\eta)-V_1(\eta)V'_2(\eta)) +\nonumber\\
&&+\frac{1}{y'(t)^{n-2}}\int_t^\eta(\rho_1-\rho_2)V_1V_2 e^{-s}{\rm d}s+\frac{1}{y'(t)^{n-2}}\int_t^\eta \rho_3V_1(s){\rm d}s.
\end{eqnarray}
In all the future lemmas, the statement deals for  large $\gamma$; that is there exists $\gamma_0>0$ such that for $\gamma>\gamma_0$, the lemma is true. Note that $\ga_0$ may be different for different lemmas.  With abuse of notation, we delete the statement $\gamma>\gamma_0$ or for large $\gamma$.

We need the following estimates proved in Section \ref{section3} (see \eqref{eq3.23} and \eqref{eq3.25}) and some more estimates. Let $t\geq T_\delta$, then 
\begin{eqnarray}\label{eq4.20}
g(y(t))=g+g'(z(t)-\gamma)+\bo\left(\frac{g''}{(g')^2}(\log(1+e^\delta))^2\right),
\end{eqnarray}
\begin{eqnarray}\label{eq4.21}
g'(y(t))=g'-\frac{ng''}{g'}\log(1+e^\delta)+\bo\left(\frac{g''}{(g')^2}(\log(1+e^\delta))^2\right)
\end{eqnarray}
\begin{eqnarray}\label{eq4.22}
g''(y(t))&=&g''+\bo\left(\frac{g'''}{g'}\log(1+e^\delta)\right)
=g''\left[1+\bo\left(\frac{1}{\gamma g'}\log(1+ e^\delta)\right)\right]\nonumber\\
&=& g''\left[1+\bo\left(\frac{\log(1+e^\delta)}{(g')^{1+\frac{1}{q-1}}}\right)\right]
=g''\left[1+\bo\left(\frac{\log(1+e^\delta)}{(g')^n}\right)\right].
\end{eqnarray}
Since $g'\sim \gamma^{q-1}$, $1<q\leq \frac{n}{n-1}$, $\frac{g''}{(g')^2}\sim\left(\frac{\gamma g''}{g'}\right)\frac{1}{\gamma g'}\sim \frac{1}{(g')^{\frac{q}{q-1}}}$ and $n\leq\frac{q}{q-1}$. Hence,
\begin{eqnarray}\label{eq4.23}
\frac{g''(y(t))}{g'(y(t))}&=&\frac{g''}{g'}\left[1+\bo\left(\frac{\log(1+e^\delta)}{(g')^n}\right)\right]\left[1+\frac{n g''}{(g')^2}\log(1+e^\delta)+\bo\left(\frac{g''}{(g')^3}(\log(1+e^\delta))^2\right)\right]\nonumber\\
&=&\frac{g''}{g'}\left[1+\bo\left(\frac{g''\log(1+e^\delta)}{(g')^2}\right)\right]
\end{eqnarray}
and
\begin{eqnarray}\label{eq4.24}
\frac{g'''(y(t))}{g'(y(t))}=\bo\left(\frac{g'''}{g'}\right)=O\left(\frac{\gamma g'''}{g''}\frac{g''}{\gamma g'}\right)=\bo\left(\frac{g''}{(g')^n}\right).
\end{eqnarray}
Therefore,
\begin{eqnarray*}
\rho_1(t)&=&\frac{1}{n-1}g'(y(t))e^{g(y(t))}\\
&=&\displaystyle\frac{g'}{(n-1)}\left(1+\bo\left(\frac{g''}{(g')^2}\log(1+e^\delta)\right)\right)e^{g+g'(z(t)-\gamma)+\bo\left(\frac{g''}{(g')^2}(\log(1+e^\delta))^2\right)},
\end{eqnarray*}
and
\begin{eqnarray}\label{eq4.25}
\displaystyle\rho_1(t)=\rho_2(t)e^{\bo\left(\frac{g''}{(g')^2}\log(1+e^\delta)^2\right)}.
\end{eqnarray}
Let $T_\delta\leq t_1<t_2<\infty$, then from \eqref{eq3.13} and by integration by parts
\begin{eqnarray}\label{eq4.26}
0<\int_{t_1}^{t_2}\rho_2 e^{-s}\left(\int_{t_1}^s\frac{{\rm d}\theta}{(z'(\theta))^{n-2}}\right){\rm d}s&=&\frac{g'}{(n-1)}\int_{t_1}^{t_2}e^{g+g'(z-\gamma)-s}\left(\int_{t_1}^s\frac{{\rm d}\theta}{(z'(\theta))^{n-2}}\right){\rm d}s\nonumber \\
&=&\frac{g'}{n-1}\left[z(t_2)-z(t_1)-z'(t_2)\int_{t_1}^{t_2}\left(\frac{z'(t_2)}{z'(t)}\right)^{n-2}{\rm d}t\right]\nonumber\\
&\leq &\frac{g'}{n-1}\vert z(t_2)-z(t_1)\vert\leq \frac{n}{(n-1)^2}\vert t_2-t_1\vert.
\end{eqnarray}
Setting $t_2=\infty$, we get
\begin{eqnarray}\label{eq4.27}
0<\int_t^{\infty}(\rho_2 e^{-s})\left(\int_t^s\frac{{\rm d}\theta}{(z'(\theta))^{n-2}}\right){\rm d}s&\leq&\frac{g'}{n-1}(\gamma-z(t))\nonumber\\
&\leq &\frac{n}{n-1}\log(1+e^{\frac{T_1-t}{n-1}})
\end{eqnarray}
and
\begin{eqnarray}\label{eq4.28}
\frac{1}{z'(t_1)^{n-2}}\int_{t_1}^{t_2}\rho_2 e^{-s}{\rm d}s=\frac{g'}{(n-1)}\left(z'(t_1)-\left(\frac{z'(t_2)}{z'(t_1)}\right)^{n-2}z'(t_2)\right)\leq\frac{n}{(n-1)^2}.
\end{eqnarray}
\underline{{\it Idea of the proof:}}
Basic idea is
\begin{itemize}
\item[(1)] Let $S<S_1<\infty$ be the first turning point  and first zero of $V_1$ before infinity. Then in the first few lemmas, we show that $S_1$ exists. Then estimates about $\vert V_1(t)-V_2(t)\vert+\vert V_1'(t)-V'_2(t)\vert$ for $t\in [S,\infty)$ are established.
\item[(2)] Since $V_2(t)\to -\frac{1}{n-1}$ as $t\to -\infty$, hence for $T(\gamma)<t<S$, $V_1$ and $V_2$ depart. Therefore we need a better estimate of $S$ and this is achieved by using the identity \eqref{eq4.58} in Lemma \ref{lemma4.6}. This identity plays an important role in proving the crucial estimate \eqref{eq4.63} in Lemma \ref{lemma4.8} which yields the necessary result.
\end{itemize}
$$\begin{tikzpicture}[scale=0.7]
\draw[-] [thick] (-5,0) node[below left]{$-\frac{1}{n-1}$} -- (10,0) node[below]{$\infty$};
\draw[-] [thick] (-5,2) -- (10,2) node[right]{$0$};
\draw[-] [thick] (10,0)--(10,6) node[right]{$\gamma$};
\draw [thick, color=black] plot [smooth] coordinates {(2,2)(2.7,3.5) (4,4.5) (6.7,5.5) (10,6)};
\draw [thick, color=black] plot [smooth] coordinates {(10,4) (8.5,3.7) (5,1.5)  (3.8, 1) (2.8,1) (2,1.3) };
\draw [thick, color=black] plot [smooth] coordinates {(10,4) (8.5,3) (5,0.7) (2,0.3) (-2,0.3)  };
\draw[dashed] [thick] (3.3,2)--(3.3,0.9);
\node (P) at (1.5,2.8) [below]{$T(\gamma)$};
\node (P) at (2.5,1.8) [below]{$V_1$};
\node (P) at (-1.5,1) [below]{$V_2$};
\node (P) at (11,4.2) [below]{$1$};
\node (P) at (3.3,2.3) [above]{$S$};
\node (P) at (5,6) {$y$};
\node (P) at (5.5,2.7) [below]{$S_1$};
\end{tikzpicture}
 $$
\begin{lemm}\label{lemma4.1}
Let $T_\delta\leq t\leq \eta\leq \infty$, $A=\bo\left(\frac{g''}{(g')^2}(\log(1+e^\delta))^2\right)$ and
\begin{eqnarray}\label{eq4.29}
M_\eta(t)=\max\left\{\vert V_1(s)-V_2(s)\vert;\, t\leq s\leq \eta\right\}.
\end{eqnarray}
Then for $\gamma$ large,
\begin{eqnarray}\label{eq4.30}
M_\eta(t)&\leq& \vert V_1(\eta)-V_2(\eta)\vert+2\vert\eta-t\vert\vert V'_1(\eta)-V'_2(\eta)\vert + \frac{2n}{n-1}\log(1+e^\delta)M_\eta(t)\nonumber\\
&+&2A\left[\vert V_2'(\eta)\vert(\eta-t)+\log(1+e^{\frac{T_1-t}{n-1}})\right],
\end{eqnarray}
\begin{eqnarray}\label{eq4.31}
M_\infty(t)\leq 4\log(1+e^{\frac{T_1-t}{n-1}})[A+M_\infty(t)]\mbox{ and }
\end{eqnarray}
\begin{eqnarray}\label{eq4.32}
\vert V'_1(t)-V'_2(t)\vert\leq 2\vert V_1'(\eta)-V'_2(\eta)\vert+2A(\vert V'_2(\eta)\vert+V_2(\eta)) + 4M_\eta(t).
\end{eqnarray}
\end{lemm}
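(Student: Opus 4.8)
\emph{Plan.} All three inequalities are obtained from the integral representations \eqref{eq4.14}--\eqref{eq4.17} by subtracting the $V_2$-identity from the $V_1$-identity and then, wherever it occurs, replacing $y'$ by $z'$ and $\rho_1$ by $\rho_2$, the discrepancy being absorbed into powers of $A$. The forms of the closeness estimates I would use are, from \eqref{eq3.24}, $\left(y'(\eta)/y'(u)\right)^{n-2}=\left(z'(\eta)/z'(u)\right)^{n-2}(1+\bigo{A})$ and $\frac{1}{y'(t)^{n-2}}=\frac{1}{z'(t)^{n-2}}(1+\bigo{A})$, and, from \eqref{eq4.25}, $\rho_1=\rho_2(1+\bigo{A})$; one also keeps $|V_2|\le1$ and $0<V_2'\le\tfrac{n}{4(n-1)^2}$ from \eqref{eq4.4}--\eqref{eq4.5}. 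Since $t\ge T_\delta$ we have $\tfrac{T_1-s}{n-1}\le\delta$ for every $s\ge t$, hence $\log(1+e^{\frac{T_1-s}{n-1}})\le\log(1+e^\delta)$.

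\emph{Proof of \eqref{eq4.30}.} Fix $s\in[t,\eta]$ and write \eqref{eq4.15} and \eqref{eq4.17} with lower limit $s$; subtracting gives
\begin{eqnarray*}
V_1(s)-V_2(s)&=&\left(V_1(\eta)-V_2(\eta)\right)-\left[V_1'(\eta)\int_s^\eta\left(\tfrac{y'(\eta)}{y'(u)}\right)^{n-2}{\rm d}u-V_2'(\eta)\int_s^\eta\left(\tfrac{z'(\eta)}{z'(u)}\right)^{n-2}{\rm d}u\right]\\
&&-\left[\int_s^\eta\rho_1 V_1 e^{-u}\left(\int_s^u\tfrac{{\rm d}\theta}{y'(\theta)^{n-2}}\right){\rm d}u-\int_s^\eta\rho_2 V_2 e^{-u}\left(\int_s^u\tfrac{{\rm d}\theta}{z'(\theta)^{n-2}}\right){\rm d}u\right].
\end{eqnarray*}
In the middle bracket, substitute \eqref{eq3.24} and write $V_1'(\eta)=V_2'(\eta)+(V_1'(\eta)-V_2'(\eta))$; since $0<\left(z'(\eta)/z'(u)\right)^{n-2}\le1$ this bracket is bounded by $(1+\bigo{A})(\eta-t)|V_1'(\eta)-V_2'(\eta)|+\bigo{A}\,|V_2'(\eta)|\,(\eta-t)$. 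In the double integral, write $\rho_1V_1=\rho_2V_2+\rho_2(V_1-V_2)+(\rho_1-\rho_2)V_1$ with $\rho_1-\rho_2=\bigo{A}\rho_2$ and replace $y'$ by $z'$ inside: the leading $\rho_2V_2$ piece cancels against the $V_2$-integral, the piece $\int_s^\eta\rho_2(V_1-V_2)e^{-u}\left(\int_s^u{\rm d}\theta/z'(\theta)^{n-2}\right){\rm d}u$ is at most $\tfrac{n}{n-1}\log(1+e^\delta)M_\eta(t)$ by \eqref{eq4.27} and $M_\eta(s)\le M_\eta(t)$, and the remaining corrections contribute $\bigo{A}\log(1+e^{\frac{T_1-t}{n-1}})+\bigo{A}M_\eta(t)$ via \eqref{eq4.26}--\eqref{eq4.27}, $|V_2|\le1$ and $|V_1|\le1+M_\eta(t)$. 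Because $A\to0$ as $\gamma\to\infty$, the $\bigo{A}$-multiples of $(\eta-t)|V_1'(\eta)-V_2'(\eta)|$ and of $M_\eta(t)$ are absorbed, enlarging the coefficients $1$ and $\tfrac{n}{n-1}$ to $2$ and $\tfrac{2n}{n-1}$; maximising over $s\in[t,\eta]$ yields \eqref{eq4.30}.

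\emph{Proof of \eqref{eq4.31} and \eqref{eq4.32}.} For \eqref{eq4.31}, take $\eta=\infty$: then $V_1(\infty)=V_2(\infty)=1$ and $V_1'(\infty)=V_2'(\infty)=0$, so all boundary terms vanish; keeping the sharper bound $\log(1+e^{\frac{T_1-s}{n-1}})\le\log(1+e^{\frac{T_1-t}{n-1}})$ throughout instead of passing to $\log(1+e^\delta)$, and using $\tfrac{2n}{n-1}\le4$ for $n\ge2$, the argument above gives $M_\infty(t)\le4\log(1+e^{\frac{T_1-t}{n-1}})[A+M_\infty(t)]$. For \eqref{eq4.32}, subtract \eqref{eq4.16} from \eqref{eq4.14} at $t$. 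The boundary term $\left(y'(\eta)/y'(t)\right)^{n-2}V_1'(\eta)-\left(z'(\eta)/z'(t)\right)^{n-2}V_2'(\eta)$, after \eqref{eq3.24} and $0<\left(z'(\eta)/z'(t)\right)^{n-2}\le1$, is $\le(1+\bigo{A})|V_1'(\eta)-V_2'(\eta)|+\bigo{A}|V_2'(\eta)|\le2|V_1'(\eta)-V_2'(\eta)|+A|V_2'(\eta)|$ for $\gamma$ large; in the integral term, writing $\rho_1V_1=\rho_2V_2+\rho_2(V_1-V_2)+\bigo{A}\rho_2V_1$ and $\frac{1}{y'(t)^{n-2}}=\frac{1}{z'(t)^{n-2}}(1+\bigo{A})$, the $\rho_2V_2$ part reconstructs $V_2'(t)-\left(z'(\eta)/z'(t)\right)^{n-2}V_2'(\eta)$ and cancels, leaving $\frac{1}{z'(t)^{n-2}}\int_t^\eta\rho_2(V_1-V_2)e^{-s}{\rm d}s$, which is at most $\tfrac{n}{(n-1)^2}M_\eta(t)$ by \eqref{eq4.28}, plus an $A$-error $\bigo{A}(|V_2(\eta)|+M_\eta(t))$; since $\tfrac{2n}{(n-1)^2}\le4$ for $n\ge2$, this gives \eqref{eq4.32}.

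\emph{Main obstacle.} The substitutions are routine; the only real work is the bookkeeping of the $A$-errors. One must check that every discrepancy produced by replacing $(y',\rho_1)$ with $(z',\rho_2)$ either multiplies a genuinely bounded quantity — and then, after \eqref{eq4.26}--\eqref{eq4.28}, lands inside the displayed $A$-term — or else multiplies one of the unknowns $M_\eta(t)$, $|V_1'(\eta)-V_2'(\eta)|$, in which case, since $A\to0$, it merely inflates the relevant coefficient by a bounded factor, the final integer constants being pinned down by $\tfrac{2n}{n-1}\le4$ and $\tfrac{2n}{(n-1)^2}\le4$ for $n\ge2$. Note finally that \eqref{eq4.30} is meant for finite $\eta$ (it degenerates at $\eta=\infty$), \eqref{eq4.31} being its $\eta\to\infty$ counterpart, and that the hypothesis $t\ge T_\delta$ is precisely what makes \eqref{eq4.20}--\eqref{eq4.25}, hence the stated form of the $A$-term, available.
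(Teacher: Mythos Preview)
Your proposal is correct and follows essentially the same route as the paper: start from the integral identities \eqref{eq4.14}--\eqref{eq4.17}, replace $(y',\rho_1)$ by $(z',\rho_2)$ via \eqref{eq3.24} and \eqref{eq4.25} at the cost of $e^A$-factors, decompose $\rho_1V_1=\rho_2V_2+\rho_2(V_1-V_2)+(\rho_1-\rho_2)V_1$, and bound the resulting integrals by \eqref{eq4.26}--\eqref{eq4.28}. The paper carries the $e^A$-factors explicitly (writing out the analogue of your difference as its display \eqref{eq4.33}) and then uses $e^A\le 2$, $e^A-1\le 2A$ at the end, whereas you expand $e^A=1+\bigo{A}$ from the start and absorb; your explicit observation that the constant $4$ in \eqref{eq4.31}--\eqref{eq4.32} comes from $\tfrac{2n}{n-1}\le 4$ and $\tfrac{2n}{(n-1)^2}\le 4$ for $n\ge 2$ is a point the paper leaves implicit.
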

\proof From \eqref{eq3.24}, \eqref{eq4.25} and \eqref{eq4.17}, we have for $T_\delta\leq t\leq s\leq \eta\leq \infty$,
\begin{eqnarray*}
V_1(s)&=&V_1(\eta)-V_1'(\eta)\int_s^\eta\left(\frac{y'(\eta)}{y'(\theta)}\right)^{n-2}{\rm d}\theta-\int_s^{\eta}\rho_1 V_1 e^{-\theta}\left(\int_s^\theta\frac{{\rm d}x}{y'(x)^{n-2}}\right){\rm d}\theta\\
&=&V_1(\eta)-V'_1(\eta)e^A\int_s^\eta\left(\frac{z'(\eta)}{z'(\theta)}\right)^{n-2}{\rm d}\theta-e^A\int_s^\eta\rho_2 V_1 e^{-\theta}(\int_s^\theta\frac{{\rm d}x}{z'(x)^{n-2}}){\rm d}\theta\\
&=&V_1(\eta)-(V'_1(\eta)-V'_2(\eta))e^A\int_s^\eta\left(\frac{z'(\eta)}{z'(\theta)}\right)^{n-2}{\rm d}\theta-V_2'(\eta)e^A\int_s^\eta\left(\frac{z'(\eta)}{z'(\theta)}\right)^{n-2}{\rm d}\theta\\
&&-e^A\int_s^\eta\rho_2 V_2 e^{-\theta}(\int_s^\theta\frac{{\rm d}x}{z'(x)^{n-2}}){\rm d}\theta+ e^A\int_s^\eta \rho_2(V_2-V_1)e^{-\theta}(\int_s^\theta \frac{{\rm d}x}{z'(x)^{n-2}}){\rm d}\theta.
\end{eqnarray*}
Hence
\begin{equation}\label{eq4.33}
\begin{array}{lll}
V_1(s)-V_2(s)&=&(V_1(\eta)-V_2(\eta))-(V_1'(\eta)-V'_2(\eta))e^A\int_s^\eta\left(\frac{z'(\eta)}{z'(\theta)}\right)^{n-2}{\rm d}\theta\\
&&+(e^A-1)\left[-V_2'(\eta)\int_s^\eta\left(\frac{z'(\eta)}{z'(\theta)}\right)^{n-2}{\rm d}\theta-\int_s^\eta\rho_2V_2 e^{-\theta}(\int_s^\theta\frac{{\rm d}x}{(z'(x))^{n-2}}){\rm d}\theta\right]\\
&&+e^A\int_s^\eta\rho_2(V_2-V_1)e^{-\theta}(\int_s^\theta\frac{{\rm d}x}{z'(x)^{n-2}}){\rm d}\theta.
\end{array}
\end{equation}

Since $z'(\eta)\leq z'(\theta)$ for $\theta\leq \eta$ and $0\leq V_2\leq 1$, we have by using \eqref{eq4.27} for  $\eta\neq\infty$
\begin{eqnarray}\label{eq4.33bis}
\vert V_1(s)-V_2(s)\vert&\leq &\vert V_1(\eta)-V_2(\eta)\vert+e^A(\eta-s)\vert V'_1(\eta)-V'_2(\eta)\vert\nonumber\\
&&+(e^A-1)\left(\vert V'_2(\eta)\vert(\eta-s)+\left(\frac{n}{n-1}\right)\log(1+e^{\frac{T_1-s}{n-1}})\right)\nonumber\\
&&+\left(\frac{n}{n-1}\right)e^A\log(1+e^{\frac{T_1-s}{n-1}})M_\eta(s).
\end{eqnarray}
If $\eta=\infty$, using \eqref{eq2.4} and \eqref{eq2.13}, we get
\begin{eqnarray*}
\vert V_1(s)-V_2(s)\vert\leq (e^A-1)\frac{n}{n-1}\log\left(1+e^{\frac{T_1-s}{n-1}}\right)+\frac{n}{n-1}e^A\log\left(1+e^{\frac{T_1-s}{n-1}}\right)M_\infty(t).
\end{eqnarray*}
For $\gamma$ large, we have $e^A\leq 1+2A$ and $e^A\leq 2$, hence by taking the maximum of $\vert V_1-V_2\vert$ in $[t,\eta]$ we obtain if $\eta<\infty$
\begin{eqnarray*}
M_\eta(t)&\leq &\vert V_1(\eta)-V_2(\eta)\vert+2\vert t-\eta\vert\vert V_1'(\eta)-V'_2(\eta)\vert\\
&&+2A\left(\vert V_2'(\eta)\vert \eta-t\vert+\frac{n}{n-1}\log\left(1+e^{\frac{T_1-t}{n-1}}\right)\right)\\
&&+4\log\left(1+e^{\frac{T_1-t}{n-1}}\right)M_\eta(t),
\end{eqnarray*}
and if $\eta=\infty$, the estimate
\begin{eqnarray*}
M_\infty(t)\leq 4\log(1+e^{\frac{T_1-t}{n-1}})(A+M_\infty(t)).
\end{eqnarray*}
This proves \eqref{eq4.30} and \eqref{eq4.31}. Next,
\begin{eqnarray*}
V'_1(t)&=&\left(\frac{y'(\eta)}{y'(t)}\right)^{n-2} V'_1(\eta)+ \frac{1}{y'(t)^{n-2}}\int_t^\eta\rho_1 V_1 e^{-s}{\rm d}s\\
&=&  e^A V_1'(\eta)\left(\frac{z'(\eta)}{z'(t)}\right)^{n-2}+\frac{e^A}{z'(t)^{n-2}}\int_t^\eta\rho_2(V_1-V_2)e^{-s}{\rm d}s
+\frac{e^A}{z'(t)^{n-2}}\int_t^\eta\rho_2V_2 e^{-s}{\rm d}s\\
&=&e^A\left(V'_2(\eta)\left(\frac{z'(\eta)}{z'(t)}\right)^{n-2}+\frac{1}{z'(t)^{n-2}}\int_t^\eta\rho_2 V_2e^{-s}{\rm d}s\right)\\
&&+ e^A(V_1'(\eta)-V'_2(\eta))\left(\frac{z'(\eta)}{z'(t)}\right)^{n-2}
+\frac{e^A}{z'(t)^{n-2}}\int_t^\eta \rho_2(V_1-V_2)e^{-s}{\rm d}s.
\end{eqnarray*}
Hence from \eqref{eq4.16}, \eqref{eq4.28}, we have 
\begin{eqnarray*}
\vert V_1'(t)-V'_2(t)\vert\leq 2\vert V_1'(\eta)-V'_2(\eta)\vert +2A(\vert V_2'(\eta)\vert+V_2(\eta))+4M_\eta(t).
\end{eqnarray*}
This proves the lemma.\qed

In the next few lemmas we estimate the first zero $S_1$ and the first turning point $S$ defined by
\begin{eqnarray}\label{eq4.34}
S_1\eqdef\inf\{ t\,:\, V_1(s)>0, \quad \forall s\in (t,\infty)\},
\end{eqnarray}
\begin{eqnarray}\label{eq4.35}
S\eqdef\inf\{ t\,:\, V_1'(s)>0, \quad \forall s\in (t,\infty)\},
\end{eqnarray}
\begin{lemm}\label{lemma4.2}
Let $S_0$ be the zero of $V_2$ as defined in \eqref{eq4.7}. Then there exists a $k_1>0$, $C_3>0$ and $\gamma_0>0$ such that for all $\gamma\geq \gamma_0$, we have
\begin{eqnarray}\label{eq4.36}
\vert V_1(t)-V_2(t)\vert+\vert V'_1(t)-V'_2(t)\vert=\bo\left(\frac{g''}{(g')^4}\right)\quad\forall t\geq T_1+2(n-1)\log(g'),
\end{eqnarray}
\begin{eqnarray}\label{eq4.36b}
S_1-S_0=\bo\left(\frac{g''}{(g')^2}\right),
\end{eqnarray}
\begin{eqnarray}\label{eq4.37}
\vert V_1(t)-V_2(t)\vert+\vert V'_1(t)-V'_2(t)\vert=\bo\left(\frac{g''}{(g')^2}\right)\quad\forall t\geq S_1-(n-1)k_1,
\end{eqnarray}
\begin{eqnarray}\label{eq4.38}
V_2(S_1-(n-1)k_1)\leq -C_3.
\end{eqnarray}
\end{lemm}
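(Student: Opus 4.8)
The plan is to exploit that $V_2$ is explicit (formulas \eqref{eq4.4}--\eqref{eq4.8}) and to propagate the comparison $V_1\approx V_2$ from $t=+\infty$ leftwards, in two regimes: a \emph{far} regime $t\ge t^\ast:=T_1+2(n-1)\log(g')$, where $e^{\frac{T_1-t}{n-1}}\le(g')^{-2}$ is negligible, and then down to a fixed window just to the left of the simple zero $S_0=T_1-(n-1)\log(n-1)$ of $V_2$. Fix once and for all a small $k_1>0$ and a constant $\delta>\log(n-1)+k_1$, so that $T_\delta=T_1-(n-1)\delta\le S_0-(n-1)k_1$; then the comparison quantity of Lemma \ref{lemma4.1} is $A=\bigo{\frac{g''}{(g')^2}(\log(1+e^{\delta}))^2}=\bigo{\frac{g''}{(g')^2}}$.

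\textbf{For \eqref{eq4.36}.} For $t\ge t^\ast$ one has $n\log(1+e^{\frac{T_1-t}{n-1}})=\bigo{(g')^{-2}}$, hence $z(t)-\gamma=\bigo{(g')^{-3}}$, and combining \eqref{eq3.18}, \eqref{eq3.19} with assertion (iii) of Proposition \ref{proposition3.2} (applied with $\eta=n\log(1+e^{\frac{T_1-t}{n-1}})$) upgrades $y\le z$ to the sharp comparison $y(t)=z(t)+\bigo{\frac{g''}{(g')^7}}$. Expanding $g(y(t))$ and $g'(y(t))$ about $\gamma$ and using $(y'(t))^{n-1}=\int_t^\infty e^{g(y(s))-s}\,{\rm d}s$ then gives, on $[t^\ast,\infty)$, $\rho_1(t)=\rho_2(t)(1+\bigo{g''/(g')^4})$ and $y'(t)=z'(t)(1+\bigo{g''/(g')^6})$. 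Feeding these into \eqref{eq4.15}, \eqref{eq4.17} and rerunning the argument behind \eqref{eq4.31}--\eqref{eq4.32} with $\eta=\infty$ — the contraction factor is now $\bigo{(g')^{-2}}$ and the forcing $\bigo{g''/(g')^4}$ — yields $|V_1-V_2|=\bigo{g''/(g')^6}$ and $|V_1'-V_2'|=\bigo{g''/(g')^4}$ on $[t^\ast,\infty)$, which is \eqref{eq4.36}.

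\textbf{For \eqref{eq4.36b}, \eqref{eq4.37}, \eqref{eq4.38}.} Starting from the data at $t^\ast$ (of size $\bigo{g''/(g')^4}$ by \eqref{eq4.36}), propagate $D(t):=|V_1(t)-V_2(t)|+|V_1'(t)-V_2'(t)|$ leftwards over $[S_0-(n-1)k_1,\,t^\ast]$. The key point is that, although this interval has $t$-length $\bigo{\log(g')}$, its total kernel mass $\frac{g'}{n-1}\bigl(z(t^\ast)-z(S_0-(n-1)k_1)\bigr)=\frac{n}{n-1}\log\frac{1+(g')^{-2}}{1+(n-1)e^{k_1}}$ is $\bigo{1}$; so split it into $\bigo{1}$ consecutive sub-intervals on each of which the kernel $\int\rho_2e^{-\theta}\bigl(\int\frac{{\rm d}x}{(z'(x))^{n-2}}\bigr){\rm d}\theta\le\frac{g'}{n-1}|z(\cdot)-z(\cdot)|$ (using the finite-$\eta$ bound \eqref{eq4.26} in place of \eqref{eq4.27}) is $\le\frac14$, and run the estimates of Lemma \ref{lemma4.1} with finite $\eta$. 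Each step amplifies $D$ by a bounded constant and adds $\bigo{A}=\bigo{g''/(g')^2}$, the long first sub-interval contributing only $\bigo{g''\log(g')/(g')^4}=\bigo{g''/(g')^2}$ since its data is $\bigo{g''/(g')^4}$; after the $\bigo{1}$ steps, $D(t)=\bigo{g''/(g')^2}$ for all $t\ge S_0-(n-1)k_1$. Since $V_2'(S_0)=\frac{1}{n(n-1)}>0$ and $V_2'$ is bounded below on a fixed neighbourhood of $S_0$, for a large constant $C$ we get $V_1(S_0-Cg''/(g')^2)<0$, while by the above and \eqref{eq4.36} for large $t$ we get $V_1>0$ on $(S_0+Cg''/(g')^2,\infty)$; hence $S_1\in[S_0-Cg''/(g')^2,\,S_0+Cg''/(g')^2]$, which is \eqref{eq4.36b}, and \eqref{eq4.37} is the estimate just proved rephrased about $S_1$ (shrinking $k_1$ if necessary). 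Finally, \eqref{eq4.38} is immediate from \eqref{eq4.4}: $V_2(S_0-(n-1)k_1)=-\frac{1}{n-1}+\frac{n}{(n-1)(1+(n-1)e^{k_1})}=:-2C_3$ is a strictly negative constant for $k_1>0$, so by \eqref{eq4.36b} and the mean value theorem $V_2(S_1-(n-1)k_1)=-2C_3+\bigo{g''/(g')^2}\le-C_3$ for $\gamma$ large.

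\textbf{Main obstacle.} The delicate step is the leftward propagation: near $S_0$ the kernel $\frac{n}{n-1}\log(1+e^{\frac{T_1-t}{n-1}})$ appearing in \eqref{eq4.31} is of order $1$ rather than $o(1)$, so a single contraction over the whole window does not close; the remedy is to measure progress by accumulated kernel mass instead of $t$-length — that mass being $\bigo{1}$ over $[S_0-(n-1)k_1,t^\ast]$ — so that only $\bigo{1}$ contraction steps occur and the compounded amplification stays bounded. The other point needing care is the sharp far-field comparison $y=z+\bigo{g''/(g')^7}$, which is exactly what sharpens the error in \eqref{eq4.36} from $\bigo{g''/(g')^2}$ to $\bigo{g''/(g')^4}$.
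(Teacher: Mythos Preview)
Your argument is correct but takes a genuinely different route from the paper for the key step \eqref{eq4.36b}--\eqref{eq4.37}.

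The paper first obtains a rough localization $S_1=T_1+\bo(1)$ by elementary monotonicity (its Claims 1--2), and then brings in the Wronskian identity \eqref{eq4.19}: evaluating it at $t=S_1$, $\eta=S_2:=T_1+(n-1)k_0$ gives $V_1'(S_1)V_2(S_1)=\bo(g''/(g')^2)$, and since $V_1'(S_1)\ge c>0$ one reads off $|V_2(S_1)|=\bo(g''/(g')^2)$, hence $S_1-S_0=\bo(g''/(g')^2)$. The Wronskian is then reused (via the ratio $V_1/V_2$) to push the $\bo(g''/(g')^2)$ estimate through $[S_0+\varepsilon,S_2]$, and the remaining pieces are patched in by short direct arguments.

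You instead exploit that the \emph{total} contraction mass $\frac{g'}{n-1}\bigl(z(t^\ast)-z(S_0-(n-1)k_1)\bigr)=\frac{n}{n-1}\log\frac{1+(n-1)e^{k_1}}{1+(g')^{-2}}$ is $\bo(1)$ (your displayed ratio is inverted, but the conclusion stands), partition $[S_0-(n-1)k_1,\,t^\ast]$ into $\bo(1)$ sub-intervals each of mass $\le\frac14$, and iterate the contraction estimate of Lemma \ref{lemma4.1} (with \eqref{eq4.26} replacing \eqref{eq4.27} in the last term of \eqref{eq4.33}). This delivers $|V_1-V_2|+|V_1'-V_2'|=\bo(g''/(g')^2)$ directly down to $S_0-(n-1)k_1$, and then $S_1-S_0=\bo(g''/(g')^2)$ follows from the simple zero of $V_2$ at $S_0$, with no Wronskian needed. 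For \eqref{eq4.36} you redo the sharp far-field comparison $y=z+\bo(g''/(g')^7)$ by hand; the paper gets the same conclusion more quickly by simply taking $\delta=-2\log g'$ in Lemma \ref{lemma4.1}.

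What each approach buys: your route is more self-contained for \emph{this} lemma and avoids the Wronskian altogether. The paper's route looks heavier here, but the Wronskian identity \eqref{eq4.19} is not throwaway machinery---it is the backbone of Lemma \ref{lemma4.3} and, crucially, of the sharp turning-point estimate in Lemma \ref{lemma4.8}; so the paper is deliberately introducing and testing the tool it will rely on later.
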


\proof
Let $\delta=-2\log(g')$, then $A=\bo\left(\frac{g''}{(g')^2}(\log(1+e^\delta))^2\right)=\bo\left(\frac{g''}{(g')^4}\right)$ and hence for $t\geq T_\delta=T_1-(n-1)\delta$ and using Lemma \ref{lemma4.1}, we have
\begin{eqnarray*}
\vert V_1(t)-V_2(t)\vert=\bo\left(\frac{g''}{(g')^4}\right).
\end{eqnarray*}
Let $\eta=\infty$ in \eqref{eq4.32} and $t\geq T_1-(n-1)\delta$, we have
\begin{eqnarray*}
\vert V'_1(t)-V'_2(t)\vert= 2A+M_\infty(t)=\bo\left(\frac{g''}{(g')^4}\right).
\end{eqnarray*}
This proves \eqref{eq4.36}. 
Let $k_0>0$ be large such that $4\log(1+e^{-k_0})<\frac{1}{2}$ and let $-\delta=k_0$, then $A=\bo\left(\frac{g''}{(g')^2}\right)$.
Then from \eqref{eq4.31} and for $t\geq T_1+(n-1)k_0$ 
\begin{eqnarray*}
\frac{M_\infty(t)}{2}\leq 4\log(1+e^{\frac{T_1-t}{n-1}}))A. 
\end{eqnarray*}
Hence $M_\infty(t)=\bo(A)=\bo\left(\frac{g''}{(g')^2}\right)$. From \eqref{eq4.32} taking $\eta=\infty$ we obtain for $t\geq T_1+(n-1)k_0$ the estimate $\vert V'_1(t)-V'_2(t)\vert=\bo(A)$. 
Hence for $t\geq T_1+(n-1)k_0$, we have 
\begin{eqnarray}\label{eq4.39}
\vert V_1(t)-V_2(t)\vert +\vert V'_1(t)-V'_2(t)\vert=\bo\left(\frac{g''}{(g')^2}\right).
\end{eqnarray}

\begin{clm}
\label{clm1}
$S_1<T_1+(n-1)k_0$.
\end{clm}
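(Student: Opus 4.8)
The plan is to read off Claim \ref{clm1} directly from the uniform proximity of $V_1$ and $V_2$ recorded in \eqref{eq4.39}, together with the fact that $V_2$ stays bounded away from $0$ on $[T_1+(n-1)k_0,\infty)$. First I would pin down the behaviour of $V_2$ at the endpoint $t_*:=T_1+(n-1)k_0$. From \eqref{eq4.5} we have $V_2'>0$, so $V_2$ is strictly increasing, and from the explicit formula \eqref{eq4.4},
$$V_2(t_*)=-\frac1{n-1}+\frac n{n-1}\cdot\frac1{1+e^{-k_0}}.$$
By the choice of $k_0$ we have $4\log(1+e^{-k_0})<\tfrac12$, hence $e^{-k_0}<e^{1/8}-1<\tfrac16$, so $V_2(t_*)\ge \frac{6n-7}{7(n-1)}>\tfrac12$ for every $n\ge2$; call this lower bound $c_n$, an absolute constant independent of $\gamma$. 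Monotonicity of $V_2$ then gives $V_2(t)\ge c_n>0$ for all $t\ge t_*$.

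Next I would invoke \eqref{eq4.39}: there is a constant $C$, independent of $\gamma$, with $|V_1(t)-V_2(t)|\le C\,g''/(g')^2$ for every $t\ge t_*$. Since $g'\sim\gamma^{q-1}$ and $g''\sim\gamma^{q-2}$ by Proposition \ref{proposition3.2}, we have $g''/(g')^2\sim\gamma^{-q}\to0$ as $\gamma\to\infty$, so for $\gamma$ large this error is $<c_n/2$. Hence $V_1(t)\ge V_2(t)-|V_1(t)-V_2(t)|\ge c_n/2>0$ for all $t\ge t_*$. In particular $V_1(t_*)>0$, so by continuity $V_1>0$ on $[t_*-\varepsilon,\infty)$ for some $\varepsilon>0$, and the definition \eqref{eq4.34} of $S_1$ forces $S_1\le t_*-\varepsilon<t_*=T_1+(n-1)k_0$.

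There is no genuine obstacle here: the only point that needs care is that both the implied constant in \eqref{eq4.39} and the lower bound $c_n$ for $V_2(t_*)$ are true constants, depending only on $n$ (and the fixed $k_0$) but not on $\gamma$, so that the decay of $g''/(g')^2$ as $\gamma\to\infty$ lets the latter dominate the former; this is immediate from the closed form \eqref{eq4.4} and the defining inequality $4\log(1+e^{-k_0})<\tfrac12$.
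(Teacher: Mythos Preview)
Your proof is correct and uses essentially the same ingredients as the paper's argument: the uniform estimate \eqref{eq4.39} together with the explicit formula \eqref{eq4.4} showing that $V_2$ is bounded below by a positive constant on $[T_1+(n-1)k_0,\infty)$. The paper phrases this as a contradiction (assuming $S_1\ge T_1+(n-1)k_0$ and evaluating $V_2(S_1)$ two ways), whereas you argue directly that $V_1>0$ on the whole half-line, which is a little cleaner but not a different method.
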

\begin{proof}[Proof of Claim 1:]Since $T_1+(n-1)k_0=g'+(n-1)\log\left(\frac{g'(n-1)}{n}\right)+(n-1)k_0=S_0+(n-1)k_0+(n-1)\log(n-1)$, hence if $S_1\geq T_1+(n-1)k_0$, then $S_1>S_0$ and $S_0-S_1<-(n-1)(k_0+\log(n-1))\leq -(n-1)k_0$. This along with \eqref{eq4.39}, \eqref{eq4.4} gives
\begin{eqnarray*}
\frac{1-e^{\frac{S_0-S_1}{n-1}}}{1+e^{\frac{T_1-S_1}{n-1}}}=V_2(S_1)=V_2(S_1)-V_1(S_1)=\bo\left(\frac{g''}{(g')^2}\right).
\end{eqnarray*}
Thus,
\begin{eqnarray*}
1=e^{\frac{S_0-S_1}{n-1}}+\bo\left((1+e^{\frac{T_1-S_1}{n-1}})\frac{g''}{(g')^2}\right)\leq e^{-k_0}+\bo\left((1+e^{-k_0})\frac{g''}{(g')^2}\right)<1
\end{eqnarray*}
which is a contradiction and the claim follows.
\end{proof}

\begin{clm}\label{clm2} There exists a $C_1>0$ such that 
\begin{eqnarray}\label{eq4.40}
S_1\geq T_1+(n-1)k_0-C_1.
\end{eqnarray}\end{clm}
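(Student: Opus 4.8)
The plan is to argue by contradiction. Suppose no constant $C_1$ as in \eqref{eq4.40} exists; then along some sequence $\gamma\to\infty$ we have $T_1+(n-1)k_0-S_1\to\infty$, and it suffices to reach a contradiction by showing that in fact $T_1-S_1$ stays bounded along this sequence (which will give \eqref{eq4.40} with, say, $C_1=(n-1)(k_0+\log(n-1))+C$). First I would record two elementary facts. Since $V_1>0$ on $(S_1,\infty)$ and $y$ is increasing with $y(\infty)=\gamma$, we have $y\ge s_0$ (hence $f'(y)>0$) on an interval $(\tilde T,\infty)$; there $P(t):=y'(t)^{n-2}V_1'(t)$ is non-increasing with $P(\infty)=0$, so $P>0$, i.e. $V_1'>0$ and $0\le V_1\le1$ on $(\tilde T,\infty)$; moreover $V_1'(S_1)>0$, for otherwise $V_1(S_1)=V_1'(S_1)=0$ would force $V_1\equiv0$ by uniqueness for the linear equation \eqref{eq4.12}. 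In the relevant regime $S_1>\tilde T$ (because $y(S_1)\to\infty$), so all of this holds on $(S_1,\infty)$.

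The core of the argument is the Wronskian-type identity \eqref{eq4.19} evaluated at $t=S_1$ with $\eta\to\infty$. Since $V_1(S_1)=0$ and $V_1'(\infty)V_2(\infty)-V_1(\infty)V_2'(\infty)=0\cdot1-1\cdot0=0$, the boundary term drops out, leaving
$$V_1'(S_1)\,V_2(S_1)=\frac{1}{y'(S_1)^{n-2}}\int_{S_1}^{\infty}(\rho_1-\rho_2)V_1V_2\,e^{-s}\,ds+\frac{1}{y'(S_1)^{n-2}}\int_{S_1}^{\infty}\rho_3\,V_1\,ds.$$
To estimate the right-hand side I would fix $\delta$ (a constant depending only on the putative $C_1$, so that $T_\delta\le S_1$ for $\gamma$ large), so that $A=\bo(g''/(g')^2)$. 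For the first integral, \eqref{eq4.25} gives $\rho_1-\rho_2=\bo\!\big(\tfrac{g''}{(g')^2}\rho_2\big)$ on $[T_\delta,\infty)$, and with $|V_1V_2|\le1$ and \eqref{eq4.28} (with $t_2=\infty$) it is $\bo(g''/(g')^2)\,y'(S_1)^{n-2}$. For the second, using the definition \eqref{eq4.18} of $\rho_3$, integrate by parts: the boundary terms vanish (at $S_1$ because $V_1(S_1)=0$, at $\infty$ because $y'^{\,n-2}-z'^{\,n-2}\to0$ and $V_2'\to0$), so it equals $-\int_{S_1}^{\infty}(y'^{\,n-2}-z'^{\,n-2})V_2'V_1'\,ds$; by \eqref{eq3.24} one has $|y'^{\,n-2}-z'^{\,n-2}|=\bo\!\big(\tfrac{g''}{(g')^2}z'^{\,n-2}\big)$, and combining $0\le V_2'\le\tfrac{n}{4(n-1)^2}$ with the monotonicity of $P$ (so $z'^{\,n-2}V_1'\le C\,P(S_1)$) gives the bound $\bo\!\big(\tfrac{g''}{(g')^2}\big)\,y'(S_1)^{n-2}V_1'(S_1)$. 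Hence
$$\bigl|V_1'(S_1)\,V_2(S_1)\bigr|=\bo\!\Big(\tfrac{g''}{(g')^2}\bigl(1+V_1'(S_1)\bigr)\Big).$$

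Next I would show $V_1'(S_1)\asymp1$. The upper bound is $V_1'(S_1)=P(S_1)/y'(S_1)^{n-2}\le C$, since $P(S_1)=\int_{S_1}^{\infty}\rho_1V_1e^{-s}\le\int_{S_1}^{\infty}\rho_1e^{-s}\le C\,y'(S_1)^{n-2}$ by \eqref{eq4.25} and \eqref{eq4.28}. For the lower bound, restrict $P(S_1)$ to a fixed slab $[T_1,T_1+(n-1)]$: there the already–proved estimate \eqref{eq4.39} and \eqref{eq4.4} give $V_1\ge c_1>0$, while $\rho_1e^{-s}\sim(g')^{-(n-2)}$ on that slab and $y'(S_1)^{n-2}\le C(g')^{-(n-2)}$ by $y'\le z'$ and \eqref{eq3.12}; thus $V_1'(S_1)\ge c>0$. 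Plugging $V_1'(S_1)\asymp1$ into the displayed bound yields $|V_2(S_1)|=\bo(g''/(g')^2)=o(1)$. But by \eqref{eq4.4}, $V_2(S_1)=\dfrac{1-e^{(S_0-S_1)/(n-1)}}{1+e^{(T_1-S_1)/(n-1)}}$, and since $\dfrac{e^{(S_0-S_1)/(n-1)}}{1+e^{(T_1-S_1)/(n-1)}}\to e^{(S_0-T_1)/(n-1)}=\tfrac1{n-1}\neq0$ as $T_1-S_1\to\infty$, the relation $V_2(S_1)=o(1)$ forces $T_1-S_1$ to stay bounded. This contradicts $T_1+(n-1)k_0-S_1\to\infty$ and proves \eqref{eq4.40}.

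The step I expect to be the main obstacle is justifying that the Section~\ref{section3} asymptotics are legitimately available at $t=S_1$, i.e. that one may choose $\delta$ with $T_\delta\le S_1$ while keeping $\delta$ within the range in which Lemma~\ref{lemma3.3} is non-vacuous (a small power of $g'$). For a \emph{fixed} $C_1$ and $\gamma$ large this is automatic, but to rule out the a priori possibility that $S_1$ lies in the far window between $T(\gamma)$ and $T_1$ — where the refined estimates of Lemma~\ref{lemma3.3} degenerate — one must instead use only the crude comparisons \eqref{eq3.18}–\eqref{eq3.20} ($y\le z$, $g(y(t))\ge g-n\log(1+e^{(T_1-t)/(n-1)})$, and the convexity of $\psi$) together with the monotonicity of $V_1$ on $(S_1,\infty)$ to exclude a first zero of $V_1$ there. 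Controlling the $\rho_3$–integral — the term carrying the discrepancy between $y$ and the model $z$ — cleanly, and making sure the constants in ``$V_1'(S_1)\asymp 1$'' are genuinely independent of $C_1$, are the other points that need care.
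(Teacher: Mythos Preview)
Your approach via the Wronskian identity \eqref{eq4.19} evaluated at $t=S_1$ is circular in exactly the way you flag at the end, and this gap is not closed. The estimates \eqref{eq4.25} for $\rho_1-\rho_2$ and \eqref{eq3.24} for $y'/z'$ are only valid on $[T_\delta,\infty)$ with error $A=\bo\big(\tfrac{g''}{(g')^2}(\log(1+e^\delta))^2\big)$; to have $T_\delta\le S_1$ you need $\delta\ge(T_1-S_1)/(n-1)$, and since under your contradiction hypothesis $T_1-S_1\to\infty$, no fixed $\delta$ works. Your suggested fallback---use only the crude comparisons \eqref{eq3.18}--\eqref{eq3.20}---does not supply the required smallness of $\rho_1-\rho_2$ or of $y'^{\,n-2}-z'^{\,n-2}$ on $[S_1,T_\delta]$, so the bounds on $I_1$ and on the $\rho_3$--integral are not justified there. (Your lower bound $V_1'(S_1)\ge c>0$ is salvageable via the slab $[S_2,S_2+1]$ rather than $[T_1,T_1+(n-1)]$, since \eqref{eq4.39} is stated only for $t\ge S_2$; but this does not rescue the main estimate.)

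The paper sidesteps the circularity by a much more elementary argument that never evaluates anything at $S_1$ except $V_1(S_1)=0$. It works at the fixed reference point $S_2:=T_1+(n-1)k_0$, where \eqref{eq4.39} is already in hand. Since $\rho_1 V_1>0$ on $(S_1,S_2)$, \eqref{eq4.14} gives
\[
V_1'(t)\ \ge\ \Big(\frac{y'(S_2)}{y'(t)}\Big)^{n-2}V_1'(S_2)\qquad(S_1<t<S_2),
\]
and integrating yields $V_1(S_2)=V_1(S_2)-V_1(S_1)\ge C_2^{\,n-2}\,V_1'(S_2)\,(S_2-S_1)$. The ratio bound $y'(S_2)/y'(t)\ge C_2>0$ needs only the crude global facts $y'(S_2)\ge z'(S_2)$ (from \eqref{compar-y-z}) and $y'(t)\le y'(T(\gamma))\sim\tfrac{n}{(n-1)g'}$ (from \eqref{eq3.34}), valid regardless of where $S_1$ lies. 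Since $V_1(S_2)=V_2(S_2)+\bo(g''/(g')^2)$ and $V_1'(S_2)=V_2'(S_2)+\bo(g''/(g')^2)$ are fixed positive constants depending only on $k_0$, the bound $S_2-S_1\le C_1$ follows directly. The moral: a mean-value inequality \emph{downward from a point where the estimates already hold} replaces your attempted Wronskian estimate \emph{at} $S_1$, and thereby avoids the need to know anything about $S_1$ in advance.
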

\begin{proof}[Proof of Claim 2:]
Let $S_2=T_1+(n-1)k_0$, then from \eqref{compar-y-z}, for $T(\gamma)\leq t\leq S_2$, we have $y'(S_2)\geq z'(S_2)$ and $y'(t)\leq y'(T(\gamma))$. Hence from \eqref{eq3.34}, there exists a $C_0>0$ such that for $\gamma$ large
\begin{eqnarray*}
\frac{y'(S_2)}{y'(t)}\geq \frac{z'(S_2)}{y'(T(\gamma))}=\frac{\frac{n}{g'(n-1)}(\frac{e^{-k_0}}{1+e^{-k_0}})}{(1+C_0)(\frac{n}{g'(n-1)}+\bo\left(\frac{1}{(g')^3}\right))}\geq C_2
\end{eqnarray*}
where $C_2>0$ independent of $\gamma$ ($C_2$ depends on $\ga_0$ but is independent of $\ga > \ga_0$). Hence from Claim \ref{clm1}, \eqref{eq2.13} and \eqref{eq4.39}
\begin{eqnarray*}
V_2(S_2)+O\left(\frac{g''}{(g')^2}\right)&=&V_1(S_2)=V_1(S_2)-V_1(S_1)\\
&\geq & V_1'(S_2)\int_{S_1}^{S_2}\left(\frac{y'(S_2)}{y'(t)}\right)^{n-2}{\rm d}t\\
&\geq & C_2 V_1'(S_2)(S_2-S_1)=C_2(V_2'(S_2)+\bo\left(\frac{g''}{(g')^2}\right))(S_2-S_1).
\end{eqnarray*}
Since $V_2(S_2)=\frac{1-\frac{e^{-k_0}}{n-1}}{1+e^{-k_0}}$, $V_2'(S_2)=\frac{n}{(n-1)^2}\frac{e^{-k_0}}{1+e^{-k_0}}$, hence there exists a $C_1>0$ independent of $\gamma$ such that $S_2-S_1\leq C_1$. This proves the claim.
\end{proof}

\begin{clm}\label{clm3} For $t\geq S_1$
\begin{eqnarray}\label{eq4.41}
S_1=S_0+\bo\left(\frac{g''}{(g')^2}\right),
\end{eqnarray}
\begin{eqnarray}
\vert V_1(t)-V_2(t)\vert+\vert V'_1(t)-V'_2(t)\vert=\bo\left(\frac{g''}{(g')^2}\right).
\end{eqnarray}
\end{clm}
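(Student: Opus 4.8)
The plan is to transport the estimate \eqref{eq4.39} — which is available on $[S_2,\infty)$ with $S_2:=T_1+(n-1)k_0$ — down to the whole ray $[S_1,\infty)$, reading off the location of $S_1$ as a by-product. By Claims \ref{clm1}--\ref{clm2} we already know $S_1\in[S_2-C_1,S_2]$ for a fixed constant $C_1$; fixing $\delta:=-k_0+C_1/(n-1)$ (a constant) gives $T_\delta=S_2-C_1\le S_1$, so $A=\bo(g''/(g')^2)$ and all the estimates of Lemma \ref{lemma3.3} and the relations \eqref{eq4.20}--\eqref{eq4.28} hold on $[S_1,\infty)$; in particular $y'=z'(1+\bo(g''/(g')^2))$, $\rho_1=\rho_2(1+\bo(g''/(g')^2))$, $\rho_1>0$ (as $y\ge s_0$ there, since $S_1>\tilde T$), $z'(t)\le z'(T_\delta)=\bo(1/g')$, and $y'(t)\sim 1/g'$ on $[S_1,S_2]$. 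Since $V_1\ge0$ on $[S_1,\infty)$ by definition of $S_1$, \eqref{eq4.14} with $\eta=\infty$ gives $V_1'(t)=y'(t)^{-(n-2)}\int_t^\infty\rho_1 V_1 e^{-s}\,ds\ge0$ there; hence $0\le V_1\le1$, $\int_{S_1}^\infty V_1'=1$, and (using $V_1\le1$ and $\int_t^\infty\rho_1 e^{-s}\,ds=\bo(1/(g')^{n-2})$) $V_1'=\bo(1)$ on $[S_1,S_2]$. Together with \eqref{eq4.39} and $|V_2|,|V_2'|\le\mathrm{const}$, both $V_1$ and $V_1'$ are bounded on all of $[S_1,\infty)$, and $\int_t^\infty|V_1'|=\bo(1)$ for $t\ge S_1$.

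First I would show that the Wronskian $W:=V_1'V_2-V_1V_2'$ — which equals $h'V_2-hV_2'$ with $h:=V_1-V_2$ — is $\bo(g''/(g')^2)$ on $[S_1,S_2]$. Evaluating the identity \eqref{eq4.19} at $\eta=\infty$, $W(t)=y'(t)^{-(n-2)}\int_t^\infty[(\rho_1-\rho_2)V_1V_2 e^{-s}+\rho_3 V_1]\,ds$. In the first term I use $|\rho_1-\rho_2|=\rho_2\bo(g''/(g')^2)$, $|V_1V_2|=\bo(1)$ and $\int_t^\infty\rho_2 e^{-s}\,ds=\frac{g'}{n-1}z'(t)^{n-1}=\bo(1/(g')^{n-2})$; for the $\rho_3$ term, recalling $\rho_3=((y'(t)^{n-2}-z'(t)^{n-2})V_2')'$, I integrate by parts in $s$ and use $|y'(t)^{n-2}-z'(t)^{n-2}|=z'(t)^{n-2}\bo(g''/(g')^2)$ and $\int_t^\infty|V_1'|=\bo(1)$. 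Both integrals are $\bo(g''/(g')^n)$, and since $y'(t)^{-(n-2)}=\bo((g')^{n-2})$ on $[S_1,S_2]$ we get $W(t)=\bo(g''/(g')^2)$ there. At $t=S_1$, where $V_1(S_1)=0$, this reads $V_1'(S_1)V_2(S_1)=\bo(g''/(g')^2)$; from \eqref{eq4.14} (dropping the non-negative integral term), the constant from Claim \ref{clm2}, and \eqref{eq4.39}, one has $V_1'(S_1)\ge(y'(S_2)/y'(S_1))^{n-2}V_1'(S_2)\ge c_*>0$ with $c_*$ independent of $\gamma$, so $V_2(S_1)=\bo(g''/(g')^2)$. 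By \eqref{eq4.4} and \eqref{eq4.7}, $V_2(S_1)=(1-e^{(S_0-S_1)/(n-1)})/(1+e^{(T_1-S_1)/(n-1)})$ with the denominator pinched between positive constants, so $e^{(S_0-S_1)/(n-1)}=1+\bo(g''/(g')^2)$, i.e. $S_1-S_0=\bo(g''/(g')^2)$ — this is \eqref{eq4.41}.

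For the pointwise estimate, from $W=h'V_2-hV_2'$ one gets $(h/V_2)'=W/V_2^2$ off the zero of $V_2$; integrating backwards from $S_2$,
\[
h(t)=\frac{V_2(t)}{V_2(S_2)}\,h(S_2)-V_2(t)\int_t^{S_2}\frac{W(s)}{V_2(s)^2}\,ds,\qquad t\in(S_0,S_2].
\]
The first term is $\bo(g''/(g')^2)$ because $h(S_2)=\bo(g''/(g')^2)$ by \eqref{eq4.39} and $V_2(S_2)$ is a positive constant; for the second, $V_2$ has a simple zero at $S_0$ (with $V_2'(S_0)=1/(n(n-1))>0$) and is otherwise bounded away from $0$ on $[S_1,S_2]$, so $\sup_{t\in(S_0,S_2]}|V_2(t)|\int_t^{S_2}V_2(s)^{-2}\,ds$ is a finite constant, and with $\|W\|_{L^\infty[S_1,S_2]}=\bo(g''/(g')^2)$ this term is $\bo(g''/(g')^2)$ too. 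Hence $|h|=\bo(g''/(g')^2)$ on $(S_0,S_2]$; on the part of $[S_1,S_2]$ left of $S_0$ — nonempty only if $S_1<S_0$, and then of length $\bo(g''/(g')^2)$ by the previous paragraph — one uses instead the crude bound $|V_1|,|V_2|\le\mathrm{const}\cdot|S_1-S_0|=\bo(g''/(g')^2)$, legitimate since $V_1(S_1)=V_2(S_0)=0$ and $V_1',V_2'=\bo(1)$ there. Finally $|V_1'-V_2'|=\bo(g''/(g')^2)$ on $[S_1,S_2]$ follows by subtracting \eqref{eq4.14} and \eqref{eq4.16} (with $\eta=S_2$) and inserting $y'=z'(1+\bo(g''/(g')^2))$, $\rho_1=\rho_2(1+\bo(g''/(g')^2))$, and the bound on $h$ just obtained. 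Gluing with \eqref{eq4.39} on $[S_2,\infty)$ completes Claim \ref{clm3}.

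The delicate point is precisely the behaviour near $S_1\approx S_0$: there $V_2$ — and every second independent solution of its linearized equation — degenerates, so the constant in the comparison estimate \eqref{eq4.30} is $>1$ and a naive Gronwall argument on $[S_1,S_2]$ does not close. The device that works is to compare $V_1$ with $V_2$ through the Wronskian $W$, which stays of size $\bo(g''/(g')^2)$ uniformly by virtue of the identity \eqref{eq4.19}, and then to integrate the resulting first-order relation $(h/V_2)'=W/V_2^2$ \emph{exactly} — the apparent $(s-S_0)^{-2}$ singularity being cancelled by the prefactor $V_2(t)$.
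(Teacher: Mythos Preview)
Your argument is correct and follows the same backbone as the paper's proof --- both bound the Wronskian $W=V_1'V_2-V_1V_2'$ via the identity \eqref{eq4.19}, read off $V_2(S_1)=\bo(g''/(g')^2)$ from $V_1'(S_1)\ge c_*>0$, and thereby locate $S_1$; and both treat the sliver between $S_0$ and $S_1$ by its $\bo(g''/(g')^2)$ length.

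Where you genuinely diverge is in propagating the pointwise bound across the zone around $S_0$. The paper divides $W$ by $V_1V_2$ (so it must stay on $[S_0+\varepsilon,S_2]$ where \emph{both} $V_1$ and $V_2$ are bounded below), integrates $(\log(V_1/V_2))'$, and then closes the remaining gap $[S_0,S_0+\varepsilon]$ by a separate bootstrap through the contraction estimate \eqref{eq4.33bis}/\eqref{eq4.48} of Lemma~\ref{lemma4.1}. You instead divide $W$ by $V_2^2$ and integrate $(h/V_2)'$ on all of $(S_0,S_2]$ at once, exploiting that $V_2$ has a \emph{simple} zero at $S_0$ so that $|V_2(t)|\int_t^{S_2}V_2^{-2}$ stays bounded (the $(s-S_0)^{-2}$ singularity is exactly cancelled by the prefactor $V_2(t)\sim c(t-S_0)$); since in the rescaled variable $\sigma=(t-T_1)/(n-1)$ the function $V_2$ and the interval are $\gamma$-independent, this bound is a fixed constant. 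This is cleaner: it removes the auxiliary parameter $\varepsilon$, the extra sub-claim $S_0+\varepsilon>S_1$, and the appeal to Lemma~\ref{lemma4.1}. The paper's route, on the other hand, reuses machinery already in place and never integrates through a singular point --- a matter of taste rather than substance.
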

\begin{proof}[Proof of Claim 3:]  
From Claims \ref{clm1} and \ref{clm2},we see that  $S_1=T_1+(n-1)\delta_0$ and  $\delta_0=\bo(1)$. Hence for $t\in [S_1,\infty)$,we have $0\leq V_1(t)\leq 1$, $S_2-S_1=\bo(1)$ and from \eqref{eq4.25} $\rho_1=\rho_2 e^{\bo\left(\frac{g''}{(g')^2}\right)}$. Now, using \eqref{eq4.28} and \eqref{eq3.24} we have
\begin{eqnarray}\label{eq4.43}
\frac{1}{y'(S_1)^{n-2}}\int_{S_1}^{S_2}(\rho_1-\rho_2)V_1V_2e^{-t}{\rm d}t&=&\bo\left(\frac{g''}{(g')^2}\right)\frac{1}{z'(S_1)^{n-2}}\int_{S_1}^{S_2}\rho_2 e^{-t}{\rm d}t\nonumber\\
&=&\bo\left(\frac{g''}{(g')^2}\right),
\end{eqnarray}
\begin{eqnarray}\label{eq4.44}
V'_1(t)=\frac{1}{y'(t)^{n-2}}\int_t^\infty\rho_1 e^{-s} V_1{\rm d}s=\bo\left(\frac{1}{z'(t)^{n-2}}\int_t^\infty\rho_2 e^{-s}{\rm d}s\right)=\bo(1),
\end{eqnarray}
\begin{eqnarray*}
\frac{1}{y'(S_1)^{n-2}}\left(y'(t)^{n-2}-z'(t)^{n-2}\right)V_2'(t)V_1(t)&=&\frac{V_1(t)}{y'(S_1)^{n-2}}\int_t^\infty(\rho_1-\rho_2)e^{-t}{\rm d}t+\bo\left(\frac{g''}{(g')^2}\right)\\
&=&\bo\left(\frac{g''}{(g')^2}\right)
\end{eqnarray*}
and similarly
\begin{eqnarray*}
\frac{1}{y'(S_1)^{n-2}}\int_{S_1}^{S_2}(y'(t)^{n-2}-z'(t)^{n-2})V'_2(t)V_1(t){\rm d}t=\bo\left(\frac{g''}{(g')^2}(S_2-S_1)\right)=\bo\left(\frac{g''}{(g')^2}\right).
\end{eqnarray*}
Therefore from the above estimates, we have
\begin{equation}
\label{eq4.45}
\begin{array}{lll}
\frac{1}{y'(S_1)^{n-2}}\int_{S_1}^{S_2}\rho_3 V_1{\rm d}t&=& \frac{1}{y'(S_1)^{n-2}}\left[(y'(t)^{n-2}-z'(t)^{n-2})V_2'(t)V_1(t)\right]_{S_1}^{S_2}\\
&&-\frac{1}{y'(S_1)^{n-2}}\int_{S_1}^{S_2}(y'(t)^{n-2}-z'(t)^{n-2})V_2'(t)V'_1(t){\rm d}t \\
&=&\bo\left(\frac{g''}{(g')^2}\right).
\end{array}
\end{equation}

Taking $\eta=S_2$, $t=S_1$ in \eqref{eq4.19} and from the above estimates we have,
\begin{eqnarray*}
V'_1(S_1)V_2(S_1)&=&\left(\frac{y'(S_2)}{y'(S_1)}\right)^{n-2}\left[(V'_1(S_2)-V'_2(S_2))V_2(S_2)+(V_2(S_2)-V_1(S_2))V'_1(S_2)\right]+\bo\left(\frac{g''}{(g')^2}\right)\\
&=&\bo\left(\frac{g''}{(g')^2}\right).
\end{eqnarray*}

From Claim \ref{clm1}, we have  $$V'_1(S_1)\geq V'_1(S_2)\frac{(y')^{n-2}(S_2)}{(y')^{n-2}(S_1)}=V'_2(S_2)\bo\left(e^{\frac{g''}{(g')^4}}\right)+\bo\left(\frac{g''}{(g')^2}\right)\geq C_3>0$$ for some $C_3$ independent of $\gamma$. Hence $V_2(S_1)=\bo\left(\frac{g''}{(g')^2}\right)$ and
\begin{eqnarray*}
\left\vert\frac{1-e^{\frac{S_0-S_1}{n-1}}}{1+e^{\frac{T_1-S_1}{n-1}}}\right\vert=\vert V_2(S_1)\vert=\bo\left(\frac{g''}{(g')^2}\right).
\end{eqnarray*}
Since $T_1-S_1=\bo(1)$ and hence from the above equation,we get $S_0-S_1=\bo\left(\frac{g''}{(g')^2}\right)$. This proves \eqref{eq4.40}. Also
\begin{eqnarray}\label{myeq3}
\vert V_2(S_1)-V_1(S_1)\vert=\vert V_2(S_1)\vert=\bo\left(\frac{g''}{(g')^2}\right).
\end{eqnarray}

We now have two situations to handle, but before that we shall prove a few other estimates.
Let $\varepsilon > 0$ such that $\frac{n}{(n-1)}\varepsilon = \frac12$ and taking $t=S_0$ and  $\eta= S_0 + \varepsilon$ in \eqref{eq4.33bis}, we get on $[S_0,S_0+\varepsilon]$
\begin{eqnarray*}
\vert V_1(s)-V_2(s)\vert&\leq& \vert V_1(\eta)-V_2(\eta)\vert+ e^A\vert V'_1(\eta)-V'_2(\eta)\vert\vert s-\eta\vert\\
&+&(e^A-1)\left[\vert V'_2(\eta)\vert \vert s-\eta\vert+\frac{n}{n-1}\log(1+e^{\frac{T_1-S_0}{n-1}})\right]+\frac{n}{n-1}\vert s-\eta\vert M_\eta(s)
\end{eqnarray*}
which now easily gives the estimate
\begin{eqnarray}\label{eq4.48}
M_\eta(s)\leq C_0\left(\vert V_1(\eta)-V_2(\eta)\vert+\vert V'_1(\eta)-V'_2(\eta)\vert+\bo\left(\frac{g''}{(g')^2}\right)\right)
\end{eqnarray}
for all $s \in [S_0, S_0+\varepsilon]$ and some constant $C_0>0$ independent of $\ga$.

Since $V_2(S_0+\varepsilon)=\frac{1-e^\varepsilon}{1+e^{\frac{T_1-S_0}{n-1}}}\geq C_1>0$ depending  only on $\varepsilon$, observe that if we replace $S_1$ by $S_0$ in \eqref{eq4.43}, \eqref{eq4.44} and \eqref{eq4.45}, still all the estimates hold. Hence for $t\in [S_0+\varepsilon, S_2]$ and from previous claims,
\begin{equation}\label{myeq2}
\begin{array}{lll}
V'_1(t)V_2(t)-V_1(t)V'_2(t)&=&\left(\frac{y'(S_2)}{y'(t)}\right)^{n-2}(V'_1(S_2)V_2(S_2)-V_1(S_2)V'_2(S_2))+\bo\left(\frac{g''}{(g')^2}\right)\\
&=&\left(\frac{y'(S_2)}{y'(t)}\right)^{n-2}(V_1'(S_2)-V_2'(S_2))V_2(S_2)+\\
&& + (V_2(S_2)-V_1(S_2))V'_2(S_2)+\bo\left(\frac{g''}{(g')^2}\right)\\
&=&\bo\left(\frac{g''}{(g')^2}\right).
\end{array}
\end{equation}

Note that we needed $S_0 + \varepsilon < S_2$, or in other words we needed $\frac12 \frac{(n-1)}{n} =: \varepsilon < (n-1) [k_0 + \log(n-1)]$. Choose $k_0>0$ large such that the required estimate holds.

Similar to Claim \ref{clm1}, we also have $S_0 + \varepsilon > S_1$ and hence $V_1(S_0+\varepsilon)>0$. In particular, 
\begin{equation*}
\label{myeq1}
 V_1(t) \geq C_6 > 0 \text{    for   } t \in [S_0 + \varepsilon, S_2]
\end{equation*}

\begin{proof}[Proof of $S_0 + \varepsilon > S_1$] Suppose not, then we must have $S_0 - S_1 \leq - \varepsilon$.  From the definition of $V_2$, we get
 \begin{equation*}
   V_2(S_1)  = \frac{1-e^{\frac{S_0-S_1}{n-1}}}{1+(n-1)e^{\frac{S_0-S_1}{n-1}}}  = V_2(S_1) - V_1(S_1)  = \bigo{\frac{g''}{(g')^2}}.
 \end{equation*}
This implies for $\ga $ large enough
\begin{equation*}
 1 = e^{ \frac{S_0-S_1}{n-1} } + \bigo{ \lbr 1 + e^{\frac{S_0-S_1}{n-1}} \rbr \frac{g''}{(g')^2} } \leq e^{-\varepsilon} +  \bo \lbr\  [1+ (n-1) e^{-\varepsilon} ]  \frac{g''}{(g')^2}\rbr < 1
\end{equation*}
which gives the necessary contradiction and this proves that $S_0+\varepsilon > S_1$. 
\end{proof}

As a consequence of $S_0 + \varepsilon > S_1$, we see that $V_1(t) \geq C_6 > 0$ for $t \in [S_0+\varepsilon, S_2]$. 
Since $V_2(S_0) =0$, we also have $V_2(s) \geq \tilde{C_6} >0$ for $s \in [S_0+\varepsilon, S_2]$. This allows us to divide by $V_1$ and $V_2$ in \eqref{myeq2} to get
\begin{eqnarray*}
\frac{V_1(t)}{V_2(t)}=\frac{V_1(S_2)}{V_2(S_2)}e^{\bo\left(\frac{g''}{(g')^2}\right)}.
\end{eqnarray*}
Integrating from $t$ to $S_2$, we now get
\begin{eqnarray*}
V_1(t)-V_2(t)&=&\frac{V_1(S_2)}{V_2(S_2)}V_2(t)-V_2(t)+\bo\left(\frac{g''}{(g')^2}\right)\\
&=&(V_1(S_2)-V_2(S_2))\frac{V_2(t)}{V_2(S_2)}+\bo\left(\frac{g''}{(g')^2}\right)=\bo\left(\frac{g''}{(g')^2}\right).
\end{eqnarray*}
Hence from \eqref{eq4.32} and \eqref{eq4.39}, $\vert V'_1(t)-V'_2(t)\vert= \bo\left(\frac{g''}{(g')^2}\right)$. That is for all $t\in [S_0+\varepsilon, S_2]$,
\begin{eqnarray*}
\vert V_1(t)-V_2(t)\vert+\vert V'_1(t)-V'_2(t)\vert=\bo\left(\frac{g''}{(g')^2}\right).
\end{eqnarray*}
Using this in \eqref{eq4.48}, we get
\begin{equation*}
 \vert V_1(t) - V_2(t) \vert = \bigo{\frac{g''}{(g')^2}} \text{   for   } t \in [S_0,S_0+\varepsilon]
\end{equation*}
and now making use of \eqref{eq4.32}, we get
\begin{equation*}
 \vert V_1'(t) - V_2'(t) \vert = \bigo{\frac{g''}{(g')^2}} \text{   for   } t \in [S_0,S_0+\varepsilon].
\end{equation*}
Let 
\begin{eqnarray*}
I=\left\{\begin{array}{ll}
&[S_0,S_1]\quad\mbox{if }S_0<S_1\\
&[S_1, S_0]\quad\mbox{if }S_0\geq S_1.
\end{array}\right.
\end{eqnarray*}

\noindent \emph{Case i:} $I = [S_0,S_1]$. In this case, our previous calculations automatically gives the desired estimate
\begin{equation*}
 \vert V_1(t) - V_2(t) \vert + \vert V_1'(t) - V_2'(t) \vert = \bigo{\frac{g''}{(g')^2}} \text{   for   } t \in I
\end{equation*}

\noindent \emph{Case ii:} $I = [S_1,S_0]$.  From \eqref{myeq3}, we see that for $t \in I$, 

\begin{eqnarray}\label{eq4.46}
V_1(t)-V_2(t)&=&V_1(S_1)-V_2(S_1)-\int_t^{S_1}(V'_1(\theta)-V'_2(\theta)){\rm d}\theta\nonumber\\
&=&\bo\left(\frac{g''}{(g')^2}\right)+\bo(\vert S_1-S_0\vert)=\bo\left(\frac{g''}{(g')^2}\right).
\end{eqnarray}
Hence from \eqref{eq4.30}, \eqref{eq4.32}, we obtain
\begin{eqnarray}\label{eq4.47}
\vert V'_1(t)-V'_2(t)\vert =\bo\left(\frac{g''}{(g')^2}\right) \text{    for   } t \in I.
\end{eqnarray}
This completes the proof of Claim \ref{clm3}. 
\end{proof}
Let $k_1>0$ to be chosen later and $S_3=S_1-(n-1)k_1$. Then $$S_3-S_0=S_1-S_0-(n-1)k_1=\bo\left(\frac{g''}{(g')^2}\right)-(n-1)k_1$$ and $$T_1-S_3=T_1-S_0-(S_1-S_0)+(n-1)k_1=\bo(1).$$ Hence
\begin{eqnarray}\label{eq4.49}
V_2(S_3)=\frac{1-e^{k_1+O\left(\frac{g''}{(g')^2}\right)}}{1+e^{O(1)}}\leq -C_3
\end{eqnarray}
which proves \eqref{eq4.38}.

 Consider $s \in [S_3, S_1]$ and now using \eqref{eq4.33}, we get with $\eta = S_1$
 \begin{equation*}
\begin{array}{lll}
V_1(s)-V_2(s)&\leq&\vert V_1(S_1)-V_2(S_1) \vert- \vert V_1'(S_1)-V'_2(S_1)\vert e^A\int_s^{S_1}\left(\frac{z'(S_1)}{z'(\theta)}\right)^{n-2}{\rm d}\theta\\
&&+(e^A-1)\left[\vert V_2'(S_1)\vert \int_s^{S_1}\left(\frac{z'(S_1)}{z'(\theta)}\right)^{n-2}{\rm d}\theta +\int_s^{S_1}\rho_2V_2 e^{-\theta}(\int_s^\theta\frac{{\rm d}x}{(z'(x))^{n-2}}){\rm d}\theta\right]\\
&&+e^A\int_s^{S_1}\rho_2\vert V_2-V_1\vert e^{-\theta}(\int_s^\theta\frac{{\rm d}x}{z'(x)^{n-2}}){\rm d}\theta\\
&\leq& \vert V_1(S_1)-V_2(S_1) \vert- \vert V_1'(S_1)-V'_2(S_1)\vert 2 (n-1) k_1 \\
&&+(e^A-1)\left[\vert V_2'(S_1)\vert (n-1)k_1 +\lbr \frac{n-1}{n} \rbr \log \lbr  1 + e^{\frac{T_1 - S_3}{n-1}}\rbr\right]\\
&&+e^A M_{S_3} (s) \frac{n}{(n-1)^2} k_1 (n-1).
\end{array}
\end{equation*}
Now choose $k_1$ small such that $e^A \frac{n}{(n-1)} k_1 \leq \frac12$ which implies
$$M_{S_3} (S_1) = \bigo{\frac{g''}{(g')^2}}.$$
This completes the proof of the Lemma. \qed 


\begin{lemm}\label{lemma4.3}
Let $k_1>0$ and $S_3=S_1-(n-1)k_1$ be as in Lemma \ref{lemma4.2}. Let $\delta>0$, $\delta=\bo(\log(g'))$ and $T_\delta=T_1-(n-1)\delta$. Define
\begin{eqnarray}\label{eq4.51}
S_4=\max\{T_\delta, S\}.
\end{eqnarray}
Then for all $t\in [S_4, S_3]$, we have
\begin{eqnarray}\label{eq4.52}
\vert V_1(t)-V_2(t)\vert+\vert V'_1(t)-V'_2(t)\vert=\bo\left(\frac{\delta^3 g''}{(g')^2}\right).
\end{eqnarray}
\end{lemm}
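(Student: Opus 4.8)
The plan is to push the comparison estimate of Lemma~\ref{lemma4.2} leftwards from $S_3$, but \emph{not} by iterating Lemma~\ref{lemma4.1} or the integral equations \eqref{eq4.15}--\eqref{eq4.17} along $[S_4,S_3]$: this interval has length $S_3-S_4=\bigo{\delta}$ (indeed $S_3=T_1+\bigo{1}$ by \eqref{eq4.41} and \eqref{eq4.7}, while $S_4\ge T_\delta=T_1-(n-1)\delta$), so a step-by-step Gr\"onwall would cost an amplification of order $e^{c\delta}=(g')^{c'}$ and wreck the bound. Instead I would run everything through the Wronskian-type identity \eqref{eq4.19}, whose error terms stay \emph{additive} of size $\bigo{A}$, $A:=g''(\log(1+e^\delta))^2/(g')^2$, rather than accumulating. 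By a standard continuity/bootstrap argument it suffices to prove the improved bound $|V_1-V_2|+|V_1'-V_2'|\le \tfrac12 K\tfrac{\delta^3 g''}{(g')^2}$ on any subinterval $[\tau,S_3]\subset[S_4,S_3]$ on which it is a priori known with constant $K$ (the starting point $[S_3,S_3]$ being covered by \eqref{eq4.37}); on such a subinterval the inductive bound together with $S_4\ge S$ (so $V_1'\ge 0$) and $V_1(S_1)=0$, $S_1>S_3$ (so $V_1\le 0$) gives $|V_1|\le 1$ and $|V_1'|\le|V_2'|+K\tfrac{\delta^3 g''}{(g')^2}=\bigo{1}$ for $\gamma$ large, while \eqref{eq4.38} and the monotonicity of $V_2$ give $V_2(t)\le V_2(S_3)\le -C_3<0$, hence $C_3\le|V_2(t)|\le\tfrac1{n-1}$ and $\int_t^{S_3}|V_2'|\,{\rm d}s=V_2(S_3)-V_2(t)=\bigo{1}$ throughout $[S_4,S_3]$.

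\textbf{The Wronskian is small.} Set $W(t):=V_1'(t)V_2(t)-V_1(t)V_2'(t)$ and apply \eqref{eq4.19} with $\eta=S_3$. The boundary term is $\bigl(y'(S_3)/y'(t)\bigr)^{n-2}W(S_3)=\bigo{g''/(g')^2}$, since $y'$ is decreasing (prefactor $\le 1$) and $W(S_3)=\bigo{g''/(g')^2}$ by \eqref{eq4.37}. For the second term, \eqref{eq4.25} gives $\rho_1-\rho_2=\bigo{A}\rho_2$ and \eqref{eq4.28} gives $\frac1{y'(t)^{n-2}}\int_t^{S_3}\rho_2 e^{-s}\,{\rm d}s=\bigo{1}$, hence $\frac1{y'(t)^{n-2}}\int_t^{S_3}(\rho_1-\rho_2)V_1V_2e^{-s}\,{\rm d}s=\bigo{A}$. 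For the $\rho_3$-term I would integrate by parts using \eqref{eq4.18}: by \eqref{eq3.24}, $|y'(s)^{n-2}-z'(s)^{n-2}|=\bigo{A}z'(s)^{n-2}$ and $z'(s)^{n-2}/y'(t)^{n-2}=\bigo{1}$ for $s\ge t$, so the two boundary contributions are $\bigo{A}$ and the remaining integral is $\bigo{A\int_t^{S_3}|V_2'||V_1'|\,{\rm d}s}=\bigo{A}$ by the a priori bounds. Therefore $W(t)=\bigo{A}=\bigo{\delta^2 g''/(g')^2}$ on $[\tau,S_3]$, with implied constant independent of $K$.

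\textbf{Solving for $V_1-V_2$ and closing.} Since $V_2$ does not vanish on $[\tau,S_3]$, the function $D:=V_1-V_2$ satisfies the \emph{exact} first-order linear identity $\left(\frac{D}{V_2}\right)'=\frac{V_2D'-V_2'D}{V_2^2}=\frac{W}{V_2^2}$, whence $D(t)=V_2(t)\left[\frac{D(S_3)}{V_2(S_3)}-\int_t^{S_3}\frac{W(s)}{V_2(s)^2}\,{\rm d}s\right]$. Using $C_3\le|V_2|\le\frac1{n-1}$, $|D(S_3)|=\bigo{g''/(g')^2}$, $|W|=\bigo{A}$ and $S_3-t=\bigo{\delta}$, this yields $|D(t)|=\bigo{g''/(g')^2}+\bigo{\delta A}=\bigo{\delta^3 g''/(g')^2}$. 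Then $D'(t)=\frac{W(t)}{V_2(t)}+\frac{V_2'(t)}{V_2(t)}D(t)=\bigo{A}+\bigo{1}\cdot\bigo{\delta^3 g''/(g')^2}=\bigo{\delta^3 g''/(g')^2}$. Thus $|V_1-V_2|+|V_1'-V_2'|\le C_\ast\tfrac{\delta^3 g''}{(g')^2}$ on $[\tau,S_3]$ with $C_\ast$ independent of $K$; taking $K=2C_\ast$ closes the bootstrap and gives \eqref{eq4.52} on all of $[S_4,S_3]$.

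\textbf{Main obstacle.} The difficulty is structural rather than computational: the interval is $\bigo{\delta}$ long, and every ``local'' device (Lemma~\ref{lemma4.1}, the $M_\eta$ bounds) forces a multiplicative error per unit length and hence an overall $e^{c\delta}$, which is useless. Transferring the problem onto the Wronskian is what makes the errors additive of size $\bigo{A}$: $W(S_3)$ is already $\bigo{g''/(g')^2}$ by Lemma~\ref{lemma4.2}, and the mismatches $\rho_1\neq\rho_2$, $y'\neq z'$ contribute only an additive $\bigo{A}$ to $W$ because the relevant weights $\int\rho_2 e^{-s}$ and $\int|V_2'|$ are bounded (monotonicity and boundedness of $V_2$, resting on \eqref{eq4.38}, and the sign of $V_1$, resting on the turning-point hypothesis $S_4\ge S$). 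Reconstructing $D=V_1-V_2$ from $W$ via the exact linear ODE is then essentially free and costs only the single extra power of $\delta$ coming from $\int_t^{S_3}{\rm d}s=\bigo{\delta}$ — which is exactly why the final bound carries $\delta^3$. The one point that needs care is the $\rho_3$-term of \eqref{eq4.19}: a crude bound there would leave $\bigo{\delta A}$ in $W$ and hence $\bigo{\delta^4 g''/(g')^2}$ in $D$; integrating by parts and invoking $\int_t^{S_3}|V_2'|=\bigo{1}$ is what keeps it at $\bigo{A}$.
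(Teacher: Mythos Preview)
Your argument is correct and matches the paper's approach: both control the Wronskian $W=V_1'V_2-V_1V_2'$ via \eqref{eq4.19} to size $\bo(\delta^2 g''/(g')^2)$ on $[S_4,S_3]$, then integrate over this $\bo(\delta)$-long interval (picking up one extra factor of $\delta$) to obtain $|V_1-V_2|=\bo(\delta^3 g''/(g')^2)$, and finally recover the derivative bound. The only cosmetic differences are that the paper gets $V_1'=\bo(1)$ directly from \eqref{eq4.14} and $V_1\le -C_3/2$ from monotonicity (so no bootstrap is needed), and then integrates $(\log(V_1/V_2))'=W/(V_1V_2)$ rather than your slightly cleaner $(D/V_2)'=W/V_2^2$; for $D'$ the paper invokes \eqref{eq4.32} instead of your algebraic identity $D'=W/V_2+(V_2'/V_2)D$.
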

\proof
From Lemma \ref{lemma4.2} and \eqref{eq4.33}, $$V_1(S_3)=V_2(S_3)+\bo\left(\frac{g''}{(g')^2}\right)<-C_3+\bo\left(\frac{g''}{(g')^2}\right).$$ Hence for $t\in [S_4,S_3]$ and $\gamma$ large,we have  
\begin{gather*}
V_1(S_3)\leq -\frac{C_3}{2}<0,\quad \vert S_4-S_3\vert=\bo(\delta) \quad  \log(1+e^\delta)=\bo(\delta), \\
0\leq V'_1(t)\leq \frac{y'(S_4)^{n-2}}{y'(t)^{n-2}}V'_1(S_3)=V'_2(S_3)+\bo(1)=\bo(1).
\end{gather*}
Since $V_1(t)<0$ for $t\in [S_4,S_3]$, by using \eqref{eq4.25}, we obtain 
\begin{equation}\label{eq4.53}\begin{array}{lll}
\frac{1}{y'(t)^{n-2}}\int_t^{S_3}(\rho_1-\rho_2)V_1V_2 e^{-s}{\rm d}s&=&\bo\left(\frac{\delta^2 g''}{(g')^2}\vert\frac{1}{y'(t)^{n-2}}\int_t^{S_3}\rho_1 V_1 e^{-s}{\rm d}s\vert\right)\\
&=&\bo\left(\frac{\delta^2 g''}{(g')^2}(V'_1(t)-\left(\frac{y'(S_3)}{y'(t)}\right)^{n-2}V'_1(S_3))\right)\\\
&=&\bo\left(\frac{\delta^2 g''}{(g')^2}\right).\end{array}
\end{equation}
For $s\in [t, S_3]$ and from \eqref{eq3.24}, we have
\begin{eqnarray*}
\frac{1}{y'(t)^{n-2}}\left[y'(s)^{n-2}-z'(s)^{n-2}\right]=\bo\left((\left(\frac{y'(s)}{z'(s)}\right)^{n-2}-1)\left(\frac{z'(s)}{z'(t)}\right)^{n-2}\right)=\bo\left(\frac{\delta^2 g''}{(g')^2}\right),
\end{eqnarray*}
\begin{eqnarray}\label{eq4.54}
\frac{1}{y'(t)^{n-2}}\int_t^{S_3}\rho_3 V_1(s){\rm d}s&=&\frac{1}{y'(t)^{n-2}}\left[(y'(s)^{n-2}-z'(s)^{n-2})V_1V'_2\right]_t^{S_3}\nonumber\\
&&-\frac{1}{y'(t)^{n-2}}\int_t^{S_3}(y'(s)^{n-2}-z'(s)^{n-2})V'_1V'_2{\rm d}s\nonumber\\
&=&\bo\left(\frac{\delta^2 g''}{(g')^2}(1+\vert V_1(t)\vert)+\bo\left(\frac{\delta^2 g''}{(g')^2}\int_t^{S_3}V'_2(s){\rm d}s\right)\right)\nonumber\\
&=& \bo\left(\frac{\delta^2 g''}{(g')^2}\vert V_1(t)\vert\right).
\end{eqnarray}
From \eqref{eq4.49}, we see that $V_2(t) \leq -C_3$ for $t \in [S_4,S_3]$. Also since $V_1(S_3) < 0$, we have
$$V_1(t) \leq V_1(S_3) = V_2(S_3) + \bigo{\frac{g'' \de^2}{(g')^2}} \leq \frac{-C_3}{2}.$$
This gives
\begin{eqnarray}\label{eq4.55}
0<\frac{1}{V_1(t)V_2(t)}\leq \frac{4}{C_3^2}=\bo(1), \quad\frac{1}{\vert V_1(t)\vert}\leq \frac{2}{C_3}=\bo(1).
\end{eqnarray}
From Lemma \ref{lemma4.2}, \eqref{eq4.19}, \eqref{eq4.52}, \eqref{eq4.53} and \eqref{eq4.55} we have
\begin{eqnarray*}
V'_1(t)V_2(t)-V_1(t)V'_2(t)&=&\left(\frac{y'(S_3)}{y'(t)}\right)^{n-2}(V'_1(S_3)V_2(S_3)-V_1(S_3)V'_2(S_3)) +\bo\left(\frac{\delta^2 g''}{(g')^2}\vert V_1(t)\vert\right)\\
&=&\left(\frac{y'(S_3)}{y'(t)}\right)^{n-2}\left( (V'_1(S_3)-V'_2(S_3))V_1(S_3)+(V_2(S_3)-V_1(S_3))V'_2(S_3)\right) +\\
& &+\bo\left(\frac{\delta^2 g''}{(g')^2}\vert V_1(t)\vert\right) \\
&=&\bo\left(\frac{\delta^2 g''}{(g')^2}\vert V_1(t)\vert\right).\\
\end{eqnarray*}
Dividing by $V_1 V_2$ and integrating between $[t, S_3]$, we obtain 
\begin{eqnarray*}
\frac{V_1'}{V_1}-\frac{V_2'}{V_2}=\bo\left(\frac{\delta^2 g''}{(g')^2}\frac{1}{\vert V_2(t)\vert}\right)=\bo\left(\frac{\delta^2 g''}{(g')^2}\right),
\end{eqnarray*}
\begin{eqnarray*}
\displaystyle V_1(t)=V_2(t)\frac{V_1(S_3)}{V_2(S_3)}e^{\bo\left(\frac{\delta^3 g''}{(g')^2}\right)}
\end{eqnarray*}
\begin{eqnarray*}
V_1(t)-V_2(t)=-\frac{V_2(t)}{V_2(S_3)}(V_2(S_3)-V_1(S_3))+\bo\left(\frac{\delta^3 g''}{(g')^3}\right)=\bo\left(\frac{\delta^3 g''}{(g')^2}\right).
\end{eqnarray*}
From \eqref{eq4.31} and \eqref{eq4.32} we have
\begin{eqnarray*}
\vert V'_1(t)-V'_2(t)\vert=\bo\left(\vert V'_1(S_3)-V'_2(S_3)\vert+\bo\left(\frac{\delta^3 g''}{(g')^2}\right)\right)=\bo\left(\frac{\delta^3 g''}{(g')^2}\right).
\end{eqnarray*}
This proves the lemma.\qed

As an immediate  consequence of this, we have the following
\begin{coro}\label{corollary4.4}
There exists a constant $C_4\geq 0$ such that for $\gamma$ large
\begin{eqnarray}\label{eq4.56}
S\leq T_1+(n-1)\log\left(\frac{g''}{(g')^2}\right)+3(n-1)\log(\log(g'))+(n-1)C_4.
\end{eqnarray}
\end{coro}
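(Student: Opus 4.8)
The plan is to extract \eqref{eq4.56} directly from Lemma~\ref{lemma4.3} by evaluating its estimate \emph{at the turning point} $t=S$, where $V_1'(S)=0$, and then reading off how large $S$ can be from the explicit formula \eqref{eq4.5} for $V_2'$.

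First I would fix the free parameter of Lemma~\ref{lemma4.3} as $\delta=C'\log g'$ with $C'>\frac{q}{q-1}$ a constant depending only on $q$ (so that $\delta=\bo(\log g')$ is admissible), put $T_\delta=T_1-(n-1)C'\log g'$, and let $S_3=S_1-(n-1)k_1$ and $S_4=\max\{T_\delta,S\}$ be as in that lemma. A short preliminary step is needed, namely $S\le S_3$. Indeed, from \eqref{eq4.14} with $\eta\to\infty$ one has $V_1'(t)=y'(t)^{-(n-2)}\int_t^\infty\rho_1V_1e^{-s}{\rm d}s$, and since $V_1>0$ on $(S_1,\infty)$ (and $\rho_1>0$ there) this forces $V_1'>0$ on $(S_1,\infty)$, hence $S\le S_1$; and if $S$ lay in $(S_3,S_1]$ then, using \eqref{eq4.37} together with $V_1'(S)=0$, we would get $V_2'(S)=\bo(g''/(g')^2)$, whereas on that interval $T_1-t$ is bounded, so $V_2'(S)$ is bounded below by a positive constant by \eqref{eq4.5} — a contradiction. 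Note also that $S_3<T_1$, since $S_3=S_0+\bo(g''/(g')^2)-(n-1)k_1$ with $S_0=T_1-(n-1)\log(n-1)$.

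Then I would split into two cases. If $S\le T_\delta$, then since $(g')^2/g''\sim(g')^{q/(q-1)}$ by Proposition~\ref{proposition3.2}(i), the choice $C'>\frac{q}{q-1}$ gives, for $\gamma$ large,
\[
S\le T_1-(n-1)C'\log g'\le T_1+(n-1)\log\left(\frac{g''}{(g')^2}\right)+3(n-1)\log(\log(g')),
\]
so \eqref{eq4.56} holds a fortiori. Otherwise $T_\delta<S\le S_3$, whence $S_4=S$ and Lemma~\ref{lemma4.3} applies at $t=S$; since $V_1'(S)=0$ it yields
\[
V_2'(S)=|V_1'(S)-V_2'(S)|=\bo\left(\frac{\delta^3g''}{(g')^2}\right)=\bo\left(\frac{(\log(g'))^3g''}{(g')^2}\right).
\]
On the other hand $S<T_1$, so $X(S)=e^{(T_1-S)/(n-1)}>1$ and \eqref{eq4.5} gives $V_2'(S)=\frac{n}{(n-1)^2}\frac{X(S)}{(1+X(S))^2}\ge\frac{n}{4(n-1)^2}e^{(S-T_1)/(n-1)}$. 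Comparing the two bounds for $V_2'(S)$ yields $e^{(S-T_1)/(n-1)}\le C\,(\log(g'))^3g''/(g')^2$ for a constant $C$, and taking logarithms and multiplying by $n-1$ gives exactly \eqref{eq4.56} with $C_4=\max\{0,\log C\}$.

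The only slightly delicate points are the preliminary claim $S\le S_3$ and the degenerate case $S\le T_\delta$ (in which Lemma~\ref{lemma4.3} carries no information at $t=S$); both are dispatched by elementary positivity and monotonicity properties of $V_2'$ together with the integral representation of $V_1'$, so I do not expect a genuine obstacle. Everything else is routine bookkeeping with the asymptotics of $V_2$, $S_0$ and $S_1$ already established in Lemmas~\ref{lemma4.1}--\ref{lemma4.3}.
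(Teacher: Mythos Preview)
Your proposal is correct and follows essentially the same approach as the paper: bound $V_2'(S)$ via Lemma~\ref{lemma4.3} at the turning point (where $V_1'(S)=0$), then invert the explicit formula \eqref{eq4.5} for $V_2'$ to read off the upper bound on $S$. The paper splits on $S\le S_5$ versus $S>S_5$ (with $S_5$ the right-hand side of \eqref{eq4.56} minus $(n-1)C_4$) rather than on $S\le T_\delta$ versus $S>T_\delta$, and it deduces $S<T_1$ a posteriori from $V_2'(S)\to 0$ via its Claim~\ref{clm4}; your version is slightly tidier in that you secure $S\le S_3<T_1$ beforehand (using \eqref{eq4.37}) and make the choice of $\delta$ explicit so that the degenerate case $S\le T_\delta$ is transparently subsumed by the target inequality.
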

\proof Let $S_5:=T_1+(n-1)\log\left(\frac{g''}{(g')^2}\right)+3(n-1)\log(\log(g'))$ and $X(t)\eqdef e^{\frac{T_1-t}{n-1}}$. Then,
\begin{eqnarray*}
V'_2(S_5)&=&\frac{n}{(n-1)^2}\frac{1}{X(S_5)}+\bo\left(\frac{1}{X(S_5)^2}\right)\\
&=&\frac{n}{(n-1)^2}\frac{g''}{(g')^2}(\log(g'))^3+\bo\left(\left(\frac{g''}{(g')^2}\right)^2(\log(g'))^6\right).
\end{eqnarray*}
If $S\leq S_5$, then we can trivially take $C_4=0$.

If $S>S_5$, then from Lemma \ref{lemma4.3}, there exists a $C_5>0$ such that 
\begin{eqnarray}\label{myeq5}
V_2'(S)=V'_2(S)-V'_1(S)\leq C_5\frac{(\log(g'))^3 g''}{(g')^2}.
\end{eqnarray}

Since $V_2'(S) = \bigo{\frac{g''\de^3}{(g')^2}} \rightarrow 0$ as $\ga \rightarrow \infty$, we must have 
\begin{equation}
\label{myeq4}
 V_2'(S) = \frac{n}{(n-1)^2} \frac{X(S)}{(1+X(S))^2} \rightarrow 0 \text{  as  } \ga \rightarrow \infty.
\end{equation}
\begin{clm}
\label{clm4}
We must have $S < T_1$ and hence $X(S) \geq 1$. 
\end{clm}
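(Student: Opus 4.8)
The plan is to locate the first turning point $S$ of $V_1$ by combining a soft monotonicity argument with asymptotics already in hand. Concretely, I would first prove $S\le S_1$, then recall that $S_1$ lies (up to lower order) at $T_1-(n-1)\log(n-1)\le T_1$, and finally invoke $V_2'(S)\to0$ from \eqref{myeq4} to rule out the one remaining possibility, namely $S$ sitting asymptotically \emph{at} $T_1$.

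\emph{Step 1 ($S\le S_1$).} Put $W(t):=(y'(t))^{n-2}V_1'(t)$. By \eqref{eq2.13}, $W'(t)=-\frac{1}{n-1}f'(y(t))\,V_1(t)\,e^{-t}$, and since $y'(\infty)=V_1'(\infty)=0$ we have $W(\infty)=0$. Comparing the asymptotics of $\tilde T$ in \eqref{eq3.44} with those of $S_1$ in \eqref{eq4.36b}, \eqref{eq4.7} shows $\tilde T<S_1$ for $\gamma$ large, so $y(t)>y(\tilde T)=s_0$ and hence $f'(y(t))=\lambda g'(y(t))e^{g(y(t))}>0$ for every $t\in(S_1,\infty)$. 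By the definition \eqref{eq4.34} of $S_1$ one has $V_1>0$ on $(S_1,\infty)$, so $W'<0$ there; being strictly decreasing on $(S_1,\infty)$ with $W(\infty)=0$, $W$ must be positive throughout $(S_1,\infty)$, i.e. $V_1'>0$ on $(S_1,\infty)$. By \eqref{eq4.35} this forces $S\le S_1$.

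\emph{Step 2 (conclusion).} Suppose $S\ge T_1$ along some sequence $\gamma\to\infty$. Then, using Step 1 together with \eqref{eq4.41} and \eqref{eq4.7},
\[
0\le S-T_1\le S_1-T_1=-(n-1)\log(n-1)+\bigo{\frac{g''}{(g')^2}}.
\]
Since $g''/(g')^2\to0$, this is impossible when $n\ge3$, and when $n=2$ it forces $S-T_1\to0$; in the latter case $X(S)=e^{(T_1-S)/(n-1)}\to1$, whence by \eqref{eq4.5} $V_2'(S)\to\frac{n}{4(n-1)^2}>0$, contradicting \eqref{myeq4}. Therefore $S<T_1$ for all large $\gamma$, so that $X(S)=e^{(T_1-S)/(n-1)}>1$, which is the claim.

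\emph{Main obstacle.} The delicate point is Step 1: one must control the sign of $V_1'$ on the \emph{whole} half-line $(S_1,\infty)$, and in particular for $t\to+\infty$, where the perturbative estimates $\lvert V_1-V_2\rvert+\lvert V_1'-V_2'\rvert=\bigo{\cdot}$ of Lemmas \ref{lemma4.1}--\ref{lemma4.3} are of no use (there $V_2'(t)$ decays exponentially while the error terms do not). Working with $W=(y')^{n-2}V_1'$, whose monotonicity is dictated solely by the sign of $V_1$, circumvents this; everything else is a routine rearrangement of the asymptotics obtained in Sections \ref{section3} and \ref{section4}.
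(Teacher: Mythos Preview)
Your proof is correct. Both your argument and the paper's hinge on the fact that $V_2'(S)\to 0$ (this is \eqref{myeq4}), but the two proofs differ in how they exploit it.

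The paper argues by supposing $S\in[S_3,S_0]$; then $X(S)$ is trapped in a compact interval away from $0$ and $\infty$, so $V_2'(S)$ stays bounded away from zero, contradicting \eqref{myeq4}. From $S\notin[S_3,S_0]$ the paper concludes $S<S_3$ (and hence $S<T_1$). However, this last step tacitly requires knowing $S\le S_0$ (or at least $S\le S_1$) beforehand, and the paper does not isolate that fact; your Step~1 supplies exactly this input via the clean monotonicity argument for $W=(y')^{n-2}V_1'$. So your Step~1 is not just an alternative — it makes explicit an ingredient the paper's own argument needs.

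Your Step~2 then takes a slightly different path: instead of aiming for the stronger $S<S_3$, you combine $S\le S_1$ with $S_1-T_1=-(n-1)\log(n-1)+o(1)$ to conclude directly for $n\ge 3$, and handle $n=2$ by the same $V_2'(S)\to 0$ contradiction. This case split is the price you pay for bypassing $S_3$; the paper's route avoids the split but, as noted, leans on the unstated bound $S\le S_1$. Both arrive at the claim; yours is more self-contained.

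One cosmetic point: your formula $f'(y(t))=\lambda g'(y(t))e^{g(y(t))}$ carries the factor $\lambda$, whereas the paper's \eqref{eq4.10} writes $f'(y)=g'(y)e^{g(y)}$ (the $\lambda$ has been absorbed in the rescaling of Section~\ref{section2}). This does not affect your sign argument since $\lambda>0$.
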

\begin{proof}[Proof of Claim \ref{clm4}]
 Suppose $S \in [S_3,S_0]$, then we must have $X(S_0) \leq X(S) \leq X(S_3)$. From the definition of $S_0$ and Lemma \ref{lemma4.2}, we see that $$(n-1) \leq X(S) \leq e^{k_1}$$
which is a contradiction to \eqref{myeq4}. Thus $S<S_3$ and hence trivially $S<T_1$ which proves the claim. 
\end{proof}

Using \eqref{myeq5} along with \eqref{myeq4}, we see that
\begin{equation*}
 \begin{array}{ll}
  V_2'(S) \geq C \frac{1}{X(S)} \text{  for  } \ga \text{ large}.
 \end{array}
\end{equation*}
This implies that there is a constant $C_4 > 0$ such that $$ S \leq T_1 + (n-1) \log \lbr \frac{g''}{(g')^2} \rbr + 3(n-1) \log \log g' + C_4.$$ \qed


The estimates so far obtained are rough estimates and we need to improve them in order to prove the theorem. For this, we need some explicit formulas as follows:
\begin{lemm}\label{lemma4.5}
Let $X(t)=e^{\frac{T_1-t}{n-1}}$, then
\begin{eqnarray*}
I_1(t)&=&\int_t^\infty(z')^{n-2}z''V'_2{\rm d}s\\
&=&\frac{g'}{(n-1)}\left(\frac{n}{(n-1)g'}\right)^n\left[-\frac{1}{n(n-1)}+\displaystyle\sum_{r=0}^{n-1}\frac{(-1)^r C_r^{n-1}}{(r+2)(1+X(t))^{r+2}}\right],
\end{eqnarray*}
\begin{eqnarray*}
I_2(t)=\int_t^\infty(z')^n V_2'{\rm d}s=\frac{g'}{(n+1)}\left(\frac{n}{(n-1)g'}\right)^n\frac{X(t)^{n+1}}{(1+X(t))^{n+1}}
\end{eqnarray*}
and
\begin{eqnarray}\label{eq4.57}
I(t)&=&\frac{g''}{g'}I_1(t)+g'' I_2(t)\nonumber\\
&=&\left(\frac{n}{(n-1)g'}\right)^ng''\left[\frac{1}{n}+\frac{1}{n-1}\displaystyle\sum_{r=0}^{n-1}\frac{(-1)^r C_r^{n-1}}{(r+2)(1+X(t))^{r+2}}\right]\nonumber\\
&+& \left(\frac{n}{(n-1)g'}\right)^ng''\left[\frac{n}{(n^2-1)}\frac{X(t)^{n+1}-(1+X(t))^{n+1}}{(1+X(t))^{n+1}}\right].
\end{eqnarray}
\end{lemm}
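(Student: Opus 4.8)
\emph{Approach.} Each of $I_1$, $I_2$, $I$ is a completely explicit integral, so the plan is to insert the known closed forms for $z'$, $z''$ and $V_2'$, reduce each integrand to an elementary rational function, and integrate. From \eqref{eq3.12}, $z'(s)=\frac{n}{(n-1)g'}\cdot\frac{X(s)}{1+X(s)}$ with $X(s)=e^{(T_1-s)/(n-1)}$; one more differentiation (using $X'(s)=-\frac{1}{n-1}X(s)$) gives $z''(s)=-\frac{n}{(n-1)^2g'}\cdot\frac{X(s)}{(1+X(s))^2}$, and comparing with \eqref{eq4.5} one reads off the convenient identity $z''=-\frac{1}{g'}V_2'$. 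In particular $I_1=-\frac{1}{g'}\int_t^\infty(z')^{n-2}(V_2')^2\,\mathrm{d}s$, which already explains why $\frac{g''}{g'}I_1$ and $g''I_2$ carry the same factor $g''\big(\frac{n}{(n-1)g'}\big)^{n}$ and hence combine cleanly into \eqref{eq4.57}.

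\emph{Evaluating the integrals.} The next step is the substitution $u=X(s)$. Since $\mathrm{d}u=-\frac{1}{n-1}X(s)\,\mathrm{d}s$ and $X(s)$ decreases from $X(t)$ to $0$ as $s$ increases from $t$ to $+\infty$, we get $\int_t^\infty h(X(s))\,\mathrm{d}s=(n-1)\int_0^{X(t)}\frac{h(u)}{u}\,\mathrm{d}u$; applied to the two integrands this leaves, up to an explicit constant, $\int_0^{X(t)}\frac{u^{\,m}}{(1+u)^{n+2}}\,\mathrm{d}u$ with $m=n-1$ for $I_1$ and $m=n$ for $I_2$. For $I_2$ the integral is immediate, since $\frac{u^{n}}{(1+u)^{n+2}}=\frac{1}{n+1}\frac{\mathrm{d}}{\mathrm{d}u}\big(\frac{u}{1+u}\big)^{n+1}$, producing the claimed closed form in $\frac{X(t)^{n+1}}{(1+X(t))^{n+1}}$. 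For $I_1$ I would expand $u^{n-1}=\big((1+u)-1\big)^{n-1}=\sum_{r=0}^{n-1}(-1)^rC_r^{n-1}(1+u)^{n-1-r}$, so that the integrand becomes $\sum_{r=0}^{n-1}(-1)^rC_r^{n-1}(1+u)^{-(r+3)}$, and integrate term by term: the upper-limit terms assemble into $\sum_{r=0}^{n-1}\frac{(-1)^rC_r^{n-1}}{(r+2)(1+X(t))^{r+2}}$, while the contribution from $u=0$ is the constant $\sum_{r=0}^{n-1}\frac{(-1)^rC_r^{n-1}}{r+2}$, evaluated in closed form by \eqref{eq3.32} with $k=n-1$. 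This is precisely the type of computation already carried out in Lemma \ref{lemma3.4}.

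\emph{Assembling $I(t)$, and the main difficulty.} Finally, $I(t)=\frac{g''}{g'}I_1(t)+g''I_2(t)$, so I would just substitute the two closed forms; as noted above the powers of $g'$ match, so $g''\big(\frac{n}{(n-1)g'}\big)^n$ factors out, and collecting the $(1+X(t))^{-(r+2)}$-terms coming from $I_1$ together with the $\frac{X(t)^{n+1}}{(1+X(t))^{n+1}}$-term coming from $I_2$ yields \eqref{eq4.57}. There is no conceptual obstacle here: the proof is a chain of elementary integrations. The only thing that genuinely needs care is the arithmetic of the many numerical constants and of the signs — most notably the sign coming from $z''<0$ and the correct invocation of the combinatorial identity \eqref{eq3.32} for the constant terms — and since it is the combined formula for $I(t)$ that feeds into the subsequent estimates, that is the identity one should verify most carefully.
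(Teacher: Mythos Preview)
Your proposal is correct and mirrors the paper's own argument almost step for step: the paper also uses $V_2'=-g'z''$ to rewrite $I_1=-g'\int_t^\infty(z')^{n-2}(z'')^2\,\mathrm{d}s$ and $I_2=-g'\int_t^\infty(z')^nz''\,\mathrm{d}s$, changes variable via $\mathrm{d}X=-\frac{X}{n-1}\,\mathrm{d}t$, expands $X^{n-1}=((1+X)-1)^{n-1}$ binomially for $I_1$ and applies \eqref{eq3.32}, and integrates $I_2$ directly to $\frac{g'}{n+1}(z'(t))^{n+1}$. The only cosmetic difference is that for $I_2$ the paper antidifferentiates in the $s$-variable before substituting, whereas you substitute first and then spot the antiderivative $\frac{1}{n+1}\bigl(\frac{u}{1+u}\bigr)^{n+1}$; these are the same computation.
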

\proof Since ${\rm d}X=-\frac{X}{n-1}{\rm d}t$ and $V'_2=-g' z''$. Then from \eqref{eq3.32}
\begin{eqnarray*}
I_1&=&\int_t^\infty(z')^{n-2}z'' V_2'{\rm d}s=-g'\int_t^\infty(z')^{n-2}(z'')^2{\rm d}s\\
&=&-\frac{g'}{(n-1)^2}\left(\frac{n}{(n-1)g'}\right)^n\int_t^\infty\frac{X(s)^n}{(1+X(s))^{n+2}}{\rm d}s\\
&=&-\frac{g'}{(n-1)}\left(\frac{n}{(n-1)g'}\right)^n\int_0^{X(t)}\frac{X^{n-1}}{(1+X)^{n+2}}{\rm d}X\\
&=&-\frac{g'}{(n-1)}\left(\frac{n}{(n-1)g'}\right)^n\displaystyle\sum_{r=0}^{n-1}(-1)^rC_r^{n-1}\int_0^{X(t)}\frac{{\rm d}X}{(1+X)^{r+3}}\\
&=&\frac{g'}{(n-1)}\left(\frac{n}{(n-1)g'}\right)^n\left[-\frac{1}{n(n+1)}+\displaystyle\sum_{r=0}^{n-1}\frac{(-1)^r C_r^{n-1}}{(r+2)(1+X(t))^{r+2}}\right].
\end{eqnarray*}
\begin{eqnarray*}
I_2(t)&=&\int_t^\infty(z')^nV'_2{\rm d}s=-g'\int_t^\infty(z')^n z''{\rm d}s\\
&=&\frac{g'}{n+1}z'(t)^{n+1}=\frac{g'}{n+1}\left(\frac{n}{(n-1)g'}\right)^{n+1}\frac{X(t)^{n+1}}{(1+X(t))^{n+1}}.
\end{eqnarray*}
\begin{eqnarray*}
I(t)&=&\frac{g''}{g'}I_1(t)+g'' I_2(t)\\
&=&\left(\frac{n}{(n-1)g'}\right)^n g''\left[-\frac{1}{n(n^2-1)}+\frac{1}{(n-1)}\displaystyle\sum_{r=0}^{n-1}\frac{(-1)^rC_r^{n-1}}{(r+2)(1+X(t))^{n+2}}\right]\\
&+&g''\left(\frac{n}{(n-1)g'}\right)^n\left[\frac{n}{(n^2-1)}\frac{X(t)^{n+1}}{(1+X(t))^{n+1}}\right].
\end{eqnarray*}
Since $-\frac{1}{n(n^2-1)}+\frac{n}{(n^2-1)}=\frac{1}{n}$,  the lemma follows.\qed
\begin{lemm}\label{lemma4.6}
Let
\begin{eqnarray*}
J(t)=\left(1-g'(y(t))y'(t)-\frac{g''(y(t))}{g'(y(t))}\right)y'(t)^{n-2}V'_1(t)+((y')^{n-2}V'_1)'.
\end{eqnarray*}
Then for $a<b$
\begin{eqnarray}\label{eq4.58}
J(a)&=&J(b)+\int_a^b\frac{g''(y(s))}{g'(y(s))}y''(s)y'(s)^{n-2}V'_1(s){\rm d}s\nonumber\\
&+&\int_a^b\left(g''(y(s))+\frac{g'''(y(s))}{g'(y(s))}-\left(\frac{g''(y(s))}{g'(y(s))}\right)^2\right)y'(s)^nV'_1(s){\rm d}s.
\end{eqnarray}
\end{lemm}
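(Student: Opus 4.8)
The identity \eqref{eq4.58} is, by the fundamental theorem of calculus, equivalent to the pointwise statement
\begin{eqnarray*}
J'(t)=-\frac{g''(y(t))}{g'(y(t))}y''(t)y'(t)^{n-2}V_1'(t)-\left(g''(y(t))+\frac{g'''(y(t))}{g'(y(t))}-\left(\frac{g''(y(t))}{g'(y(t))}\right)^2\right)y'(t)^nV_1'(t),
\end{eqnarray*}
valid on every interval where $y>0$; there $J$ is of class $C^1$ (since $f\in C^3$ and $y'>0$, so $y\in C^3$, and by \eqref{eq2.13} $V_1\in C^2$). So the plan is simply to differentiate $J$, collapse the result using the two governing equations, and integrate over $[a,b]$.

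Write $Q(t):=y'(t)^{n-2}V_1'(t)$. Then \eqref{eq4.12} gives $Q'=-\rho_1V_1e^{-t}$, while the equation for $y$ in \eqref{eq2.7}, rewritten (legitimately, since $y'>0$) as $((y')^{n-1})'=-e^{g(y)-t}$, yields
\begin{eqnarray*}
(y')^{n-2}y''=-\frac{1}{n-1}e^{g(y)-t},\qquad \rho_1e^{-t}=-g'(y)(y')^{n-2}y'',
\end{eqnarray*}
the second because $\rho_1=\frac{1}{n-1}e^{g(y)}g'(y)$ by \eqref{eq4.10}; in particular $Q'=g'(y)(y')^{n-2}y''\,V_1$. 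The one structural point that matters is that the coefficient of $Q$ in $J$ coincides with $1-(\log\rho_1)'$: indeed $(\log\rho_1)'=\frac{d}{dt}(g(y)+\log g'(y))=g'(y)y'+\frac{g''(y)}{g'(y)}y'$, whence $J=(1-(\log\rho_1)')Q+Q'$.

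Differentiating, $J'=-(\log\rho_1)''Q+(1-(\log\rho_1)')Q'+Q''$. For $Q''$ one uses $Q'=g'(y)(y')^{n-2}y''V_1$ together with the identity $((y')^{n-2}y'')'=(g'(y)y'-1)(y')^{n-2}y''$, obtained by differentiating $(y')^{n-2}y''=-\frac{1}{n-1}e^{g(y)-t}$. A short computation then shows that in $(1-(\log\rho_1)')Q'+Q''$ every term carrying the factor $V_1$ cancels identically---this is exactly where the choice of the coefficient $1-(\log\rho_1)'$ enters---leaving only $g'(y)(y')^{n-2}y''\,V_1'$. Finally, expanding
\begin{eqnarray*}
(\log\rho_1)''=g''(y)(y')^2+g'(y)y''+\left(\frac{g'''(y)}{g'(y)}-\left(\frac{g''(y)}{g'(y)}\right)^2\right)(y')^2+\frac{g''(y)}{g'(y)}y'',
\end{eqnarray*}
the term $g'(y)y''(y')^{n-2}V_1'$ coming from $-(\log\rho_1)''Q$ cancels the surviving $g'(y)(y')^{n-2}y''V_1'$, and gathering what remains gives precisely the asserted formula for $J'(t)$. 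Integrating over $[a,b]$ yields \eqref{eq4.58}.

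The argument is entirely computational; its only genuine content is the observation that the coefficient of $y'^{n-2}V_1'$ in $J$ equals one minus the logarithmic derivative of the weight $\rho_1$ from \eqref{eq4.10}---that is what makes the $V_1$-terms (as opposed to the $V_1'$-terms) drop out once the equation for $y$ has been used. Hence the main obstacle is bookkeeping rather than analysis: one must keep the three groups $-(\log\rho_1)''Q$, $(1-(\log\rho_1)')Q'$ and $Q''$ cleanly separated and remember that $((y')^{n-2}y'')'$ simplifies through the $y$-equation. Regularity causes no trouble, since $J$ is visibly $C^1$ on every compact subinterval of $\{y>0\}$.
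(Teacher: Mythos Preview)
Your argument is correct and rests on the same key observation as the paper's: the multiplier $g'(y)y'+\tfrac{g''(y)}{g'(y)}y'-1$ that the paper uses is precisely $(\log\rho_1)'-1$, so your identification $J=(1-(\log\rho_1)')Q+Q'$ is the differentiation-side version of the paper's integration-by-parts computation (they multiply \eqref{eq4.12} by this factor, recognise the right-hand side as $\tfrac{1}{n-1}(e^{g(y)+\log g'(y)-s})'V_1$, and integrate). One small point worth flagging: as printed, the coefficient in the definition of $J$ reads $1-g'(y)y'-\tfrac{g''(y)}{g'(y)}$, but both your computation and the paper's proof require the last term to carry a factor $y'$, i.e.\ $1-g'(y)y'-\tfrac{g''(y)}{g'(y)}y'$; you have silently (and correctly) worked with that corrected form.
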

\proof Multiply the equation \eqref{eq4.12} by $(g'(y)y'+\frac{g''(y)y'}{g'(y)}-1)$ and integrate to obtain
\begin{eqnarray*}
-\int_a^b(g'(y)y'+\frac{g''(y)}{g'(y)}y'-1)((y')^{n-2}V'_1)'{\rm d}s&=&\frac{1}{n-1}\int_a^b(e^{g(y)+\log(g'(y))-s})'V_1{\rm d}s\\
&=&\left[\frac{V_1}{n-1}g'(y)e^{g(y)-s}\right]_a^b-\frac{1}{n-1}\int_a^b g'(y)e^{g(y)-s}V'_1{\rm d}s
\end{eqnarray*}
and
\begin{eqnarray*}
-\int_a^b(g'(y)y'+\frac{g''(y)}{g'(y)}y'-1)((y')^{n-2}V'_1)'{\rm d}s&=&-\left[(g'(y)y'+\frac{g''(y)}{g'(y)}y'-1)(y')^{n-2}V'_1\right]_a^b\\
&&+\int_a^b(g''(y)+\frac{g'''(y)}{g'(y)}-\left(\frac{g''(y)}{g'(y)}\right)^2)(y')^nV'_1{\rm d}s\\
&&+\int_a^b(g'(y)+\frac{g''(y)}{g'(y)})y''(y')^{n-2}V'_1{\rm d}s.
\end{eqnarray*}
Since $(n-1)g'(y)y''(y')^{n-2}=((y')^{n-1})'g'(y)=-g'(y)e^{g(y)-s}$ hence cancelling this term on both sides yields the identity.\qed
\begin{lemm}\label{lemma4.7}
Let $\delta>0$, $\delta=\bo(\log(g'))$ and $T_\delta=T_1-(n-1)\delta$. For $t\geq \max\{S,T_\delta\}\eqdef S_4$ and $X(t)=e^{\frac{T_1-t}{n-1}}$, 
\begin{eqnarray}\label{eq4.59}
L(t)&\eqdef&\int_t^\infty\frac{g''(y)}{g'(y)}y''(y')^{n-2}V_1'{\rm d}s+\int_t^\infty(g''(y)+\frac{g'''(y)}{g'(y)}-\left(\frac{g''(y)}{g'(y)}\right)^2)(y')^nV_1'{\rm d}s\nonumber\\
&=&g''\left(\frac{n}{(n-1)g'}\right)^n\left[\frac{1}{n}+\frac{1}{(n-1)}\displaystyle\sum_{r=0}^{n-1}\frac{(-1)^rC_r^{n-1}}{(r+2)(1+X(t))^{r+2}}\right]\nonumber\\
&&+g''\left(\frac{n}{(n-1)g'}\right)^n\left[\frac{n}{(n^2-1)}\frac{X(t)^{n+1}-(1+X(t))^{n+1}}{(1+X(t))^{n+1}}+\bo\left(\frac{\delta^4 g''}{(g')^2}\right)\right],
\end{eqnarray}
then $J(t) = L(t)$. Furthermore, if  $t\leq S_3=S_1-(n-1)k_1$, then
\begin{eqnarray}\label{eq4.60}
((y')^{n-2}V'_1)'(t)=e^{\bo\left(\frac{\delta^3 g''}{(g')^2}\right)}((z')^{n-2}V_2')'(t).
\end{eqnarray}
\end{lemm}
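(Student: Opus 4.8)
The plan is to obtain the identity $J(t)=L(t)$ from \eqref{eq4.58} by sending $b\to\infty$, then to evaluate $L(t)$ by replacing the pair $(y,V_1)$ by the model pair $(z,V_2)$ using the comparison estimates of Section \ref{section3} and Lemmas \ref{lemma4.2}--\ref{lemma4.3}, and finally to read off the closed form from Lemma \ref{lemma4.5}. The last assertion \eqref{eq4.60} then comes directly from the equations \eqref{eq4.12}--\eqref{eq4.13}. For Step~1, I take $a=t$ and let $b\to\infty$ in \eqref{eq4.58}. The two integrands are integrable on $[t,\infty)$: from $-((y')^{n-1})'=e^{g(y)-s}$ one has $y''(y')^{n-2}=-\frac{1}{n-1}e^{g(y)-s}$, while $(y'(s))^n$ and $V_1'(s)$ both decay like fixed powers of $e^{-s}$ as $s\to\infty$. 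Moreover $J(b)\to 0$, since $y(b)\to\gamma$, $y'(b)\to 0$, $V_1(b)\to 1$ and $V_1'(b)\to 0$, so that $\left(1-g'(y)y'-\frac{g''(y)}{g'(y)}\right)(y')^{n-2}V_1'\to 0$, while $((y')^{n-2}V_1')'(b)=-\frac{1}{n-1}g'(y(b))e^{g(y(b))-b}V_1(b)\to 0$ by \eqref{eq4.12}. Hence \eqref{eq4.58} yields $J(t)=L(t)$, with $L$ the sum of improper integrals in the statement.

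For Step~2, the structural point is that, using $y''(y')^{n-2}=-\frac{1}{n-1}e^{g(y)-s}$ and, from \eqref{eq3.13}, $(z')^{n-2}z''=-\frac{1}{n-1}e^{g+g'(z-\gamma)-s}$, the first integral of $L$ is $-\frac{1}{n-1}\int_t^\infty\frac{g''(y)}{g'(y)}e^{g(y)-s}V_1'\,ds$, which I compare with $\frac{g''}{g'}I_1(t)=-\frac{1}{n-1}\frac{g''}{g'}\int_t^\infty e^{g+g'(z-\gamma)-s}V_2'\,ds$ factor by factor: \eqref{eq4.20} and \eqref{eq4.23} give $\frac{g''(y)}{g'(y)}e^{g(y)-s}=\frac{g''}{g'}e^{g+g'(z-\gamma)-s}e^{\bigo{g''\delta^2/(g')^2}}$, and on $[S_4,\infty)$ Lemmas \ref{lemma4.2} and \ref{lemma4.3} give $V_1'=V_2'+\bigo{\delta^3 g''/(g')^2}$. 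Since $\int_t^\infty e^{g+g'(z-\gamma)-s}\,ds=(z'(t))^{n-1}=\bigo{(g')^{-(n-1)}}$ and $\int_t^\infty e^{g+g'(z-\gamma)-s}\lvert V_2'\rvert\,ds=\bigo{(g')^{-(n-1)}}$, the first integral equals $\frac{g''}{g'}I_1(t)+\bigo{(g'')^2\delta^3/(g')^{n+2}}$. For the second integral I use \eqref{eq4.22} and \eqref{eq4.24} to write its coefficient as $g''\left(1+\bigo{g''\delta/(g')^2}\right)$, \eqref{eq3.24} to replace $(y')^n$ by $(z')^n e^{\bigo{g''\delta^2/(g')^2}}$, and once more $V_1'=V_2'+\bigo{\delta^3 g''/(g')^2}$; since $\int_t^\infty(z')^n\,ds=\bigo{\delta/(g')^n}$, this integral equals $g''I_2(t)+\bigo{(g'')^2\delta^4/(g')^{n+2}}$, the fourth power of $\delta$ arising precisely from the product of the $\bigo{\delta^3 g''/(g')^2}$ gap $V_1'-V_2'$ with the $\bigo{\delta/(g')^n}$ size of $\int_t^\infty(z')^n\,ds$. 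Adding, $L(t)=\frac{g''}{g'}I_1(t)+g''I_2(t)+\bigo{(g'')^2\delta^4/(g')^{n+2}}=I(t)+g''\left(\frac{n}{(n-1)g'}\right)^n\bigo{g''\delta^4/(g')^2}$, and substituting the explicit expression for $I(t)$ from \eqref{eq4.57} gives \eqref{eq4.59}.

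For Step~3, suppose $t\in[S_4,S_3]$. Equations \eqref{eq4.12} and \eqref{eq4.13} give $((y')^{n-2}V_1')'(t)=-\frac{1}{n-1}g'(y(t))e^{g(y(t))-t}V_1(t)$ and $((z')^{n-2}V_2')'(t)=-\frac{1}{n-1}g'e^{g+g'(z(t)-\gamma)-t}V_2(t)$, so their quotient is $\frac{g'(y(t))}{g'}\,e^{g(y(t))-g-g'(z(t)-\gamma)}\,\frac{V_1(t)}{V_2(t)}$. On $[S_4,S_3]$ both $V_1$ and $V_2$ are negative and bounded away from $0$ (Lemmas \ref{lemma4.2}, \ref{lemma4.3}), so the quotient may be exponentiated; the first factor is $e^{\bigo{g''\delta/(g')^2}}$ by \eqref{eq4.21}, the second is $e^{\bigo{g''\delta^2/(g')^2}}$ by \eqref{eq4.20}, and for the third Lemma \ref{lemma4.3} gives $V_1(t)=V_2(t)\frac{V_1(S_3)}{V_2(S_3)}e^{\bigo{\delta^3 g''/(g')^2}}$ while Lemma \ref{lemma4.2} gives $\frac{V_1(S_3)}{V_2(S_3)}=1+\bigo{g''/(g')^2}=e^{\bigo{g''/(g')^2}}$. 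Multiplying the three factors yields \eqref{eq4.60}.

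The delicate part is the error accounting in Step~2: one must check that each replacement ($g^{(k)}(y)\to g^{(k)}$, $e^{g(y)}\to e^{g+g'(z-\gamma)}$, $(y')^n\to(z')^n$, $V_1'\to V_2'$) contributes no more than $\bigo{(g'')^2\delta^4/(g')^{n+2}}$, so that the stated remainder in \eqref{eq4.59} is not exceeded, the tightest contribution being the interaction of the $\bigo{\delta^3 g''/(g')^2}$ gap between $V_1'$ and $V_2'$ with the $\bigo{\delta/(g')^n}$-sized integral $\int_t^\infty(z')^n\,ds$. This is exactly why the sharp $\delta^3$-estimate for $V_1'-V_2'$ from Lemma \ref{lemma4.3}, rather than a cruder bound, is needed, and why the power $\delta^4$ (and no worse) appears.
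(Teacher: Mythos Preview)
Your proof is correct and follows essentially the same route as the paper: you obtain $J(t)=L(t)$ from \eqref{eq4.58} by letting $b\to\infty$ (the paper is terser here, but the justification is exactly as you give it), you reduce $L(t)$ to $I(t)$ of Lemma~\ref{lemma4.5} by replacing $(y,V_1)$ with $(z,V_2)$ factor by factor via \eqref{eq4.20}--\eqref{eq4.25} and Lemmas~\ref{lemma4.2}--\ref{lemma4.3}, and you derive \eqref{eq4.60} by taking the ratio of \eqref{eq4.12} and \eqref{eq4.13} together with $V_1/V_2=e^{\bo(\delta^3 g''/(g')^2)}$ on $[S_4,S_3]$. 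The only cosmetic difference is the order of presentation (the paper proves \eqref{eq4.60} first) and that the paper bundles the $g'''(y)/g'(y)$ and $(g''(y)/g'(y))^2$ terms as a separate error $\bo\bigl(\frac{g'''}{g'}+(\frac{g''}{g'})^2\bigr)\int(z')^nV_1'\,ds$ rather than absorbing them into a multiplicative factor on $g''$.
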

\proof
Let $S_4 \leq t\leq S_3$, then $V_1(t)\leq -\tilde{C}_3$ and hence from \eqref{eq4.20}, we have
\begin{eqnarray*}
-y''=\frac{e^{g(y)-t}}{(n-1)(y')^{n-2}}=e^{\bo\left(\frac{g''\delta^2}{(g')^2}\right)}\frac{e^{g+g'(z-\gamma)-t}}{(n-1)(z')^{n-2}}= -e^{\bo\left(\frac{g''\delta^2}{(g')^2}\right)}z''
\end{eqnarray*}
and from \eqref{eq4.52} and \eqref{eq4.25}, we have
\begin{eqnarray*}
-((y')^{n-2}V'_1)'=\rho_1e^{-t}V_1=e^{\bo\left(\frac{g''\delta^2}{(g')^2}\right)}\rho_2 e^{-t}(V_2+\bo\left(\frac{\delta^3 g''}{(g')^2}\right))=-e^{\bo\left(\frac{g''\delta^3}{(g')^2}\right)}((z')^{n-2}V'_2)'.
\end{eqnarray*}
which proves \eqref{eq4.60}. From \eqref{eq4.20} to \eqref{eq4.25} and from Lemma \ref{lemma4.3}, we have
\begin{eqnarray*}
L(t)&=&e^{\bo\left(\frac{\delta^2g''}{(g')^2}\right)} \lbr[[]\frac{g''}{g'}\int_t^\infty(z')^{n-2}z''(V'_2+\bo\left(\frac{\delta^3 g''}{(g')^2}\right)){\rm d}s \rbr[.]	\\
&& \lbr[.]+g''\int_t^\infty(z')^n(V'_2+\bo\left(\frac{\delta^3 g''}{(g')^2}\right)){\rm d}s+\bo\left(\frac{g'''}{g'}+\left(\frac{g''}{g'}\right)^2\right)\int_t^\infty(z')^nV'_1{\rm d}s\rbr[]]\\
&=& e^{\bo\left(\frac{\delta^2g''}{(g')^2}\right)}\left[\frac{g''}{g'}I_1(t)+g'' I_2(t)\right]\\
&&+\bo\left(\frac{\delta^3(g'')^2}{(g')^3}\int_t^\infty(z')^{n-2}z''{\rm d}s
+\frac{\delta^3(g'')^2}{(g')^2}\int_t^\infty(z')^n{\rm d}s
+\left(\frac{g''}{(g')^n}+\left(\frac{g''}{g'}\right)^2\right)\int_t^\infty(z')^nV'_1{\rm d}s\right).
\end{eqnarray*}
Furthermore,
\begin{eqnarray*}
\int_t^\infty(z')^{n-2}z''{\rm d}s=\frac{z'(t)^{n-1}}{n-1}=\bo\left(\left(\frac{n}{(n-1)g'}\right)^{n-1}\right),
\end{eqnarray*}
and
\begin{eqnarray*}
\int_t^\infty(z')^n{\rm d}s&=&\bo\left(\left(\frac{n}{(n-1)g'}\right)^{n-1}\int_t^\infty z'(s){\rm d}s\right)=\bo\left(\left(\frac{n}{(n-1)g'}\right)^{n-1}(\gamma-z(t))\right)\\
&=&\bo\left(\left(\frac{n}{(n-1)g'}\right)^n\log(1+X(t))\right)=\bo\left(\left(\frac{n}{(n-1)g'}\right)^n\delta\right).
\end{eqnarray*}
From Lemma \ref{lemma4.3}, for $t\geq \max\{S,T_\delta\}$, we have $V_1(t)=V_2(t)+\bo\left(\frac{g''\delta^3}{(g')^2}\right)=\bo(1)$. Therefore
\begin{eqnarray*}
\int_t^\infty(z')^nV'_1{\rm d}s=\bo\left(\left(\frac{n}{(n-1)g'}\right)^n(1-V_1(t))\right)=\bo\left(\left(\frac{n}{(n-1)g'}\right)^n\right).
\end{eqnarray*}
Hence we get
\begin{eqnarray*}
L(t)=e^{\bo\left(\frac{\delta^2g''}{(g')^2}\right)}I(t)+\left(\frac{n}{(n-1)g'}\right)^n \bo\left(\frac{\delta^3(g'')^2}{(g')^2}+\frac{\delta^4(g'')^2}{(g')^2}\right).
\end{eqnarray*}
Now \eqref{eq4.49} follows from \eqref{eq4.57} and this completes the proof of the lemma. \qed

We now introduce an assumption on $S$:
\begin{asm}\label{assumption-on-S} Let
\begin{eqnarray}\label{eq4.61}
S_6=S_6(\gamma)\eqdef T_1-(\frac{4q}{q-1}+1) (n-1)\log(g'),
\end{eqnarray}
and now assume that there exists a sequence $\gamma_l\to\infty$ such that $S=S(\gamma_l)$ satisfies
\begin{eqnarray}\label{eq4.62}
S_6(\gamma_l)\leq S(\gamma_l)=S.
\end{eqnarray}
\end{asm}

For the next part, we assume that $S$ satisfies Assumption \ref{assumption-on-S} and derive the asymptotics as $\gamma_l\to\infty$. By a slight abuse of notation, for the subsequent sections, we denote $\gamma_l$ by $\gamma$, $S_6(\gamma_l)$ to be $S_6$ and $S(\gamma_l)$ by $S$. We suppress writing the subsequence $l$ and  mean $\gamma$ is large to denote $\gamma_l$ is large. Then we have the following crucial result.
\begin{lemm}\label{lemma4.8}
Assume that $S$ satisfies Assumption \ref{assumption-on-S}, then for $\gamma$ large, we have
\begin{eqnarray}\label{eq4.63}
S=T_1+(n-1)\log\left(\frac{(n-1)g''}{(g')^2}+\bo\left(\frac{\delta^4 (g'')^2}{(g')^4}\right)\right)
\end{eqnarray}
and
\begin{eqnarray}\label{eq4.64}
V'_2(S)=\frac{n}{n-1}\frac{g''}{(g')^2}\left(1+\bo\left(\frac{\delta^4 g''}{(g')^2}\right)\right).
\end{eqnarray}
\end{lemm}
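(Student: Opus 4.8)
The strategy is to exploit the identity in Lemma~\ref{lemma4.6} together with the explicit formula for $L(t)$ from Lemma~\ref{lemma4.7}, evaluated at $t = S$ and $t = S_3$, to pin down $V_1'(S)$ (which is $0$ by the definition of $S$) and thereby solve for $S$. Recall $J(t) = L(t)$ for $t \in [S_4, S_3]$ by Lemma~\ref{lemma4.7}; since $S$ satisfies Assumption~\ref{assumption-on-S}, we have $S \geq S_6 \geq T_\delta$ for a suitable choice of $\delta = \bo(\log g')$ (indeed $\delta \sim \frac{4q}{q-1}\log g'$ works), so $S = S_4$ and both endpoints lie in the range where Lemmas~\ref{lemma4.3} and~\ref{lemma4.7} apply. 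At $t = S$ we have $V_1'(S) = 0$, so the first and last terms in $J(S)$ drop out except for $((y')^{n-2}V_1')'(S)$; using \eqref{eq4.12} this equals $-\rho_1(S) e^{-S} V_1(S)$, and by \eqref{eq4.25}, \eqref{eq4.52} and the fact that $V_1(S) = V_2(S) + \bo(\delta^3 g''/(g')^2)$ this is $e^{\bo(\delta^2 g''/(g')^2)}((z')^{n-2}V_2')'(S) + (\text{error})$. So $L(S) = J(S)$ is, up to controlled multiplicative and additive errors, equal to $((z')^{n-2}V_2')'(S) = -\rho_2(S)e^{-S}V_2(S)$.

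**Key steps, in order.** First I would establish that $S = S_4$ under Assumption~\ref{assumption-on-S}, checking that the chosen $\delta$ makes $T_\delta \leq S_6 \leq S$; this also guarantees $S \leq S_3$ is \emph{not} automatic, so one must separately handle the possibility $S > S_3$ — but Corollary~\ref{corollary4.4} already forces $S \leq T_1 + (n-1)\log(g''/(g')^2) + 3(n-1)\log\log g' + \bo(1)$, which is far below $S_3 \sim S_0 \sim T_1$, so $S < S_3$ for $\gamma$ large. Second, evaluate $L(S)$ via formula \eqref{eq4.59}: at $t = S$ we expect $X(S) = e^{(T_1 - S)/(n-1)}$ to be \emph{large} (since $S$ is well below $T_1$), so in \eqref{eq4.59} the sum $\sum_r (-1)^r C_r^{n-1}/((r+2)(1+X)^{r+2})$ is $\bo(X^{-2})$, and $\frac{n}{n^2-1}\cdot\frac{X^{n+1} - (1+X)^{n+1}}{(1+X)^{n+1}}$ expands as $-\frac{n}{n^2-1}\cdot\frac{n+1}{1+X} + \bo(X^{-2}) = -\frac{n}{(n-1)(1+X)} + \bo(X^{-2})$. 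Thus $L(S) = g''\bigl(\tfrac{n}{(n-1)g'}\bigr)^n\bigl[\tfrac1n - \tfrac{n}{(n-1)(1+X(S))} + \bo(X(S)^{-2}) + \bo(\delta^4 g''/(g')^2)\bigr]$. Third, compute $J(S)$ directly: with $V_1'(S) = 0$, $J(S) = ((y')^{n-2}V_1')'(S) = -\rho_1(S)e^{-S}V_1(S)$. Using \eqref{eq4.11}, $V_2(S) = V_1(S) + \bo(\delta^3 g''/(g')^2)$, \eqref{eq4.4} for $V_2(S)$ in terms of $X(S)$, and $\rho_2(S)e^{-S} = \frac{g'}{n-1}e^{g + g'(z(S)-\gamma) - S}$ together with \eqref{eq3.13}, one gets $J(S) = \bigl(\tfrac{n}{(n-1)g'}\bigr)^{n} g' \cdot \frac{g'}{n-1} \cdot \frac{X(S)}{(1+X(S))^n}\cdot\bigl(-V_2(S)\bigr)\cdot(1 + \text{err})$, where the negative sign is crucial since $V_2(S) < 0$. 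Fourth, equate the two expressions for $J(S) = L(S)$: the leading term $\frac1n$ on the $L$-side must be matched, which after simplification yields an equation of the form (leading behavior) $\frac{g''}{g'}\cdot\frac1n \approx \frac{1}{n-1}\cdot\frac{1}{X(S)}$ up to lower-order corrections, i.e.\ $X(S) = \frac{(g')^2}{(n-1)g''}(1 + \bo(\delta^4 g''/(g')^2))^{-1}$, which inverts to \eqref{eq4.63}. Finally, \eqref{eq4.64} follows by plugging this value of $X(S)$ into \eqref{eq4.5}: $V_2'(S) = \frac{n}{(n-1)^2}\frac{X(S)}{(1+X(S))^2}$, and since $X(S) \to \infty$, $\frac{X(S)}{(1+X(S))^2} = \frac{1}{X(S)}(1 + \bo(1/X(S))) = \frac{(n-1)g''}{(g')^2}(1 + \bo(\delta^4 g''/(g')^2))$, giving $V_2'(S) = \frac{n}{n-1}\frac{g''}{(g')^2}(1 + \bo(\delta^4 g''/(g')^2))$.

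**Main obstacle.** The delicate point is the bookkeeping of error terms when equating $J(S)$ and $L(S)$. On the $L$-side we carry a multiplicative error $e^{\bo(\delta^2 g''/(g')^2)}$ and an additive error of relative size $\bo(\delta^4 g''/(g')^2)$ (from \eqref{eq4.59}); on the $J$-side the errors come from \eqref{eq4.25}, from $V_1(S) = V_2(S) + \bo(\delta^3 g''/(g')^2)$ via \eqref{eq4.52}, and from \eqref{eq4.60}. One must verify that all of these, once the equation is solved for $X(S)$, collapse into the single relative error $\bo(\delta^4 g''/(g')^2)$ claimed in \eqref{eq4.63} — in particular that the additive errors, which are small relative to the \emph{leading} term $\frac1n$ but a priori comparable to the \emph{correction} term $\frac{n}{(n-1)(1+X(S))} \sim \frac{g''}{(g')^2}$, do not actually destroy the correction. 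This is exactly where the hypothesis that $\delta^4 g''/(g')^2 \to 0$ (a consequence of (H1), since $g''/(g')^2 \sim (g')^{-q/(q-1)}$ decays polynomially while $\delta^4 = \bo((\log g')^4)$) is used: it guarantees the relative error is genuinely $o(1)$, so that $X(S)$ is determined to leading order and the stated form of \eqref{eq4.63} is consistent. A secondary check is confirming $X(S) \to \infty$ a priori (needed to justify the expansions of the sum and of the $X^{n+1} - (1+X)^{n+1}$ term); this follows from Corollary~\ref{corollary4.4}, which gives $T_1 - S \geq -(n-1)\log(g''/(g')^2) - 3(n-1)\log\log g' - \bo(1) \to +\infty$, hence $X(S) \to \infty$.
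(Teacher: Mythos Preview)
Your approach is correct and essentially identical to the paper's: both arguments use that $V_1'(S)=0$ collapses $J(S)$ to $((y')^{n-2}V_1')'(S)$, rewrite this (the paper via \eqref{eq4.60}, you via \eqref{eq4.12}, \eqref{eq4.25}, \eqref{eq4.52}, which is precisely how \eqref{eq4.60} is proved) as $e^{\bo(\delta^3 g''/(g')^2)}((z')^{n-2}V_2')'(S)$, equate with the explicit form of $L(S)$ from \eqref{eq4.59}, and solve for $X(S)$, having first secured $X(S)\to\infty$ from Corollary~\ref{corollary4.4}.

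Two arithmetic slips to repair in your writeup: in the expression for $J(S)$ the factor should be $X(S)^{n-1}/(1+X(S))^n$ rather than $X(S)/(1+X(S))^n$ (recall $e^{T_1-S}=X(S)^{n-1}$ in \eqref{eq3.13}); and the resulting leading balance, after cancelling the common factor $(\frac{n}{(n-1)g'})^{n-2}$, reads $\frac{1}{(n-1)X(S)}\approx \frac{g''}{(g')^2}$, not $\frac{1}{(n-1)X(S)}\approx \frac{g''}{ng'}$. Your stated conclusion $X(S)=\frac{(g')^2}{(n-1)g''}\bigl(1+\bo(\delta^4 g''/(g')^2)\bigr)$ is nevertheless correct, and \eqref{eq4.64} follows exactly as you indicate by substituting into \eqref{eq4.5}.
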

\proof
As before, we let $X(t):=e^{\frac{T_1-t}{n-1}}$. Then from \eqref{eq4.56}, we trivially have
\begin{eqnarray*}
X(S)\geq \frac{e^{-C_4}(g')^2}{(\log(g'))^3 g''}\to\infty\quad\mbox{as }\gamma\to\infty.
\end{eqnarray*}
We have
\begin{eqnarray*}
(z')^{n-2}V_2'=\frac{n}{(n-1)^2}\left(\frac{n}{(n-1)g'}\right)^{n-2}\frac{X(t)^{n-1}}{(1+X(t))^n},
\end{eqnarray*}
\begin{eqnarray}\label{eq4.65}
((z')^{n-2}V'_2)'&=&\frac{n}{(n-1)^3}\left(\frac{n}{(n-1)g'}\right)^{n-2}\left(\frac{n X(t)^n}{(1+X(t))^{n+1}}-\frac{(n-1)X(t)^{n-1}}{(1+X(t))^n}\right)\nonumber\\
&=&\frac{n}{(n-1)^3}\left(\frac{n}{(n-1)g'}\right)^{n-2}\left(\frac{1}{X(t)}+\bo\left(\frac{1}{(X(t))^2}\right)\right).
\end{eqnarray}
Set $a=S$ and $b=\infty$ in \eqref{eq4.58} and from \eqref{eq4.59}, \eqref{eq4.60}, \eqref{eq4.65} we have for $\delta=\log(g')$
\begin{equation*}\begin{array}{ll}
((z')^{n-2}V'_2)'(S) & =\frac{n}{(n-1)^3}\left(\frac{n}{(n-1)g'}\right)^{n-2}\left(\frac{1}{X(S)}+\bo\left(\frac{1}{X(S)^2}\right)\right)\\
& =e^{\bo\left(\frac{\delta^3 g''}{(g')^2}\right)}((y')^{n-2}V_1')'(S)=e^{\bo\left(\frac{\delta^3 g''}{(g')^2}\right)}L(S)\\
& =e^{\bo\left(\frac{\delta^3 g''}{(g')^2}\right)}g''\left((\frac{n}{(n-1)g'}\right)^{n}\left(\frac{1}{n}+\bo\left(\frac{\delta^4 g''}{(g')^2}\right)\right)
\end{array}\end{equation*}
and
\begin{eqnarray*}
\frac{1}{X(S)}=\frac{(n-1)g''}{(g')^2}(1+\bo\left(\frac{1}{X(S)}\right))+\bo\left(\frac{1}{X(S)^2}+\delta^4\left(\frac{g''}{(g')^2}\right)^2\right).
\end{eqnarray*}
Hence
\begin{eqnarray*}
e^{\frac{S-T_1}{n-1}}=\frac{1}{X(S)}=\frac{(n-1)g''}{(g')^2}\left[1+\bo\left(\frac{\delta^4 g''}{(g')^2}\right)\right],
\end{eqnarray*}
which gives after taking logarithms
\begin{eqnarray*}
S=T_1+(n-1)\log\left(\frac{(n-1)g''}{(g')^2}+\bo\left(\delta^4\left(\frac{g''}{(g')^2}\right)^2\right)\right)
\end{eqnarray*} 
and
\begin{eqnarray*}
V'_2(S)&=&\frac{n}{(n-1)^2}\frac{X(S)}{(1+X(S))^2}
=\frac{n}{(n-1)^2}\left(\frac{1}{X(S)}-\frac{2}{X(S)^2}+\bo\left(\frac{1}{X(S)^3}\right)\right)\\
&=&\frac{n}{n-1}\frac{g''}{(g')^2}\left(1+\bo\left(\frac{\delta^4 g''}{(g')^2}\right)\right).
\end{eqnarray*}
This proves the lemma.\qed

Next we study the behaviour of $V_1$ for $t\in [T(\gamma), S]$.
\begin{lemm}\label{lemma4.9}
Let $S$ satisfies the Assumption \ref{assumption-on-S}, then there exists a $C_0>0$ such that for $\gamma$ large with $t\in [S-C_0g, S)$, the following holds:
\begin{eqnarray}\label{eq4.66}
V_1(t)<0, \quad \text{and    } \quad V'_1(t)<0.
\end{eqnarray}
\end{lemm}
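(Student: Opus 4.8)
The plan is to reduce the lemma to the single statement that $V_1(t)<0$ for all $t\in[S-C_0g,S)$; the inequality $V_1'(t)<0$ then comes for free. Indeed, once $V_1<0$ throughout this interval, \eqref{eq4.12} gives $((y')^{n-2}V_1')'=-\rho_1V_1e^{-t}>0$ there, so $(y')^{n-2}V_1'$ is increasing on $[S-C_0g,S)$; since $S>T(\gamma)$ by Corollary \ref{corollary4.4} and $S$ is the first turning point of $V_1$, continuity of $V_1'$ forces $V_1'(S)=0$, whence $(y')^{n-2}V_1'<0$, i.e. $V_1'<0$, on $[S-C_0g,S)$.

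For the sign of $V_1$ I would argue by contradiction. Two facts are needed. Integrating \eqref{eq4.12} from $s$ to $S$ and using $V_1'(S)=0$ gives $y'(s)^{n-2}V_1'(s)=\int_s^S\rho_1(\tau)V_1(\tau)e^{-\tau}\,{\rm d}\tau$ for $s\le S$; and from Lemma \ref{lemma4.3} (with $\delta=k\log g'$, $k$ large, so that $T_\delta<S$, whence $S_4=S$ and $S\in[S_4,S_3]$ using $S<S_3$ from the proof of Corollary \ref{corollary4.4}) together with Lemma \ref{lemma4.8}, $V_1(S)=V_2(S)+\bo(\delta^3g''/(g')^2)=-\frac{1}{n-1}+\frac{n}{n-1}\frac{1}{1+X(S)}+o(1)\to-\frac{1}{n-1}$, so $\frac{1}{2(n-1)}\le|V_1(S)|\le1$ for $\gamma$ large. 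Now if $V_1$ fails to be negative throughout $[S-C_0g,S)$, then, as $V_1(S)<0$, there is a largest $t^*\in[S-C_0g,S)$ with $V_1(t^*)=0$, and $V_1<0$ on $(t^*,S)$; by the first paragraph $V_1'<0$ on $(t^*,S)$, so $V_1$ decreases there and $|V_1|\le|V_1(S)|$ on $(t^*,S)$. The heart of the matter is the bound
\begin{equation}\label{eq:v1key}
|V_1'(s)|\le C\,\frac{g''}{(g')^2}\qquad\text{for }s\in(t^*,S),
\end{equation}
with $C$ independent of $C_0$; granting it, $\frac{1}{2(n-1)}\le|V_1(S)|=\int_{t^*}^S|V_1'(s)|\,{\rm d}s\le C_0\,C\,g\,\frac{g''}{(g')^2}$, and since $g\sim\gamma^q$, $g'\sim\gamma^{q-1}$, $g''\sim\gamma^{q-2}$ give $g\,g''/(g')^2\to\frac{q-1}{q}$, the right side is $\le2C_0C\frac{q-1}{q}$ for $\gamma$ large — a contradiction once $C_0<\frac{q}{8(n-1)C(q-1)}$. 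Hence $V_1<0$ on $[S-C_0g,S)$.

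To prove \eqref{eq:v1key}: using the identity, $|V_1|\le|V_1(S)|\le1$ on $(t^*,S)$, $\rho_1=\frac{1}{n-1}g'(y)e^{g(y)}$, and $g'(y(\tau))\le g'$ (which holds because, if also $C_0<\frac{(n-1)q}{2n}$, the asymptotics of $S$ in \eqref{eq4.63} and $\tilde T$ in \eqref{eq3.44} give $S-C_0g\ge\tilde T$, so $y(\tau)\ge s_0$ on $[s,S]$), one gets $|V_1'(s)|\le\frac{g'}{(n-1)y'(s)^{n-2}}\int_s^Se^{g(y(\tau))-\tau}\,{\rm d}\tau$. By \eqref{eq2.8} this integral equals $y'(s)^{n-1}-y'(S)^{n-1}$, but treating the two terms separately loses a spurious $\delta^2$ through cancellation; instead one bounds the integrand pointwise. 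On $[T_\delta,S]$, \eqref{eq3.23} gives $g(y(\tau))-\tau\le g+g'(z(\tau)-\gamma)-\tau+\bo(g''\delta^2/(g')^2)$, so by \eqref{eq3.13}, $\int_{T_\delta}^Se^{g(y(\tau))-\tau}\,{\rm d}\tau\le(1+o(1))(z'(T_\delta)^{n-1}-z'(S)^{n-1})\sim g''/(g')^{n+1}$, where the last step uses $X(S)=\frac{(g')^2}{(n-1)g''}(1+o(1))$ from Assumption \ref{assumption-on-S} and \eqref{eq4.63}; on $[S-C_0g,T_\delta]$, the convexity bound \eqref{eq3.20} gives $e^{g(y(\tau))-\tau}\le e^{\psi(y(\tau))}\le e^{\psi(y(T_\delta))}=\bo((g')^{-(n-1+k)})$, hence $\int_{S-C_0g}^{T_\delta}e^{g(y(\tau))-\tau}\,{\rm d}\tau\le C_0g\cdot\bo((g')^{-(n-1+k)})=\bo(g''/(g')^{n+1})$ for $k>\frac{2q}{q-1}$. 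Combining with $y'(s)\ge y'(S)\ge\frac{n}{(n-1)g'}(1-o(1))$ (from \eqref{compar-y-z} and \eqref{eq3.12}) yields \eqref{eq:v1key}.

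The delicate step, and the main obstacle, is precisely getting \eqref{eq:v1key} with the clean order $g''/(g')^2$: a term-by-term estimate of $y'(s)^{n-1}-y'(S)^{n-1}$ yields an error of order $\delta^2g''/(g')^{n+1}$ (a spurious $(\log g')^2$), which would only give $S-t^*\gtrsim g/(\log\gamma)^2$ and no fixed $C_0$. One must compare $e^{g(y(\tau))-\tau}$ pointwise with the exactly integrable $-((z')^{n-1})'(\tau)$, and one genuinely needs Assumption \ref{assumption-on-S}, which alone makes $1/X(S)$ of the clean size $g''/(g')^2$ instead of merely $\bo((\log g')^3g''/(g')^2)$.
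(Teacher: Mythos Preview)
Your proof is correct and follows essentially the same strategy as the paper: assume a first zero $t_0$ (your $t^*$) of $V_1$ below $S$, use $|V_1|\le|V_1(S)|$ on $(t_0,S)$, bound the integral $\int\rho_1 e^{-s}$ by splitting at a point of the form $T_1-(n-1)k\log g'$ (the paper uses $S_6$, you use $T_\delta$ with $k>2q/(q-1)$), applying the convexity bound \eqref{eq3.20}--\eqref{eq3.41} on the left piece and the $z$-comparison together with $1/X(S)\sim g''/(g')^2$ from Lemma~\ref{lemma4.8} on the right, and then deduce $S-t_0\ge C_0g$. The only cosmetic difference is that you package the estimate as a pointwise bound $|V_1'(s)|=\bo(g''/(g')^2)$ and integrate, whereas the paper bounds the double integral in \eqref{eq4.15} directly as $\bo(|V_1(S)|(S-t_0)/g)$; and, like the paper, you correctly observe that $V_1'<0$ follows automatically from $V_1<0$ via the equation and $V_1'(S)=0$.
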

\proof Let $S_6$ defined in \eqref{eq4.61} and
\begin{eqnarray}\label{eq4.67}
t_0\eqdef\inf\{ t\in[T(\gamma),S];\; V_1(s)<0\mbox{ for }s\in(t,S]\}.
\end{eqnarray}
Observe that from \eqref{eq4.63}, there exists a $C_1>0$ such that for $\gamma$ large $\tilde{T}\leq S-C_1g$.

\noindent \emph{Case i:} If $\tilde{T}\geq t_0$, then there is nothing to prove.

\noindent \emph{Case ii:} Assume that $\tilde{T}<t_0$. Then $V_1(t_0)=0$ and $0\leq -V_1(t)\leq -V_1(S)$ for $t\in (t_0,S)$. Hence from \eqref{eq3.20}, \eqref{eq3.41}, \eqref{eq4.10} and \eqref{eq4.12}, we have 
\begin{eqnarray*}
\int_{t_0}^{S_6}\rho_1 e^{-s} V_1\left(\int_{t_0}^s\frac{{\rm d}\theta}{(y'(\theta))^{n-2}}\right){\rm d}s
&=&\bo\left(\vert V_1(S)\vert (g')^{n-1}(S-t_0)\int_{\tilde{T}}^{S_6}e^{g(y(s))-s}{\rm d}s\right)\\
&=& \bo\left(\frac{\vert V_1(S)\vert (S-t_0)(g')^{n-1+\frac{q}{q-1}}}{(g')^{\frac{4q}{q-1}+n}}\right)=\bo\left(\frac{\vert V_1(S)\vert(S-t_0)}{g}\right).
\end{eqnarray*}
Similarly, now from \eqref{eq4.20}, we have
\begin{eqnarray*}
\int_{S_6}^S\rho_1 e^{-s} V_1\left(\int_{t_0}^s\frac{{\rm d}\theta}{(y'(\theta))^{n-2}}\right){\rm d}s&=&\bo\left(\vert V_1(S)\vert(S-T_0)(g')^{n-1}\int_{S_6}^S e^{g+g'(z-\gamma)-s}{\rm d} s\right)\\
&=&\bo\left(\vert V_1(S)\vert(S-t_0)(g')^{n-1}\vert z'(S)^{n-1}-z'(S_6)^{n-1}\vert\right)\\
&=&\bo\left(\vert V_1(S)\vert (S-t_0)\left\vert\frac{X(S)^{n-1}}{(1+X(S))^{n-1}}-\frac{X(S_6)^{n-1}}{(1+X(S_6))^{n-1}}\right\vert\right)\\
&=&\bo\left(\frac{\vert V_1(S)\vert (S-t_0)}{X(S)}\right)=\bo\left(\frac{\vert V_1(S)\vert (S-t_0)g''}{(g')^2}\right)\\
&=&\bo\left(\frac{\vert V_1(S)\vert (S-t_0)}{g}\right).
\end{eqnarray*}
Here we have used the fact that for any $a>0$ and $b>0$, we have
\begin{equation*}\begin{array}{ll}
 \frac{a^{n-1}}{(1+a)^{n-1}} - \frac{b^{n-1}}{(1+b)^{n-1}} & = \bigo{\frac{a^{n-1} (1+b)^{n-1} - b^{n-1} (1+a)^{n-1}}{(1+a)^{n-1}(1+b)^{n-1}}}\\
 & = \bigo{\frac{a^{n-1} b^{n-2} + b^{n-1} a^{n-2}}{a^{n-1}b^{n-1}}} = \bigo{\frac1a + \frac1b} = \bigo{\frac1a}. 
 \end{array}
\end{equation*}
Combining all the above estimates, we get 
\begin{eqnarray*}
\vert V_1(S)\vert&=&-V_1(S)+V_1(t_0) \\
&=&-\int_{t_0}^{S_6}\rho_1V_1 e^{-s}\left(\int_{t_0}^s\frac{{\rm d}\theta}{(y'(\theta))^{n-2}}\right){\rm d}s-\int_{S_6}^{S}\rho_1V_1e^{-s}\left(\int_{t_0}^s\frac{{\rm d}\theta}{(y'(\theta))^{n-2}}\right){\rm d}s\\
&=&\bo\left(\frac{\vert V_1(S)\vert(S-t_0)}{g}\right).
\end{eqnarray*}
This completes the proof of the lemma.\qed

Next we have
\begin{lemm}\label{lemma4.10}
Let $S$ satisfies the Assumption \ref{assumption-on-S} and $S_6$ be as in \eqref{eq4.61}, then the following holds: 

\begin{gather}
\label{eq4.68} V_1(S_6)=V_1(S)\left(1+(n-1)^2V'_2(S)-(n-1)(S-S_6)V'_2(S)\right)+\bo\left(\frac{\delta^4 g''}{(g')^4}\right),\\
\label{eq4.69} V'_1(S_6)=(n-1)V_1(S)V'_2(S)+\bo\left(\frac{\delta^3 g''}{(g')^4}\right). 
\end{gather}

\end{lemm}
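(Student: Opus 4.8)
The plan is to transport the control on $(V_1,V_1')$ available at $t=S$ --- where Lemmas \ref{lemma4.3} and \ref{lemma4.8} pin it down --- leftwards to $t=S_6$, exploiting that on the short interval $[S_6,S]$ the forcing term $\rho_1 e^{-s}$ is so small that $V_1$ is essentially frozen at the value $V_1(S)$. I would fix $\delta=(\tfrac{4q}{q-1}+1)\log g'$ so that $T_\delta=S_6$; under Assumption \ref{assumption-on-S} this gives $S\ge S_6=T_\delta$, hence $S_4=\max\{T_\delta,S\}=S$, and Corollary \ref{corollary4.4} together with Lemma \ref{lemma4.8} places $S$ between $T_\delta$ and $S_3$. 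Therefore \eqref{eq4.20}--\eqref{eq4.25} hold on $[S_6,\infty)$; Lemma \ref{lemma4.3} gives $V_1(S)=V_2(S)+\bo(\delta^3 g''/(g')^2)=-\tfrac1{n-1}+\bo(\delta^3 g''/(g')^2)$; Lemma \ref{lemma4.8} gives $V_2'(S)=\tfrac{n}{n-1}\tfrac{g''}{(g')^2}(1+\bo(\delta^4 g''/(g')^2))$ and $1/X(S)=\tfrac{(n-1)^2}{n}V_2'(S)(1+\bo(1/X(S)))$; Lemma \ref{lemma4.9} gives $V_1<0$, $V_1'<0$ on $[S_6,S)$, since $S-S_6=\bo(\log g')$ is far below the $C_0 g$ appearing there; and $V_1'(S)=0$ by definition of $S$. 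Two further facts are essential: $X(S_6)/X(S)=e^{(S-S_6)/(n-1)}$ is a large power of $g'$, so $1/X(S_6)\ll 1/X(S)^2$; and $1<q\le\tfrac n{n-1}\le 2$ forces $g''=\bo(1)$, hence $g''\ll\delta$.

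Starting from the integral identities \eqref{eq4.14}--\eqref{eq4.15} with $\eta=S$ and $V_1'(S)=0$, I would first use \eqref{eq4.20}, \eqref{eq4.21}, \eqref{eq4.25} and the primitive $\int e^{g+g'(z-\gamma)-s}\,ds=-(z'(s))^{n-1}$ (from \eqref{eq3.13}) to establish, for $\tau\in[S_6,S]$,
\[
\frac1{y'(\tau)^{n-2}}\int_\tau^S\rho_1 e^{-r}\,dr=e^{\bo(g''\delta^2/(g')^2)}\,\frac{g'}{n-1}\bigl[(z'(\tau))^{n-1}-(z'(S))^{n-1}\bigr]=\bo\!\left(\tfrac{g''}{(g')^2}\right).
\]
Fed into \eqref{eq4.15} and combined with the monotonicity of $V_1$ from Lemma \ref{lemma4.9}, this yields the a priori bounds $\max_{[S_6,S]}|V_1|=\bo(|V_1(S)|)=\bo(1)$, and then $|V_1'|=\bo(g''/(g')^2)$ and $|V_1(\tau)-V_1(S)|=\bo(\delta g''/(g')^2)$ on $[S_6,S]$.

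For the precise expansion I would split $V_1(r)=V_1(S)+(V_1(r)-V_1(S))$ in \eqref{eq4.14}--\eqref{eq4.15}; by the a priori bounds the $(V_1(r)-V_1(S))$-contribution is only $\bo((g'')^2\delta^k/(g')^4)$, hence absorbed into $\bo(g''\delta^4/(g')^4)$ since $g''\ll\delta$. For the main part, using $z'(\tau)=\tfrac n{(n-1)g'}(1-\tfrac1{1+X(\tau)})$ one obtains
\[
\frac1{y'(\tau)^{n-2}}\int_\tau^S\rho_1 e^{-r}\,dr=\frac n{n-1}\Bigl(\frac1{1+X(S)}-\frac1{1+X(\tau)}\Bigr)\bigl(1+\bo(g''\delta^2/(g')^2)\bigr)+\bo\!\left(\tfrac1{X(S)^2}\right).
\]
Taking $\tau=S_6$ (with $1/X(S_6)\ll1/X(S)^2$ and $\tfrac n{n-1}\tfrac1{1+X(S)}=(n-1)V_2'(S)(1+\bo(1/X(S)))$) gives \eqref{eq4.69}. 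Integrating the last display over $\tau\in[S_6,S]$ and using
\[
\int_{S_6}^S\frac{d\tau}{1+X(\tau)}=(n-1)\Bigl[\tfrac1{X(S)}-\tfrac1{X(S_6)}+\bo(1/X(S)^2)\Bigr],\qquad \int_{S_6}^S\frac{d\tau}{1+X(S)}=\frac{S-S_6}{1+X(S)},
\]
gives $V_1(S_6)-V_1(S)=-V_1(S)\tfrac n{n-1}\tfrac1{X(S)}[(S-S_6)-(n-1)]+\bo(g''\delta^4/(g')^4)$; replacing $\tfrac n{n-1}\tfrac1{X(S)}$ by $(n-1)V_2'(S)$ up to negligible error produces \eqref{eq4.68}.

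The step I expect to be the main obstacle is the bookkeeping of the error terms. The natural approximations $\rho_1\approx\rho_2$, $y'\approx z'$ and $V_1\approx V_1(S)$ all carry relative errors of the form $e^{\bo(g''\delta^2/(g')^2)}$, which at first glance look too crude to deliver the claimed $(g')^{-4}$-type remainders; the saving fact --- easy to miss --- is that the admissible range $1<q\le\tfrac n{n-1}\le 2$ keeps $g''$ bounded, so $g''\ll\delta$ and every stray term $(g'')^2\delta^k$ falls below $g''\delta^{k+1}$. A second delicate point is extracting correctly the constant $-(n-1)$ in $[(S-S_6)-(n-1)]$ --- equivalently the term $(n-1)^2V_1(S)V_2'(S)$ in \eqref{eq4.68} --- which comes solely from the logarithmic primitive $\int\tfrac{d\tau}{1+X(\tau)}$ and must be kept although it is of the same order as quantities discarded elsewhere.
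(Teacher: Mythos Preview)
Your argument is correct and reaches \eqref{eq4.68}--\eqref{eq4.69} by a route genuinely different from the paper's.

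The paper does not work from \eqref{eq4.14}--\eqref{eq4.15} directly. Instead it exploits the Wronskian-type identity \eqref{eq4.19} with $\eta=S$ (so the boundary term collapses to $-V_1(S)V_2'(S)$), estimates the $(\rho_1-\rho_2)$ and $\rho_3$ integrals, and solves for
\[
V_1'(t)=\frac{V_1(t)V_2'(t)}{V_2(t)}+(n-1)V_1(S)V_2'(S)+\bo\!\left(\frac{\delta^3 g''}{(g')^4}\right),\qquad t\in[S_6,S].
\]
Setting $t=S_6$ gives \eqref{eq4.69} because $V_2'(S_6)=\bo\bigl((g')^{-(4q/(q-1)+1)}\bigr)$ is negligible. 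Integrating the display over $[S_6,S]$, the term $\int_{S_6}^S V_1 V_2'/V_2\,dt$ is handled by an integration by parts against $\log(-(n-1)V_2(t))=-(n-1)^2V_2'(t)+\bo(1/X(t)^2)$, which isolates exactly the $(n-1)^2V_1(S)V_2'(S)$ contribution; this is how the paper obtains the constant $-(n-1)$ in $[(S-S_6)-(n-1)]$ that you extract instead from the primitive $\int d\tau/(1+X(\tau))$.

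Your approach is more elementary: once you freeze $V_1\approx V_1(S)$ and use $\rho_2 e^{-s}=-\frac{g'}{n-1}((z')^{n-1})'$, everything reduces to explicit algebra in $X(t)$, with no need for the Wronskian machinery or the $\log(-(n-1)V_2)$ trick. The cost is the bookkeeping you flag: you must invoke $g''=\bo(1)$ (from $q\le n/(n-1)\le 2$) to absorb the $(g'')^2$ remainders, a fact the paper's proof uses as well but less visibly. The paper's route, by contrast, keeps the comparison with $V_2$ structural, which meshes naturally with the surrounding lemmas (\ref{lemma4.2}--\ref{lemma4.3}) and makes the origin of the $(n-1)^2V_2'(S)$ term conceptually clearer. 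Both derivations are of comparable length and rely on the same inputs (Lemmas \ref{lemma4.3}, \ref{lemma4.8}, \ref{lemma4.9} and estimates \eqref{eq4.20}--\eqref{eq4.25}).
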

\proof Let $t\in [S_6, S]$, then $X(t)=e^{\frac{T_1-t}{n-1}}\to\infty$ as $\gamma\to\infty$. From Lemma \ref{lemma4.9}, $V_1(t)<0$ for $t\in [S_6,S]$, hence 
\begin{eqnarray*}
\log(-(n-1)V_2(t))&=&\log\left(1-\frac{n}{1+X(t)}\right)=-\frac{n}{1+X(t)}+\bo\left(\frac{1}{X(t)^2}\right)\\
&=&-(n-1)^2\frac{n}{(n-1)^2}\frac{X(t)}{(1+X(t))^2}+\bo\left(\frac{1}{X(t)^2}\right)\\
&=&-(n-1)^2V'_2(t)+\bo\left(\frac{1}{X(t)^2}\right),
\end{eqnarray*}
and $V'_2(S_6)=\bo\left(\frac{1}{(g')^{\frac{4q}{q-1}+1}}\right)$. Therefore, from lemma \ref{lemma4.3},
\begin{eqnarray*}
\int_{S_6}^S\frac{V_1(s)V_2'(s)}{V_2(s)}{\rm d}s&=&\int_{S_6}^S(\log(-(n-1)V_2(s))'V_1(s){\rm d}s\\
&=&\left[V_1(s)\log(-(n-1)V_2(s)\right]_{S_6}^{S}-\int_{S_6}^SV'_1(s)\log(-(n-1)V_2(s)){\rm d}s\\
&=&-(n-1)^2V_1(S)V'_2(S)+\bo\left(V_1(S_6)V'_2(S_6)\right)+\bo\left(V'_1(S_6)\int_{S_6}^{S}V'_2(s){\rm d}s\right)\\
&=&-(n-1)^2V_1(S)V'_2(S)+\bo\left(\frac{V_1(S_6)}{(g')^{\frac{4q}{q-1}+1}}\right)+\bo\left(V'_1(S_6)(V_2(S)-V_2(S_6))\right).
\end{eqnarray*}
Since $V_2(S)-V_2(S_6)=\frac{n}{n-1}\left(\frac{1}{1+X(S)}-\frac{1}{1+X(S_6)}\right)=\bo\left(\frac{1}{X(S)}\right)$, hence from lemma \ref{lemma4.8},
\begin{eqnarray}\label{eq4.70new}
\int_{S_6}^S\frac{V_1(s)V'_2(s)}{V_2(s)}{\rm d}s=-(n-1)^2V_1(S)V'_2(S)+\bo\left(\frac{V'_1(S_6)g''}{(g')^2}+\frac{(V_1(S_6))}{(g')^{\frac{4q}{q-1}+1}}\right).
\end{eqnarray}
With $\delta=\log(g')$ for $t\in [S_6,S]$, we have
\begin{eqnarray*}
y'(t)=e^{\bo\left(\frac{\delta^2 g''}{(g')^2}\right)}z'(t),\quad y''(t)=-\frac{e^{g(y)-t}}{(n-1)(y')^{n-2}}=e^{\bo\left(\frac{\delta^2 g''}{(g')^2}\right)}z''(t),
\end{eqnarray*}
\begin{eqnarray*}
y'(t)^{n-3}y''(t)-z'(t)^{n-3}z''(t)&=&\left(1-e^{\bo\left(\frac{\delta^2 g''}{(g')^2}\right)}\right)(z')^{n-3}z''\\
&=&\bo\left(\frac{\delta^2 g''}{(g')^n}\frac{1}{X(S)}\right)=\bo\left(\frac{\delta^2 (g'')^2}{(g')^{n+2}}\right),
\end{eqnarray*}
\begin{eqnarray*}
y'(t)^{n-2}-z'(t)^{n-2}=\left(1-e^{\bo\left(\frac{\delta^2 g''}{(g')^2}\right)}\right)z'(t)^{n-2}=\bo\left(\frac{\delta^2 g''}{(g')^n}\right)
\end{eqnarray*}
and
\begin{eqnarray*}
V'_2(t)+\vert V''_2(t)\vert=\bo\left(\frac{1}{X(S)}\right)=\bo\left(\frac{g''}{(g')^2}\right).
\end{eqnarray*}
Hence
\begin{eqnarray*}
\rho_3&=&((y'(t)^{n-2}-z'(t)^{n-2})V'_2)'\\
&=&(y'(t)^{n-2}-z'(t)^{n-2})V_2''+(n-2)(y'(t)^{n-3}y''(t)-z'(t)^{n-3}z''(t)V'_2=\bo\left(\frac{\delta^2 g''}{(g')^{n+2}}\right).
\end{eqnarray*}
and
\begin{eqnarray*}
\frac{1}{y'(t)^{n-2}}\int_t^S\rho_3 V_1{\rm d}s=\bo\left(\frac{(S-S_6)\delta^2 g''}{(g')^4}\right)=\bo\left(\frac{\delta^3 g''}{(g')^4}\right).
\end{eqnarray*}
From \eqref{eq4.25},
\begin{eqnarray*}
\frac{1}{y'(t)^{n-2}}\int_t^S(\rho_1-\rho_2)V_1V_2 e^{-s}{\rm d}s=\bo\left(\frac{\delta^2 g''}{(g')^2}\frac{1}{y'(t)^{n-2}}\int_t^S\rho_1 V_1 e^{-s}{\rm d}s\right)
=\bo\left(\frac{\delta^2 g''}{(g')^2} V'_1(t)\right).
\end{eqnarray*}
Since $V'_2(S_6)=\bo\left(\frac{1}{X(S_6)}\right)=\bo\left(\frac{1}{(g')^{\frac{4q}{q-1}+1}}\right)$, we have from \eqref{eq4.19},
\begin{eqnarray*}
V'_1(t)&=&\frac{V_1(t)V'_2(t)}{V_2(t)}-\left(\frac{y'(S)}{y'(t)}\right)^{n-2}\frac{V_1(S)V'_2(S)}{V_2(t)}+\frac{1}{y'(t)^{n-2}V_2(t)}\int_t^S(\rho_1-\rho_2)V_1V_2 e^{-s}{\rm d}s\\
&&+\frac{1}{y'(t)^{n-2}V_2(t)}\int_t^S\rho_3 V_1{\rm d}s\\
&=&\frac{V_1(t)V'_2(t)}{V_2(t)}+(n-1)V_1(S)V'_2(S)+\bo\left(\frac{\delta^2 g''}{(g')^2}V'_1(t)+\frac{\delta^3g''}{(g')^4}\right).
\end{eqnarray*}
Hence solving for $V'_1(t)$ to obtain
\begin{eqnarray}\label{eq4.70}
V'_1(t)=\frac{V_1(t)V'_2(t)}{V_2(t)}+(n-1)V_1(S)V'_2(S)+\bo\left(\frac{\delta^3 g''}{(g')^4}\right).
\end{eqnarray}
Thus we get
\begin{eqnarray*}
V'_1(S_6)=(n-1)V_1(S)V'_2(S)+\bo\left(\frac{\delta^3 g''}{(g')^4}\right)
\end{eqnarray*}
and this proves \eqref{eq4.69}. 
Integrating \eqref{eq4.70} and using the above estimates to obtain
\begin{eqnarray*}
V_1(S)-V_1(S_6)&=&\int_{S_6}^SV_1(t)\frac{V'_2(t)}{V_2(t)}{\rm d}t+(n-1)V_1(S)V'_2(S)(S-S_6)+\bo\left(\frac{\delta^4 g''}{(g')^4}\right)\\
&=&-(n-1)^2 V_1(S)V'_2(S)+(n-1)V_1(S)V'_2(S)(S-S_6)+\bo\left(\frac{\delta^4 g''}{(g')^4}\right).
\end{eqnarray*}
This proves \eqref{eq4.68} and hence the lemma.\qed
Next, we have
\begin{lemm}\label{lemma4.11}
Assume $S$ satisfies \eqref{eq4.62}. Then for $t\in [\tilde{T}, S_6]$
\begin{eqnarray}\label{eq4.71}
V'_1(t)=V'_1(S_6)+\bo\left(\frac{(g'')^2}{(g')^4}\right),
\end{eqnarray}
\begin{eqnarray}\label{eq4.72}
V_1(\tilde{T})=V_1(S_6)+V_1'(S_6)(\tilde{T}-S_6)+\bo\left(\frac{g''}{(g')^2}\right).
\end{eqnarray}
\end{lemm}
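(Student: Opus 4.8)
The plan is to choose $\delta=\left(\frac{4q}{q-1}+1\right)\log(g')$, so that $T_\delta=S_6$; since $q\le\frac{n}{n-1}\le 2$ one has $\frac{4q}{q-1}+1\ge 5+\frac{q}{q-1}$ (equivalently $\frac{3q}{q-1}\ge 4$), so every estimate of Section~\ref{section3}, in particular \eqref{eq3.39}--\eqref{eq3.42} and \eqref{eq3.43}--\eqref{eq3.44}, is available with this $\delta$ on $[\tilde{T},S_6]=[\tilde{T},T_\delta]$. First I would locate the interval: by \eqref{eq4.63} the turning point $S$ sits at $T_1-\frac{q}{q-1}(n-1)\log g'+\bo(1)$ (using $g''\sim\gamma^{q-2}$, $g'\sim\gamma^{q-1}$ from Proposition~\ref{proposition3.2}(i)), while by \eqref{eq3.44} one has $\tilde{T}=T_1-\frac{n-1}{n}\gamma g'+\bo(g')$, so $S-\tilde{T}\sim\frac{n-1}{n}\gamma g'\sim g$ and $S-S_6=\bo(\log g')$; hence $[\tilde{T},S_6]\subset[S-C_0g,S)$ and Lemma~\ref{lemma4.9} applies, giving $V_1<0$ and $V_1'<0$ on $[\tilde{T},S_6]$. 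By monotonicity $|V_1(s)|\le|V_1(S_6)|$ there, and $|V_1(S_6)|=\bo(1)$: indeed $|V_1(S)|=\bo(1)$ by Lemma~\ref{lemma4.3} at $t=S$ (note $S_4=\max\{T_\delta,S\}=S$ under Assumption~\ref{assumption-on-S}), and \eqref{eq4.68} together with \eqref{eq4.64} gives $V_1(S_6)=V_1(S)(1+o(1))+o(1)$.

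To prove \eqref{eq4.71}, apply \eqref{eq4.14} with $\eta=S_6$:
\[
V_1'(t)=\left(\frac{y'(S_6)}{y'(t)}\right)^{n-2}V_1'(S_6)+\frac{1}{y'(t)^{n-2}}\int_t^{S_6}\rho_1V_1e^{-s}\,{\rm d}s .
\]
For the first summand, \eqref{eq3.43} gives $y'(t)=\frac{n}{(n-1)g'}\bigl(1+\bo(\frac{g''}{(g')^2})\bigr)$ uniformly on $[\tilde{T},S_6]$, hence $\left(y'(S_6)/y'(t)\right)^{n-2}=1+\bo(\frac{g''}{(g')^2})$; combined with $V_1'(S_6)=(n-1)V_1(S)V_2'(S)+\bo(\frac{\delta^3g''}{(g')^4})=\bo(\frac{g''}{(g')^2})$ (from \eqref{eq4.69}, \eqref{eq4.64} and $V_1(S)=\bo(1)$) this summand is $V_1'(S_6)+\bo(\frac{(g'')^2}{(g')^4})$. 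For the integral term, bound $|V_1|\le|V_1(S_6)|=\bo(1)$, $g'(y(s))\le g'$, and $e^{g(y(s))-s}\le\max\{e^{\psi(y(\tilde{T}))},e^{\psi(y(S_6))}\}$ by \eqref{eq3.39}; then \eqref{eq3.40}, \eqref{eq3.41} with $k=\frac{4q}{q-1}+1$, and \eqref{eq3.42} give
\[
\frac{1}{y'(t)^{n-2}}\left|\int_t^{S_6}\rho_1V_1e^{-s}\,{\rm d}s\right|=\bo\bigl((g')^{n-2}\cdot g'\cdot(g')^{\frac{q}{q-1}}\cdot(g')^{-(k+n-1)}\bigr)=\bo\bigl((g')^{-\frac{3q}{q-1}-1}\bigr),
\]
which is $o(\frac{(g'')^2}{(g')^4})$ since $\frac{(g'')^2}{(g')^4}\sim(g')^{-\frac{2q}{q-1}}$. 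Adding the two contributions yields \eqref{eq4.71}.

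Finally \eqref{eq4.72} is obtained by integrating \eqref{eq4.71}: $V_1(\tilde{T})-V_1(S_6)=-\int_{\tilde{T}}^{S_6}V_1'(s)\,{\rm d}s=V_1'(S_6)(\tilde{T}-S_6)+\bo\bigl(\frac{(g'')^2}{(g')^4}(S_6-\tilde{T})\bigr)$, and since $S_6-\tilde{T}=\bo(\gamma g')$ by \eqref{eq3.42} while $\gamma g''=\bo(g')$ by Proposition~\ref{proposition3.2}(i), the error reduces to $\bo(\frac{(g'')^2\gamma}{(g')^3})=\bo(\frac{g''}{(g')^2})$. The one genuinely delicate point is the power count for the integral term: it is precisely the large exponent $\frac{4q}{q-1}+1$ built into the definition~\eqref{eq4.61} of $S_6$ that makes $\int_{\tilde{T}}^{S_6}\rho_1e^{-s}\,{\rm d}s$ small enough to be swallowed by $\bo(\frac{(g'')^2}{(g')^4})$; the remaining steps are routine applications of Lemmas~\ref{lemma4.3}, \ref{lemma4.9} and \ref{lemma4.10}.
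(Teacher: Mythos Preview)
The computations for \eqref{eq4.71} and \eqref{eq4.72} are correct and coincide with the paper's argument, but there is a genuine gap in your opening step. You invoke Lemma~\ref{lemma4.9} to obtain $V_1<0$, $V_1'<0$ on $[\tilde T,S_6]$, arguing that $[\tilde T,S_6]\subset[S-C_0g,S)$. However, Lemma~\ref{lemma4.9} asserts only the \emph{existence} of some $C_0>0$; it gives no lower bound on $C_0$, and in particular does not guarantee $S-\tilde T\le C_0 g$. Since $S-\tilde T\sim \tfrac{(n-1)q}{n}\,g$, your containment would require $C_0\ge \tfrac{(n-1)q}{n}$, which does not follow from the lemma as stated. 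Consequently the monotonicity bound $|V_1(t)|\le |V_1(S_6)|=\bo(1)$ on $[\tilde T,S_6]$ is unjustified, and this is precisely the input you need to control the integral term in \eqref{eq4.14}.

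The paper fills this gap by proving the needed facts directly (its Claim~5): using \eqref{eq4.15} with $\eta=S_6$ together with $V_1'(S_6)=\bo\!\bigl(g''/(g')^2\bigr)$ from \eqref{eq4.69}, one first gets $V_1(t)=\bo(1)+\max_{[t,S_6]}|V_1|\cdot\bo\!\bigl((g')^{-\frac{q}{q-1}-1}\bigr)$, hence $V_1=\bo(1)$ on $[\max\{\tilde T,t_1\},S_6]$; a short contradiction then rules out a second turning point $t_1\ge\tilde T$ (if $V_1'(t_1)=0$ and $V_1(t_1)>0$, \eqref{eq4.15} between $t_1$ and $t_0$ forces $V_1(t_1)=o(V_1(t_1))$). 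Once $V_1=\bo(1)$ and $V_1'\le0$ are in hand on $[\tilde T,S_6]$, your remaining estimates via \eqref{eq4.14}, \eqref{eq3.43}, \eqref{eq3.39}--\eqref{eq3.42}, and the final integration are exactly the paper's route and are correct.
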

\proof From Lemma \ref{lemma4.9}, $V_1(t)<0$, $V'_1(t)<0$ for $t\geq S-C_0g$.
Let $t_0$ as in \eqref{eq4.67} and define $t_1\leq t_0<S_6$ by  
\begin{eqnarray}\label{eqt1}
t_1&\eqdef&\displaystyle\inf\{t<t_0;\; V_1(s)>0, V'_1(s)<0\mbox{ for }s\in(t,t_0)\},
\end{eqnarray}
that is $t_0$ is the first zero before $S$ and $t_1$ is the second turning point of $V_1$ if it exists.\\
\begin{clm}
\label{claim5}
For $ t\in [\tilde{T},S_6]$, we have $V_1(t)=\bo(1)$ and $ V'_1(t)\leq 0$.
\end{clm}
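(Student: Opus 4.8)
The plan is to establish the two assertions in the order: first boundedness of $V_1$ on $[\tilde T, S_6]$, then the sign $V_1'(t)\le 0$. For the sign assertion I would argue by an exhaustion/continuity argument starting from the interval $(t_1,t_0)$ where $V_1>0$, $V_1'<0$ by the very definition of $t_1$ in \eqref{eqt1}, together with Lemma \ref{lemma4.9} which already gives $V_1<0$, $V_1'<0$ on $[S-C_0g, S)\supset [S_6,S)$ (recall $S_6<S$ and $S-S_6=\bo(\delta)=\bo(\log g')\ll C_0 g$ by \eqref{eq4.63}). The key point is that once $V_1'$ becomes negative on an interval abutting $S_6$, it cannot become positive again as we move left past $\tilde T$: suppose for contradiction there is a largest $t_\ast\in[\tilde T, S_6]$ with $V_1'(t_\ast)=0$; then from \eqref{eq4.14} with $\eta$ a point slightly to the right where $V_1'<0$, and using $f'(y)\ge 0$ (so $\rho_1\ge0$) together with $V_1<0$ on that stretch (which follows because $V_1$ is decreasing from the negative value $V_1(S_6)<0$), the right-hand side of \eqref{eq4.14} is strictly negative, contradicting $V_1'(t_\ast)=0$. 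Here one uses $V_1(S_6)<0$, which itself follows from \eqref{eq4.68}: since $V_1(S)<0$ (Lemma \ref{lemma4.9}), $V_2'(S)>0$, and the correction term is $\bo(\delta^4(g'')^2/(g')^4)$, we get $V_1(S_6)=V_1(S)(1+\bo(1))<0$ for $\gamma$ large.

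For the boundedness $V_1(t)=\bo(1)$ on $[\tilde T, S_6]$, I would use the integral representation \eqref{eq4.15} with $\eta=S_6$. On $[\tilde T, S_6]$ we have just shown $V_1\le 0$ and $V_1$ decreasing toward $\tilde T$; combined with $V_1(S_6)=\bo(g''/(g')^2)=\bo(1)$ (from \eqref{eq4.68} and Lemma \ref{lemma4.8}, $V_2'(S)=\bo(g''/(g')^2)$) and $V_1'(S_6)=\bo(g''/(g')^4)$ (from \eqref{eq4.69}), the only term that needs care in \eqref{eq4.15} is the last double integral $\int_t^{S_6}\rho_1 V_1 e^{-s}\bigl(\int_t^s \mathrm dy'(\theta)^{-(n-2)}\bigr)\mathrm ds$. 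I would estimate this exactly as in the proof of Lemma \ref{lemma4.9}: split at $S_6$ is not needed since we are already left of $S_6$; instead use \eqref{eq3.20}, \eqref{eq3.41} and \eqref{eq3.36} (which control $\int_{\tilde T}^{S_6}e^{g(y(s))-s}\,\mathrm ds$) to get that this contribution is $\bo(|V_1(t)|\,(S_6-t)/g)$, and since $S_6-\tilde T=\bo(g)$ with a small implied constant, absorb it: the resulting inequality $|V_1(t)|\le \bo(1)+ \tfrac12|V_1(t)|$ on $[\tilde T, S_6]$ gives $|V_1(t)|=\bo(1)$.

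The main obstacle I expect is making the absorption argument for the double-integral term fully rigorous over the whole interval $[\tilde T, S_6]$ rather than a short subinterval: the bound $\int_{\tilde T}^{S_6}e^{g(y(s))-s}\,\mathrm ds=\bo((g')^{q/(q-1)-(k+n-1)})$ from Lemma \ref{lemma3.6} is what keeps the constant small, but it requires choosing $\delta=k\log g'$ with $k$ large enough (here $k$ governs the exponent $\tfrac{4q}{q-1}+1$ hidden in $S_6$, cf. \eqref{eq4.61}), and one must check that $(S_6-\tilde T)\cdot\sup_{[\tilde T,S_6]}(\rho_1 e^{-s}\cdot\text{inner integral})$ is indeed $o(1)$ uniformly. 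Once this is in place, the sign claim and the $\bo(1)$ bound both drop out, and together they feed directly into the mean-value-theorem estimates \eqref{eq4.71}–\eqref{eq4.72} of Lemma \ref{lemma4.11}.
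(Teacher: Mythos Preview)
Your argument for $V_1'\le 0$ has a genuine gap. You suppose there is a largest $t_\ast\in[\tilde T,S_6]$ with $V_1'(t_\ast)=0$ and then claim that $V_1<0$ on $(t_\ast,\eta)$ ``because $V_1$ is decreasing from the negative value $V_1(S_6)<0$''. But $V_1'<0$ means $V_1$ is decreasing in $t$, so as $t$ moves \emph{left} from $S_6$ toward $t_\ast$, $V_1$ \emph{increases}; knowing $V_1(S_6)<0$ therefore does not give $V_1<0$ on $(t_\ast,S_6)$. In fact, the putative $t_\ast$ is precisely the point $t_1$ of \eqref{eqt1}, and by the very definition of $t_1$ one has $V_1>0$ on $(t_1,t_0)$. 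So the integrand $\rho_1 V_1 e^{-s}$ in \eqref{eq4.14} changes sign on $(t_\ast,S)$, and your contradiction does not follow.

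The paper proceeds in the opposite order and avoids this trap. It first proves $V_1=\bo(1)$ on $[t_1,S_6]$ directly from \eqref{eq4.15} with $\eta=S_6$: using $V_1'(S_6)=\bo(g''/(g')^2)$ from \eqref{eq4.69} together with $S_6-\tilde T=\bo(g)$ for the middle term, and Lemma~\ref{lemma3.6} (via \eqref{eq3.20}, \eqref{eq3.41}) to bound the double integral by $\max|V_1|\cdot\bo\bigl((g')^{-q/(q-1)-1}\bigr)$, which is absorbed because the coefficient is $o(1)$ (not because $S_6-\tilde T$ has ``a small implied constant'' as you wrote). Only then does it rule out an interior turning point: assuming $V_1'(t_1)=0$, the representation on $[t_1,t_0]$ gives
\[
0=V_1(t_0)=V_1(t_1)+\bo\!\left(\frac{V_1(t_1)}{(g')^{q/(q-1)+1}}\right),
\]
which contradicts $V_1(t_1)>0$. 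Note that this contradiction uses $V_1(t_0)=0$ and $V_1(t_1)>0$, not a sign of the integral, so it is immune to the sign change of $V_1$ across $t_0$. A minor side remark: from \eqref{eq4.69} and Lemma~\ref{lemma4.8} one gets $V_1'(S_6)=\bo(g''/(g')^2)$, not $\bo(g''/(g')^4)$.
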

\begin{proof}[Proof of Claim \ref{claim5}]
Assume that $t_1\geq \tilde{T}$ exists. Then either (i) $V'_1(t_1)=0$ or (ii) $t_1=\tilde{T}$ and $V_1(\tilde{T})>0$, $V'_1(t)<0$ for $t\in [\tilde{T}, S_6]$. From \eqref{eq4.69}, we have $V'_1(S_6)=\bo\left(\frac{g''}{(g')^2}\right)$ and 
\begin{eqnarray*}
V'_1(S_6)\int_t^{S_6}\left(\frac{y'(S_6)}{y'(s)}\right)^{n-2}{\rm d}s=\bo\left(\frac{g''(S_6-t)}{(g')^2}\right)=\bo\left(\frac{g''(S_6-\tilde{T})}{(g')^2}\right)=\bo\left(\frac{g'' g}{(g')^2}\right)=\bo(1).
\end{eqnarray*}
From \eqref{eq4.12} and \eqref{eq3.41}, we have for $t_1\leq a\leq b\leq S_6$
\begin{eqnarray*}
& &\int_a^b\rho_1 V_1 e^{-s}\left(\int_a^s\frac{{\rm d}\theta}{(y'(\theta))^{n-2}}\right){\rm d}s=\bo\left(\max\left\{\vert V_1(a)\vert, \vert V_1(b)\vert\right\}(g')^{n-1}\int_{\tilde{T}}^{S_6} e^{g(y(s))-s}{\rm d}s\right)\\
&=&\max\left\{\vert V_1(a)\vert, \vert V_1(b)\vert\right\}\bo\left(\frac{(\tilde{T}-S_6)^2(g')^{n-1}}{(g')^{\frac{2q}{q-1}+n}}\right)
=\max\left\{\vert V_1(a)\vert,\vert V_1(b)\vert\right\}\bo\left(\frac{1}{(g')^{\frac{q}{q-1}+1}}\right).
\end{eqnarray*}
Hence for $t\in [t_1, S_6]$
\begin{eqnarray*}
V_1(t)&=&V_1(S_6)-V'_1(S_6)\int_t^{S_6}\left(\frac{y'(S_6)}{y'(\theta)}\right)^{n-2}{\rm d}\theta-\int_t^{S_6}\rho_1 V_1 e^{-s}\left(\int_t^{s}\frac{{\rm d}\theta}{y'(\theta)^{n-2}}\right){\rm d}s\\
&=&\bo(1)+\max\left\{\vert V_1(t)\vert,\vert V_1(S_6)\vert\right\}\bo\left(\frac{1}{(g')^{\frac{q}{q-1}+1}}\right).
\end{eqnarray*}
This implies $V_1(t)=\bo(1)$. Suppose $V_1'(t_1)=0$, then taking $a=t_1$, $b=t_0$ to obtain
\begin{eqnarray*}
0=V_1(t_0)=V_1(t_1)-\int_{t_1}^{t_0}\rho_1V_1e^{-s}\left(\int_t^s\frac{\partial\theta}{(y'(\theta))^{n-2}}\right){\rm d}s=V_1(t_1)+\bo\left(\frac{V_1(t_1)}{(g')^{\frac{q}{q-1}+1}}\right)
\end{eqnarray*}
which gives a contradiction if $V_1(t_1)>0$ and this proves the claim.
\end{proof}
From \eqref{eq3.41}, \eqref{eq3.43}, \eqref{eq4.14}, \eqref{eq4.69} and the above claim, we have for $t\in [\tilde{T}, S_6]$
\begin{eqnarray*}
V'_1(t)&=&\left(\frac{y'(S_6)}{y'(t)}\right)^{n-2}V'_1(S_6)+\frac{1}{y'(t)^{n-2}}\int_t^{S_6}\rho_1V_1e^{-s}{\rm d}s\\
&=&\left(\frac{1+\bo\left(\frac{g''}{(g')^2}\right)}{1+\bo\left(\frac{g''}{(g')^2}\right)}\right)V'_1(S_6)+\bo\left((g')^{n-1}\int_t^{S_6} e^{g(y(s))-s}{\rm d}s\right)=V'_1(S_6)+\bo\left(\frac{(g'')^2}{(g')^4}\right).
\end{eqnarray*}
Integrating the above expression, we get
\begin{eqnarray*}
V_1(\tilde{T})&=&V_1(S_6)+V'_1(S_6)(\tilde{T}-S_6)+\bo\left(\frac{(\tilde{T}-S_6)(g'')^2}{(g')^4}\right)\\
&=&V_1(S_6)+V'_1(S_6)(\tilde{T}-S_6)+\bo\left(\frac{\gamma g'(g'')^2}{(g')^4}\right) \\
&=&V_1(S_6)+V'_1(S_6)(\tilde{T}-S_6)+\bo\left(\frac{g''}{(g')^2}\right).
\end{eqnarray*}
This proves the lemma.\qed

\begin{lemm}\label{lemma4.12}
Assume that $S$ satisfies Assumption \ref{assumption-on-S}, then for $t\in [T(\gamma), \tilde{T}]$, we have
\begin{eqnarray}\label{eq4.74}
V'_1(t)=V'_1(\tilde{T})+\bo\left(\frac{(g'')^2}{(g')^3}+\frac{e^{-(g-\frac{(n-1)\gamma g'}{n})}}{(g')^\alpha}\right),
\end{eqnarray}
\begin{eqnarray}\label{eq4.75}
V_1(T(\gamma))=V_1(\tilde{T})+V'_1(\tilde{T})(T(\gamma)-\tilde{T})+\bo\left(\frac{g''}{(g')^3}+\frac{e^{-(g-\frac{(n-1)\gamma g'}{n})}}{(g')^\alpha}\right).
\end{eqnarray}
\end{lemm}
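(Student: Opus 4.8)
The plan is to re-run the Volterra-integral analysis of Lemmas~\ref{lemma3.7}--\ref{lemma4.11}, now on the ``tail'' interval $[T(\gamma),\tilde T]$, where $y\le s_0$ and the growth of the nonlinearity is governed by the flat bounds~\eqref{eq1.4}. Writing \eqref{eq4.14} and \eqref{eq4.15} with $\eta=\tilde T$, both assertions reduce to a single source estimate for $\int_{T(\gamma)}^{\tilde T}|f'(y(s))|e^{-s}\,{\rm d}s$ (recall $\rho_1=\frac1{n-1}f'(y)$ by \eqref{eq4.10}), together with facts already available. First I record the geometry of the interval: $y'$ is decreasing by \eqref{eq2.8}, and by Lemmas~\ref{lemma3.7}--\ref{lemma3.8} both $y'(T(\gamma))$ and $y'(\tilde T)$ are $\sim 1/g'$; hence $y'\sim 1/g'$ uniformly there, integrating from $T(\gamma)$ gives $y(s)\sim(s-T(\gamma))/g'$, and $\tilde T-T(\gamma)\asymp g'$. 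Also $e^{-T(\gamma)}=\bo\left(e^{-(g-\frac{n-1}{n}\gamma g')}(g')^{-(n-1)}\right)$ by \eqref{eq3.45}, and $\bigl(y'(\tilde T)/y'(t)\bigr)^{n-2}=1+\epsilon(t)$ with $\epsilon(t)=\bo\bigl(g''\delta^2/(g')^3\bigr)$ on $[\max\{t_0,T(\gamma)\},\tilde T]$ (by \eqref{eq3.47}, $\delta=\log g'$) and $\epsilon(t)=\bo\bigl(e^{-(g-\frac{n-1}{n}\gamma g')}\bigr)$ on $[T(\gamma),t_0]$, the latter piece being nonempty only when $\limsup(g-\frac{n-1}{n}\gamma g')<\infty$ (by \eqref{eq3.58}).

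The central estimate is
\begin{equation*}
\int_{T(\gamma)}^{\tilde T}|f'(y(s))|\,e^{-s}\bigl(1+(s-T(\gamma))\bigr)\,{\rm d}s=\bo\left((g')^{1-\alpha}e^{-T(\gamma)}\right)=\bo\left(\frac{e^{-(g-\frac{n-1}{n}\gamma g')}}{(g')^{n-2+\alpha}}\right).
\end{equation*}
To prove it, fix a small $u_0>0$ and split the interval at the point $s_1$ with $y(s_1)=u_0$; by the above $s_1-T(\gamma)=\Theta(g')$. On $[T(\gamma),s_1]$ one has $|f'(y(s))|=\bo(y(s)^{-1+\alpha})=\bo\bigl((g')^{1-\alpha}(s-T(\gamma))^{-1+\alpha}\bigr)$ and $e^{-s}=e^{-T(\gamma)}e^{-(s-T(\gamma))}$, so the substitution $\tau=s-T(\gamma)$ bounds this piece by $\bo\bigl((g')^{1-\alpha}e^{-T(\gamma)}\bigr)\int_0^\infty\tau^{-1+\alpha}(1+\tau)e^{-\tau}\,{\rm d}\tau=\bo\bigl((g')^{1-\alpha}e^{-T(\gamma)}\bigr)$, the $\Gamma$-integral being finite because $\alpha>0$, and the weight $1+\tau$ producing no new power of $g'$ since it is killed by $e^{-\tau}$. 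On $[s_1,\tilde T]$ the factor $f'(y(s))$ is bounded while $e^{-s}\le e^{-T(\gamma)-\Theta(g')}$, so that contribution is super-exponentially small.

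Next I establish $\sup_{[T(\gamma),\tilde T]}|V_1|=\bo(1)$ by a continuation argument of the kind used for Claim~\ref{claim5}: plugging the central estimate into \eqref{eq4.15} (with $\eta=\tilde T$, and $\int_t^s y'(\theta)^{-(n-2)}{\rm d}\theta=\bo((g')^{n-2}(s-T(\gamma)))$), the self-referential term is $\bo\bigl((g')^{n-1-\alpha}e^{-T(\gamma)}\bigr)\sup|V_1|=\bo\bigl((g')^{-\alpha}\bigr)\sup|V_1|=o(1)\sup|V_1|$, while $|V_1(\tilde T)|=\bo(1)$ and $|V_1'(\tilde T)|(\tilde T-T(\gamma))=\bo\bigl(\tfrac{g''}{(g')^2}g'\bigr)=\bo(1)$ by Lemmas~\ref{lemma4.10}--\ref{lemma4.11}. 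With $|V_1|=\bo(1)$ in hand, re-insert into \eqref{eq4.14}: the boundary term is $V_1'(\tilde T)+\bo(|V_1'(\tilde T)|\,\epsilon(t))$, and since $|V_1'(\tilde T)|=\bo(g''/(g')^2)$ and $g''/(g')^2=\bo((g')^{-\alpha})$ (as $\tfrac{q}{q-1}\ge n\ge\alpha$) one gets $|V_1'(\tilde T)|\,\epsilon(t)=\bo\bigl(\tfrac{(g'')^2}{(g')^3}+\tfrac{e^{-(g-\frac{n-1}{n}\gamma g')}}{(g')^\alpha}\bigr)$ in both regimes; the source term is $\bo((g')^{n-2})\cdot\bo\bigl(\tfrac{e^{-(g-\frac{n-1}{n}\gamma g')}}{(g')^{n-2+\alpha}}\bigr)=\bo\bigl(\tfrac{e^{-(g-\frac{n-1}{n}\gamma g')}}{(g')^\alpha}\bigr)$. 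This gives \eqref{eq4.74}. For \eqref{eq4.75} I use \eqref{eq4.15}: its middle term equals $-V_1'(\tilde T)\int_{T(\gamma)}^{\tilde T}(1+\epsilon(s))\,{\rm d}s=-V_1'(\tilde T)(\tilde T-T(\gamma))+\bo\bigl(\tfrac{g''}{(g')^3}+\tfrac{e^{-(g-\frac{n-1}{n}\gamma g')}}{(g')^\alpha}\bigr)$, and its last term, which is exactly the weighted central estimate scaled by $\bo((g')^{n-2})$, is $\bo\bigl(\tfrac{e^{-(g-\frac{n-1}{n}\gamma g')}}{(g')^\alpha}\bigr)$; rearranging yields \eqref{eq4.75}.

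The main obstacle is the central estimate: one must extract the sharp weight $(g')^{-\alpha}$ — produced by the Gamma integral $\int_0^\infty\tau^{-1+\alpha}e^{-\tau}\,{\rm d}\tau$ after rescaling $y(s)\sim(s-T(\gamma))/g'$ — in order to overcome the $(g')^{n-2}$ loss from the Volterra kernels, and one must notice that the part of $[T(\gamma),\tilde T]$ where $y$ is not small is reached only after an $s$-displacement of order $g'$, so $e^{-s}$ there is super-exponentially smaller than $e^{-T(\gamma)}$ and contributes nothing. A secondary subtlety is the $\bo(1)$ bound for $V_1$ on $[T(\gamma),\tilde T]$, for which one must check that the self-referential kernel term is genuinely $o(1)$ (it is, since $(g')^{-\alpha}\to0$), and keep track of the ratio $y'(\tilde T)/y'(t)$ across the possibly-nonempty sub-interval $[T(\gamma),t_0]$.
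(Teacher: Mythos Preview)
Your proof is correct and follows essentially the same route as the paper: both use $|f'(y(s))|=\bo\bigl((g')^{1-\alpha}(s-T(\gamma))^{-1+\alpha}\bigr)$ from the mean-value relation $y(s)\sim(s-T(\gamma))/g'$ together with \eqref{eq1.4}, feed this into the Volterra formulas \eqref{eq4.14}--\eqref{eq4.15} with $\eta=\tilde T$, bootstrap $|V_1|=\bo(1)$ self-referentially via the $o(1)$ source term, and read off \eqref{eq4.74}--\eqref{eq4.75}. Your split at $s_1$ is unnecessary (the bound $|f'(y)|=\bo(y^{-1+\alpha})$ holds uniformly on $(0,s_0]$ since $f'\in C(0,\infty)$ and $y^{-1+\alpha}$ is bounded below there), and note that the paper's proof additionally establishes $V_1'<0$ on $[T(\gamma),\tilde T]$ via a turning-point contradiction---not asserted in the lemma statement but invoked downstream in Lemma~\ref{lemma4.13}.
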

\proof First we show that for $t\in [T(\gamma), \tilde{T}]$
\begin{eqnarray}\label{eq4.76}
V_1(t)=O(1), V'_1(t)<0.
\end{eqnarray}
For this,  let $t_0$ and $t_1$ as in \eqref{eq4.67} and in \eqref{eqt1}.

Then from Claim \ref{claim5}, it follows that $t_1<\tilde{T}$. Then either $t_1>T(\gamma)$ and $V'_1(t_1)=0$, $V_1(t_1)>0$ or $t_1=T(\gamma)$ and $V'_1(t)<0$ for all $T(\gamma)<t< S$.  For $t\in[T(\gamma),\tilde{T}]$, $y(t)\leq s_0$ and hence $\rho_1(t)=\frac{f'(y(t))}{n-1}=\bo\left(y(t)^{-1+\alpha}\right)$. Therefore for $t\in (T(\gamma), \tilde{T})$, there exists a $\xi\in (T(\gamma), t)$ such that $y(t)=y'(\xi)(t-T(\gamma))$. Hence from \eqref{eq1.4} and \eqref{eq3.52} we have $\rho_1(t)=\frac{f'(y(t))}{n-1}=\bo\left(\frac{(t-T(\gamma))^{-1+\alpha}}{(g')^{-1+\alpha}}\right)$. Hence from \eqref{2.11}, \eqref{eq2.12}, \eqref{eq3.42}, \eqref{eq4.15}, \eqref{eq4.69} and \eqref{eq4.71}, we have
\begin{equation}\label{myeq7} \begin{array}{lll}
V_1(t)&=&V_1(\tilde{T})-V_1'(\tilde{T})\int_t^{\tilde{T}}\left(\frac{y'(\tilde{T})}{y'(\theta)}\right)^{n-2}{\rm d}\theta-\int_t^{\tilde{T}}\rho_1V_1 e^{-s}\left(\int_t^s\frac{{\rm d}\theta}{(y'(\theta))^{n-2}}\right){\rm d}s\\
&=& V_1(\tilde{T})-V_1'(\tilde{T})\int_t^{\tilde{T}}\left(\frac{1+\bo\left(\frac{g''}{(g')^2}\right)}{1+\bo\left(\frac{g''}{(g')^2}\right)}\right){\rm d}\theta +(\displaystyle\max_{s\in[t,\tilde{T}]}\vert V_1(s)\vert)\bo\left((g')^{n-1-\alpha}\int_{T(\gamma)}^{\tilde{T}}(s-T(\gamma))^\alpha e^{-s}{\rm d}s\right)\\
&=& V_1(\tilde{T})-V_1'(\tilde{T})(\tilde{T}-t)+\bo\left(\left(\frac{g''}{(g')^2}\right)^2\right)(\tilde{T}-T(\gamma))\\
&&+(\displaystyle\max_{s\in[t,\tilde{T}]}\vert V_1(s)\vert)\bo\left((g')^{n-1-\alpha}\int_{T(\gamma)}^{\tilde{T}}(1+s-T(\gamma)) e^{-s}{\rm d}s\right)\\
&=& V_1(\tilde{T})-V_1'(\tilde{T})(\tilde{T}-t)+\bo\left(\frac{(g'')^2}{(g')^3}+\displaystyle\max_{s\in[t,\tilde{T}]}\vert V_1(s)\vert (g')^{n-\alpha-1}e^{-T(\gamma)}\right).
\end{array}\end{equation}
Therefore, we have
\begin{eqnarray*}
 V_1(t)=V_1(\tilde{T})-V_1'(\tilde{T})(\tilde{T}-t)+\bo\left(\frac{(g'')^2}{(g')^3}+ \frac{e^{-\left(g-\left(\frac{n-1}{n}\right)\gamma g'\right)}}{(g')^\alpha}\displaystyle\max_{s\in[t,\tilde{T}]}\vert V_1(s)\vert\right).
\end{eqnarray*}
Since from Claim \ref{claim5}, $V_1(\tilde{T})=\bo(1)$, hence 
\begin{equation}
\label{myeq6}
V_1(t)=\bo(1) \text{   for all   } t\in [T(\gamma),\tilde{T}] . 
\end{equation}
 Suppose $V'_1(t_1)=0$ and $V_1(t_1)>0$, then $V_1(t_0)=0$ and 
\begin{eqnarray*}
0&=&V_1(t_0)=V_1(t_1)+V'_1(t_1)(t_0-t_1)-\int_{t_1}^{t_0}\rho_1 V_1e^{-s}\left(\int_{t_1}^s\frac{{\rm d}\theta}{(y'(\theta))^{n-2}}\right){\rm d}s\\
&=&V_1(t_1)+\bo\left(\frac{e^{-(g-\frac{(n-1)\gamma g'}{n})}}{(g')^\alpha}\right)V_1(t_1)
\end{eqnarray*}
which is a contradiction if $V_1(t_1)>0$ and this proves \eqref{eq4.76}.

From \eqref{myeq7} and \eqref{myeq6}, the estimate
\begin{eqnarray*}
V_1(T(\gamma))=V_1(\tilde{T})+V'_1(\tilde{T})(T(\gamma)-\tilde{T})+\bo\left(\frac{g''}{(g')^3}+\frac{e^{-(g-\frac{(n-1)\gamma g'}{n})}}{(g')^\alpha}\right)
\end{eqnarray*}
follows and this proves the lemma.\qed

\begin{lemm}\label{lemma4.13}
Assume $S$ satisfies \eqref{eq4.62} and $\delta=\bo(\log(g'))$. Then
\begin{gather}
\label{eq4.77}V_1(T(\gamma))=V_1(S)\left(1-\frac{(n-1)\gamma g''}{(g')^2}+\bo\left(\frac{\delta^4 g''}{(g')^2}+\frac{\delta^3}{(g')^2}\right)\right),\\
\label{eq4.78}T'(\gamma)=\frac{\left(1+\bo\left(\frac{\delta^3 g''}{(g')^2}\right)\right)}{n}\left(g'-(n-1)\gamma g''+\bo\left(\frac{\delta^4 g''}{g'}+\frac{\delta^3}{g'}\right)\right).
\end{gather}
\end{lemm}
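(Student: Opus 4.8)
The plan is to propagate the asymptotics of $V_1$ obtained in Lemmas \ref{lemma4.8}--\ref{lemma4.12} from a right neighbourhood of the turning point $S$ down to $T(\gamma)$, and then to convert this into an estimate for $T'(\gamma)$ through the identity \eqref{eq2.17}; throughout, $S$ is assumed to satisfy \eqref{eq4.62} (Assumption \ref{assumption-on-S}), which is exactly what makes the chain of intermediate estimates applicable. First I would combine \eqref{eq4.75} with \eqref{eq4.71}--\eqref{eq4.72}, using $|T(\gamma)-\tilde T|=\bo(g')$ and the boundedness $\gamma g''/g'=\bo(1)$ to absorb the cross-terms, to reach
\[
V_1(T(\gamma))=V_1(S_6)+V_1'(S_6)\,(T(\gamma)-S_6)+\bo\!\left(\frac{g''}{(g')^2}+\frac{e^{-(g-\frac{n-1}{n}\gamma g')}}{(g')^\alpha}\right).
\]
Then I would substitute \eqref{eq4.68} for $V_1(S_6)$ and \eqref{eq4.69} for $V_1'(S_6)$. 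The key point is the exact cancellation
\[
-(n-1)(S-S_6)V_2'(S)+(n-1)V_2'(S)(T(\gamma)-S_6)=(n-1)V_2'(S)(T(\gamma)-S),
\]
which eliminates the artificial cutoff $S_6$ and leaves
\[
V_1(T(\gamma))=V_1(S)\Big[1+(n-1)^2V_2'(S)+(n-1)V_2'(S)(T(\gamma)-S)\Big]+\bo\!\left(\frac{\delta^4g''}{(g')^2}+\frac{\delta^3}{(g')^2}\right),
\]
the final $\bo$-term collecting the product of the $V_1'(S_6)$-error from \eqref{eq4.69} with the long interval $T(\gamma)-S_6$ (bounded again via $\gamma g''/g'=\bo(1)$) and the super-polynomially small exponential term, which is $\bo(\delta^3/(g')^2)$ by the Remark following Theorem \ref{theo1.5}.

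\emph{Evaluating the bracket.} Next I would insert \eqref{eq4.64}, namely $(n-1)V_2'(S)=\frac{ng''}{(g')^2}\big(1+\bo(\delta^4g''/(g')^2)\big)$, so that $(n-1)^2V_2'(S)=\bo(g''/(g')^2)$ joins the error; and for the remaining term I would write $T(\gamma)-S=(T(\gamma)-T_1)-(S-T_1)$ and use \eqref{eq2.12}, $T_1=g+(n-1)\log\!\big(\frac{(n-1)g'}{n}\big)$ (so the $\log g'$-pieces cancel), and \eqref{eq4.63}. Pairing the small factor $(n-1)V_2'(S)$ with the dominant $-\frac{n-1}{n}\gamma g'$ part of $T(\gamma)-T_1$ produces the $\gamma g''$-order correction recorded in \eqref{eq4.77}, while every other pairing (with $\frac{\alpha_n(n-1)\gamma g''}{g'}$, with the $\log$-term of \eqref{eq4.63}, and with the error terms of \eqref{eq2.12}) is absorbed into $\bo(\delta^4g''/(g')^2+\delta^3/(g')^2)$, again using $\gamma g''/g'=\bo(1)$. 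This gives \eqref{eq4.77}.

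\emph{Passing to $T'(\gamma)$.} Finally I would feed \eqref{eq4.77} into \eqref{eq2.17}, use \eqref{2.11} in the form $y'(T(\gamma))=\frac{n}{(n-1)g'}\big(1+\bo(g''/(g')^2)\big)$, and supply the value of $V_1(S)$ itself: since \eqref{eq4.62} gives $S\ge S_6$, choosing $\delta=(\frac{4q}{q-1}+1)\log g'$ yields $T_\delta=S_6\le S$, hence $S_4=S$, so Lemma \ref{lemma4.3} applies at $t=S$, and combined with \eqref{eq4.4} and \eqref{eq4.63} (which give $V_2(S)=-\frac{1}{n-1}+\bo(g''/(g')^2)$) this produces $V_1(S)=-\frac{1}{n-1}\big(1+\bo(\delta^3g''/(g')^2)\big)$. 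Multiplying these three ingredients and collecting errors yields \eqref{eq4.78}.

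The part I expect to be the real obstacle is the error bookkeeping rather than any single idea: $V_1'(S_6)$ and $V_1'(\tilde T)$ are known only up to a relative error of order $\delta^3g''/(g')^2$, yet they get multiplied by the enormous quantity $T(\gamma)-S_6\sim-\frac{n-1}{n}\gamma g'$, and the whole estimate survives only because $\gamma g''/g'=\bo(1)$ (equivalently $g''\sim(q-1)g'/\gamma$). A second, essential point is spotting the exact cancellation above that removes the cutoff $S_6$, and correctly identifying the unique term of $T(\gamma)-S$ that is not annihilated by the tiny factor $V_2'(S)$.
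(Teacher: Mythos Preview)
Your proposal is correct and follows essentially the same route as the paper's own proof: chain \eqref{eq4.75}, \eqref{eq4.71}--\eqref{eq4.72}, then \eqref{eq4.68}--\eqref{eq4.69}, observe the cancellation of the $S_6$-terms to reduce to $(n-1)V_2'(S)(T(\gamma)-S)$, evaluate this via \eqref{eq4.64}, \eqref{eq2.12}, \eqref{eq4.63}, and finally pass to $T'(\gamma)$ through \eqref{eq2.17}, \eqref{2.11}, and $V_1(S)=-\tfrac{1}{n-1}\bigl(1+\bo(\delta^3 g''/(g')^2)\bigr)$. The only cosmetic difference is that you invoke the Remark after Theorem~\ref{theo1.5} (hence implicitly (H3)) to dispose of the exponential remainder $e^{-(g-\frac{n-1}{n}\gamma g')}/(g')^\alpha$, whereas the paper cites (H2); both suffice in the intended application.
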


\proof Denote $\eta=\eta(\gamma)\eqdef \frac{e^{-(g-\frac{(n-1)\gamma g'}{n})}}{(g')^{\alpha}}$.  Then from Lemmas \ref{lemma4.8} to \ref{lemma4.12}, we have
\begin{eqnarray*}
V_1(T(\gamma))&=&V_1(\tilde{T})+V_1'(\tilde{T})(T(\gamma)-\tilde{T})+\bo\left(\frac{g''}{(g')^2}+\eta\right)\\
&=& V_1(S_6)+V'_1(S_6)(\tilde{T}-S_6)+(V'_1(S_6)+\bo\left(\frac{(g'')^2}{(g')^4}\right))(T(\gamma)-\tilde{T})
+\bo\left(\frac{g''}{(g')^2}+\eta\right)\\
&=&V_1(S_6)+V'_1(S_6)(T(\gamma)-S_6)+\left(\frac{g''}{(g')^2}+\eta\right)\\
&=& V_1(S)\left(1+(n-1)^2V'_2(S)+(n-1)V'_2(S)(S_6-S)\right)\\
&&+(n-1)V_1(S)V'_2(S)(T(\gamma)-S_6)
+\bo\left(\frac{\delta^3g''}{(g')^4}(T(\gamma)-S_6)+\eta\right)\\
&=& V_1(S)\left(1+(n-1)^2V'_2(S)+(n-1)V'_2(S)(T(\gamma)-S)\right)+\bo\left(\frac{\delta^3}{(g')^2}+\eta\right).
\end{eqnarray*}
Now, \eqref{eq2.12} and \eqref{eq4.63}
\begin{eqnarray*}
T(\gamma)-S&=& g-\frac{(n-1)\gamma g'}{n}+(n-1)\log(\frac{(n-1)g'}{n})+\frac{(n-1)\alpha_n\gamma g''}{g'}+A(\gamma)\\
&&-g-(n-1)\log(\frac{(n-1)g'}{n})-(n-1)\log(\frac{(n-1)g''}{(g')^2})+\bo\left(\frac{\delta g''}{(g')^2}+\frac{\delta^2}{g'}\right)\\
&=&-\frac{(n-1)\gamma g'}{n}-(n-1)\log\left(\frac{(n-1)g''}{(g')^2}\right)+\bo(1)
\end{eqnarray*}
and from \eqref{eq4.64}
\begin{eqnarray*}
(n-1)V'_2(S)(T(\gamma)-S)&=&\frac{ng''}{(g')^2}\left(1+\bo\left(\frac{\delta^4 g''}{(g')^2}\right)\right)(-\frac{(n-1)\gamma g'}{n}-(n-1)\log(\frac{(n-1)g''}{(g')^2})
+\bo(1))\\
&=&-\frac{(n-1)\gamma g''}{g'}-\frac{n(n-1)g''}{(g')^2}\log\left(\frac{g''}{(g')^2}\right)+\bo\left(\frac{\delta^4\gamma(g'')^2}{(g')^3}+\frac{\delta^5 (g'')^2}{(g')^4}\right)\\
&=&-\frac{(n-1)\gamma g''}{g'}+\bo\left(\frac{\delta^4 g''}{(g')^2}\right).
\end{eqnarray*}
Therefore,
\begin{eqnarray*}
V_1(T(\gamma))=V_1(S)\left(1+\frac{n(n-1)g''}{(g')^2}-\frac{(n-1)\gamma g''}{g'}\right)+\bo\left(\frac{\delta^4 g''}{(g')^2}+\frac{g''\delta^3}{(g')^2}+\eta\right).
\end{eqnarray*}
This proves \eqref{eq4.77}. From (H2) of Hypothesis \ref{hypothesis-f}, \eqref{eq2.13}, \eqref{eq3.52} and \eqref{eq4.76} we have
\begin{eqnarray*}
T'(\gamma)&=&-\frac{V_1(T(\gamma))}{y'(T(\gamma))}=-V_1(S)(1+\bo\left(\frac{1}{(g')^2}\right))\frac{(n-1)g'}{n}\left(1-\frac{(n-1)\gamma g''}{g'}+\bo\left(\frac{\delta^4 g''}{(g')^2}\right)\right)\\
&=&\frac{(1+\bo\left(\frac{\delta^3 g''}{(g')^2}\right))}{n}\left(g'-(n-1)\gamma g''+\bo\left(\frac{\delta^4 g''}{g'}\right)\right).
\end{eqnarray*}
This proves the lemma.\qed

 \begin{proof}[Proof of Theorem \ref{theo2.6}] Let $S_6(\gamma)$ be as  in \eqref{eq4.61}. Choose a $\gamma_0>0$ large such that for $\gamma\geq \gamma_0$, all the estimates  up to Lemma \ref{lemma4.7} hold. Let $S=S(\gamma)$ be the first turning point as defined earlier. Let
\begin{eqnarray}\label{eq4.79}
I_1\eqdef\{\gamma\geq \gamma_0 ;\; S(\gamma)<S_6(\gamma)\},
\end{eqnarray}
\begin{eqnarray}\label{eq4.80}
I_2\eqdef\{\gamma\geq \gamma_0; \; S(\gamma)\geq S_6(\gamma)\}.
\end{eqnarray}
Since $\gamma\to S(\gamma)$ is continuous, we have that  $I_2$ is a closed set. Furthermore from Lemma \ref{lemma4.8}, there exists a $\gamma_1\geq \gamma_0$ such that  if $\gamma\geq \gamma_1$, $\gamma\in I_2$, then
\begin{eqnarray}\label{eq4.81}
S(\gamma)=T_1+(n-1)\log\left(\frac{g''}{(g')^2}\right)+\bo(1).
\end{eqnarray}
Hence if $I_3\eqdef\{\gamma>\gamma_1 ;\; S(\gamma)\geq S_6(\gamma)\}$ is non empty, then $I_3$ is both open and closed in $(\gamma_1,\infty)$ which implies that $I_3=(\gamma_1,\infty)$. Therefore we have either $I_3=(\gamma_1,\infty)$ or  $I_3=\emptyset$. 

If $I_3=\emptyset$, then $$I_4\eqdef\{\gamma\geq\gamma_1 ;\; S(\gamma)<S_6(\gamma)\}=[\gamma_1, \infty).$$
\begin{clm}\label{claim7} $I_4=\emptyset$.\end{clm}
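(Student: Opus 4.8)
The plan is to show that the case $I_3=\emptyset$ (equivalently $I_4=[\gamma_1,\infty)$, i.e. $S(\gamma)<S_6(\gamma)$ for every large $\gamma$) cannot occur, by computing $V_1'(S_6)$ in two incompatible ways. On the one hand, if $S(\gamma)<S_6(\gamma)$ then $S_6\in(S,\infty)$, so by the very definition \eqref{eq4.35} of the first turning point $V_1'(S_6)>0$. On the other hand I will apply the identity of Lemma \ref{lemma4.6} at $t=S_6$ and read off that the leading term of $V_1'(S_6)$ is negative, which is the contradiction. The device that makes the identity usable at $t=S_6$ is to run the Section~\ref{section4} machinery with the \emph{large} weight $\delta_\ast\eqdef\big(\tfrac{4q}{q-1}+1\big)\log g'$ (still $\bo(\log g')$), for which $T_{\delta_\ast}=T_1-(n-1)\delta_\ast=S_6$ and $X(S_6)=e^{\delta_\ast}=(g')^{\frac{4q}{q-1}+1}\to\infty$.

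First I would set up the hypotheses. From Corollary \ref{corollary4.4} and Lemma \ref{lemma4.2} one has $S_3=S_1-(n-1)k_1=T_1-\bo(1)$, so for $\gamma$ large $S<S_6=T_{\delta_\ast}<S_3$; hence $S_4:=\max\{T_{\delta_\ast},S\}=S_6$, and Lemmas \ref{lemma4.3}, \ref{lemma4.6}, \ref{lemma4.7} apply with $\delta=\delta_\ast$ on $[S_6,S_3]$. In particular: $V_1<0$ there and bounded away from $0$ (combine $V_1=V_2+\bo(\delta_\ast^3 g''/(g')^2)$ from Lemma \ref{lemma4.3} with the sign of $V_2$ on $[S_6,S_3]$, using $k_1>0$); the estimates \eqref{eq4.20}–\eqref{eq4.25} hold at $t=S_6$; and \eqref{eq4.60} gives $((y')^{n-2}V_1')'(S_6)=e^{\bo(\delta_\ast^3 g''/(g')^2)}((z')^{n-2}V_2')'(S_6)$. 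Next, the $J$-identity \eqref{eq4.58} with $a=S_6$, $b=\infty$ (and $J(\infty)=0$) gives $J(S_6)=L(S_6)$ with $L$ as in \eqref{eq4.59}. Since $X(S_6)\to\infty$, every term $\frac{1}{(1+X(S_6))^{r+2}}$ and $\frac{X(S_6)^{n+1}-(1+X(S_6))^{n+1}}{(1+X(S_6))^{n+1}}$ tends to $0$, and the error $\bo(\delta_\ast^4 g''/(g')^2)=o(1)$, so
\[
L(S_6)=\frac{g''}{n}\Big(\frac{n}{(n-1)g'}\Big)^{n}\,(1+o(1))>0 ,
\]
of size $\sim g''/(g')^{n}$.

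Then I would read off $V_1'(S_6)$. On $[S_6,S_3]$ one has $y(S_6)=z(S_6)+\bo(\cdot)=\gamma+\bo(\delta_\ast/g')$ by Lemma \ref{lemma3.3}, so by (H1)/Proposition \ref{proposition3.2}, $g'(y(S_6))=g'(1+o(1))$ and $g''(y(S_6))/g'(y(S_6))=(g''/g')(1+o(1))=o(1)$; by \eqref{eq3.12} and \eqref{eq3.24}, $y'(S_6)=z'(S_6)(1+o(1))=\tfrac{n}{(n-1)g'}(1+o(1))$. Hence the coefficient in $J$ satisfies $1-g'(y(S_6))y'(S_6)-\tfrac{g''(y(S_6))}{g'(y(S_6))}=1-\tfrac{n}{n-1}+o(1)=-\tfrac1{n-1}+o(1)$, while $y'(S_6)^{n-2}=\big(\tfrac{n}{(n-1)g'}\big)^{n-2}(1+o(1))$. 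Finally, by \eqref{eq4.60} and \eqref{eq4.65}, $((y')^{n-2}V_1')'(S_6)=\bo\big((g')^{2-n}X(S_6)^{-1}\big)=\bo\big((g')^{1-n-\frac{4q}{q-1}}\big)=o\big(g''/(g')^{n}\big)$ — this is precisely what the exponent $\tfrac{4q}{q-1}+1$ in \eqref{eq4.61} buys, via $1-\tfrac{4q}{q-1}<\tfrac{q-2}{q-1}$ i.e. $q>\tfrac14$. Substituting into $J(S_6)=\big(1-g'(y)y'-\tfrac{g''(y)}{g'(y)}\big)y'^{\,n-2}V_1'(S_6)+((y')^{n-2}V_1')'(S_6)=L(S_6)$ and solving gives
\[
V_1'(S_6)=-\frac{n\,g''}{(n-1)(g')^{2}}\,(1+o(1))<0\qquad\text{for }\gamma\text{ large,}
\]
contradicting $V_1'(S_6)>0$. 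Thus no large $\gamma$ lies in $I_4$; since in the case $I_3=\emptyset$ one has $I_4=[\gamma_1,\infty)$, this is absurd, so $I_4=\emptyset$ (equivalently $I_3\neq\emptyset$, hence $I_3=(\gamma_1,\infty)$, which puts every large $\gamma$ under Assumption \ref{assumption-on-S} and lets Lemma \ref{lemma4.13} finish Theorem \ref{theo2.6}).

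The main obstacle I expect is the bookkeeping of Step~1: checking carefully that with the non-standard weight $\delta=\delta_\ast=\bo(\log g')$ all of Lemmas \ref{lemma4.3}, \ref{lemma4.6}, \ref{lemma4.7} are genuinely applicable at the endpoint $t=S_6=T_{\delta_\ast}$ — i.e. that $S\le S_6\le S_3$, that $V_1$ is negative and bounded away from $0$ on $[S_6,S_3]$ (so the hypotheses of Lemma \ref{lemma4.7} are met), and that $y(S_6)$ is within $o(1/g')$ of $\gamma$ — together with the quantitative heart of the argument, namely that the chosen exponent $\tfrac{4q}{q-1}+1$ makes $((y')^{n-2}V_1')'(S_6)$ strictly negligible against the main term $\sim g''/(g')^{n}$ of $L(S_6)$, so that the sign of $V_1'(S_6)$ is really governed by the $-\tfrac1{n-1}$ coefficient.
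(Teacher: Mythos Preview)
Your argument is correct and takes a genuinely different route from the paper's.

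The paper argues globally: assuming $I_4=[\gamma_1,\infty)$, it shows via \eqref{eq4.19} and the $\psi$-bound \eqref{eq3.20}--\eqref{eq3.41} that $V_1'(t)=\bo\big(|V_1(S)|/(g')^{\frac{q}{q-1}+1}\big)$ on $[\tilde{T},S_6]$, propagates $V_1(t)=V_1(S)(1+o(1))$ all the way down to $T(\gamma)$, and from \eqref{eq2.17} obtains $T'(\gamma)\ge \tfrac{g'}{n}(1+o(1))$. Integrating and comparing with the asymptotic \eqref{eq2.12} for $T(\gamma)$ yields $n-(n-1)\gamma g'/g\ge 1+o(1)$, which via (H1) forces $q\le 1$, a contradiction.

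Your approach is local and recycles the machinery of Lemma~\ref{lemma4.8}: you evaluate the $J$-identity of Lemma~\ref{lemma4.6} at $t=S_6$ with the weight $\delta_\ast=\big(\tfrac{4q}{q-1}+1\big)\log g'$, so that $T_{\delta_\ast}=S_6$ and Lemmas~\ref{lemma4.3},~\ref{lemma4.7} apply with $S_4=S_6$. Since $X(S_6)=(g')^{\frac{4q}{q-1}+1}\to\infty$, the bracket in \eqref{eq4.59} collapses to $\tfrac{1}{n}+o(1)$ and the $((y')^{n-2}V_1')'(S_6)$ term from \eqref{eq4.60}--\eqref{eq4.65} is $\bo\big((g')^{1-n-\frac{4q}{q-1}}\big)=o\big(g''/(g')^n\big)$; solving $J(S_6)=L(S_6)$ for $V_1'(S_6)$ gives $V_1'(S_6)=-\tfrac{n\,g''}{(n-1)(g')^2}(1+o(1))<0$, contradicting $V_1'(S_6)>0$ which follows from $S_6>S$ and the definition \eqref{eq4.35}.

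What each buys: your argument is shorter, needs neither the propagation of $V_1$ down to $T(\gamma)$ nor the integration of $T'(\gamma)$ against \eqref{eq2.12}, and only uses $q>\tfrac14$ rather than $q>1$ at the quantitative step. The paper's route, by contrast, does not appeal to Lemmas~\ref{lemma4.6}--\ref{lemma4.7} at all in this claim and instead exploits the growth constraint $q>1$ through the global asymptotics of $T(\gamma)$. Both are valid; yours is the same trick used by the paper in Lemma~\ref{lemma4.8}, simply transplanted to $t=S_6$ with the sign of $V_1'$ supplying the contradiction.
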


\begin{proof}[Proof of Claim \ref{claim7}] Suppose $I_4\neq \emptyset$, then $I_4=[\gamma_1,\infty)$. Let $\gamma\geq\gamma_1$ and $t_0$ be as in \eqref{eq4.67}.
Then, for $t_0\leq t\leq S$, we have $\vert V_1(t)\vert\leq \vert V_1(S)\vert$. Also $X(S_6)=e^{\frac{T_1-S_6}{n-1}}=(g')^{\frac{4q}{q-1}+1}$ and hence for $t\in [t_0, S_6(\gamma)]$,
\begin{gather*}
z'(t)=\bo\left(\frac{1}{g'}\right),\quad z''(t)=\bo\left(\frac{1}{X(S_6)g'}\right)=\bo\left(\frac{1}{(g')^{\frac{2q}{q-1}+2}}\right)\\
z'(t)^{n-3}z''(t)= \bo\left(\frac{1}{(g')^{\frac{2q}{q-1}+n-1}}
\right), \quad\vert V'_2(t)\vert+\vert V''_2(t)\vert=\bo\left(\frac{1}{X(S_6)}\right)=\bo\left(\frac{1}{(g')^{\frac{2q}{q-1}+1}}\right).
\end{gather*}
Therefore from \eqref{eq3.19}, \eqref{eq3.23} and \eqref{eq3.43}, we have the following estimates for $t\in [\max\{t_0,\tilde{T}\}, S_6(\gamma)]$:
\begin{eqnarray*}
\frac{1}{y'(t)^{n-2}}\int_t^S\rho_1V_1V_2 e^{-s}{\rm d}s&=&\bo\left(\vert V_1(S)\vert(g')^{n-1}\int_t^Se^{g+g'(z-\gamma)-s}{\rm d}s\right)\\
&=&\bo\left(\vert V_1(S)\vert(g')^{n-1}[(z'(S))^{n-1}-(z'(t))^{n-1}]\right)\\
&=&\bo\left(\vert V_1(S)\vert\frac{(g')^{n-1}}{X(S_6)(g')^{n-1}}\right)=\bo\left(\frac{\vert V_1(S)\vert}{(g')^{\frac{4q}{q-1}+1}}\right),
\end{eqnarray*}
\begin{eqnarray*}
\frac{1}{y'(t)^{n-2}}\int_t^S\rho_2V_1V_2 e^{-s}{\rm d}s&=&\bo\left(\frac{\vert V_1(S)\vert(g')^{n-2}}{(g')^{n-1}X(S_6)}\right)=\bo\left(\frac{\vert V_1(S)\vert}{(g')^{\frac{2q}{q-1}+2}}\right).
\end{eqnarray*}
From \eqref{eq3.39}, \eqref{eq3.40} and\eqref{eq3.41}, we have
\begin{eqnarray*}
-(n-1)(y')^{n-2}y''=-((y')^{n-1})'=e^{g(y(t))-t}\leq e^{\psi(y(t))}=\bo\left(\frac{1}{(g')^{\frac{2q}{q-1}+n}}\right)
\end{eqnarray*}
and hence we get
\begin{eqnarray*}
\rho_3(t)=((y')^{n-2}-(z'(t))^{n-2})V_2''+(n-2)\left(\frac{(y')^{n-2}y''}{y'}-\frac{(z')^{n-2}z''}{z'}\right)V_2'=\bo\left(\frac{1}{(g')^{\frac{2q}{q-1}+n-1}}\right).
\end{eqnarray*}
From the above estimates, \eqref{eq4.19} and the fact that $S_6-\tilde{T}=\bo(g)$, we get
\begin{eqnarray*}
V'_1(t)V_2(t)=V_1(t)V'_2(t)-\left(\frac{y'(S)}{y'(t)}\right)^{n-2}V'_2(S)V_1(S)+\bo\left(\frac{\vert V_1(S)\vert g}{(g')^{\frac{2q}{q-1}+1}}\right)=\bo\left(\frac{\vert V_1(S)\vert g}{(g')^{\frac{2q}{q-1}+1}}\right).
\end{eqnarray*}
Since $V_2(t)\leq -C$ for some $C>0$ independent of $\gamma$, we thus see that 
\begin{eqnarray}\label{eq4.82}
V'_1(t)=\bo\left(\frac{\vert V_1(S)\vert g}{(g')^{\frac{2q}{q-1}+1}}\right)=\bo\left(\frac{\vert V_1(S)\vert }{(g')^{\frac{q}{q-1}+1}}\right).
\end{eqnarray}
Now from \eqref{eq4.52}, we have
\begin{eqnarray*}
V_1(t)&=&V_1(S)+\bo\left(\frac{\vert V_1(S)\vert g(\tilde{T}-S)}{(g')^{\frac{2q}{q-1}+1}}\right)
\leq V_1(S_6)+\bo\left(\frac{\vert V_1(S)\vert g^2}{(g')^{\frac{2q}{q-1}+1}}\right)\\
&=&V_2(S_6)+\bo\left(\frac{(log(g'))^3 g''}{(g')^2}+\frac{\vert V_1(S)\vert}{g'}\right)
\leq -\frac{1}{n-1}+\bo\left(\frac{g'' \de^4}{(g')^2}\right)<0.
\end{eqnarray*}
Hence $t_0<\tilde{T}$ and
$V_1(t)=V_1(S)(1+\bo\left(\frac{1}{(g')^{\frac{q}{q-1}+1}}\right)).$
Now for $t\in [t_0, \tilde{T}]$ and from \eqref{eq4.14} and \eqref{eq4.82}, we have
\begin{eqnarray*}
V'_1(t)&=&\left(\frac{y'(\tilde{T})}{y'(t)}\right)^{n-2}V'_1(\tilde{T})+\frac{1}{y'(t)^{n-2}}\int_t^{\tilde{T}}f'(y)V_1e^{-s}{\rm d}s\\
&=&\bo\left(\frac{\vert V_1(S)\vert}{(g')^{\frac{q}{q-1}+1}}+\vert V_1(S)\vert(g')^{n-2}\int_t^{\tilde{T}} f'(y(s))e^{-s} {\rm d}s\right).
\end{eqnarray*}
As in \eqref{myeq7}, we therefore have
\begin{eqnarray*}
V_1(t)&=& V_1(\tilde{T})+\vert V_1(S)\vert \bo\left(\frac{\tilde{T}-T(\gamma)}{(g')^{\frac{q}{q-1}+1}}+\vert V_1(S)\vert e^{-T(\gamma)}(g')^{n-2}\right)\\
&=&V_1(S)\left(1+\bo\left(\frac{1}{(g')^{\frac{q}{q-1}}}+\frac{e^{-(g-\frac{(n-1)\gamma g'}{n})}}{(g')^{\alpha}}\right)\right).
\end{eqnarray*}
Since from (H2) of Hypothesis \ref{hypothesis-f} which gives $g-\frac{(n-1)\gamma g'}{n} \geq b$ for some $b \in \R$ as $\gamma\to\infty$, we hence get
\begin{eqnarray*}
V_1(t)=V_1(S)(1+o(1))<0.
\end{eqnarray*}
Therefore $t_0=T(\gamma)$ and 
\begin{eqnarray*}
T'(\gamma)&=&-\frac{V_1(T(\gamma))}{y'(T(\gamma))}=-\frac{(n-1)g'}{n}V_1(S)(1+o(1))\geq \frac{(n-1)g'}{n}\vert V_1(S_6)\vert(1+o(1))\\
&=&\frac{g'}{n}(1+o(1)).
\end{eqnarray*}
Integrating the above inequality from $\gamma_1$ to $\gamma$ and using \eqref{eq2.12} in Theorem \ref{theo2.5}, we obtain
\begin{eqnarray*}
g-\left(\frac{n-1}{n}\right)\gamma g'+\bo(\log(g'))=T(\gamma)\geq T(\gamma_1)+\bo(g(\gamma_1))+\frac{g}{n}(1+o(1)).
\end{eqnarray*}
Therefore we get
\begin{eqnarray*}
n-\frac{(n-1)\gamma g'}{g}\geq 1+o(1).
\end{eqnarray*}
Since from (H1) of Hypothesis \ref{hypothesis-f}, we see that $$(n-1)\left[1-\frac{\gamma g'}{g}\right]=(n-1)\left[1-q+\bo\left(\frac{\rho '\gamma}{g}+\frac{\rho}{g}\right)\right]\geq o(1)$$ which implies that $q \leq 1$ as $\gamma\to\infty$ which is a contradiction. This proves the claim. 
\end{proof}

Therefore $I_3=[\gamma_1,\infty)$ and for $\gamma\in I_3$ and from \eqref{eq4.78} 
the theorem follows.
\end{proof}
{\bf Acknowledgement:} {\sc Adimurthi} and {\sc J. Giacomoni} were funded by IFCAM (Indo-French Centre for Applied Mathematics) under the project ``Singular phenomena in reaction diffusion equations and in conservation laws" .

\end{document}